%% file: Kernel.tex
\documentclass[a4paper,10pt,oneside]{amsart}
\usepackage[a4paper, textwidth=15.5cm]{geometry}
\usepackage{amsmath,amsthm,amssymb,amsfonts,extarrows}
\usepackage{enumerate,amscd,amsxtra,MnSymbol}
\usepackage{mathrsfs}
\usepackage{color}
\usepackage[cmtip,all]{xy}
\usepackage{eucal}
\usepackage{bbold}
\usepackage{ upgreek }
\usepackage{paralist}
\usepackage{tikz-cd}
\usepackage{stackrel}
\usepackage{leftindex}

\usepackage[dvipsnames]{xcolor} 
\usepackage[backref=page]{hyperref}

\hypersetup{
    colorlinks=true,
    linkcolor=PineGreen, 
    citecolor=BurntOrange,
    urlcolor=CadetBlue,
    }

\usepackage{thmtools} 
\usepackage{quiver}
\usepackage{tikz-cd}
\usepackage{csquotes}
\usepackage{pdfpages}
\usepackage{graphicx}

\input{Comands}

\title{Kernel theorems for rigidly-compactly generated $\infty$-categories}
\author{Giovanni Rossanigo}

\begin{document}
\maketitle

\begin{abstract}
    We prove two representability results for rigidly-compactly generated $\infty$-categories and functors between them. 
    The first one represents contravariant linear functionals out of a category of perfect objects with values in a category of (pseudo)-coherent objects in terms of (pseudo)-coherent objects. 
    The second one represents covariant functionals out of coherent objects with values in a category of coherent objects in terms of  perfect objects. 
    The techniques used belong to the realm of  \enquote{functional analysis} of presentable stable categories and ultimately depend on the interaction between three notion of finiteness, namely compactness, dualizability and coherence. 
    These results apply to $\mb{E}_\infty$-ring spectra, quasi-proper maps of quasi-compact quasi-separated schemes and certain spectral algebraic spaces.
    We also reformulate Grothendieck duality in terms of internal left adjoints.
\end{abstract}

\begin{figure}[h]
\centering
\includegraphics[scale=0.2]{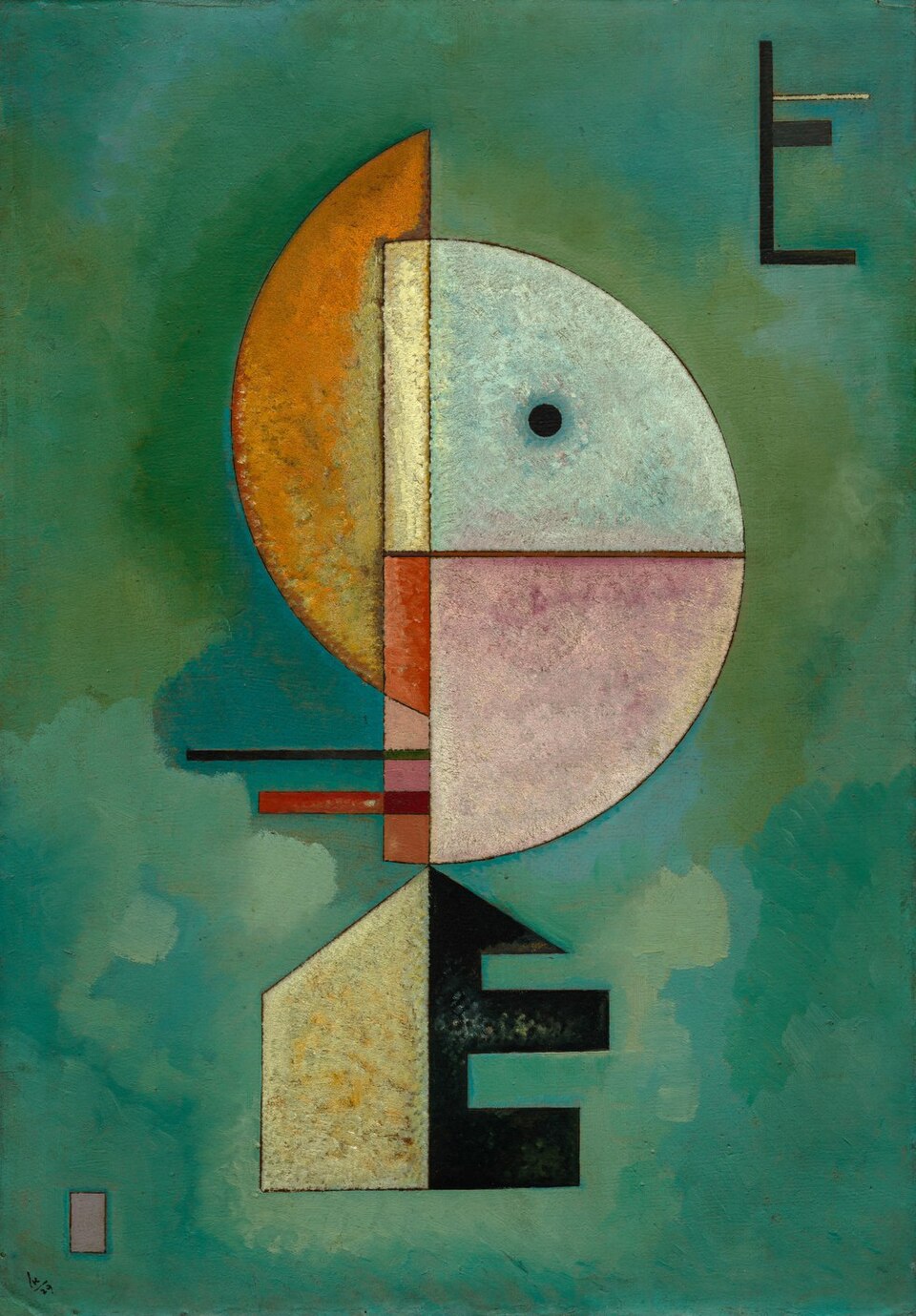}
\caption{\emph{Empor}, Vasilij Kandinskij, 1929.}
\end{figure}

\newpage
\tableofcontents

\section{Introduction}

Many of the stable categories arising in homotopy theory and derived algebraic geometry are \emph{rigidly-compactly generated}: they are compactly generated stable categories with a compatible symmetric monoidal structure such that the monoidal unit is compact and compact objects are dualizable.  
Typical examples include the $\infty$-category of spectra $\Sp$, module categories over connective $\mb{E}_\infty$-rings, and the derived $\infty$-category $\QCoh(X)$ of quasi-coherent sheaves on a quasi-compact quasi-separated scheme $X$. 
In such contexts it is fruitful to regard compact objects as the
\enquote{finite} part of the theory, while various $t$-structures single out further finiteness conditions (namely, almost compactness and bounded almost compactness in homotopy theory and pseudo-coherence and coherence in algebraic geometry).

A fundamental input of rigidly-compactly generated categories is that \emph{colimit-preserving} linear functors are controlled by \emph{kernels}.  
To be concrete, let $\Cat^\perf$ denote the $\infty$-category of small idempotent-complete stable $\infty$-categories and equip it with the symmetric monoidal structure constructed by Blumberg, Gepner, and Tabuada  in \cite{blumberg2013universal} and recall that a commutative algebra $A\in\CAlg(\Cat^\perf)$ is rigid if and only if every object is dualizable. 
Then the main \enquote{kernel theorem} asserts that for every  pair $\mathcal{M},\mathcal{N}\in\Mod_\mathcal{A}(\Cat^\perf)$ of small idempotent-complete $\mathcal{A}$-modules, the $\mathcal{A}$-linear Yoneda embedding  identifies $\Ind(\mathcal{M}^{\op}\otimes_\mathcal{A} \mathcal{N})$ with $\mathcal{A}$-linear exact functors $\mathcal{M}\to\Ind(\mathcal{N})$; here $\Ind$ denotes the ind-completion.
The goal of this paper is to establish two \emph{finitary} kernel theorems that refine the above equivalence when the source is restricted to \enquote{finite objects in the $t$-structure}.

These finite objects are defined for \emph{tor-finite categories} and the finitary kernel theorems are statements about functors between them.
To be concrete, a tor-finite category is a rigidly-compactly generated category $\mathcal{A}\in\CAlg^\rig(\Pr^{\L,\omega}_\st)$ equipped with a $t$-structure generated by a set of compact objects $S_\mathcal{A}\subseteq\mathcal{A}^\omega$.
This $t$-structure is required to be accessible, compatible with filtered colimits, right and left complete, and since for this data there exists a notion of \emph{finite tor-dimension} one requires that compact objects are of finite tor-dimension, so that tensoring with compacts has uniformly bounded $t$-exactness properties.
On the side of morphisms, a functor between tor-finite categories is just a rigid functor which is right $t$-exact and has a right $t$-exact up to a shift right adjoint.

As said before, in a tor-finite category $\mathcal{A}$ one has well-behaved notions of  \emph{pseudo-coherence} and \emph{coherence}.
We denote by $\PCoh{A}$ and $\Coh{A}$ the corresponding full
subcategories.
Pseudo-coherent objects  are defined to be the connective objects in the $t$-structure whose coconnective truncation defines a compact object in the coconnective part, and coherent objects are simply the bounded pseudo-coherent objects. 
Under mild assumptions, these objects relate to Neeman (bounded) pseudo-compact objects studied in \cite[Definition 0.20]{neeman2025triangulatedcategoriessinglecompact}.

As the Grothendieck–Neeman duality applies to rigid functors that have a compact object-preserving right adjoint (for us the \emph{quasi-perfect} functors), our finitary kernel theorems apply to functors of tor-finite categories that have a  pseudo-coherent object-preserving right adjoint, that is, to \emph{quasi-proper} functors in the terminology (which we adopt) of  Lipman-Neeman \cite[Page 3]{lipman2007quasi}.
In particular, we prove the first finitary kernel theorem, which asserts that, for a quasi-proper functor $f^\L:\mathcal{A}\to\mathcal{B}$, the exact $\mathcal{A}^\omega$-linear functors on compact objects with pseudo-coherent (respectively coherent) values are represented by pseudo-coherent (respectively coherent) kernels in $\mathcal{B}$.
\begin{theorem}[Functors out of $(\mathcal{B}^\omega)^\op$]\label{theorem: functors out of compacts}
   Let $f^\L:\mathcal{A}\to\mathcal{B}$ be a quasi-proper functor and assume that $S_\text{fin}\subseteq S_\mathcal{B}$ is a finite generating set. 
    Assume furthermore  that the family $(\text{ev}_s)_{s\in S_\mathcal{B}}$ detects coconnective objects. 
    Then there are equivalences of categories
    \[
    \PCoh{B}\to\Fun_{\mathcal{A}^\omega}^\ex((\mathcal{B}^\omega)^\op,\PCoh{A}),\qquad
    \Coh{B}\to\Fun_{\mathcal{A}^\omega}^\ex((\mathcal{B}^\omega)^\op,\Coh{A})
    \]
    induced by the $\mathcal{A}^\omega$-linear Yoneda embedding.
\end{theorem}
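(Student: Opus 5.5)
The plan is to first identify the right-hand side with a full subcategory of $\mathcal{B}$ via the $\mathcal{A}^\omega$-linear Yoneda embedding, and then cut out pseudo-coherent (respectively coherent) kernels using the finiteness properties of $f^\L$.

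\textbf{Step 1: the Yoneda functor is fully faithful and lands in the right place.} For $K\in\mathcal{B}$ consider the functor
\[
\Phi_K:(\mathcal{B}^\omega)^\op\to\mathcal{A},\qquad b\mapsto f^\R\underline{\Hom}_\mathcal{B}(b,K)\simeq f^\R(b^\vee\otimes K).
\]
It is $\mathcal{A}^\omega$-linear by the projection formula for the rigid functor $f^\L$. Since $\mathcal{B}$ is rigidly-compactly generated, $\mathcal{B}\simeq\Ind(\mathcal{B}^\omega)$. The unenhanced kernel theorem recalled in the introduction, applied with $\mathcal{M}=\mathcal{B}^\omega$ and $\mathcal{N}=\mathcal{A}^\omega$, identifies $\mathcal{B}$ (viewed as $\Ind(\mathcal{B}^\omega\otimes_{\mathcal{A}^\omega}\mathcal{A}^\omega)$ after dualizing $\mathcal{B}^\omega$ by $b\mapsto b^\vee$) with $\Fun^\ex_{\mathcal{A}^\omega}((\mathcal{B}^\omega)^\op,\mathcal{A})$ via $K\mapsto\Phi_K$. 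So the functor in the statement is the restriction of an equivalence, and it suffices to prove two things:
\begin{enumerate}[(a)]
\item $\Phi_K$ takes values in $\PCoh{A}$ (respectively $\Coh{A}$) when $K\in\PCoh{B}$ (respectively $\Coh{B}$);
\item conversely, if every $\Phi_K(b)$ lies in $\PCoh{A}$ (respectively $\Coh{A}$), then $K$ does.
\end{enumerate}

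\textbf{Step 2: direction (a).} Take $K$ pseudo-coherent. Since compact objects have finite tor-dimension, $b^\vee\otimes K$ is, up to a shift, again pseudo-coherent; this is a lemma I expect to hold for tor-finite categories, since tensoring with a compact object preserves compact coconnective truncations up to a uniform shift. Quasi-properness of $f^\L$ then gives $f^\R(b^\vee\otimes K)\in\PCoh{A}$. For coherent $K$, boundedness is preserved as well: $b^\vee\otimes-$ has bounded amplitude, and $f^\R$ is right $t$-exact up to a shift and left $t$-exact as a right adjoint of a right $t$-exact functor. So $\Phi_K(b)\in\Coh{A}$.

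\textbf{Step 3: direction (b), the main obstacle.} Here we use the hypotheses on $S_\text{fin}$ and on $(\text{ev}_s)$. I read $\text{ev}_s$ as evaluation $K\mapsto\Phi_K(s)$, or its image in $\mathcal{A}$, for $s\in S_\mathcal{B}$.

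\emph{Boundedness below.} Assume all $\Phi_K(s)$ are bounded below. Since $S_\text{fin}$ is finite and generates the $t$-structure, there is a uniform $n$ with $\Phi_K(s)\in\mathcal{A}_{\ge -n}$ for all $s\in S_\text{fin}$. This gives $\Hom(s[m],K)=0$ for $m$ large, hence $K\in\mathcal{B}_{\ge -n'}$.

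\emph{Boundedness above} (coherent case). Since $(\text{ev}_s)$ detects coconnective objects, uniform coconnectivity of the $\Phi_K(s)$ for $s\in S_\text{fin}$ forces $K$ to be bounded above.

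\emph{Compactness of truncations.} This is the hardest point: we must show $\tau_{\le k}K$ is compact in $\mathcal{B}_{\le k}$, i.e.\ that $\Hom(\tau_{\le k}K,-)$ commutes with filtered colimits of $k$-coconnective objects. I would first try to rewrite the condition $\Phi_K(b)\in\PCoh{A}$ for all compact $b$ via the dual formula
\[
\Hom_\mathcal{B}(K, b\otimes f^\L a)\simeq\Hom_\mathcal{A}(\Phi_K(b^\vee)\dots)
\]
so as to express $\Hom(\tau_{\le k}K,Y)$ as a limit of finitely many terms $\Hom_\mathcal{A}(\Phi_K(s),\text{ev}_s Y)$, using the finite generating set $S_\text{fin}$. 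One would build a finite resolution of $\tau_{\le k}K$ in $\mathcal{B}_{\le k}$ from the pieces $\tau_{\le k}$ of finite sums of shifts of $s\in S_\text{fin}$, with multiplicities controlled by the pseudo-coherence of the $\Phi_K(s)$. Finiteness of $S_\text{fin}$ makes these limits finite, hence they commute with filtered colimits.

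If this direct approach stalls, the fallback is an induction on $k$ using the fibre sequences $\pi_k K[k]\to\tau_{\le k}K\to\tau_{\le k-1}K$. This reduces the problem to showing that the hearts $\pi_k K$ are finitely presented, which I would check by detecting finite presentation through the $\text{ev}_s$ and invoking the detection hypothesis again.
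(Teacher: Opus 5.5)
Your Steps 1 and 2 agree with the paper. The functor is the restriction of the equivalence $\mathcal{B}\simeq\Fun^\ex_{\mathcal{A}^\omega}((\mathcal{B}^\omega)^\op,\mathcal{A})$. The easy inclusion follows from $\PCoh{B}$ and $\Coh{B}$ being $\mathcal{B}^\omega$-modules (\autoref{lemma: pcoh and coh are Cc submodules}) together with quasi-properness. Deducing the coherent case from the pseudo-coherent one via the detection hypothesis is also what the paper does.

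The gap is the pseudo-coherent half of Step 3, which is the real content of the theorem, and none of it is established.
\begin{itemize}
\item \textbf{Bounded below.} Your argument runs in the wrong direction. By adjunction $\pi_0\Hom_\mathcal{B}(\susp^m s,K)\cong\pi_0\Hom_\mathcal{A}(\susp^m\mb{1}_\mathcal{A},\Phi_K(s))$. Since $\mb{1}_\mathcal{A}$ is connective, vanishing for $m\gg0$ follows from $\Phi_K(s)$ being bounded \emph{above}, not below. Moreover, such vanishing against the generators of the $t$-structure is exactly what puts $K$ in some $\mathcal{B}_{\le n}$, which is a coconnectivity statement. Connectivity is a left-orthogonality condition and is not detected by maps out of generators. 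In any case $S_{\text{fin}}$ is only assumed to compactly generate $\mathcal{B}$, not to generate the $t$-structure.
\item \textbf{Compact truncations.} For compactness of $\tau_{\le k}K$ you only state a hope. Maps out of $\Phi_K$ in the functor category are not a finite limit of the $\Hom_\mathcal{A}(\Phi_K(s),\text{ev}_sY)$, $s\in S_{\text{fin}}$. The correct formula is the totalization of $\Hom(T_{\text{fin}}^{\bullet+1}\Phi_K,Y)$ with $T_{\text{fin}}=\bigoplus_{s\in S_{\text{fin}}}s_!\circ\text{ev}_s$. That is an infinite limit, and cutting it down to finitely many stages is precisely the missing input.
\item \textbf{Fallback via $\pi_kK$.} This needs a coherent $t$-structure (\autoref{proposition: pseudo coherent and pi_n}), which is not assumed. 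Without it, finite presentation of homotopy groups does not give pseudo-coherence: over a ring $R$ with a finitely generated but not finitely presented ideal $I$, the discrete module $R/I$ is finitely presented but not pseudo-coherent.
\end{itemize}

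The paper never treats connectivity and truncations of $K$ separately. It resolves $F=\Phi_K$ by $T_{\text{fin}}$. Since $S_{\text{fin}}$ is finite and $s_!(a)$ is represented by $f^\L(a)\otimes s$, $T_{\text{fin}}F$ is pseudo-coherent whenever $F$ has values in $\PCoh{A}$. This, not uniformity of bounds, is where finiteness of $S_{\text{fin}}$ is used. Iterating the counit with the octahedral axiom gives cofibre sequences $F_n\to F\to D_n$ with $F_n$ pseudo-coherent (\autoref{lemma: cofibre sequence pseudo-coherent}). Projective classes and a ghost lemma are then invoked to make $F\to D_n$ vanish, so that $F$ is a retract of some $F_n$.

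Do not simply import that last step. The cofibre of the counit is null only against sums of shifts of the $s_!(a)$ with $s\in S_{\text{fin}}$, not against all compact objects. The retract conclusion already fails for $\mathcal{A}=\Mod_k$, $\mathcal{B}=\Mod_{k[\epsilon]}$, $S_{\text{fin}}=\{k[\epsilon]\}$, $K=k$:
\begin{itemize}
\item $D_n\simeq\susp^n k$;
\item the map $F\to D_n$ is $\xi^n\neq0$, where $\xi$ generates $\text{Ext}^1_{k[\epsilon]}(k,k)$;
\item every $F_n$ is perfect while $k$ is not, so $F$ is never a retract of any $F_n$.
\end{itemize}
What does hold there is that the $D_n$ become increasingly connective, so the $F_n$ approximate $F$ in the sense of pseudo-compactness. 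A correct Step 3 needs connectivity control of this kind on the resolution, in the spirit of Neeman's approximating systems (\autoref{lemma: lemma 7.5 Neeman}). Neither your sketch nor the retract argument provides it.
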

In the statment, the clause \enquote{$S_\text{fin}\subseteq S_\mathcal{B}$ is a finite generating set} forces certain projective classes generated by  $S_\text{fin}$ and $S_\mathcal{B}$ to agree, even if the $t$-structure geneated by them do not agree. 
It constitutes the main ingredient of the proof and, in applications, it ultimately reduces to produce a finite collection of compact connective generators (which are not required to compactly generate the given $t$-structure). 

A complementary problem is to represent exact $\mathcal{A}^\omega$-linear functors defined on coherent objects of $\mathcal{B}$.  
Here the appropriate representing objects are no longer coherent kernels, but rather compact kernels, and one must use the dual $A^\omega$-linear Yoneda embedding.
The obstruction is that $\Coh{B}$ is usually not generated, as a stable category, by compact objects alone.
We therefore introduce a descent condition designed to reduce the representability question to a setting where coherence and compactness coincide.  
Concretely, we define morphisms $h^\L: \mathcal{B}\to \mathcal{R}$ of \emph{universal descent} to a \emph{regular} tor-finite category $\mathcal{R}$.
Regularity is easy to explain, since it simply reduces to $\mathcal{R}^\omega=\Coh{R}$.
Universal descent, instead, is a technical assumption that produces an explicit finite-stage retract
of the identity, allowing one to descend many properties from the target to the source.
In particular, it allows us to prove the following finitary kernel theorem, which asserts that, for a quasi-proper functor $f^\L:\mathcal{A}\to\mathcal{B}$, the exact $\mathcal{A}^\omega$-linear functors on coherent objects with coherent values are represented by compact kernels. 
\begin{theorem}[Functors out of $\Coh{B}$]\label{theorem: functors out of coherents}
    Let $f^\L:\mathcal{A}\to\mathcal{B}$ be quasi-proper and assume that $\mathcal{B}$ admits a  morphism of universal descent $h^\L:\mathcal{B}\to\mathcal{R}$ to a regular category such that there exists a finite generating set $S_\text{fin}\subseteq S_\mathcal{R}$ for which $(\text{ev}_r)_{r\in S_\mathcal{R}}$ detects coconnective objects on $\mathcal{A}$.
    Then there exists an equivalence of categories
    \[
    (\mathcal{B}^\omega)^\op\to\Fun^\ex_{\mathcal{A}^\omega}(\Coh{B},\Coh{A})
    \]
    induced by the dual $\mathcal{A}^\omega$-linear Yoneda embedding.
\end{theorem}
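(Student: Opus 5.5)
The plan is to construct the functor, show it is fully faithful, and then prove essential surjectivity by descent along $h^\L$ to the regular category $\mathcal{R}$, where coherent and compact objects coincide.

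\textbf{The functor.} By rigidity, every compact $K\in\mathcal{B}^\omega$ is dualizable. We therefore send $K$ to the functor
\[
\Coh{B}\to\mathcal{A},\qquad M\mapsto f^\R(K^\vee\otimes M)=f^\R\underline{\Hom}(K,M).
\]
This is exact and $\mathcal{A}^\omega$-linear by the projection formula for the rigid functor $f^\L$. It takes values in $\Coh{A}$ for two reasons. First, tensoring with the compact object $K^\vee$ preserves coherence: $K^\vee$ has finite tor-dimension and is pseudo-coherent, and the unit is compact. Second, $f^\R$ preserves pseudo-coherent objects by quasi-properness, and it preserves boundedness because $f^\R$ is right $t$-exact up to a shift and left $t$-exact.

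\textbf{Full faithfulness.} I would reduce to the first finitary kernel theorem by duality, using $\Coh{B}\supseteq\mathcal{B}^\omega\cap\Coh{B}$. The natural transformations between the functors attached to $K$ and $L$ should be computed by $f^\R\underline{\Hom}(L,K)$ evaluated on the unit. To see this, I would restrict along $\mathcal{B}^\omega\cap\Coh{B}\subseteq\Coh{B}$. The difficulty is that $\Coh{B}$ is not generated by compacts, so the restriction need not be conservative. This is exactly where universal descent enters.

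\textbf{Descent.} The morphism $h^\L\colon\mathcal{B}\to\mathcal{R}$ of universal descent gives a finite-stage retract: each object $M\in\Coh{B}$ is a retract of a finite totalization, or more precisely a finite filtration, built from objects $h^\R(N)$, and possibly from terms $h^\R h^\L\cdots$, with $N\in\Coh{R}=\mathcal{R}^\omega$.
\begin{itemize}
\item Since $h$ is itself quasi-proper (I expect universal descent to include this), $h^\R$ carries $\mathcal{R}^\omega=\Coh{R}$ into $\Coh{B}$.
\item Consequently $\Coh{B}$ is the thick subcategory generated by $h^\R(\mathcal{R}^\omega)$.
\item For the composite $f\circ h$, the first finitary kernel theorem (Theorem~\ref{theorem: functors out of compacts}) applies to $\mathcal{R}$. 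The finite generating set $S_\text{fin}\subseteq S_\mathcal{R}$ and the detection of coconnective objects are exactly its hypotheses. Since $\mathcal{R}$ is regular, $\Coh{R}=\mathcal{R}^\omega$, and exact $\mathcal{A}^\omega$-linear functors $\Coh{R}\to\Coh{A}$, after passing to opposites and duals $K\mapsto K^\vee$ on $\mathcal{R}^\omega$, are represented by objects of $\Coh{R}=\mathcal{R}^\omega$.
\end{itemize}

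\textbf{Essential surjectivity.} Given $F\colon\Coh{B}\to\Coh{A}$, I would proceed in three steps.
\begin{enumerate}
\item Precompose $F$ with $h^\R$ to get $F\circ h^\R\colon\Coh{R}\to\Coh{A}$. By the previous step this is represented by a compact kernel $P\in\mathcal{R}^\omega$.
\item Use the finite-stage retract to descend $P$. The cosimplicial or filtered diagram of kernels on the iterated terms assembles into a finite limit in $\mathcal{B}$ of objects $h^\R(\text{compact})$, together with the retraction data.
\item Show that the resulting object $K$ is compact in $\mathcal{B}$, rather than merely coherent, and that it represents $F$.
\end{enumerate}

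\textbf{Main obstacle.} Step 3 is where I expect the real difficulty. The objects $h^\R(P)$ need not be compact in $\mathcal{B}$, so compactness of $K$ has to come from a different description. I would first try to recognize $K$ through its functor of points: $K$ is compact if and only if $\Hom(K,-)$ commutes with filtered colimits. I would then test this using the assumption that the representing functor is defined on all of $\Coh{B}$ and has bounded values, via the tor-finite $t$-structure and left completeness. In other words, the plan is to show that $K$ has finite tor-dimension and is pseudo-coherent, and to deduce compactness from that. That last deduction, "pseudo-coherent plus finite tor-dimension implies compact", presumably needs the descent retract as well, applied to $\underline{\Hom}(K,-)$.
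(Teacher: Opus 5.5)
Your construction of the functor and your Step 1 (representing $F\circ h^\R$ by some $P\in\mathcal{R}^\omega$ via the first kernel theorem on $\mathcal{R}=\Coh{R}$) match the paper. \textbf{The gap is in Steps 2--3, and it rests on a false premise.}

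\emph{Why the premise fails.} You write that $h^\R(P)$ need not be compact, and you only expect $h$ to be quasi-proper. But universal descent includes the Beck--Chevalley condition, which forces $h^\R$ to preserve compact objects (\autoref{lemma: BC implies pushfoward}). So $h$ is quasi-perfect. By Grothendieck--Neeman duality (\autoref{theorem: Grothendieck Duality}), $h^\L$ then has a left adjoint $h_{(1)}$ that preserves compacts.

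\emph{The missing argument.} The paper applies the descent retract to the \emph{functor}, not to the kernel.
\begin{enumerate}
\item Since $\id$ is a retract of $\phi_e$ and $\phi_e$ preserves $\Coh{B}$, $F$ is a retract of $F\circ\phi_e$. This is a finite limit of the functors $F\circ(h^\R h^\L)^n$ for $1\leq n\leq e+1$.
\item Start from $F\circ h^\R\simeq f^\R h^\R\IHom_\mathcal{R}(P,-)$ and use $h^\R\IHom_\mathcal{R}(P,h^\L(-))\simeq\IHom_\mathcal{B}(h_{(1)}P,-)$. Then $F\circ(h^\R h^\L)^n$ is represented by the compact object $h_{(1)}(h^\L h_{(1)})^{n-1}(P)$. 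In your normalization $M\mapsto f^\R(K^\vee\otimes M)$, the projection formula gives $K^\vee=(h^\R h^\L)^{n-1}h^\R(P^\vee)$, which is compact for the same reason.
\item The dual Yoneda functor is exact and fully faithful, so $F\circ\phi_e$ is represented by a finite colimit in $\mathcal{B}^\omega$.
\item $F$ is a retract of $F\circ\phi_e$ and $\mathcal{B}^\omega$ is idempotent complete, so $F$ is represented by a compact object (\autoref{lemma: retract of a representable}).
\end{enumerate}
No compactness-recognition principle is needed. The one you propose, ``pseudo-coherent plus finite tor-dimension implies compact'', is not proved by you. It is also not available from the axioms of a tor-finite category; the paper has to assume $\mathcal{A}^\omega=\PCoh{A}\cap\Tor{A}$ where it uses it (\autoref{corollary: quasi-perfect implies quasi-proper}). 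Your own bullet $\Coh{B}=\text{thick}(h^\R(\mathcal{R}^\omega))$, combined with $h^\R(\mathcal{R}^\omega)\subseteq\mathcal{B}^\omega$, already gives $\Coh{B}\subseteq\mathcal{B}^\omega$. So compactness was never the real obstruction.

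\textbf{Full faithfulness is also left open.} Your conservativity worry is misplaced. What you need is $\mathcal{B}^\omega\subseteq\Coh{B}$, not just $\mathcal{B}^\omega\cap\Coh{B}$. This follows from universal descent to the regular $\mathcal{R}$ (\autoref{lemma: universal descent to regular implies compact subsets of coehrent}):
\begin{itemize}
\item for $x\in\mathcal{B}^\omega$ one has $h^\L(x)\in\mathcal{R}^\omega=\Coh{R}$;
\item $x$ is a retract of $\phi_e(x)$, a finite limit of iterates of $h^\R h^\L$ applied to $x$;
\item these iterates are coherent, so $\phi_e(x)$ is coherent, and hence so is its retract $x$.
\end{itemize}
Once the representing objects lie in $\Coh{B}$, your functor is the restriction to $(\mathcal{B}^\omega)^\op$ of the $\mathcal{A}^\omega$-enriched dual Yoneda embedding of $\Coh{B}$, with values in $\Coh{A}$. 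Full faithfulness is then just the enriched Yoneda lemma, and nothing has to be detected on a generating subcategory.
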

These formal results have applications which the reader can find in \autoref{section: examples}.
The first  one is in the realm of $\mb{E}_\infty$-rings.
\begin{corollary}
    Let $f:A\to B$ be a map between connective $\mb{E}_\infty$-rings and assume that $\mathcal{B}$ is an almost compact $\mathcal{A}$-module.
    Then there are  equivalences of categories
    \[
    \text{PCoh}(B)\to \Fun^\ex_{\text{Perf}(A)}(\text{Perf}(B)^\op, \text{PCoh}(A)), \qquad
    \text{Coh}(B)\to \Fun^\ex_{\text{Perf}(A)}(\text{Perf}(B)^\op, \text{Coh}(A))
    \]
    induced by the $\text{Perf}(A)$-linear Yoneda embedding.
\end{corollary}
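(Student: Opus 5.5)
The corollary will follow from \autoref{theorem: functors out of compacts} applied to the functor $f^\L=B\otimes_A-:\Mod_A\to\Mod_B$. I will first fix the tor-finite structures. On $\Mod_A$ and $\Mod_B$ I take the standard $t$-structures, generated by $S_A=\{A\}$ and $S_B=\{B\}$. Both are accessible, compatible with filtered colimits, and left and right complete. Compact objects, i.e.\ perfect modules, have finite tor-amplitude, so both categories are tor-finite.

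Next I check that $f^\L$ is a morphism of tor-finite categories. The base change $B\otimes_A-$ is symmetric monoidal and colimit preserving, so it is rigid. It is right $t$-exact because $B$ is connective. Its right adjoint is the forgetful functor, which is $t$-exact. With these conventions, pseudo-coherent modules are the almost perfect (almost compact) modules, and coherent modules are the bounded ones among them.

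The key step is quasi-properness: the right adjoint must preserve pseudo-coherent objects. The hypothesis that $B$ is almost compact as an $A$-module should give this. Let $M$ be almost perfect over $B$. For each $n$, the truncation $\tau_{\le n}M$ is a retract, in the coconnective part, of a truncated finite cell complex built from shifts of $B$. The forgetful functor commutes with truncations and is $t$-exact. Since $B$ is almost perfect over $A$, each such truncated cell complex restricts to something almost perfect, and hence so does $\tau_{\le n}M$. Equivalently, I can use Lurie's characterization: $M$ is almost perfect over $A$ if and only if $\pi_*$-finiteness holds stagewise, and this transfers along $A\to B$ when $B$ is almost perfect over $A$. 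This is standard, cf.\ HA 7.2.4.

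Finally I verify the two remaining hypotheses. I take $S_\text{fin}=\{B\}=S_B$, which is trivially a finite generating set. The evaluation $\text{ev}_B=\Map(B,-)=$ forget detects coconnective objects, since $\pi_*$ is computed by $B$ itself. Then \autoref{theorem: functors out of compacts} gives both equivalences, with $\PCoh{B}=\text{PCoh}(B)$, $\Coh{B}=\text{Coh}(B)$ and $\mathcal{B}^\omega=\text{Perf}(B)$.

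I expect the quasi-properness step to be the main obstacle, and in particular matching the paper's abstract definition of pseudo-coherence with Lurie's almost perfectness. The rest is bookkeeping.
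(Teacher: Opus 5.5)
Your proposal is correct and follows the paper's route: apply \autoref{theorem: functors out of compacts} to $B\otimes_A-$ with $S_\text{fin}=S_B=\{B\}$, identify $\text{ev}_B$ with the $t$-exact conservative restriction functor, and deduce quasi-properness from $B$ being almost perfect over $A$. Your truncation argument is a fine substitute for the paper's geometric-realization argument (cf.\ \autoref{lemma: equivalent condition for quasi-proper}), with two corrections: the point is that $\tau_{\le n}\text{res}(P)$ is \emph{compact} in $(\Mod_A)_{\ge 0}\cap(\Mod_A)_{\le n}$ because $\text{res}(P)$ is almost perfect, not that the truncations themselves are almost perfect (they need not be); and the aside about a $\pi_*$-characterization should be dropped, since that criterion requires $A$ to be coherent.
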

The second application is to schemes, and it provides the following generalization of \cite[Example 0.7]{neeman2025triangulatedcategoriessinglecompact}. 
To state it, let us denote by $\text{Perf}(-)\subseteq D^b_{\text{coh}}(-)\subseteq D^-_{\text{coh}}(-)$ the stable categories of perfect, bounded with coherent cohomology and bounded above with coherent cohomology complexes, respectively. Then:
\begin{corollary}
   Let $f:X\to Y$ be a proper map and assume that $Y$ is noetherian. 
   Then there are equivalences of categories
    \[
    D^-_{\text{coh}}(X)\to \Fun^\ex_{\text{Perf}(Y)}(\text{Perf}(X)^\op,D^-_{\text{coh}}(Y)), \qquad D^b_{\text{coh}}(X)\to \Fun^\ex_{\text{Perf}(Y)}(\text{Perf}(X)^\op,D^b_{\text{coh}}(Y)).
    \]
    induced by the  $\text{Perf}(Y)$-Yoneda embedding.  
    If furthermore $X$ admits a quasi-perfect regular alteration, then there is an equivalence of categories
    \[
    \text{Perf}(X)^\op\to\Fun_{\text{Perf}(Y)}^\ex(D^b_{\text{coh}}(X),D^b_{\text{coh}}(Y))
    \]
    induced by the  dual $\text{Perf}(Y)$-Yoneda embedding.
\end{corollary}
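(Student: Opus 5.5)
The plan is to deduce the corollary from the first part of the preceding scheme corollary together with \autoref{theorem: functors out of coherents}, applied to $f^\L = f^*:\QCoh(Y)\to\QCoh(X)$. We use the standard $t$-structures, generated by a finite set of compact connective objects (for instance an ample family, or the finite set produced by Thomason's generator on a quasi-compact quasi-separated scheme).

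\textbf{Step 1: quasi-properness.} Since $Y$ is noetherian and $f$ is proper, $X$ is noetherian. On noetherian schemes pseudo-coherent objects are exactly $D^-_{\text{coh}}$ and coherent objects are exactly $D^b_{\text{coh}}$. The functor $f^*$ is right $t$-exact. Its right adjoint $f_*$ has finite cohomological dimension, so it is $t$-exact up to a shift. Compact objects are perfect complexes and therefore have finite tor-dimension. Finally, $f_*$ preserves $D^-_{\text{coh}}$ by the coherence theorem for proper maps. Hence $f^*$ is quasi-proper in the sense adopted from Lipman--Neeman, and both $\QCoh(X)$ and $\QCoh(Y)$ are tor-finite.

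\textbf{Step 2: the descent morphism.} Let $p:\tilde X\to X$ be a quasi-perfect regular alteration. We take $h^\L = p^*:\QCoh(X)\to\QCoh(\tilde X)$. This target is regular: $\tilde X$ is regular noetherian, so $\text{Perf}(\tilde X)=D^b_{\text{coh}}(\tilde X)$, which is the condition $\mathcal{R}^\omega=\Coh{R}$. We must then verify that $p^*$ is of universal descent, i.e.\ that the identity of $\QCoh(X)$ is a finite-stage retract of an object built from $p_*p^*$.

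For this I would use the following facts.
\begin{itemize}
\item Quasi-perfectness gives that $p_*$ preserves compacts, so the projection formula yields $p_*p^*(-)\simeq p_*\mathcal{O}_{\tilde X}\otimes(-)$.
\item Because $p$ is proper, surjective and generically finite, the map $\mathcal{O}_X\to p_*\mathcal{O}_{\tilde X}$ is descendable in the sense of Bhatt--Scholze and Mathew.
\item Descendability is exactly the finite-stage retract statement needed, assuming this matches the paper's definition of universal descent (to be checked against the definition in the paper).
\end{itemize}

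\textbf{Step 3: detection and conclusion.} We need a finite generating set $S_{\text{fin}}\subseteq S_{\mathcal{R}}$ such that the evaluations $(\text{ev}_r)_{r\in S_\mathcal{R}}$ detect coconnective objects on $\QCoh(Y)$. Take $S_{\text{fin}}$ to be a finite family of perfect connective generators of $\QCoh(\tilde X)$, for instance twists $\mathcal{O}(-n)$ of an ample line bundle, or Thomason's generator and its shifts. Detecting coconnectivity after pushing along $\tilde X\to Y$ then amounts to a statement about $\mathrm{Hom}$ out of connective compact generators into $f_*p_*(-)$-type objects. This should follow because the generators are connective and generate the $t$-structure. \autoref{theorem: functors out of coherents} then gives the equivalence $\text{Perf}(X)^\op\simeq\Fun^\ex_{\text{Perf}(Y)}(D^b_{\text{coh}}(X),D^b_{\text{coh}}(Y))$.

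\textbf{Main obstacle.} I expect the hardest part to be Step 2 in its full form. One has to show that an alteration which is only generically finite still gives a descendable, "universal" descent in the paper's sense, rather than merely conservativity. The first approach I would try is to reduce to the case where $p$ is finite flat of constant degree plus a blow-up, using the trace. Failing that, I would invoke Bhatt's result that proper surjective maps of noetherian schemes are descendable for $D_{\text{qc}}$ (derived $h$-descent). Quasi-perfectness should then upgrade this to a statement about compact objects.
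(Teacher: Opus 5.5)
Your overall route is the paper's. Quasi-properness of $f^*$ feeds the first part through the preceding corollary; the paper cites Kiehl's finiteness theorem where you cite the coherence theorem for proper maps, and over a noetherian $Y$ these give the same thing. The second part is \autoref{theorem: functors out of coh proof} applied to $h^\L=p^*$. There the retract condition comes from Bhatt--Scholze's result that $h$-covers (in particular proper surjective maps) of noetherian schemes are descendable, which is exactly your fallback. The detour through finite flat maps, blow-ups and traces is unnecessary, and generic finiteness plays no role.

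The step whose justification fails as written is Step 3, together with the opening claim it rests on. The standard $t$-structure on $\QCoh(\tilde X)$ is \emph{not} generated by a finite family of connective compact objects. In the paper's setup it is generated by the whole tensor-closed collection $\Perf(\tilde X)_{\geq0}$. A finite $S_{\text{fin}}$, for instance a connective shift of a single generator $G$, generates only a $t$-structure in the same equivalence class. Already on $\mathbb{P}^1$ with $G=\mathcal{O}\oplus\mathcal{O}(1)$, the sheaf $\mathcal{O}(-1)$ lies in the standard heart but not in the $G$-generated one. So "the generators are connective and generate the $t$-structure" does not yield detection.

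What is true is the following. If $(fp)_*(G^\vee\otimes M)\in\QCoh(Y)_{\leq n}$, then, since $\Gamma(Y,-)$ is left $t$-exact, the mapping spectrum $\Hom(G,M)$ lies in $\Sp_{\leq n}$. Hence $M$ is coconnective for the $G$-generated $t$-structure. You then still need the nontrivial comparison that this $t$-structure is equivalent to the standard one. The paper packages exactly this as \autoref{lemma: yoneda for schemes}, and you should invoke it for the composite $f\circ p:\tilde X\to Y$, just as the preceding corollary invokes it for $f$.

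Separately, in the paper's definition universal descent also requires the Beck--Chevalley condition and that $p^*$ be left $t$-exact up to a shift. Both come from quasi-perfectness of $p$: $p_*$ preserves compacts, and quasi-perfect maps have finite tor-dimension by Lipman--Neeman. State these explicitly rather than reducing universal descent to the retract condition alone.
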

A more general result may be found in \autoref{section: examples}. 

We also prove a representability result for quasi-proper morphisms between arbitrary quasi-compact quasi-separeted schemes.
Still in the realm of algebraic geometry, we also prove a representability result for spectral algebraic spaces.
\begin{corollary}
    Let $f: X\to Y$ be a morphism of finite cohomological dimension of quasi-compact quasi-separated spectral algebraic spaces which is proper  and locally almost of finite presentation. 
    Assume also that $Y$ is noetherian. 
    Then there are equivalences of categories
    \[
    \text{PCoh}(X)\to \Fun^\ex_{\text{Perf}(Y)}(\text{Perf}(X)^\op,\text{PCoh}(Y)), \qquad \text{Coh}(X)\to \Fun^\ex_{\text{Perf}(Y)}(\text{Perf}(X)^\op,\text{Coh}(Y)).
    \]
    induced by the $\text{Perf}(Y)$-Yoneda embedding. 
\end{corollary}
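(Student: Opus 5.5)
The plan is to deduce the corollary from \autoref{theorem: functors out of compacts}, applied to the pushforward of the map of spectral algebraic spaces.

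\textbf{Setting up the functor.} Put $\mathcal{A}=\QCoh(Y)$ and $\mathcal{B}=\QCoh(X)$ with their standard $t$-structures, and take $f^\L=f^*$. Since $X$ and $Y$ are quasi-compact quasi-separated spectral algebraic spaces, both categories are rigidly-compactly generated, with compact objects exactly $\text{Perf}(-)$. The standard $t$-structure is generated by a set of compact connective objects. On a qcqs spectral algebraic space one can take the connective perfect complexes; this should follow from the existence of a compact generator via a scallop decomposition, but it needs checking. The $t$-structure is compatible with filtered colimits and is left and right complete. Perfect complexes have finite tor-amplitude. So both $\mathcal{A}$ and $\mathcal{B}$ are tor-finite.

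\textbf{$f^*$ is a morphism of tor-finite categories.} $f^*$ is right $t$-exact. Its right adjoint $f_*$ is left $t$-exact. Because $f$ has finite cohomological dimension, $f_*$ is also right $t$-exact up to a shift.

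\textbf{Quasi-properness.} We must show that $f_*$ carries $\text{PCoh}(X)$ into $\text{PCoh}(Y)$. This is Lurie's proper pushforward theorem (SAG, Theorem 5.6.0.2): for $f$ proper and locally almost of finite presentation with $Y$ noetherian, $f_*$ preserves almost perfect objects. Almost perfect and pseudo-coherent agree in these $t$-structures, using that $Y$ is noetherian and the generators are compact.

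\textbf{Remaining hypotheses.} These are the step I expect to be the main obstacle.
\begin{itemize}
\item \emph{A finite generating set $S_\text{fin}\subseteq S_\mathcal{B}$.} A qcqs spectral algebraic space admits a single compact generator $G$. Shifting it, we may assume $G$ is connective, and we set $S_\text{fin}=\{G\}$ or a finite family of such objects. The difficulty is verifying the paper's notion of \emph{generating set}, i.e.\ that the projective classes agree. Here I would try to show that every connective perfect complex lies in the finite-stage envelope of $G$, using finite tor-dimension of $G$ together with Thomason–Neeman style bounds on building objects from a generator.
\item \emph{$(\text{ev}_s)_{s\in S_\mathcal{B}}$ detects coconnective objects.} With $\text{ev}_s=\Hom(s,-)$ valued in $\mathcal{A}$, if $\Hom(s,M)$ is coconnective for all connective perfect $s$, then $M$ is coconnective. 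I would argue this affine-locally via flat base change, since connective perfect complexes generate the connective part.
\end{itemize}

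\textbf{Conclusion.} With all hypotheses checked, \autoref{theorem: functors out of compacts} gives the two equivalences. Coherent objects in the $t$-structure are the bounded almost perfect complexes $\text{Coh}(-)$.
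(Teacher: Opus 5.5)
Your route is the paper's: apply \autoref{theorem: functors out of compacts} to $f^*$, get quasi-properness from Lurie's direct image theorem, and take $S_\text{fin}=\{G\}$ for a connective shift of a compact generator. The finite generating set is not actually an obstacle. All that is needed is that $G$ be a connective compact generator. Its thick closure is then $\Perf(X)$ (\autoref{remark: thick generation}), and the projective classes agree automatically (\autoref{remark: projective classes and inclusion}, \autoref{remark: crucial}). No bound on the number of cones is required. A uniform bound would in any case be unavailable for singular $X$, where $\Perf(X)$ has no strong generator.

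The genuine gap is in the coconnectivity step. The statement you propose to prove is: if $f_*\IHom(s,M)$ is coconnective for every connective perfect $s$, then $M$ is coconnective. This holds for purely formal reasons in any tor-finite situation, because $\pi_0\Hom_Y(\susp^n\mathcal{O}_Y,f_*\IHom(s,M))\cong\pi_0\Hom_X(\susp^n s,M)$ and connective perfects generate $\QCoh(X)_{\geq0}$. So it cannot be what drives the coherent half, and indeed it does not.

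In the coherent half, the pseudo-coherent equivalence gives $F\simeq\QCoh(X)(-,M)$ with $M\in\text{PCoh}(X)$. You then only know $F(s)\in\QCoh(Y)_{\leq k_s}$ with $k_s$ depending on $s$, which says nothing about $M$ being bounded above. What is needed is a \emph{uniform} statement: for a single compact generator $G$, $f_*\IHom(G,M)$ bounded above implies $M$ bounded above.

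After reducing to $Y$ affine (boundedness is local on $Y$, and $f_*\IHom(G,-)$ commutes with flat base change), this says that $\pi_0\Hom_X(\susp^n G,M)=0$ for $n\gg0$ forces $M$ to be eventually coconnective. It would follow, for example, from the $t$-structure generated by $G$ being within a finite shift of the standard one. This is real geometric input, not a consequence of generation, and the paper takes it from \cite[Proposition 7.0.2 and Remark 7.0.3]{ben2017integral}. As written, your argument establishes only the $\text{PCoh}$ equivalence.
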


\subsubsection*{Related work}
The present work was inspired by an informal question that appeared in \cite[Remark 0.9]{neeman2025triangulatedcategoriessinglecompact}.
It suggests an interesting parallelism between two representability results proved by Ben-Zvi, Nadler and Preygel in \cite{ben2017integral}, namely Theorem 1.1.3 (and more precisely Remark 1.1.6.(ii)) and Theorem 1.2.4, and two representability results proved by Neeman in the previous mentioned paper.  
In particular, \autoref{theorem: functors out of compacts} may be seen as a generalization of \cite[Theorem 0.4.(i)]{neeman2025triangulatedcategoriessinglecompact} and \autoref{theorem: functors out of coherents} as a partial (and actually incomplete) generalization of \cite[Theorem 0.4.(ii)]{neeman2025triangulatedcategoriessinglecompact}; both of our results appear then as a first step in understanding abstract relative finitary kernel theorems.

\subsubsection*{Linear overview} 
In the following we give an overview over the content.
In \autoref{section: rigid-categories} we recollect some  background on commutative algebra in $\Pr^\L_{\st}$ and  the construction of the linear Yoneda embedding as well as providing  a $2$-categorical formulation of Grothendieck-Neeman duality.  
In \autoref{section: Pseudo-Coherent and Coherent Objects} we introduce pseudo-coherent and coherent objects in right-complete presentable $t$-structures, study their formal properties and provide practical criteria to compute these subcategories.
We also compare them with Neeman pseudo-compact objects.
In \autoref{section: Neeman Dualities} we prove the main duality statements: we introduce tor-finite categories and quasi-proper
functors, prove \autoref{theorem: functors out of compacts}, develop universal descent, and prove \autoref{theorem: functors out of coherents}.
Finally, in \autoref{section: examples} we discuss applications to module categories, schemes and spectral algebraic spaces. 

\subsection{Acknowledgements}
This paper constitutes the author's master thesis. 
I \emph{thank} my advisors Denis-Charles Cisinski and Amnon Neeman for many helpful discussions and suggestions. 
I also thank Michele Riva for having read an early draft of these pages. 

\subsection{Notation and terminology}\label{subsection: conventions}
We will refer to \enquote{$\infty$-categories} simply as \enquote{categories}.
If we want to emphasise that a category has discrete mapping space, we will call it a \enquote{$1$-category}.

\begin{notation}
    Recall that a category $\mathcal{C}$ is called \emph{stable} if it is pointed, admits finite limits and finite colimits, and a commutative square is a pullback if and only if it is a pushout. 
    Equivalently, $\mathcal{C}$ is pointed and every morphism admits a fibre and a cofibre, which canonically agree after a shift.  
    Recall that a functor is \emph{exact} if it  preserves finite limits and colimits. 
\end{notation}
\begin{notation}
    Let $\mathcal{C}$ be a stable category. 
    A $t$-structure $\ts{C}$ on $\mathcal{C}$ will be graded homologically\footnote{Recall that to switch to the  cohomological convention it suffices to define $\mathcal{C}^{\geq n} = \mathcal{C}_{\leq -n}$  for every $n\in\Z$.}. 
    That is, we imagine $\mathcal{C}$ as linearized in the following way  $\dots \to\bullet_{n+1}\to\bullet_n\to\bullet_{n-1}\to\dots$. 
    We will think of objects in $\mathcal{C}_{\geq n}$ as existing  to the left of $n$, whereas  objects in $\mathcal{C}_{\leq n}$ will exist on the right. 
    In particular, we will call these objects \emph{$n$-connective} and \emph{$n$-coconnective}.
    The inclusions of the connective and coconnective objects admit a left and right adjoint respectively, that is 
    \[
    \tau_{\leq n}:\mathcal{C}\rightleftarrows\mathcal{C}_{\leq n}: i_{\leq n},
    \qquad 
    i_{\geq n}:\mathcal{C}_{\geq n}\rightleftarrows\mathcal{C}: \tau_{\geq n}.
    \]
    Let $\mathcal{C}^\heartsuit= \mathcal{C}_{\geq0}\cap\mathcal{C}_{\leq0}$ denote the \emph{heart of the $t$-structure}.
    We will denote by $\pi_n:\mathcal{C}\to\mathcal{C}^\heartsuit$ the functor $\tau_{\leq 0}\tau_{\geq0}[-n]$ for any $n\in\mb{Z}$, and refer to $\pi_n$ as the \emph{$n$-th homotopy group of the $t$-structure}. 
    We will also add a superscript (for example, $\pi_n^\mathcal{C}$) if more categories with $t$-structure are considered. 
    Finally, we will denote by 
    \[
    \mathcal{C}^-=\bigcup_{n>0}\mathcal{C}_{\geq -n},\qquad
    \mathcal{C}^+=\bigcup_{n>0}\mathcal{C}_{\leq n}, \qquad \mathcal{C}^b=\mathcal{C}^-\cap\mathcal{C}^+
    \]
    the full subcategories of $\mathcal{C}$ spanned by the \emph{eventually connective}, \emph{eventually coconnective} and \emph{bounded objects}.
\end{notation}
More notation will come in the following section. 

\section{Rigid geometry}\label{section: rigid-categories}

\subsection{Remarks on commutative algebra in $\Pr^\L$}
We begin with a list of observations. 
Most of the material appears in \cite[Section 4.8]{Lurie-HA}, in \cite[Section 4]{ben2010integral}, in \cite[Chapter I.1]{Gaitsgory2017ASI} and in \cite[Section 4]{hoyois2017highertracesnoncommutativemotives}.
\begin{remark}
    Recall that a category $\mathcal{C}$ is presentable if it is accessible (that is, there exists some  small regular cardinal $\kappa$ such that $\mathcal{C}\simeq\Ind_\kappa(\mathcal{D})$ is the ind-completion of  a small category $\mathcal{D}$) and admits small colimits.
    Let $\Pr^\text{L}$ denote the category of presentable categories and colimit-preserving functors (or equivalently, left adjoints). 
    Lurie constructed  a  symmetric monoidal structure on $\Pr^\text{L}$ characterized by the following universal property: given two presentable categories $\mathcal{C}$ and $\mathcal{D}$, their Lurie tensor product is a presentable category $\mathcal{C}\otimes\mathcal{D}$ with a functor $\mathcal{C}\times\mathcal{D}\to\mathcal{C}\otimes\mathcal{D}$ such that for every presentable category $\mathcal{E}$, precomposition with it induces an equivalence 
    \[
    \Fun^\text{L}(\mathcal{C}\otimes\mathcal{D},\mathcal{E})\to \Fun^{\text{L},\text{L}}(\mathcal{C}\times\mathcal{D},\mathcal{E}).
    \]
    Here the superscript $\L,\L$ denotes the full subcategory spanned by those functors $\mathcal{C}\times\mathcal{D}\to\mathcal{E}$ which preserve colimits separately in each variable.
    It follows from this universal property that the symmetric monoidal structure is also closed. To be precise,  for every pair of presentable categories $\mathcal{C}$ and $\mathcal{D}$, there exists a natural equivalence
    \[
    \Fun^\text{L}(\mathcal{C}\otimes\mathcal{D},\mathcal{E})\simeq \Fun^{\text{L},\text{L}}(\mathcal{C}\times\mathcal{D},\mathcal{E})\simeq \Fun^\text{L}(\mathcal{C},\Fun^\text{L}(\mathcal{D},\mathcal{E}))
    \]
    in $\mathcal{E}\in\Pr^\text{L}$,  so that $\Fun^\text{L}(-,-)$ exhibits the internal hom. 
    Since $\Fun^\text{L}(\Spc, \mathcal{C})\simeq\mathcal{C}$, it follows that the category of spaces $\Spc$ is the neutral element for the Lurie tensor product.
\end{remark}
We will need a stable version of this observation.
\begin{remark}
    Let $\Sp$ denote the category of spectra, defined as the colimit $\Sp=\colim (\Spc_*\xrightarrow{\susp}\Spc_*\xrightarrow{\susp}\dots)$ in $\Pr^\text{L}$, or dually, as the limit $\Sp\simeq\lim (\dots\xrightarrow{\Omega}\Spc_*\xrightarrow{\Omega}\Spc_*)$ in $\Pr^\text{R}$, the category of presentable categories and right adjoints. 
    Here $\Spc_*$ denotes the category of pointed spaces, whereas $\susp$ and $\Omega$ denote the suspension and loop functors, respectively.  
    Adjoining a point to a space and then \enquote{infinitely suspending it} produces a functor $\susp^\infty_+:\Spc\to\Sp$. 
    It follows that a presentable category $\mathcal{C}$ is stable if and only if the canonical map $\id_\mathcal{C}\otimes\susp^\infty_+:\mathcal{C}\to \mathcal{C}\otimes\Sp$ is an equivalence. 
    In particular, since $\Sp$ is a stable category,  the inverse of the  equivalence $\Sp\to\Sp\otimes\Sp$  makes $\Sp$ into a commutative algebra of $\Pr^\text{L}$. 
    It follows that the category of stable presentable categories and left adjoints can be realized as $ \Pr^\text{L}_\st \simeq \Mod_\Sp(\Pr^\text{L})$ and that taking spectrum objects $-\otimes\Sp:\Pr^\text{L}\to\Pr^\text{L}_\st$ defines a symmetric monoidal functor. 
\end{remark}

\begin{remark}
    Let $\Cat^\st$ denote the category of small stable categories and exact functors.
    Let $\Cat^\perf$ denote the full subcategory of $\Cat^\st$ spanned by the idempotent-complete stable categories. 
    Then the ind-completion  functor $\Ind: \Cat^\perf \to\Pr^\text{L}_\st$ induces an equivalence between $\Cat^\perf$ and the subcategory $\Pr^{\text{L},\omega}_\st$ of $\Pr^\text{L}_\st$ spanned by the compactly generated stable categories and by the left adjoint functors preserving compact objects. 
    The inverse is given by taking compact objects $(-)^\omega:\Pr^{\text{L},\omega}_\st\to\Cat^\perf$. 
    Now the symmetric monoidal structure on $\Pr^\text{L}_\st$ restricts to a symmetric monoidal structure on $\Pr^{\text{L},\omega}_\st$. 
    In particular, \cite[Proposition 4.4]{ben2010integral} implies that the above equivalence can enhanced to a symmetric monoidal equivalence. 
    More precisely, the tensor product of  $\mathcal{A},\mathcal{B}\in\Cat^\perf$ is given by a small idempotent-complete stable category $\mathcal{A}\otimes\mathcal{B}$ which is the recipient of the universal functor $\mathcal{A}\times\mathcal{B}\to\mathcal{C}$ which is exact in each variable. 
    In other words, there exists an equivalence
    \[
    \Fun^\ex(\mathcal{A}\otimes \otimes \mathcal{B}, \mathcal{C})\simeq \Fun^\ex(\mathcal{A}, \Fun^\ex(\mathcal{B}, \mathcal{C}))
    \]
    of categories for all $\mathcal{A},\mathcal{B},\mathcal{C}\in\Cat^\perf$. 
\end{remark}
We also remind the relevant features of commutative algebra objects and modules.
\begin{remark}
    It follows by construction that a commutative algebra object in $\CAlg(\Cat^\perf)$ can be identified with a small idempotent-complete stable category equipped with a symmetric monoidal structure whose tensor product is exact in each variable.
    Let $\mathcal{A}\in\CAlg(\Cat^\perf)$ be a commutative algebra object.
    Let  $\Mod_\mathcal{A}(\Cat^\perf)$ be the category of modules over $\mathcal{A}$ in $\Cat^\perf$ and regard it as a symmetric monoidal category with the induced relative tensor product $\otimes_\mathcal{A}$. This symmetric monoidal structure is also closed, and the internal hom may be identified with the pullback
    \[\begin{tikzcd}[cramped,sep=scriptsize]
	{\Fun^\ex_\mathcal{A}(-,-)} & {\Fun^\ex(-,-)} \\
	{\Fun_\mathcal{A}(-,-)} & {\Fun(-,-)}
	\arrow[from=1-1, to=1-2]
	\arrow[hook, from=1-1, to=2-1]
	\arrow[hook, from=1-2, to=2-2]
	\arrow[from=2-1, to=2-2]
    \end{tikzcd}\]
    where $\Fun_\mathcal{A}(-,-)$ denotes the category of $\mathcal{A}$-linear functors. Objects of $\Fun^\ex_\mathcal{A}(-,-)$ will be called \emph{$\mathcal{A}$-linear functors}.
    Similarly, let $\Mod_{\Ind(\mathcal{A})}(\Pr^\text{L}_\st)$ be the category of modules over $\Ind(\mathcal{A})$ in $\Pr^\text{L}_\st$, and equip it with the relative tensor product $\otimes_{\Ind(\mathcal{A})}$. Again, this symmetric monoidal structure is closed, with internal hom $\Fun^\text{L}_{\Ind(\mathcal{A})}(-,-)$ defined similarly. Objects of this category will be called $\Ind(\mathcal{A})$\emph{-linear functors}.
\end{remark} 
Since the ind-completion is symmetric monoidal, the definitions of linear functors immediately imply the following. 
\begin{lemma}\label{lemma: from L to ex}
    Let $\mathcal{A}\in\CAlg(\Cat^\perf)$ be a commutative algebra object and let $\mathcal{M}\in\Mod_\mathcal{A}(\Cat^\perf)$ be an $\mathcal{A}$-module.
    Then precomposition with the Yoneda embedding $\yo:\mathcal{M}\into\Ind(\mathcal{M})$ induces an equivalence 
    \[
    \yo^*:\Fun^\text{L}_{\Ind(\mathcal{A})} (\Ind(\mathcal{M}), \mathcal{N})\to\Fun^\ex_\mathcal{A} (\mathcal{M}, \mathcal{N})
    \]
    of $\mathcal{A}$-modules for every $\mathcal{N}\in\Mod_{\Ind(\mathcal{A})}(\Pr^\text{L}_\st)$.
\end{lemma}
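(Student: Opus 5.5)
The plan is to reduce to the non-linear universal property of ind-completion and then add the linear structure by comparing module structures on both sides. First, I would recall the unenriched statement. For a small idempotent-complete stable $\mathcal{M}$ and any presentable stable $\mathcal{N}$, restriction along $\yo$ induces an equivalence
\[
\Fun^\L(\Ind(\mathcal{M}),\mathcal{N})\to\Fun^\ex(\mathcal{M},\mathcal{N}).
\]
This holds because $\Ind(\mathcal{M})$ is the free cocompletion under filtered colimits, and exactness of the restriction corresponds to preservation of finite colimits. The inverse is left Kan extension along $\yo$.

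Next, I would upgrade this to module categories. The symmetric monoidal equivalence $\Ind:\Cat^\perf\simeq\Pr^{\L,\omega}_\st$ is compatible with the inclusion $\Pr^{\L,\omega}_\st\subseteq\Pr^\L_\st$. Consider the forgetful functor $\Pr^\L_\st\to\Cat^\st$ that regards a presentable stable category as a large stable category. Then $\Ind$ is left adjoint to the right adjoint $\mathcal{N}\mapsto\mathcal{N}$, in the appropriate size-enlarged sense, with $\yo$ as unit. This adjunction is lax symmetric monoidal on the right and strong monoidal on the left: $\Ind(\mathcal{A}\otimes\mathcal{M})\simeq\Ind(\mathcal{A})\otimes\Ind(\mathcal{M})$ by the cited \cite[Proposition 4.4]{ben2010integral}.

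A monoidal adjunction of this kind induces an adjunction on module categories. On one side, $\Ind:\Mod_\mathcal{A}(\Cat^\perf)\to\Mod_{\Ind(\mathcal{A})}(\Pr^\L_\st)$. On the other, the restriction that views an $\Ind(\mathcal{A})$-module $\mathcal{N}$ as an $\mathcal{A}$-module via $\mathcal{A}\subseteq\Ind(\mathcal{A})$. On mapping spaces this gives the equivalence $\Map_{\Ind(\mathcal{A})}(\Ind\mathcal{M},\mathcal{N})\simeq\Map_\mathcal{A}(\mathcal{M},\mathcal{N})$ induced by $\yo^*$.

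To pass from mapping spaces to the functor categories $\Fun^\L_{\Ind(\mathcal{A})}$ and $\Fun^\ex_\mathcal{A}$, I would use the closedness recalled in the last remark. Mapping out of $\mathcal{K}\otimes-$ for test objects $\mathcal{K}$ recovers the internal homs: take $\mathcal{K}$ to be $\mathcal{A}\otimes\mathcal{K}_0$ with $\mathcal{K}_0$ small stable, or $\Ind(\mathcal{A})\otimes\Ind(\mathcal{K}_0)$. Since $\Ind$ is strong monoidal, the equivalence on mapping spaces is natural in such $\mathcal{K}$. The Yoneda lemma then gives the equivalence of internal hom objects. The same naturality, applied with the $\mathcal{A}$-action on the target, shows that $\yo^*$ is $\mathcal{A}$-linear.

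Alternatively, one can verify directly that the linear structure on $\Fun^\ex_\mathcal{A}(\mathcal{M},\mathcal{N})$ is a limit of $\Fun^\ex(\mathcal{A}^{\otimes n}\otimes\mathcal{M},\mathcal{N})$ over the bar/cobar diagram. Similarly, $\Fun^\L_{\Ind(\mathcal{A})}$ is the analogous limit of $\Fun^\L(\Ind(\mathcal{A})^{\otimes n}\otimes\Ind(\mathcal{M}),\mathcal{N})$. The first step applies levelwise since $\Ind(\mathcal{A}^{\otimes n}\otimes\mathcal{M})\simeq\Ind(\mathcal{A})^{\otimes n}\otimes\Ind(\mathcal{M})$, and limits preserve equivalences.

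The main obstacle is the size and coherence bookkeeping. $\mathcal{N}$ is large, so $\Cat^\perf$ must be replaced by a large-category analogue, and the module structure on $\mathcal{N}$ must restrict coherently to $\mathcal{A}$. I would handle this via the levelwise cobar description. Then only the unenriched equivalence and the monoidality of $\Ind$ are needed, and both are available from the cited references.
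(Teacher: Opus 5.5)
Your proposal is correct and follows the paper's route. The paper's entire justification is the sentence that the lemma follows immediately from the symmetric monoidality of $\Ind$ and the definition of $\Fun^{\ex}_{\mathcal{A}}$ as the pullback of $\Fun_{\mathcal{A}}$ and $\Fun^{\ex}$ over $\Fun$. Your levelwise comparison on the cobar diagrams, combining the unenriched equivalence $\Fun^{\L}(\Ind(\mathcal{X}),\mathcal{N})\simeq\Fun^{\ex}(\mathcal{X},\mathcal{N})$ with $\Ind(\mathcal{A}^{\otimes n}\otimes\mathcal{M})\simeq\Ind(\mathcal{A})^{\otimes n}\otimes\Ind(\mathcal{M})$, is exactly the content the paper leaves implicit.
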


\subsection{The linear Yoneda embedding} Following \cite[Section 4.2]{hoyois2017highertracesnoncommutativemotives} we now recall the construction of the enriched Yoneda embedding. 
\begin{construction}\label{construction: linear Yoneda}
    Let $\mathcal{A}\in\CAlg(\Cat^\perf)$ be a commutative algebra  and let $\mathcal{M}\in\Mod_\mathcal{A}(\Cat^\perf)$ be an $\mathcal{A}$-module. Let $m\in\mathcal{M}$ and consider the action functor $-\otimes m:\mathcal{A}\to\mathcal{M}$.
     By definition this functor preserves finite colimits and hence it admits an ind-right adjoint $\hom_\mathcal{M}(m,-):\mathcal{M}\to\Ind(\mathcal{A})$. 
    The functoriality of these cosntruction produces a functor
    \[
    \yo_\mathcal{A}: \mathcal{M}\to\Fun^\ex(\mathcal{M}^\op,\Ind(\mathcal{A}))
    \]
    which is given on objects by $m\mapsto\hom_\mathcal{M}(-,m)$. 
\end{construction}
In general the above functor is not fully-faithful; dualizability is exactly what is needed.
\begin{definition}
    A commutative algebra  in $\Cat^\perf$  is called \emph{rigid} if every object in it is dualizable. 
    We let  $\CAlg^\rig(\Cat^\perf)$ denote the full subcategory of $\CAlg(\Cat^\perf)$ spanned by the rigid commutative algebras.
\end{definition}

We recall the following folkore result. 
\begin{lemma}\label{lemma: action restricts}
    Let $\mathcal{A}\in\CAlg^\rig(\Cat^\perf)$ be a rigid commutative algebra.
    \begin{enumerate*}
        \item Taking duals defines a symmetric monoidal equivalence $(-)^\vee:\mathcal{A}\to\mathcal{A}^\op$.
        \item Let $\mathcal{M}\in\Mod_{\Ind(\mathcal{A})}(\Pr^{\L}_\st)$ be a presentable $\Ind(\mathcal{A})$-module. Then the action of $\mathcal{A}$ on $\mathcal{M}$ restricts to the full subcategory $\mathcal{M}^\omega\subseteq\mathcal{M}$ of compact objects.
    \end{enumerate*}
\end{lemma}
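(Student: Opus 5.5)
For part (1), I will use the standard theory of duals in a symmetric monoidal category. Since every object of $\mathcal{A}$ is dualizable, each $a\in\mathcal{A}$ has a dual $a^\vee$ with evaluation $a^\vee\otimes a\to\mathbb{1}$ and coevaluation $\mathbb{1}\to a\otimes a^\vee$. These satisfy the triangle identities, which determine the pair $(a^\vee,\text{ev},\text{coev})$ up to a contractible space of choices. The cleanest way to make $a\mapsto a^\vee$ functorial is to regard it as the representing object of the functor $\Map_\mathcal{A}(-\otimes a,\mathbb{1})$. Dualizability of $a$ identifies this functor with $\Map_\mathcal{A}(-,a^\vee)$, so the internal hom $\underline{\hom}(-,\mathbb{1})$ restricts to a functor $\mathcal{A}^\op\to\mathcal{A}$. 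This functor is $(-)^\vee$, or more precisely its opposite $\mathcal{A}\to\mathcal{A}^\op$. The double-dual map $a\to a^{\vee\vee}$ is an equivalence for dualizable objects, so $(-)^\vee$ is an equivalence with inverse $(-)^\vee$ itself. Exactness follows because $(-)^\vee$ is a right adjoint to its own opposite, and hence sends colimits to limits. For symmetric monoidality, note that $a^\vee\otimes b^\vee$ together with the tensored evaluation and coevaluation exhibits a dual of $a\otimes b$, and likewise $\mathbb{1}^\vee\simeq\mathbb{1}$. For the coherent $\infty$-categorical statement I will cite the general fact (e.g.\ \cite[Section 4.6.1]{Lurie-HA}) that on the full subcategory of dualizable objects, duality refines to a symmetric monoidal equivalence with the opposite.

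For part (2), fix $a\in\mathcal{A}$ and $m\in\mathcal{M}^\omega$. I must show that $a\otimes m$ is compact in $\mathcal{M}$, i.e.\ that $\Map_\mathcal{M}(a\otimes m,-)$ commutes with filtered colimits. The key step is to show that the action functor $a\otimes-:\mathcal{M}\to\mathcal{M}$ has a right adjoint, namely $a^\vee\otimes-$. The unit and counit are built from $\text{coev}_a$ and $\text{ev}_a$ via the module structure maps $(a\otimes a^\vee)\otimes m\simeq a\otimes(a^\vee\otimes m)$ and the unit constraint $\mathbb{1}\otimes m\simeq m$. The triangle identities then follow from those in $\mathcal{A}$ together with the associativity and unitality of the $\Ind(\mathcal{A})$-action; formally, this is the observation that the action $\mathcal{A}\to\Fun^\L(\mathcal{M},\mathcal{M})$ is monoidal and monoidal functors preserve duals. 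Given this, we have
\[
\Map_\mathcal{M}(a\otimes m,\colim_i x_i)\simeq\Map_\mathcal{M}(m,a^\vee\otimes\colim_i x_i)\simeq\Map_\mathcal{M}(m,\colim_i a^\vee\otimes x_i),
\]
using that $a^\vee\otimes-$ preserves colimits because the action of $\Ind(\mathcal{A})$ on $\mathcal{M}$ is colimit-preserving in each variable. Compactness of $m$ then pulls the colimit out, and reversing the adjunction gives $\colim_i\Map_\mathcal{M}(a\otimes m,x_i)$. So $a\otimes-$ preserves $\mathcal{M}^\omega$, and the action map $\mathcal{A}\times\mathcal{M}\to\mathcal{M}$ restricts to $\mathcal{A}\times\mathcal{M}^\omega\to\mathcal{M}^\omega$.

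It remains to see that this restriction is an $\mathcal{A}$-module structure on $\mathcal{M}^\omega$. This holds because $\mathcal{M}^\omega\subseteq\mathcal{M}$ is full and stable under the action of $\mathcal{A}$, so all higher coherences restrict from those on $\mathcal{M}$ (restricted along $\mathcal{A}\subseteq\Ind(\mathcal{A})$). I expect the main obstacle to be exactly this coherence bookkeeping: producing the adjunction $a\otimes-\dashv a^\vee\otimes-$ on $\mathcal{M}$ and the symmetric monoidal structure on $(-)^\vee$ at the $\infty$-categorical level. The plan is to avoid doing this by hand and instead reduce both to the principle that monoidal functors preserve dualizable objects and their duals.
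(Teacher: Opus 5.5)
Your proposal is correct and follows essentially the same route as the paper. For (1), both arguments use that the double-dual map is an equivalence and that $(a\otimes b)^\vee\simeq a^\vee\otimes b^\vee$. For (2), both use the adjunction $a\otimes-\dashv a^\vee\otimes-$ on $\mathcal{M}$, whose right adjoint preserves colimits, so $a\otimes-$ preserves compact objects. Your extra remarks (monoidal functors preserve duals; the module structure restricts to the full subcategory $\mathcal{M}^\omega$) only spell out coherences the paper leaves implicit.
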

\begin{proof}
    Consider point $(1)$. Since $\mathcal{A}$ is rigid, every object $a\in A$ admits a (chosen) dual $a^\vee$ together with evaluation and coevaluation maps $\text{ev}_a: a\otimes a^\vee\to \mathbbm{1}_\mathcal{A}$ and $\text{coev}_a:\mathbbm{1}_\mathcal{A}\to a^\vee\otimes a$ satisfying the triangle identities. 
    Define the functor $(-)^\vee: \mathcal{A} \to \mathcal{A}^{\op}$ by taking duals. Since for dualizable objects there are canonical equivalences $(a\otimes b)^\vee \simeq b^\vee\otimes a^\vee\simeq a^\vee\otimes b^\vee$, the functor $(-)^\vee$ is symmetric monoidal.
    Moreover, $(-)^\vee$ is adjoint to itself: the unit $\eta_a:a\to (a^\vee)^\vee$ and counit $\epsilon_a:(a^\vee)^\vee\to a$ are the usual double-dual maps. 
    Dualizability implies $\eta_a$ (equivalently $\epsilon_a$) is an equivalence for every $a$, hence $(-)^\vee$ is an equivalence of symmetric monoidal categories.

    Consider point $(2)$.
    Fix $a\in \mathcal{A}$. The action endofunctor $a\otimes - : \mathcal{M}\to \mathcal{M}$ preserves colimits (because the $\Ind(\mathcal{A})$-action does), hence is a left adjoint. 
    Since $a$ is dualizable in $\mathcal{A}$, the functor $a\otimes -$ has right adjoint $a^\vee\otimes -$. 
    In particular, $a^\vee\otimes -$ preserves filtered colimits (indeed it preserves all colimits), so $a\otimes -$ preserves compact objects.
    In particular, the $\mathcal{A}$-action restricts to $\mathcal{M}^\omega\subseteq \mathcal{M}$.
\end{proof}

The following result has been proved many times in the literature. 
We recall the proof for completeness.
\begin{lemma}\label{lemma: compactly generated categories are dualizable}
    Let $\mathcal{A}\in\CAlg^\rig(\Cat^\perf)$ be a rigid  algebra  and let $\mathcal{M}\in\Mod_{\Ind(\mathcal{A})}(\Pr^{\text{L},\omega}_\st)$ be an $\Ind(\mathcal{A})$-compactly generated module. 
    Then $\mathcal{M}$  is dualizable, and the dual can be identified with $\mathcal{M}^\vee \simeq\Ind((\mathcal{M}^\omega)^\op)$.
\end{lemma}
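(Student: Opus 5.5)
We exhibit explicit duality data in $\Mod_{\Ind(\mathcal{A})}(\Pr^\L_\st)$ and verify the triangle identities on compact generators. Write $\mathcal{M}_0=\mathcal{M}^\omega$. By \autoref{lemma: action restricts}(2) the $\mathcal{A}$-action restricts to $\mathcal{M}_0$, so $\mathcal{M}_0\in\Mod_\mathcal{A}(\Cat^\perf)$ and $\mathcal{M}\simeq\Ind(\mathcal{M}_0)$ as $\Ind(\mathcal{A})$-modules. By \autoref{lemma: action restricts}(1), the rule $(a,m)\mapsto a^\vee\otimes m$ makes $\mathcal{M}_0^\op$ an $\mathcal{A}$-module. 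Since $\Ind$ is symmetric monoidal, $\Ind(\mathcal{M}_0^\op)\otimes_{\Ind(\mathcal{A})}\Ind(\mathcal{M}_0)\simeq\Ind(\mathcal{M}_0^\op\otimes_\mathcal{A}\mathcal{M}_0)$. Hence it suffices to build the duality data at the level of small categories, in $\Mod_\mathcal{A}(\Cat^\perf)$, with $\Ind(\mathcal{A})$ replaced by $\Ind(\mathcal{A})$ as the target of evaluation.

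For the \emph{evaluation}, use \autoref{construction: linear Yoneda}. The $\mathcal{A}$-valued mapping object $\hom_{\mathcal{M}_0}(-,-)\colon\mathcal{M}_0^\op\times\mathcal{M}_0\to\Ind(\mathcal{A})$ is exact in each variable and $\mathcal{A}$-balanced. Balancedness is the key use of rigidity: $\hom(a^\vee\otimes m,n)\simeq\hom(m,a\otimes n)$, because $a\otimes-$ and $a^\vee\otimes-$ are adjoint. It therefore factors through $\mathcal{M}_0^\op\otimes_\mathcal{A}\mathcal{M}_0$ and extends, via \autoref{lemma: from L to ex}, to an $\Ind(\mathcal{A})$-linear colimit-preserving functor
\[
\mathrm{ev}\colon\Ind(\mathcal{M}_0^\op)\otimes_{\Ind(\mathcal{A})}\Ind(\mathcal{M}_0)\to\Ind(\mathcal{A}).
\]

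For the \emph{coevaluation} we need an object
\[
\mathrm{coev}\in\Ind(\mathcal{M}_0\otimes_\mathcal{A}\mathcal{M}_0^\op).
\]
The natural candidate is the image of the identity functor under the equivalence
\[
\Ind(\mathcal{M}_0\otimes_\mathcal{A}\mathcal{M}_0^\op)\simeq\Fun^\L_{\Ind(\mathcal{A})}(\Ind(\mathcal{M}_0),\Ind(\mathcal{M}_0)).
\]
Concretely, it is the filtered colimit/coend $\int^{m}m\boxtimes m$, i.e.\ the diagonal bimodule. The cleanest way to produce it is to prove the linear analogue of the fact that $\Ind(\mathcal{C}^\op\otimes\mathcal{D})\simeq\Fun^\L(\Ind\mathcal{C},\Ind\mathcal{D})$. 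By \autoref{lemma: from L to ex}, the right-hand side is $\Fun^\ex_\mathcal{A}(\mathcal{M}_0,\Ind(\mathcal{M}_0))$. One compares this with $\Fun^\ex(\mathcal{M}_0^\op\otimes_\mathcal{A}\mathcal{M}_0,\Sp)$-type descriptions by writing the relative tensor product as the geometric realization of the bar construction $\mathcal{M}_0^\op\otimes\mathcal{A}^{\otimes\bullet}\otimes\mathcal{M}_0$ and using the non-linear statement levelwise. The coevaluation is then the object corresponding to $\id$.

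The triangle identities then reduce, after composing, to the statement that the composite $\Ind(\mathcal{M}_0)\to\Ind(\mathcal{M}_0)$ sends a compact $m$ to $\int^{n}\hom(n,m)\otimes n\simeq m$. This is the linear co-Yoneda lemma, and the other identity is the same statement for $\mathcal{M}_0^\op$. Since all functors involved are colimit-preserving and $\Ind(\mathcal{A})$-linear, checking on compact generators suffices.

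The main obstacle is the coevaluation step, namely identifying $\Ind$ of the relative tensor product with linear functors. This requires commuting $\Ind$ and $\Fun^\ex$ past the bar construction (a limit of functor categories versus a colimit in $\Pr^\L$). I would handle it by passing to right adjoints, using that $\Pr^\L$-colimits are computed as limits in $\Pr^\R$.
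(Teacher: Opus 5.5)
Your route is correct in outline, but it differs from the paper's. The paper never writes down evaluation or coevaluation. It uses the criterion that $\mathcal{M}$ is dualizable with dual $\Ind((\mathcal{M}^\omega)^\op)$ once there is a natural equivalence $\mathcal{M}^\vee\otimes_{\Ind(\mathcal{A})}\mathcal{B}\simeq\Fun^\L_{\Ind(\mathcal{A})}(\mathcal{M},\mathcal{B})$ for all modules $\mathcal{B}$. It produces this equivalence by a chain of currying and Yoneda manipulations, finishing with two applications of \autoref{lemma: from L to ex}. Rigidity enters there only to make $(\mathcal{M}^\omega)^\op$ an $\mathcal{A}$-module.

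You instead build explicit duality data and check the triangle identities. Your key input, $\Ind(\mathcal{M}_0\otimes_\mathcal{A}\mathcal{M}_0^\op)\simeq\Fun^\L_{\Ind(\mathcal{A})}(\Ind(\mathcal{M}_0),\Ind(\mathcal{M}_0))$, is a case of \autoref{corollary: equivalence ind and exact functors}, which the paper deduces \emph{from} this lemma. You rightly see that it must be proved independently, and the bar-construction argument does so.

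What your approach buys:
\begin{itemize}
\item Dualizability is checked against its definition, so you never need to know that an abstract natural equivalence is the canonical comparison map.
\item Rigidity is used exactly where it is needed.
\item The kernel theorem comes out as a by-product.
\item You never treat the non-presentable categories $\mathcal{B}^\op,\mathcal{C}^\op$ as objects of $\Mod_{\Ind(\mathcal{A})}(\Pr^\L_\st)$. The paper's chain does this implicitly: its identity $\Fun^\L_{\Ind(\mathcal{A})}(\mathcal{B},\mathcal{C})\simeq\Fun^\L_{\Ind(\mathcal{A})}(\mathcal{C}^\op,\mathcal{B}^\op)^\op$ is one instance, and applying \autoref{lemma: from L to ex} with target $\mathcal{B}^\op$ is another. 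That makes the paper's argument more formal than rigorous.
\end{itemize}
What it costs is coherence bookkeeping. You must match entire cosimplicial objects, not just levels. You must also identify your equivalence with the integral transform $K\mapsto(\id\otimes\mathrm{ev})(K\otimes-)$, so that the triangle identities genuinely reduce to the linear co-Yoneda lemma.

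Two smaller points.
\begin{itemize}
\item For $\mathrm{ev}$ to be a map of $\Ind(\mathcal{A})$-modules you need, besides balancedness, the linearity $\hom(m,a\otimes n)\simeq a\otimes\hom(m,n)$. This also follows from rigidity, by a direct adjunction computation using $a\otimes-\dashv a^\vee\otimes-$ and $a^\vee\otimes-\dashv a\otimes-$.
\item The obstacle you flag is milder than you fear. Since $\Ind(\mathcal{D})\simeq\Fun^\ex(\mathcal{D}^\op,\Sp)$ and $(-)^\op$ is an automorphism of $\Cat^\perf$, the functor $\Fun^\ex(-,\Sp)$ turns the bar colimit in $\Cat^\perf$ into a totalization by the universal property alone. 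No passage to $\Pr^\R$ is needed.
\end{itemize}
The real content is that the levelwise identifications $\Fun^\ex(\mathcal{M}_0^\op\otimes\mathcal{A}^{\otimes n}\otimes\mathcal{M}_0,\Sp)\simeq\Fun^\ex(\mathcal{A}^{\otimes n}\otimes\mathcal{M}_0,\Ind(\mathcal{M}_0))$ intertwine the bar and cobar coface maps. The face map acting on $\mathcal{M}_0^\op$ by $a^\vee\otimes-$ corresponds to postcomposition with $a\otimes-$ on $\Ind(\mathcal{M}_0)$, via $\hom(a^\vee\otimes n,x)\simeq\hom(n,a\otimes x)$.
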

\begin{proof}
    Set $\mathcal{M}^\vee = \Ind((\mathcal{M}^\omega)^{\op})$.
    First of all, point $(2)$ of \autoref{lemma: action restricts} implies that the $\Ind(\mathcal{A})$-action   on $\mathcal{M}$ restricts to an $\mathcal{A}$-action on the small idempotent-complete stable category $\mathcal{M}^\omega$.
    Hence $(\mathcal{M}^\omega)^{\op}$ is canonically an $\mathcal{A}$-module as well, and therefore $\Ind((\mathcal{M}^\omega)^{\op})$ inherits a canonical $\Ind(\mathcal{A})$-module structure.
    In particular, $\mathcal{M}^\vee\in \Mod_{\Ind(\mathcal{A})}(\Pr^{\L,\omega}_{\st})$.
    To prove dualizability of $\mathcal{M}$ with dual $\mathcal{M}^\vee$, it suffices to construct, for every module $\mathcal{B}\in \Mod_{\Ind(\mathcal{A})}(\Pr^{\L}_{\st})$, a natural equivalence
    \[
    \mathcal{M}^\vee \otimes_{\Ind(\mathcal{A})} \mathcal{B}\simeq \Fun^{\L}_{\Ind(\mathcal{A})}(\mathcal{M},\mathcal{B}),
    \]
    where $\otimes_{\Ind(\mathcal{A})}$ denotes the relative Lurie tensor product.
    Because $\Mod_{\Ind(\mathcal{A})}(\Pr^{\mathrm L}_{\st})$ is closed symmetric monoidal, for every $\mathcal{B}$ and $\mathcal{B}$ there is a natural equivalence
    \begin{equation}\label{eq:closed}
    \Fun^{\mathrm L}_{\Ind(\mathcal{A})}(\mathcal{M}^\vee\otimes_{\Ind(\mathcal{A})} \mathcal{B},\mathcal{C})
    \simeq
    \Fun^{\mathrm L}_{\Ind(\mathcal{A})}(\mathcal{M}^\vee,\; \Fun^{\mathrm L}_{\Ind(\mathcal{A})}(\mathcal{B},\mathcal{C})).
    \end{equation}
    Since taking opposites is compatible with $\Ind(\mathcal{A})$-linearity, there is a natural equivalence
    \[
    \Fun^{\mathrm L}_{\Ind(\mathcal{A})}(\mathcal{B},\mathcal{C})\simeq \Fun^{\mathrm L}_{\Ind(\mathcal{A})}(\mathcal{C}^{\op},\mathcal{B}^{\op})^{\op}.
    \]
    Plugging this into \eqref{eq:closed} and using the closed structure again (and currying-uncurrying) gives a chain of natural equivalences
    \begin{align*}
    \Fun^{\mathrm L}_{\Ind(\mathcal{A})}(\mathcal{M}^\vee\otimes_{\Ind(\mathcal{A})} \mathcal{B}, \mathcal{C})
    &\simeq
    \Fun^{\mathrm L}_{\Ind(\mathcal{A})}(\mathcal{M}^\vee, \Fun^{\mathrm L}_{\Ind(\mathcal{A})}(\mathcal{C}^{\op},\mathcal{B}^{\op})) \\
    &\simeq
    \Fun^{\mathrm L}_{\Ind(\mathcal{A})}(\mathcal{C}^{\op},\Fun^{\mathrm L}_{\Ind(\mathcal{A})}(\mathcal{M}^\vee,\mathcal{B}^{\op})) \\
    &\simeq
    \Fun^{\mathrm L}_{\Ind(\mathcal{A})}(\Fun^{\mathrm L}_{\Ind(\mathcal{A})}(\mathcal{M}^\vee,\mathcal{B}^{\op})^{\op}, \mathcal{C}).
    \end{align*}
    Yoneda then implies that $\mathcal{M}^\vee\otimes_{\Ind(\mathcal{A})} \mathcal{B}\simeq \Fun^{\mathrm L}_{\Ind(\mathcal{A})}(M^\vee,\mathcal{B}^{\op})^{\op}$.
    By using \autoref{lemma: from L to ex} twice it follows that
    \[
    \Fun^\L_{\Ind(\mathcal{A})}(\mathcal{M}^\vee, \mathcal{B}^\op)^\op 
    \simeq
    \Fun^\ex_\mathcal{A}((\mathcal{M}^\omega)^\op,\mathcal{B}^\op)^\op 
    \simeq 
    \Fun^\ex_\mathcal{A}((\mathcal{M}^\omega),\mathcal{B}) 
    \simeq
    \Fun^\L_{\Ind(\mathcal{A})}(\mathcal{M},\mathcal{B})
    \]
    thus concluding the proof.
\end{proof}

\begin{corollary}\label{corollary: equivalence ind and exact functors}
    Let $\mathcal{A}\in\CAlg^\rig(\Cat^\perf)$ be a rigid  algebra  and let $\mathcal{M},\mathcal{N}\in\Mod_{\mathcal{A}}(\Cat^\perf)$ be two $\mathcal{A}$-modules. Then there exists a natural equivalence
    \[
    \Ind(\mathcal{M}^\op \otimes_\mathcal{A}\mathcal{N}) \simeq\Fun^\ex_\mathcal{A}(\mathcal{M}, \Ind(\mathcal{N}))
    \]
    in $\Mod_{\Ind(\mathcal{A})}(\Pr^{\text{L},\omega}_\st)$. 
\end{corollary}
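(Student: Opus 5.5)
The plan is to obtain the equivalence by combining \autoref{lemma: compactly generated categories are dualizable} with \autoref{lemma: from L to ex}, after passing from small $\mathcal{A}$-modules to their ind-completions. Put $\widehat{\mathcal{M}}=\Ind(\mathcal{M})$ and $\widehat{\mathcal{N}}=\Ind(\mathcal{N})$. Since ind-completion is symmetric monoidal, these are compactly generated $\Ind(\mathcal{A})$-modules with $\widehat{\mathcal{M}}^\omega\simeq\mathcal{M}$; here we use that $\mathcal{M}$ is idempotent complete.

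\textbf{Step 1: reduce to presentable linear functors.} By \autoref{lemma: from L to ex}, restriction along the Yoneda embedding gives
\[
\Fun^\ex_\mathcal{A}(\mathcal{M},\Ind(\mathcal{N}))\simeq\Fun^\L_{\Ind(\mathcal{A})}(\Ind(\mathcal{M}),\Ind(\mathcal{N})).
\]

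\textbf{Step 2: apply dualizability.} By \autoref{lemma: compactly generated categories are dualizable}, $\Ind(\mathcal{M})$ is dualizable in $\Mod_{\Ind(\mathcal{A})}(\Pr^\L_\st)$ with dual $\Ind(\mathcal{M}^\op)$. The internal hom out of a dualizable object is tensoring with its dual, so
\[
\Fun^\L_{\Ind(\mathcal{A})}(\Ind(\mathcal{M}),\Ind(\mathcal{N}))\simeq\Ind(\mathcal{M}^\op)\otimes_{\Ind(\mathcal{A})}\Ind(\mathcal{N}).
\]

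\textbf{Step 3: identify the relative tensor product.} We need $\Ind(\mathcal{M}^\op)\otimes_{\Ind(\mathcal{A})}\Ind(\mathcal{N})\simeq\Ind(\mathcal{M}^\op\otimes_\mathcal{A}\mathcal{N})$. This should follow from the symmetric monoidal equivalence $\Ind:\Cat^\perf\simeq\Pr^{\L,\omega}_\st$ cited from \cite[Proposition 4.4]{ben2010integral}. Symmetric monoidality gives an equivalence of module categories $\Mod_\mathcal{A}(\Cat^\perf)\simeq\Mod_{\Ind(\mathcal{A})}(\Pr^{\L,\omega}_\st)$ that is compatible with relative tensor products, since these are computed as geometric realizations of bar constructions. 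What remains is that the inclusion $\Pr^{\L,\omega}_\st\subseteq\Pr^\L_\st$ preserves these geometric realizations. I would argue this by noting that a colimit in $\Pr^\L_\st$ is computed as a limit of right adjoints. When the transition functors preserve compacts, the right adjoints preserve filtered colimits, and one checks that the colimit is generated by the images of compact objects. Equivalently, $\Ind$ preserves colimits in $\Cat^\perf$, a standard fact (Lurie, HA 4.8.1).

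\textbf{Step 4: naturality and linearity.} Each equivalence above is natural in $\mathcal{M}$ and $\mathcal{N}$ and is $\Ind(\mathcal{A})$-linear, so the composite lives in $\Mod_{\Ind(\mathcal{A})}(\Pr^{\L,\omega}_\st)$. Unwinding the construction, the composite sends $m^\op\otimes n$ to the functor $m'\mapsto\hom_\mathcal{M}(m',m)\otimes n$, which matches the linear Yoneda embedding of \autoref{construction: linear Yoneda}.

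I expect Step 3, compatibility of $\Ind$ with relative tensor products, to be the main obstacle. If the colimit-preservation argument becomes delicate, I would instead verify directly that both sides corepresent $\mathcal{A}$-bilinear exact functors $\mathcal{M}^\op\times\mathcal{N}\to\mathcal{E}^\omega$-valued data, and use the universal property of $\otimes_\mathcal{A}$ in $\Cat^\perf$.
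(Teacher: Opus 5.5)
Your proposal is correct and is the paper's own argument, read in the opposite direction. The paper uses the same chain: the symmetric monoidality of $\Ind\colon\Mod_\mathcal{A}(\Cat^\perf)\to\Mod_{\Ind(\mathcal{A})}(\Pr^{\L,\omega}_\st)$, then \autoref{lemma: compactly generated categories are dualizable}, then the internal hom out of a dualizable object, then \autoref{lemma: from L to ex}. Your Step 3 usefully spells out the colimit-closure of $\Pr^{\L,\omega}_\st$ in $\Pr^\L_\st$, which the paper leaves implicit. One small slip in Step 4: the functors are covariant on $\mathcal{M}$, so $m^\op\otimes n$ corresponds to $m'\mapsto\hom_\mathcal{M}(m,m')\otimes n$, not $\hom_\mathcal{M}(m',m)\otimes n$.
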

\begin{proof}
    There are equivalences
    \[\begin{aligned}
    \Ind(\mathcal{M}^{\op}\otimes_\mathcal{A} \mathcal{N})
    &\simeq\Ind(\mathcal{M}^{\op})\otimes_{\Ind(\mathcal{A})} \Ind(\mathcal{N})\\
    &\simeq \Ind(\mathcal{M})^\vee\otimes_{\Ind(\mathcal{A})}\Ind(\mathcal{N})\\
    &\simeq\Fun^\L_{\Ind(\mathcal{A})}(\Ind(\mathcal{M}),\Ind(\mathcal{N}))\\
    &\simeq\Fun^{\ex}_\mathcal{A}(\mathcal{M},\Ind(\mathcal{N})).
    \end{aligned}\]
    Indeed the first one follows since  $\Ind:\Mod_\mathcal{A}(\Cat^\perf)\to \Mod_{\Ind(\mathcal{A})}(\Pr^{\L,\omega}_{\st})$ is symmetric monoidal, the second one by \autoref{lemma: compactly generated categories are dualizable} since $\mathcal{A}$ is rigid, the third one by definition of internal hom and the last one by \autoref{lemma: from L to ex}.
\end{proof}

\begin{remark}
    Let $\mathcal{A}\in\CAlg^\rig(\Cat^\perf)$ be a rigid  algebra and let $\mathcal{M}\in\Mod_{\mathcal{A}}(\Cat^\perf)$ be an $\mathcal{A}$-module. 
    Then \autoref{corollary: equivalence ind and exact functors} implies that the functor $\yo_\mathcal{A}:\mathcal{M}\into\Fun^\ex_\mathcal{A}(\mathcal{M}^\op,\Ind(\mathcal{A}))$ of \autoref{construction: linear Yoneda} is fully-faithful and exhibits the target as the ind-completion of the source. 
\end{remark}
This motivates the following.
\begin{definition}
    Let $\mathcal{A}\in\CAlg^\rig(\Cat^\perf)$ be a rigid  algebra and let $\mathcal{M}\in\Mod_{\mathcal{A}}(\Cat^\perf)$ be an $\mathcal{A}$-module. 
    We will refer to the functor  $\yo_\mathcal{A}:\mathcal{M}\into\Fun^\ex_\mathcal{A}(\mathcal{M}^\op,\Ind(\mathcal{A}))$ as the \emph{$\mathcal{A}$-linear Yoneda embedding of $\mathcal{M}$}.
\end{definition}

\subsection{Grothendieck duality} 
A central feature of rigidly-compactly generated categories and of functors between them is that Grothendieck–Neeman duality admits a clean axiomatic formulation in this setting.
One such formulation is due to Balmer, Dell’Ambrogio, and Sanders \cite[Theorem 1.7]{balmer2016grothendieck}. 
We will now recast this duality in the language of $(\infty,2)$-categories, which we will henceforth refer to simply as $2$-categories.

\begin{remark}
    To work $2$-categorically with presentable stable categories, one uses the standard $\Cat$-enriched enhancement: for $\mathcal{A},\mathcal{B}\in \Pr^\L_{\st}$ the mapping category is
    \[
    \underline{\Hom}_{2\Pr^\L_{\st}}(\mathcal{A},\mathcal{B}) = \Fun^\L(\mathcal{A},\mathcal{B}),
    \]
    and composition is induced by composition of functors.
    More intrinsically (in the sense of complete Segal objects, see \cite[Chapter I.1, 6.1.8]{Gaitsgory2017ASI}), one may package this as an $2$-category $2\Pr^\L_{\st}$ whose underlying $1$-category $\iota_12\Pr^\L_{\st}$ is the usual symmetric monoidal $1$-category of presentable stable categories and colimit-preserving functors, and whose hom-$1$-categories are $\Fun^\L(\mathcal{A},\mathcal{B})$. 
    In particular, adjunctions internal in this $2$-category coincide with ordinary adjunctions of functors.
    Finally, the Lurie tensor product of presentable categories upgrades (functorially) to this $2$-categorical enhancement, yielding a symmetric monoidal $2$-category structure on $2\Pr^\text{L}_{\st}$.
\end{remark}

Every $2$-category supports the notion of \enquote{internal left adjoint}, bootstrapped from the usual notion of adjunction of categories. 
We now specialize it to modules in $2\Pr^\text{L}_\st$.
\begin{definition}
    Let $\mathcal{B}\in\CAlg(\Pr^\text{L}_\st)$ be a commutative algebra. 
    Let $f^\text{L}:\mathcal{M}\to\mathcal{N}$ be a map in $\Mod_\mathcal{B}(\Pr^\text{L}_\st)$. We will say that $f^\text{L}$ is an \emph{internal left adjoint} if its right adjoint $f^\text{R}$ preserves colimits and the square
    \[\begin{tikzcd}[cramped]
	{\mathcal{B}\otimes\mathcal{M}} & {\mathcal{B}\otimes\mathcal{N}} \\
	{\mathcal{M}} & {\mathcal{N}}
	\arrow["{{\id_{\mathcal{B}}\otimes f^\text{L}}}", from=1-1, to=1-2]
	\arrow["{\text{act}_\mathcal{M}}"', from=1-1, to=2-1]
	\arrow["{\text{act}_\mathcal{N}}", from=1-2, to=2-2]
	\arrow["{{f^\text{L}}}"', from=2-1, to=2-2]
    \end{tikzcd}\]
    is horizontally right adjointable.
\end{definition}
In other terms, the functor $f^\text{L}:\mathcal{M}\to\mathcal{N}$ is an internal left adjoint if and only if the right adjoint $f^\text{R}$ preserves colimits and the canonical right-lax $\mathcal{B}$-module structure $\text{act}_\mathcal{M}(b, f^\text{R}(n))\to f^\text{R}(\text{act}_{\mathcal{M}}(b,n))$ is an equivalence for every $b\in\mathcal{B}$ and $n\in\mathcal{N}$.

We now procede towards Grothendieck–Neeman duality.
\begin{remark}
    Let $\mathcal{B}\in\CAlg(\Pr^\text{L}_\st)$ be a commutative algebra and let $f^\text{L}:\mathcal{M}\to\mathcal{N}$ be an internal left adjoint. Then $f^\text{R}$ is a $\mathcal{B}$-module map. Furthermore, $f^\text{R}$ admits a right adjoint $f^{(1)}$ which has the structure of a right-lax $\mathcal{B}$-module structure. This lax structure can be constructed explicitly as 
    \[
    \text{act}_\mathcal{N}(b, f^{(1)}(m))
    \to f^{(1)}f^\text{R}(\text{act}_\mathcal{N}(b, f^{(1)}(m)))
    \simeq f^{(1)}( \text{act}_\mathcal{M}(b, f^\text{R}f^{(1)}(m)))
    \to f^{(1)}(\text{act}_\mathcal{M}(b, m))
    \]
    for $b\in\mathcal{B}$ and $m\in\mathcal{M}$. Here the first map and last map are  given by the unit and counit of $f^\text{R}\dashv f^{(1)}$, respectively.
\end{remark}

\begin{remark}
    Let $\mathcal{M}\in\CAlg(\Pr^{\text{L}}_\st)$ be a commutative algebra and let $\mathcal{N}\in\CAlg_{\mathcal{M}/}(\Pr^{\text{L}}_\st)$ be an algebra object under $\mathcal{M}$. 
    Under the equivalence $\CAlg_{\mathcal{M}/}(\Pr^{\text{L}}_\st)\simeq \CAlg(\Mod_\mathcal{M}(\Pr^\text{L}_\st))$ the category $\mathcal{N}$ inherits the structure of a $\mathcal{M}$-module. In particular, if $f^\text{L}:\mathcal{M}\to\mathcal{N}$ is the structure map, then the $\mathcal{M}$-module structure $\mathcal{M}\times\mathcal{N}\to\mathcal{N}$ on $\mathcal{N}$ is given by $(m,n)\mapsto f^\text{L}(m)\otimes_\mathcal{N}n$. 
\end{remark}

\begin{remark}
    Let $f^\text{L}:\mathcal{M}\to\mathcal{N}$ be a map of commutative algebras in $\Pr^\text{L}_\st$ which is an internal left adjoint  of $\mathcal{M}$-modules. Then the functor $f^\text{R}$ is colimit preserving and $\mathcal{M}$-linear and the right adjoint $f^{(1)}$ has a right lax $\mathcal{M}$-module structure. 
    Consider now the equivalences
    \[\begin{aligned}
    \Hom_\mathcal{M}(f^\text{R}f^{(1)}(x)\otimes_\mathcal{M} y, x\otimes_\mathcal{M} y) 
    &\simeq \Hom_{\mathcal{M}}(f^\text{R}( f^{(1)}(x)\otimes_\mathcal{M} f^\text{L}(y)), x\otimes_\mathcal{M} y)\\
    &\simeq \Hom_{\mathcal{N}}(f^{(1)}(x)\otimes_\mathcal{M} f^\text{L}(y), f^{(1)}(x\otimes_\mathcal{N} y))
    \end{aligned}\]
    where the first one follows by linearity of $f^\text{R}$ and the second one by adjunction. If $\epsilon: f^\text{R}f^{(1)}\to\id_\mathcal{M}$ is the counit of $f^\text{R}\dashv f^{(1)}$, then the object $\epsilon_x\otimes_\mathcal{M}\id_y$ produces a canonical morphism
    \[
    f^{(1)}(-)\otimes_\mathcal{N} f^\text{L}(-)\to f^{(1)}(-\otimes_\mathcal{M}-). 
    \]  
    The Grothendieck-Neeman duality is about understanding this morphism.
\end{remark}

\begin{definition}
     Let $f^\text{L}:\mathcal{M}\to\mathcal{N}$ be a map of commutative algebras in $\Pr^\text{L}_\st$ which is an internal left adjoint  of $\mathcal{M}$-modules. We will say that \emph{Grothendieck–Neeman duality holds for} $f^\text{L}$ if the canonical map $f^{(1)}(-)\otimes_\mathcal{N} f^\text{L}(-)\to f^{(1)}(-\otimes_\mathcal{M}-)$ is an equivalence. 
\end{definition}
We have then the following $2$-categorification of Grothendieck-Neeman duality. 
\begin{theorem}\label{theorem: Grothendieck Duality}
    Let $f^\text{L}:\mathcal{M}\to\mathcal{N}$ be a map of commutative algebras in $\Pr^\text{L}_\st$ which is an internal left adjoint  of $\mathcal{M}$-modules. Then the right adjoint $f^\text{R}$ is an internal left adjoint of $\mathcal{M}$-modules if and only if Grothendieck–Neeman duality holds. 
\end{theorem}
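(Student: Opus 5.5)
Since $f^\L$ is an internal left adjoint, $f^\R$ preserves colimits and has a right adjoint $f^{(1)}$. By the definition of internal left adjoint applied to $f^\R$, the claim reduces to: the canonical right-lax $\mathcal{M}$-linear structure on $f^{(1)}$,
\[
\beta_{b,m}\colon f^\L(b)\otimes_\mathcal{N} f^{(1)}(m)\to f^{(1)}(b\otimes_\mathcal{M} m),
\]
is an equivalence for all $b,m\in\mathcal{M}$ if and only if $f^{(1)}$ preserves colimits and the Grothendieck--Neeman map $f^{(1)}(x)\otimes_\mathcal{N}f^\L(y)\to f^{(1)}(x\otimes_\mathcal{M}y)$ is an equivalence. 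Here the $\mathcal{M}$-action on $\mathcal{N}$ is $(b,n)\mapsto f^\L(b)\otimes_\mathcal{N} n$, as in the earlier remark on algebras under $\mathcal{M}$. The plan has three steps.

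\textbf{Step 1: identify the two maps.} I will show that $\beta_{b,m}$ and the Grothendieck--Neeman map at $(x,y)=(m,b)$ agree up to the symmetry of $\otimes_\mathcal{N}$. Both are adjoint, under $f^\R\dashv f^{(1)}$, to the composite
\[
f^\R\bigl(f^{(1)}(m)\otimes_\mathcal{N} f^\L(b)\bigr)\simeq f^\R f^{(1)}(m)\otimes_\mathcal{M} b\xrightarrow{\epsilon_m\otimes\id_b} m\otimes_\mathcal{M} b,
\]
where the first equivalence is the projection formula, i.e.\ the $\mathcal{M}$-linearity of $f^\R$ that comes from $f^\L$ being an internal left adjoint. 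For the Grothendieck--Neeman map this holds by construction. For $\beta$, I unwind its explicit formula: unit of $f^\R\dashv f^{(1)}$, then the linearity equivalence, then the counit. By the triangle identities, its adjoint is exactly the composite above. This is routine adjunction calculus, but the coherence of the symmetric monoidal structure has to be tracked to confirm that the linearity equivalence used in both places is the same one.

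\textbf{Step 2: the forward direction.} If $f^\R$ is an internal left adjoint, then $\beta$ is invertible, so by Step 1 Grothendieck--Neeman duality holds.

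\textbf{Step 3: the converse, which is the main obstacle.} Step 1 gives the invertibility of $\beta$ directly. What remains is showing that $f^{(1)}$ preserves colimits, and this does not follow formally from the tensor identity in general. My plan:
\begin{itemize}
\item Set $y=\mathbbm{1}_\mathcal{M}$. Duality then gives $f^{(1)}(x)\simeq f^{(1)}(\mathbbm{1}_\mathcal{M})\otimes_\mathcal{N} f^\L(\,\cdot\,)$ evaluated suitably; more precisely, take $x=\mathbbm{1}_\mathcal{M}$ and $y$ arbitrary to get $f^{(1)}(y)\simeq f^{(1)}(\mathbbm{1}_\mathcal{M})\otimes_\mathcal{N} f^\L(y)$.
\item The right-hand side is a composite of colimit-preserving functors: $f^\L$ is a left adjoint, and $\otimes_\mathcal{N}$ preserves colimits in each variable because $\mathcal{N}\in\CAlg(\Pr^\L_\st)$.
\item Hence $f^{(1)}$ preserves colimits, and $f^\R$ is an internal left adjoint.
\end{itemize}

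The one point I would check carefully is that the equivalence $f^{(1)}\simeq f^{(1)}(\mathbbm{1}_\mathcal{M})\otimes_\mathcal{N} f^\L(-)$ is natural in $y$. It should be, since the Grothendieck--Neeman map is a natural transformation of bifunctors, and restricting one variable to $\mathbbm{1}_\mathcal{M}$ produces an invertible natural transformation of functors in $y$. No size or accessibility issues arise, because we only need $f^{(1)}$ to be equivalent to a known colimit-preserving functor.
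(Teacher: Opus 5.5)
Your proposal is correct and follows the paper's own proof. The forward direction uses that the canonical lax $\mathcal{M}$-linear structure on $f^{(1)}$ is the Grothendieck--Neeman map. For the converse, you evaluate the duality equivalence at $\mb{1}_\mathcal{M}$ in the first variable, which gives $f^{(1)}\simeq f^{(1)}(\mb{1}_\mathcal{M})\otimes_\mathcal{N} f^\L(-)$ and hence colimit-preservation of $f^{(1)}$. Your Step 1, which checks via the triangle identities that the two maps have the same adjoint under $f^\R\dashv f^{(1)}$, makes explicit an identification the paper only asserts.
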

\begin{proof}
    Assume that $f^\text{R}$ is an internal left adjoint. Then $f^{(1)}$ is $\mathcal{M}$-linear. Since the lax structure on $f^{(1)}$ is given by the canonical map $f^{(1)}(-)\otimes_\mathcal{N} f^\text{L}(-)\to f^{(1)}(-\otimes_\mathcal{M}-)$ the claim follows. 
    Conversely, if this map is an equivalence, to show that $f^\text{R}$ is an internal left adjoint it suffices to show that $f^{(1)}$ preserves colimits. 
    Since $f^\text{L}$ is colimit preserving, evaluating at the unit in the first argument shows that $f^{(1)}$ is colimit preserving.  
\end{proof}

The following observations are a translation of this language for rigidly-compactly generated categories.
\begin{remark}\label{remark: ramzi corollary 3.8}
    Let $\mathcal{B}\in\CAlg(\Pr^\text{L}_\st)$ be a commutative algebra and assume that $\mathcal{B}$ is generated by the dualizable objects.
    Then by \cite[Corollary 3.8]{ramzi2024locallyrigidinftycategories} every right-lax $\mathcal{B}$-linear functor is $\mathcal{B}$-linear. 
    Therefore a $\mathcal{B}$-module map  $f^\text{L}:\mathcal{M}\to\mathcal{N}$ is an internal left adjoint if and only if its right adjoint is colimit preserving.
\end{remark}
\begin{example}\label{example: internal left adjoint for rigidly-compactly generated}
    Let $f^\text{L}:\mathcal{M}\to\mathcal{N}$ be a morphism of commutative algebras in $\Pr^\text{L}_\st$. 
    Assume that $\mathcal{M}$ and $\mathcal{N}$ are rigidly-compactly generated. 
    Then $f^\text{L}$, being symmetric monoidal, preserves dualizable objects, thus it preserves compact objects. 
    But every functor preserving compact objects between compactly-generated categories has a filtered colimit preserving right adjoint. 
    In the stable case this implies that the right adjoint preserves colimits. 
    Hence \autoref{remark: ramzi corollary 3.8} implies that $f^\text{L}$ is an internal left adjoint.
    To spell it out: every functor $f^\text{L}:\mathcal{M}\to\mathcal{N}$ in $\CAlg^\rig(\Pr^{\text{L},\omega}_\st)$ is an internal left adjoint in $\Mod_\mathcal{M}(\Pr^\text{L}_\st)$.

    Grothendieck–Neeman duality asks the functor $f^\R$ to be an internal left adjoint of $\mathcal{M}$-modules. 
    Once again, \autoref{remark: ramzi corollary 3.8} implies that $f^\R$ is an internal left adjoint if and only if the right adjoint $f^{(1)}$ preserves colimits (and thus it has a further right adjoint). 
    Since $f^{(1)}$ is  exact, this is equivalent to  $f^{(1)}$ preserving filtered colimits, which in turn is equivalent to $f^\R$ preserving compact objects. 
    This is Neeman's criterion in \cite[Theorem 1.7]{balmer2016grothendieck}.
    Notice that if this is the case, the functor $f^\L$ admits a left adjoint. 
\end{example}
\begin{definition}
    Let $p^\L:\mathcal{M}\to\mathcal{N}$ be a map of commutative algebras in $\Pr^\text{L}_\st$ which is an internal left adjoint of $\mathcal{M}$-modules. The object $\omega_{p}=p^{(1)}(\mb{1}_\mathcal{M})$ is called the \emph{dualizing object of} $p^\L:\mathcal{M}\to\mathcal{N}$. 
\end{definition} 

\section{Pseudo-coherent and coherent objects}\label{section: Pseudo-Coherent and Coherent Objects}

\subsection{Right-complete presentable $t$-structures}
Recall that a $t$-structure on a presentable stable category is accessible if the connective aisle is presentable, and it is compatible with filtered colimits if the coconnective aisle is closed under filtered colimits.
\begin{definition}\label{definition: geometric t-structure}
    Let $\mathcal{C}\in\Pr^\text{L}_\st$ be a  presentable stable category equipped with a $t$-structure  $\ts{C}$. We will say that the $t$-structure $\ts{C}$ is \emph{right-complete presentable} if it is accessible, compatible with filtered colimits and right complete.
    We will furthermore say that the $t$-structure is \emph{excellent} if it is also left-complete.
\end{definition}

\begin{lemma}\label{lemma: NP 6.1.1}
    Let $\mathcal{C}\in\Pr^\text{L}_\st$ be a  presentable stable category with accessible $t$-structure. Then the following are equivalent.
    \begin{enumerate*}
        \item The category $\mathcal{C}_{\leq0}$ is compatible with filtered colimits.
        \item The functor $i_{\leq0} : \mathcal{C}_{\leq0}\to \mathcal{C}$ preserves filtered colimits.
        \item The functor $i_{\leq0}\tau_{\leq0}: \mathcal{C}\to\mathcal{C}$ preserves filtered colimits.
        \item The functor $i_{\geq0}\tau_{\geq0}:\mathcal{C}\to\mathcal{C}$ preserves filtered colimits.
        \item The functor $\tau_{\geq0}: \mathcal{C}\to\mathcal{C}_{\geq0}$ preserves filtered colimits.
    \end{enumerate*}
    These equivalent conditions imply the following.
    \begin{enumerate*}
        \item[(6)] The functor $i_{\geq0} : \mathcal{C}_{\geq0}\to\mathcal{C}$ preserves compact objects.
        \item[(7)] The functor $\tau_{\leq0}: \mathcal{C}\to\mathcal{C}_{\leq0}$ preserves compact objects.
    \end{enumerate*}
    Furthermore,
    \begin{enumerate**}
        \item If $\mathcal{C}$ is compactly-generated, then so is $\mathcal{C}_{\leq0}$ (with compact objects being retracts of objects of the form $\tau_{\leq0}c$ for  $c\in\mathcal{C}^\omega$). In this case, the above conditions are equivalent to $\tau_{\leq0}$ preserving compact objects.
        \item If $\mathcal{C}$ and $\mathcal{C}_{\geq0}$ are compactly-generated, then the above conditions are equivalent to $i_{\geq0}$ preserving compact objects.
    \end{enumerate**}
\end{lemma}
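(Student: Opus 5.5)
The equivalences (1)–(5) will follow from a few formal facts. First, (1)$\Leftrightarrow$(2) is the definition: $\mathcal{C}_{\leq0}$ being closed under filtered colimits in $\mathcal{C}$ is the same as $i_{\leq0}$ preserving them, since colimits in the reflective subcategory are computed by applying $\tau_{\leq0}$ to the colimit in $\mathcal{C}$. For (2)$\Leftrightarrow$(3), I will use that $\tau_{\leq0}$ is a left adjoint, hence preserves all colimits. Then $i_{\leq0}\tau_{\leq0}$ preserves filtered colimits if $i_{\leq0}$ does. Conversely, since $\tau_{\leq0}i_{\leq0}\simeq\id$, writing $i_{\leq0}=(i_{\leq0}\tau_{\leq0})\circ i_{\leq0}$ and noting that a filtered colimit in $\mathcal{C}_{\leq0}$ is $\tau_{\leq0}$ of the colimit in $\mathcal{C}$ gives the reverse implication. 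For (3)$\Leftrightarrow$(4), I will use the functorial cofibre sequence $i_{\geq0}\tau_{\geq0}\to\id\to i_{\leq-1}\tau_{\leq-1}$ together with stability: filtered colimits commute with cofibres, and $i_{\leq-1}\tau_{\leq-1}$ is a shift of $i_{\leq0}\tau_{\leq0}$. For (4)$\Leftrightarrow$(5), I will argue that $i_{\geq0}$ preserves and reflects all colimits, being a fully faithful left adjoint whose essential image is closed under colimits.

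Next, (6) and (7) will follow by adjunction. For (6), I will use $\Map_{\mathcal{C}}(i_{\geq0}c,-)\simeq\Map_{\mathcal{C}_{\geq0}}(c,\tau_{\geq0}-)$ together with (5). For (7), I will use $\Map_{\mathcal{C}_{\leq0}}(\tau_{\leq0}c,-)\simeq\Map_{\mathcal{C}}(c,i_{\leq0}-)$ together with (2).

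For (a), I will argue as follows. If $\mathcal{C}$ is generated under filtered colimits by $\mathcal{C}^\omega$, then applying the colimit-preserving, essentially surjective $\tau_{\leq0}$ shows that $\mathcal{C}_{\leq0}$ is generated under colimits by the objects $\tau_{\leq0}c$. Under (7) these are compact, so $\mathcal{C}_{\leq0}$ is compactly generated, and compact objects are retracts of finite colimits of such. To upgrade to "retracts of $\tau_{\leq0}c$", I will use that $\mathcal{C}^\omega$ is closed under finite colimits and that $\tau_{\leq0}$ preserves them. For the converse, assume $\tau_{\leq0}$ preserves compacts. Then for each $c\in\mathcal{C}^\omega$,
\[
\Map(c,i_{\leq0}\colim x_\alpha)\simeq\Map(\tau_{\leq0}c,\colim x_\alpha)\simeq\colim\Map(c,i_{\leq0}x_\alpha).
\]
Since the compact objects $c$ jointly detect equivalences in $\mathcal{C}$, the comparison map $\colim i_{\leq0}x_\alpha\to i_{\leq0}\colim x_\alpha$ is an equivalence, which gives (2).

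For (b), I will run the same argument in the other direction. Assuming $i_{\geq0}$ preserves compacts and $\mathcal{C}_{\geq0}$ is compactly generated, the adjunction shows that $\Map_{\mathcal{C}_{\geq0}}(d,\tau_{\geq0}-)$ commutes with filtered colimits for every $d\in\mathcal{C}_{\geq0}^\omega$. The compacts $d$ detect equivalences in $\mathcal{C}_{\geq0}$, so this gives (5).

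I expect the main obstacle to be (a), specifically the description of compacts of $\mathcal{C}_{\leq0}$ as exactly the retracts of the $\tau_{\leq0}c$, as opposed to retracts of finite colimits of them. Everything else is formal adjunction bookkeeping.
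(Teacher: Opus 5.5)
Your proposal is correct and follows essentially the same route as the paper. Both arguments use formal adjunction arguments and the cofibre sequence $\tau_{\geq0}\to\id\to\tau_{\leq-1}$; you apply it to (3)$\Leftrightarrow$(4) rather than the paper's (2)$\Leftrightarrow$(5), and you spell out the test-against-compacts converses in (a) and (b) more carefully than the paper does.

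The step you flag as the main obstacle is not one: every $x\in\mathcal{C}_{\leq0}$ is a filtered colimit $\tau_{\leq0}(\colim c_\alpha)\simeq\colim\tau_{\leq0}c_\alpha$ with $c_\alpha\in\mathcal{C}^\omega$, so a compact $x$ is directly a retract of a single $\tau_{\leq0}c_\alpha$.
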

\begin{proof}
    The equivalence $(1)\iff(2)$ is by definition. 
    The implication $(2)\Rightarrow(3)$ follows since $\tau_{\leq0}$ is a left adjoint whereas the converse $(3)\Rightarrow(2)$ follows since the $t$-structure is accessible. A similar argument (coupled with the fact that also $\mathcal{C}_{\geq0}$ is presentable) shows the equivalence $(4)\iff(5)$. 
    To conclude,  note that $(2)\iff(5)$ follows by considering the cofibre sequence of functors $\tau_{\geq0}\to id\to\tau_{\leq-1}\simeq \susp^{-1}\tau_{\leq0}$ on $\mathcal{C}$. 
    Assume now the equivalent conditions $(1)$-$(5)$. 
    The implication $(2)\Rightarrow(7)$ follows since a right adjoint preserving filtered colimits has a left adjoint preserving compact objects. The same argument imples  $(5)\Rightarrow(6)$. 
    Consider the \enquote{furthermore} part. Point (a) follows since a left Bousfield localization of a compactly generated category is again compactly generated. For point (b), if $\mathcal{C}$ and $\mathcal{C}_{\geq0}$ are compactly generated, then $(6)$ implies $(5)$ since a left adjoint that preserves compact objects between compactly-generated categories has a right adjoint that preserves filtered colimits.
\end{proof}

We also need to incorporate a monoidal structure.
\begin{definition}
    Let $\mathcal{C}\in\CAlg(\Pr^\text{L}_\st)$ be a commutative algebra  with a $t$-structure $\ts{C}$. 
    We will say that the $t$-structure is \emph{compatible with $\otimes$} if the inclusion functor   $\mathcal{C}_{\geq0}\into\mathcal{C}$ is symemtric monoidal.
\end{definition}
In other terms, $t$-structure is compatible with $\otimes$ if the monoidal unit is connective and connective objects are closed under tensor products.
\begin{remark}\label{remark: lurie construction of t}
    Let $\mathcal{C}\in\Pr^\text{L}_\st$ be a  presentable stable category. 
    Then \cite[Proposition 1.4.4.11]{Lurie-HA} implies that any small collection of objects $S\subseteq\mathcal{C}$  determines an accessible $t$-structure on $\mathcal{C}$, which is minimal among  $t$-structures such that  $S\subseteq\mathcal{C}_{\geq0}$.
    Assume now that $\mathcal{C}\in\CAlg(\Pr^\text{L}_\st)$ is a commutative algebra object. 
    If $S$ is closed under tensor products and contains the monoidal unit, then the resulting $t$-structure is also compatible with $\otimes$. More generally, to check that the $t$-structure is compatible with $\otimes$ it suffices for the monoidal unit to be in $\mathcal{C}_{\geq0}$ and $S\otimes\mathcal{C}_{\geq0}\subseteq \mathcal{C}_{\geq0}$. 
\end{remark}
\begin{corollary}
    Let $\mathcal{C}\in\CAlg(\Pr^\text{L}_\st)$. Then the monoidal unit $\mb{1}_\mathcal{C}$ determines an accessible $t$-structure whose connective aisle is the smallest full subcategory of $\mathcal{C}$ closed under colimits, retracts and tensor products  containing $\mb{1}_\mathcal{C}$. 
\end{corollary}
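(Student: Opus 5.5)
We apply \autoref{remark: lurie construction of t} to the singleton $S=\{\mb{1}_\mathcal{C}\}$. This produces an accessible $t$-structure whose connective part $\mathcal{C}_{\geq0}$ is the smallest full subcategory of $\mathcal{C}$ that contains $\mb{1}_\mathcal{C}$ and is closed under colimits and extensions. By \cite[Proposition 1.4.4.11]{Lurie-HA}, this is the same as the smallest full subcategory containing $\mb{1}_\mathcal{C}$ and closed under small colimits. Extensions are automatic, since an extension $x\to y\to z$ presents $y$ as the cofibre of $z[-1]\to x$, and this cofibre is a pushout of objects in the subcategory. Denote this subcategory by $\mathcal{C}_{\geq0}$, and let $\mathcal{D}$ be the smallest full subcategory closed under colimits, retracts and tensor products containing $\mb{1}_\mathcal{C}$. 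We must show $\mathcal{C}_{\geq0}=\mathcal{D}$.

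For the inclusion $\mathcal{C}_{\geq0}\subseteq\mathcal{D}$, note that $\mathcal{D}$ contains $\mb{1}_\mathcal{C}$ and is closed under colimits. Since $\mathcal{C}_{\geq0}$ is the minimal such subcategory, the inclusion follows.

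For the converse it suffices to show that $\mathcal{C}_{\geq0}$ is closed under retracts and tensor products.

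\emph{Retracts.} Any connective aisle of a $t$-structure is closed under retracts. Indeed, $\mathcal{C}_{\geq0}$ is the left orthogonal of $\mathcal{C}_{\leq-1}$, and an orthogonality condition passes to retracts.

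\emph{Tensor products.} Fix $x\in\mathcal{C}_{\geq0}$ and consider the full subcategory $\mathcal{E}_x=\{y\in\mathcal{C} : x\otimes y\in\mathcal{C}_{\geq0}\}$.
\begin{enumerate*}
\item Since $x\otimes-$ preserves colimits and $\mathcal{C}_{\geq0}$ is closed under colimits, $\mathcal{E}_x$ is closed under colimits.
\item $\mathcal{E}_x$ contains $\mb{1}_\mathcal{C}$, because $x\otimes\mb{1}_\mathcal{C}\simeq x$.
\end{enumerate*}
By minimality of $\mathcal{C}_{\geq0}$, we get $\mathcal{C}_{\geq0}\subseteq\mathcal{E}_x$. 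Hence $\mathcal{C}_{\geq0}\otimes\mathcal{C}_{\geq0}\subseteq\mathcal{C}_{\geq0}$. This is precisely the criterion $S\otimes\mathcal{C}_{\geq0}\subseteq\mathcal{C}_{\geq0}$ from \autoref{remark: lurie construction of t}, and it also shows that the $t$-structure is compatible with $\otimes$. Thus $\mathcal{C}_{\geq0}$ is closed under colimits, retracts and tensor products and contains $\mb{1}_\mathcal{C}$, so $\mathcal{D}\subseteq\mathcal{C}_{\geq0}$.

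The only delicate point is the description of the aisle generated by $S$ as the colimit-and-extension closure of $S$, which is the content of Lurie's proposition cited in the remark. Everything else is the standard two-variable minimality argument.
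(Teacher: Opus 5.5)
\textbf{The gap.} Your proof breaks at the claim that extensions are automatic. In the rotated sequence $\susp^{-1}z\to x\to y$, the object $y$ is a cofibre of objects of your subcategory only if $\susp^{-1}z$ belongs to it. A colimit-closed subcategory is closed under $\susp$ but not under $\susp^{-1}$. This is why \cite[Proposition 1.4.4.11]{Lurie-HA} describes the aisle as the closure under colimits \emph{and} extensions, and the second condition cannot be dropped.

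For a simple example, in $\Mod_{\mathbb{Z}}$ the colimit closure of $\mathbb{F}_p$ does not contain $\mathbb{Z}/p^2$. The reason is that $\pi_0$ commutes with colimits of connective objects, and colimits of $\mathbb{F}_p$-vector spaces are again $\mathbb{F}_p$-vector spaces. As a result, your inclusion $\mathcal{C}_{\geq0}\subseteq\mathcal{D}$ is unjustified, since $\mathcal{D}$ is only known to be closed under colimits, retracts and tensor products.

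\textbf{The statement as worded is false.} This cannot be repaired. Let $\mathcal{D}_0$ be the colimit closure of $\mb{1}_\mathcal{C}$. It is already closed under tensor products, by your own $\mathcal{E}_x$ argument. It is also closed under retracts, because in a stable category a retract is the telescope of an idempotent. Hence $\mathcal{D}=\mathcal{D}_0$, and $\mathcal{D}_0$ can be strictly smaller than the aisle.

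Take $\mathcal{C}=\Mod_R$ with $R=C^*(S^1;\mathbb{F}_p)$, so that $\pi_0R\cong\pi_{-1}R\cong\mathbb{F}_p$, and let $\epsilon\in\pi_{-1}R$ be a generator.
\begin{enumerate*}
\item In the $t$-structure generated by $R$, the part $\mathcal{C}_{\leq0}$ consists of the modules whose underlying homotopy vanishes in positive degrees. So $R$ lies in the heart.
\item Set $Y=\fib(\epsilon\colon R\to\susp R)$. It gives a non-split extension $0\to R\to Y\to R\to0$ in $\mathcal{C}^\heartsuit$, so $Y$ lies in the heart and in $\mathcal{C}_{\geq0}$.
\item Multiplication by $\epsilon$ is an isomorphism $\pi_0R\to\pi_{-1}R$. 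Hence the long exact sequence shows that the inclusion $R\to Y$ is an isomorphism on $\pi_0$.
\item Consequently every map $R\to Y$ factors through the proper subobject $R\subsetneq Y$, so $Y$ is not a quotient of a sum of copies of $R$ in the heart.
\item Since $\pi_0\colon\mathcal{C}_{\geq0}\to\mathcal{C}^\heartsuit$ is a left adjoint, the connective objects whose $\pi_0$ is such a quotient form a colimit-closed subcategory containing $R$. This subcategory therefore contains $\mathcal{D}_0$.
\end{enumerate*}
Together these give $Y\in\mathcal{C}_{\geq0}\setminus\mathcal{D}$.

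\textbf{The fix.} The statement must be read with extensions among the closure conditions. That is what the paper's remark on Lurie's construction actually yields when applied to $S=\{\mb{1}_\mathcal{C}\}$. With this reading your argument goes through: you get $\mathcal{C}_{\geq0}\subseteq\mathcal{D}$ by minimality. In the tensor step you must also note that $\mathcal{E}_x$ is closed under extensions, which holds because $x\otimes-$ is exact and $\mathcal{C}_{\geq0}$ is extension-closed.
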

In general, the $t$-structure determined by a collection of objects is  neither compatible with filtered colimits nor right complete. 
We now look for conditions that fix these issues.
\begin{lemma}\label{lemma: conditions of t-structure}
    Let $\mathcal{C}\in\Pr^\text{L}_\st$ be a  presentable stable category and let $S\subseteq\mathcal{C}^\omega$ be a small collection of compact objects that generates, in the sense that for every nonzero object $x\in\mathcal{C}$ there exists $s\in S$ and $n\in\mb{Z}$ for which the abelian group $\pi_0\Hom_\mathcal{C}(\susp^n s,x)$ is nonzero\footnote{Notice that this forces $\mathcal{C}$ to be compactly geneated.}.
    Let $\mathcal{C}_{\leq0}$ be  the full subcategory of $\mathcal{C}$ spanned by those objects $x\in\mathcal{C}$ for which the groups $\pi_0\Hom_\mathcal{C}(\susp^ns,x)$ vanish for all $s\in S$ and $n>0$.
    \begin{enumerate*}
        \item Then the $t$-structure generated by $S$ is compatible with filtered colimits.
        \item Then the $t$-structure is right-complete. 
        \item The connective half $\mathcal{C}_{\geq0}$ is compactly generated. More precisely, let $\mathcal{G}$ denote  the smallest full subcategory of $\mathcal{C}$ which contains the generators and is  closed under finite colimits and extensions. Then the inclusion $\mathcal{G}\into\mathcal{C}$ extends to an  equivalence of categories $\Ind(\mathcal{G})\to\mathcal{C}_{\geq0}$.
    \end{enumerate*} 
    In other words, the $t$-structure is right-complete presentable and compactly generated.
\end{lemma}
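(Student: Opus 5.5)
The plan is to identify the coconnective aisle explicitly and read off all three claims from that description. By \cite[Proposition 1.4.4.11]{Lurie-HA}, the $t$-structure generated by $S$ has $\mathcal{C}_{\geq0}$ equal to the smallest full subcategory containing $S$ and closed under colimits and extensions. Its coconnective part $\mathcal{C}_{\leq -1}$ is the right orthogonal $\mathcal{C}_{\geq0}^{\perp}$. Since $\Hom_\mathcal{C}(-,x)$ converts colimits into limits and preserves fibre sequences, an object $x$ is right orthogonal to $\mathcal{C}_{\geq0}$ if and only if $\Hom_\mathcal{C}(s,x)$ is $(-1)$-coconnective as a spectrum for every $s\in S$. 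Equivalently, $\pi_0\Hom_\mathcal{C}(\susp^n s,x)=0$ for all $n\geq0$ and all $s\in S$. Shifting by one recovers the description of $\mathcal{C}_{\leq0}$ given in the statement.

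For (1), we use that each $s\in S$ is compact, so $\Hom_\mathcal{C}(\susp^n s,-)$ commutes with filtered colimits. Filtered colimits of spectra are exact, so $\pi_0$ commutes with them as well. Hence the vanishing conditions defining $\mathcal{C}_{\leq0}$ are stable under filtered colimits, and $\mathcal{C}_{\leq0}$ is closed under filtered colimits in $\mathcal{C}$. This is condition (1) of \autoref{lemma: NP 6.1.1}.

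For (2), right completeness means that the comparison map $\colim_n\tau_{\geq -n}x\to x$ is an equivalence; equivalently, $\bigcap_n\mathcal{C}_{\leq -n}=0$. I would argue via the intersection. Suppose $x$ lies in every $\mathcal{C}_{\leq -n}$. Then $\pi_0\Hom_\mathcal{C}(\susp^k s,x)=0$ for every $s\in S$ and every $k\in\mb{Z}$, since any integer $k$ exceeds $-n$ for $n$ large. Because $S$ generates, this forces $x\simeq0$. To pass from the intersection statement to the colimit statement, form the cofibre $c$ of $\colim_n\tau_{\geq-n}x\to x$. By (1) and \autoref{lemma: NP 6.1.1}, $\tau_{\leq m}$ commutes with filtered colimits, so $\tau_{\leq m}$ applied to this map is an equivalence for every $m$. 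Hence $c$ lies in every $\mathcal{C}_{\leq -n}$, and therefore $c\simeq0$.

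For (3), every object of $\mathcal{G}$ is compact in $\mathcal{C}$, being built from the compact objects in $S$ by finite colimits and extensions. These objects are then also compact in $\mathcal{C}_{\geq0}$, because $i_{\geq0}$ preserves colimits. It follows that the induced functor $\Ind(\mathcal{G})\to\mathcal{C}_{\geq0}$ is fully faithful. Its essential image is closed under colimits and extensions, since $\mathcal{G}$ itself is closed under finite colimits and extensions, and it contains $S$. By minimality of $\mathcal{C}_{\geq0}$, the image is all of $\mathcal{C}_{\geq0}$. The main obstacle is checking closure under extensions and colimits of this image inside $\mathcal{C}_{\geq0}$. To do this I would show that the image is exactly the class of objects left orthogonal to $\mathcal{G}^{\perp}\cap\mathcal{C}_{\geq0}$, up to the appropriate shift, via the standard small object argument. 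A simpler route is to observe that $\Ind(\mathcal{G})$ is the connective part of the $t$-structure generated by $\mathcal{G}$, which has the same generators as the one generated by $S$. If $\mathcal{G}$ is not idempotent complete, I would replace it with its idempotent completion, or note that $\Ind$ is insensitive to this.
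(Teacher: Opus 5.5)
Your description of $\mathcal{C}_{\leq0}$ and your proof of (1) are correct. A direct argument is a reasonable alternative to the paper, which simply cites \cite[Proposition C.6.3.1]{Lurie-SAG}. The genuine gap is in (3). Full faithfulness of $\Ind(\mathcal{G})\to\mathcal{C}_{\geq0}$ is formal, and so is closure of its essential image $\mathcal{D}$ under colimits. So the entire content is that $\mathcal{D}$ is closed under extensions in $\mathcal{C}$, and this does not follow just from $\mathcal{G}$ being extension-closed. You flag this step but leave it open. The small object argument is only gestured at, and the ``simpler route'' is circular: the $t$-structure generated by $\mathcal{G}$ is the one generated by $S$, so saying its connective part is $\Ind(\mathcal{G})$ is exactly statement (3).

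The missing idea is a compactness argument.
\begin{itemize}
\item Take a cofibre sequence $a\to b\to c$ with $a,c\in\mathcal{D}$, and write $b\simeq\fib(c\to\susp a)$.
\item Write $c\simeq\colim_i c_i$ as a filtered colimit with $c_i\in\mathcal{G}$. Since fibres commute with colimits in a stable category, $b\simeq\colim_i\fib(c_i\to\susp a)$. So you may assume $c\in\mathcal{G}$.
\item Then $c$ is compact, so the map $c\to\susp a\simeq\colim_j\susp a_j$ (with $a_j\in\mathcal{G}$) factors through some $\susp a_{j_0}$.
\item Hence $b\simeq\colim_{j\geq j_0}\fib(c\to\susp a_j)$. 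Each term sits in a cofibre sequence $a_j\to\fib(c\to\susp a_j)\to c$, so it lies in $\mathcal{G}$ because $\mathcal{G}$ is extension-closed.
\end{itemize}
Thus $b\in\mathcal{D}$, and minimality gives $\mathcal{D}=\mathcal{C}_{\geq0}$.

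There is also a wrong step in (2): ``$\tau_{\leq m}$ applied to this map is an equivalence, hence $c$ lies in every $\mathcal{C}_{\leq -n}$'' fails in both halves. First, $\tau_{\leq m}\tau_{\geq -n}\simeq\tau_{\geq -n}\tau_{\leq m}$. So the map $\tau_{\leq m}(\colim_n\tau_{\geq-n}x)\to\tau_{\leq m}x$ is the same comparison map for the object $\tau_{\leq m}x$. Commuting $\tau_{\leq m}$ past the colimit therefore only restates the problem. Second, even if $\tau_{\leq m}$ of the map were an equivalence for every $m$, that would show the cofibre $c$ is infinitely \emph{connective}, i.e.\ $c\in\bigcap_m\mathcal{C}_{\geq m}$. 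You cannot kill such an object without left separatedness, and it is not the same as $c\in\bigcap_n\mathcal{C}_{\leq -n}$.

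The fix is direct. Cofibres commute with colimits, so $c\simeq\colim_n\tau_{\leq -n-1}x$. For each $N$, the terms with $n\geq N$ lie in $\mathcal{C}_{\leq -N-1}$, which is closed under filtered colimits by (1). Hence $c\in\bigcap_N\mathcal{C}_{\leq -N}$, and this intersection is zero by your generation argument. Alternatively, presumably what you meant: use $\tau_{\geq -m}$, which commutes with filtered colimits by (1) and visibly sends the map to an equivalence. This gives $\Hom_{\mathcal{C}}(\susp^k s,c)\simeq 0$ for all $s\in S$ and $k\in\mathbb{Z}$, so $c\simeq0$.
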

\begin{proof}
    This is \cite[Proposition C.6.3.1]{Lurie-SAG}.
\end{proof}
The previous result is particularly useful if coupled with the following.  
\begin{lemma}\label{lemma: thick, compact are eventually connective}
    Let $\mathcal{C}\in\Pr^{\L,\omega}_\st$ be a compactly generated stable category with a $t$-structure. 
    Let $S\subseteq \mathcal{C}_{\geq0}\cap\mathcal{C}^\omega$ be a set of connective compact objects such that $\mathcal{C}$ is the smallest  localizing subcategory containing $S$. 
    Then every compact object $\mathcal{C}^\omega\subseteq\mathcal{C}^-$ is eventually connective.
\end{lemma}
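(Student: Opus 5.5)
The plan is to use the standard thick-subcategory argument. Let $\mathcal{T}\subseteq\mathcal{C}$ be the full subcategory spanned by compact, eventually connective objects, that is $\mathcal{T}=\mathcal{C}^\omega\cap\mathcal{C}^-$. I will show that $\mathcal{T}$ is a thick subcategory of $\mathcal{C}^\omega$ containing $S$. Then I will invoke the classical fact that, when $\mathcal{C}$ is compactly generated by a set $S$ of compact objects, $\mathcal{C}^\omega$ is the thick closure of $S$. Together these give $\mathcal{T}=\mathcal{C}^\omega$, which is the claim.

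The first step is to check that $\mathcal{T}$ is thick. It contains $S$ because $S\subseteq\mathcal{C}_{\geq0}$. It is closed under shifts because $\susp^{k}\mathcal{C}_{\geq -n}=\mathcal{C}_{\geq -n+k}$. It is closed under cofibres and extensions: if $x\in\mathcal{C}_{\geq -n}$ and $y\in\mathcal{C}_{\geq -m}$, then both lie in $\mathcal{C}_{\geq -\max(n,m)}$. Connective aisles are closed under colimits, so the cofibre of $x\to y$ lies in $\mathcal{C}_{\geq -\max(n,m)}$, and fibres are handled by one further desuspension. It is closed under retracts because each $\mathcal{C}_{\geq -n}$ is closed under retracts, being the left orthogonal of $\mathcal{C}_{\leq -n-1}$. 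Compactness is preserved by all of these operations, so $\mathcal{T}$ is a thick subcategory of $\mathcal{C}^\omega$.

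The second step is the thick closure fact. Because the localizing subcategory generated by $S$ is all of $\mathcal{C}$, every compact object is a retract of an object built from $S$ by finitely many shifts, cofibres and extensions. This is Neeman's theorem, and in $\infty$-categorical language it reads $\Ind(\operatorname{thick}(S))\simeq\mathcal{C}$, so that $\mathcal{C}^\omega=\operatorname{thick}(S)$ after idempotent completion.

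I expect this second step to be the only nontrivial input. To justify it I would argue as follows. The inclusion $\operatorname{thick}(S)\into\mathcal{C}^\omega$ induces a fully faithful, colimit-preserving functor $\Ind(\operatorname{thick}(S))\to\mathcal{C}$, since the objects of $\operatorname{thick}(S)$ are compact in $\mathcal{C}$. Its essential image is localizing and contains $S$, hence it is all of $\mathcal{C}$. Taking compact objects on both sides then gives $\operatorname{thick}(S)=\mathcal{C}^\omega$, because $\operatorname{thick}(S)$ is already idempotent complete.
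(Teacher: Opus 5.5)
Your proposal is correct and takes the paper's approach: the paper likewise observes that $\mathcal{C}^-$ is thick and contains $S$, hence contains the thick closure of $S$, which equals $\mathcal{C}^\omega$. The only difference is that you also prove $\mathcal{C}^\omega=\text{Thick}(S)$ via the equivalence $\Ind(\text{Thick}(S))\simeq\mathcal{C}$, whereas the paper takes this as known here and sketches it later, in its remark on thick generation.
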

\begin{proof}
    Since $S\subseteq\mathcal{C}^-$ and since $\mathcal{C}^-$ is thick, the thick closure of $S$ lies in $\mathcal{C}^-$. 
    Since $\mathcal{C}$ is the smallest  localizing subcategory containing $S$, the  thick closure of $S$ coincides with $\mathcal{C}^\omega$. Thus $\mathcal{C}^\omega\subseteq\mathcal{C}^-$ .
\end{proof}

Another assumption that will be useful later is left completeness. Recall that a $t$-structure $\ts{C}$ on a stable category $\mathcal{C}$  is \emph{left complete} if the canonical functor $\mathcal{C}\to\lim_{n\in\N}(\mathcal{C}_{\leq n},\tau_{\leq n})$ is an equivalence of categories. 
\begin{remark}
    Let $\mathcal{C}$ be a stable category equipped with a  left complete $t$-structure $\ts{C}$. Let $x\in\cap_{n\in\N}\mathcal{C}_{\geq n}$. Then $\tau_{\leq n}x\simeq0$ for all $n\in\N$ so that $x\simeq 0$ by left completeness.
    In other words, the $t$-structure is \emph{left separated}.
    The converse holds if $\mathcal{C}$ has countable products and the connective half is closed under such products; see \cite[Proposition 1.2.1.19]{Lurie-HA}.
\end{remark}

We summarize the situation.
\begin{remark}\label{remark: summary 1}
    Let $\mathcal{C}\in\CAlg(\Pr^{\L,\omega}_\st)$ be a commutative algebra and let $S\subseteq \mathcal{C}$ be a small collection of compact objects. Let $\ts{C}$ be the accessible $t$-structure generated by $S$. Convinient assumptions are then the followings.
    \begin{enumerate*}
        \item The collection $S$ generates. 
        \item The $t$-structure is left complete (and hence left separated).
        \item The $t$-structure is compatible with $\otimes$. 
    \end{enumerate*}
    Then $(1)$ coupled with \autoref{lemma: conditions of t-structure} implies that the $t$-structure generated by $S$ is right-complete presentable and compactly generated. 
    It does not say anything about left completeness and compatibility with $\otimes$.
    Nonetheless, point $(3)$  is equivalent to requiring the monoidal unit to be in $\mathcal{C}_{\geq0}$ and $S\otimes\mathcal{C}_{\geq0}\subseteq \mathcal{C}_{\geq0}$. 
    In particular, if $S$ is closed under tensor products and contains the monoidal unit, then point $(3)$ is satisfied.
    Another convinient assumption is:
    \begin{enumerate*}
        \item[(4)] Compact objects $\mathcal{C}^\omega\subseteq\mathcal{C}^-$ are eventually connective.
    \end{enumerate*}
    Then \autoref{lemma: thick, compact are eventually connective} gives an easy condition for $(4)$ to be satisfied, namely that $\mathcal{C}$ is the smallest localizing subcategory containing $S$.
\end{remark}
It may seem hard to construct such $S$ in applications. 
We give an idea here (which we will study more in detail in \autoref{section: examples}).
\begin{example}\label{example: moral}
    Let $X$ be a geometric object of some kind such that  there is a category of \enquote{quasi-coherent sheaves} $\QCoh(X)\in\CAlg^\rig(\Pr^{\L,\omega}_\st)$ and assume that there exists  a \enquote{standard $t$-structure} compatible with $\otimes$.
    Let $\Perf(X)$ denote the full subcategory on compact objects.  
    Then the essentially small category $\Perf(X)_{\geq0} = \Perf(X)\cap\QCoh(X)_{\geq0}$ is symmetric monoidal. 
    If $X$ satisfies \enquote{connective perfect generation}, meaning that the connective half is generated by $\Perf(X)_{\geq0}$, then by picking a small skeleton\footnote{
        In general the small skeleton $S$ does not inherit  a symmetric monoidal structure from $\Perf(X)_{\geq0}$ so that it is not clear that the $t$-structure generated by $S$ is compatible with $\otimes$. 
        Fortunately, the $t$-structure generated via \autoref{lemma: conditions of t-structure} depends only on the skeleton of the generators, so that the $t$-structure generated by $S$ and the one by $\Perf(X)_{\geq0}$ coincide. 
        In practice, it is useful to close $S$ under $\otimes$ and unit to an essentially small collection $S^\otimes$. 
        The symmetric monoidal category $S^\otimes\into\Perf(X)_{\geq0}$ is not a skeleton anymore, but it has the same skeleton of $S$, so that the resulting $t$-structures are the same.
    } 
    $S\subseteq\Perf(X)_{\geq0}$ and by applying \autoref{lemma: conditions of t-structure}, the resulting $t$-structure is the same one we started from.  
    It follows formally that the $t$-structure is right-complete presentable and compactly generated (and compatible with $\otimes$). This leaves us to check left completeness.   
    For $(4)$, it is often the case that $\QCoh(X)$ admits a single \enquote{perfect} generator which is eventually connective, thus shifting it and including it in $S$ allows us to deduce that compact objects are eventually connective.
\end{example}
Another case where $S$ is easy to construct is when the monoidal unit is a compact generator.  
\begin{example}
    Let $\mathcal{C}\in\CAlg^\rig(\Pr^{\L,\omega}_\st)$ and assume that the monoidal unit generates. 
    Then the $t$-structure generated by $S=\{\mb{1}\}\subseteq \mathcal{C}^\omega$ produces a right-complete presentable $t$-structure compatible with $\otimes$.  Furthermore,  $\mathcal{C}^\omega\subseteq\mathcal{C}^-$.
    Again, left completeness is not automatic.
\end{example}
\begin{example}
    The left completeness in the previous example really needs to be checked. Consider for example  the complex $K$-theory spectrum $KU$ and consider the category of modules $\Mod_{KU}(\Sp)$. 
    Then the connective half $t$-structure generated by $KU$ is  all of $\Mod_{KU}(\Sp)$, since $KU$ a $2$-periodic spectrum. This $t$-structure is obviously not left complete. 
\end{example}

\subsection{$t$-exact functors}
We add to the previous discussion the input of functors.
\begin{definition}
    Let $f:\mathcal{B}\to\mathcal{C}$ be an exact functor between stable categories with $t$-structures $\ts{B}$ and $\ts{C}$. We will say that:
    \begin{enumerate*}
    \item The functor $f$ is \emph{right $t$-exact up to a shift $r\in\N$} if it sends $0$-connective objects to $(-r)$-connective objects. That is, $f(\mathcal{B}_{\geq0})\subseteq \mathcal{C}_{\geq -r}$.
    \item The functor $f$ is \emph{left $t$-exact up to a shift $l\in\N$} if it sends $0$-coconnective objects to $l$-coconnective objects. That is, $f(\mathcal{B}_{\leq0})\subseteq \mathcal{B}_{\leq l}$.
    \end{enumerate*}
    In the case where $r,l$ are zero, we will say that $f$ is right $t$-exact, left $t$-exact, respectively.
\end{definition}
The next observation explains the discrepancies between the indexes. 
\begin{remark}
    Let $f^\text{L}:\mathcal{B}\rightleftarrows\mathcal{C}:f^\text{R}$ be adjoint functors between stable categories equipped with $t$-structures $\ts{B}$ and $\ts{C}$.
    Let $r\in\N$. Then a compuation with hom-spaces shows that $f^\text{L}$ is right $t$-exact up to a shift $r$ if and only if $f^\text{R}$ is left $t$-exact up to a shift $r$. 
\end{remark}

\begin{remark}\label{remark: t-exact functors on the heart}
    Let $f^\text{L}:\mathcal{B}\rightleftarrows\mathcal{C}:f^\text{R}$ be adjoint functors between stable categories equipped with $t$-structures $\ts{B}$ and $\ts{C}$.
    Let $i_\mathcal{B}:\mathcal{B}^\heartsuit\into \mathcal{B}$ and $i_\mathcal{C}:\mathcal{C}^\heartsuit\into \mathcal{C}$ be the  inclusions of the hearts.
    We denote by $f^\text{L}_\heartsuit$ and $f^\text{R}_\heartsuit$ the compositions
    \[
    f^\text{L}_\heartsuit: \mathcal{B}^\heartsuit\xrightarrow{i_\mathcal{B}}\mathcal{B}\xrightarrow{f^\text{L}}\mathcal{C}\xrightarrow{\pi_0^\mathcal{C}}\mathcal{C}^\heartsuit,\qquad 
    f^\text{R}_\heartsuit:
    \mathcal{C}^\heartsuit\xrightarrow{i_\mathcal{C}}\mathcal{C}\xrightarrow{f^\text{R}}\mathcal{B}\xrightarrow{\pi_0^\mathcal{B}}\mathcal{B}^\heartsuit
    \]
    and call them the \emph{induced functors of} $f^\text{L}\dashv f^\text{R}$. 
\end{remark}
We conclude with a technical result. 

\begin{lemma}\label{lemma: properties of functors on the heart}
    Let $f^\text{L}:\mathcal{B}\rightleftarrows\mathcal{C}:f^\text{R}$ be adjoint functors between stable categories equipped with $t$-structures $\ts{B}$ and $\ts{C}$.
    Assume that  $f^\text{L}$ is right $t$-exact up to a shift $r\in\N$.
    \begin{enumerate*}
        \item Given $x\in\mathcal{B}_{\geq0}$ and $y\in\mathcal{C}_{\leq0}$, there are a natural isomorphisms
        \[
        f^\text{L}_\heartsuit(\pi_0^\mathcal{B}x)\to \pi_{-r}^\mathcal{C}(f^\text{L}(x)),\qquad
        \pi_{-r}^\mathcal{B}(f^\text{R}(y))\to f^\text{R}_\heartsuit(\pi_0^\mathcal{C}(y))
        \]
        in $\mathcal{C}^\heartsuit$ and $\mathcal{B}^\heartsuit$, respectively.
        \item  The adjunction $f^\text{L}\dashv f^\text{R}$ determines an adjunction $f^\text{L}_\heartsuit\dashv f^\text{R}_\heartsuit$. 
    \end{enumerate*}
\end{lemma}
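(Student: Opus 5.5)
The plan is a direct truncation argument using only the adjunction and the hom-orthogonality of the two aisles. The shift bookkeeping has to be read carefully. Because $f^\L$ is only right $t$-exact up to a shift $r$, the homotopy object of $f^\L(x)$ that depends only on $\pi_0^\mathcal{B}x$ is the bottom one, in degree $-r$. So for $r>0$ I will read $f^\L_\heartsuit$ and $f^\R_\heartsuit$ with the shift built in, namely as $\pi_{-r}^\mathcal{C}\circ f^\L\circ i_\mathcal{B}$ and $\pi_0^\mathcal{B}\circ f^\R\circ \susp^{-r}\circ i_\mathcal{C}$. For $r=0$ these agree with the functors of \autoref{remark: t-exact functors on the heart}.

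For part (1), fix $x\in\mathcal{B}_{\geq0}$ and use the cofibre sequence $\tau_{\geq1}x\to x\to \pi_0^\mathcal{B}x$, where the heart is viewed inside $\mathcal{B}$. Applying $f^\L$ gives a cofibre sequence whose first term lies in $\mathcal{C}_{\geq 1-r}$. The long exact sequence of $\pi_*^\mathcal{C}$ therefore shows that $\pi_{-r}^\mathcal{C}f^\L(x)\to\pi_{-r}^\mathcal{C}f^\L(\pi_0^\mathcal{B}x)$ is an isomorphism, since $\pi_{-r}$ and $\pi_{-r-1}$ of $f^\L(\tau_{\geq1}x)$ vanish. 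Naturality in $x$ is automatic because all maps are induced by the functorial truncation.

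For $f^\R$, fix $y\in\mathcal{C}_{\leq0}$. By the remark preceding \autoref{remark: t-exact functors on the heart}, $f^\R$ is left $t$-exact up to the shift $r$. Applying it to the fibre sequence $\pi_0^\mathcal{C}y\to y\to\tau_{\leq-1}y$ (suitably shifted), the error term lands in a range of degrees where it cannot affect the extremal homotopy object. The same five-term exact sequence argument then yields the second natural isomorphism. This is the step I expect to need the most care: it is the only place where the exact degrees must be tracked, and one has to check that the shift conventions of the statement match the ones just fixed.

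For part (2), take $a\in\mathcal{B}^\heartsuit$ and $c\in\mathcal{C}^\heartsuit$ and compute:
\begin{align*}
\Hom_{\mathcal{C}^\heartsuit}(\pi_{-r}^\mathcal{C}f^\L a,\,c)
&\simeq \Hom_\mathcal{C}(\tau_{\leq -r}f^\L a,\,\susp^{-r}c)
\simeq \Hom_\mathcal{C}(f^\L a,\,\susp^{-r}c)\\
&\simeq \Hom_\mathcal{B}(a,\,f^\R\susp^{-r}c)
\simeq \Hom_{\mathcal{B}^\heartsuit}(a,\,\pi_0^\mathcal{B}f^\R\susp^{-r}c).
\end{align*}
The steps are justified as follows.
\begin{enumerate*}
\item The first equivalence holds because $f^\L a\in\mathcal{C}_{\geq -r}$, so $\tau_{\leq -r}f^\L a\simeq \susp^{-r}\pi_{-r}^\mathcal{C}f^\L a$.
\item The second holds because $\susp^{-r}c\in\mathcal{C}_{\leq -r}$, which lets us drop the truncation.
\item The third is the adjunction $f^\L\dashv f^\R$.
\item The last holds because $f^\R\susp^{-r}c\in\mathcal{B}_{\leq 0}$ by left $t$-exactness up to shift $r$, while $a$ is connective.
\end{enumerate*}
All of these equivalences are natural in $a$ and $c$, which exhibits $f^\L_\heartsuit\dashv f^\R_\heartsuit$. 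The unit and counit are induced from those of $f^\L\dashv f^\R$ composed with the truncation maps.
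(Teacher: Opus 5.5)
Your argument is the paper's: the cofibre sequence $\tau_{\geq1}x\to x\to\pi_0^{\mathcal{B}}x$ with the long exact sequence for (1), and a chain of hom-equivalences through the truncation adjunctions and $f^{\L}\dashv f^{\R}$ for (2). Where you differ is the shift bookkeeping, and there you are more careful than the paper. Its proof of (2) invokes $f^{\L}(\mathcal{B}_{\geq0})\subseteq\mathcal{C}_{\geq0}$ and $f^{\R}(\mathcal{C}_{\leq0})\subseteq\mathcal{B}_{\leq0}$, so it only treats $r=0$. For $r>0$ the lemma is false with $f^{\L}_\heartsuit=\pi_0^{\mathcal{C}}f^{\L}i_{\mathcal{B}}$:
\begin{enumerate*}
\item for $f^{\L}=\susp^{-r}$ on $\Sp$ this functor is zero, whereas $\pi_{-r}f^{\L}(x)\simeq\pi_0x$ need not be;
\item for $f^{\L}=-\otimes\susp^{-1}H\mathbb{F}_2$ on $\Mod_{H\mathbb{Z}}$ (with $r=1$), the literal heart functors are $a\mapsto\mathrm{Tor}_1(a,\mathbb{Z}/2)$ and $c\mapsto c/2c$. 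These are not adjoint, since the first is not right exact.
\end{enumerate*}
So your shifted heart functors are the right repair. Your proof of the first isomorphism in (1) and your chain for (2) are correct as written.

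The step you deferred, however, does not close against the statement. Since $f^{\R}(\mathcal{C}_{\leq0})\subseteq\mathcal{B}_{\leq r}$, the extremal homotopy object of $f^{\R}(y)$ is the top one, in degree $+r$. Apply $f^{\R}\susp^{-r}$ to $\tau_{\geq0}y\to y\to\tau_{\leq-1}y$. The last term lands in $\mathcal{B}_{\leq-1}$, so its $\pi_0$ and $\pi_1$ vanish, and you get $\pi_r^{\mathcal{B}}f^{\R}(y)\cong\pi_0^{\mathcal{B}}f^{\R}\susp^{-r}\pi_0^{\mathcal{C}}y=f^{\R}_\heartsuit(\pi_0^{\mathcal{C}}y)$.

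The index $-r$ in the statement cannot be achieved when $r>0$. For $f^{\R}=\susp^{r}$ and $0\neq y\in\mathcal{C}^\heartsuit$ one has $\pi_{-r}^{\mathcal{B}}f^{\R}(y)\simeq\pi_{-2r}^{\mathcal{C}}y=0$, while $f^{\R}_\heartsuit(y)=y$. The paper's ``by duality'' hides this, because passing to opposite categories turns $\pi_{-r}$ into $\pi_r$. Write the second isomorphism with $\pi_r^{\mathcal{B}}$ and your proof is complete.
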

\begin{proof}
    Consider $(1)$. By duality it suffices to prove the first isomorphism.
    Consider $x\in\mathcal{B}_{\geq0}$ and consider the cofibre sequence $\tau_{\geq1}^\mathcal{B}x\to x\to \tau_{\leq0}^\mathcal{B}x$ in $\mathcal{B}$. Since $x$ is $0$-connective, we can identify  $\tau_{\leq0}^\mathcal{B}x$ with $\pi_0^\mathcal{B}(x)$. 
    Apply $f^\text{L}$ to obtain the cofibre sequence $f^\text{L}(\tau_{\geq1}^\mathcal{B}x)\to f^\text{L}(x)\to f^\text{L}(\pi_0^\mathcal{B}x)$ in $\mathcal{C}$.
    Since $f^\text{L}$ is right $t$-exact up to a shift $r$, it sends connective objects to $(-r)$-connective objects. In particular, the natural map $f^\text{L}(\pi_0^\mathcal{B}x)\to\pi_{-r}^\mathcal{C}f^\text{L}(x)$ has to be an equivalence.
    The proof of point $(2)$ is a chain of equivalences. 
    Take $x\in\mathcal{B}^\heartsuit$ and $y\in\mathcal{C}^\heartsuit$ and compute
    \[\begin{aligned}
    \Hom_{\mathcal{C}^\heartsuit}(f^\text{L}_\heartsuit(x),y)
    &= \Hom_{\mathcal{C}^\heartsuit}(\pi_0^\mathcal{C}f^\text{L}i_\mathcal{B}(x),y)\\
    &\iso\Hom_{\mathcal{C}}(f^\text{L}i_\mathcal{B}(x),i_\mathcal{C}(y))\\
    &\iso\Hom_{\mathcal{B}}(i_\mathcal{B}(x),f^\text{R}i_\mathcal{C}(y))\\
    &\iso\Hom_{\mathcal{B}^\heartsuit}(x, \pi_0^\mathcal{B}f^\text{R}i_\mathcal{C}(y)) 
    = \Hom_{\mathcal{B}^\heartsuit}(x,f^\text{R}_\heartsuit(y)).
    \end{aligned}\]
    Here the first equality is the definition of $f^\text{L}_\heartsuit$, the second by our assumption (since $f^\text{L}(\mathcal{B}_{\geq0})\subseteq\mathcal{C}_{\geq0}$ and since $i_\mathcal{C}(y)$ is $0$-coconnective, the space of maps $f^\text{L}i_\mathcal{B}(x)\to i_\mathcal{C}(y)$ is discrete by using the standard argument with the cofibre sequence given by truncating), the third one by adjunction $f^\text{L}\dashv f^\text{R}$, and the  fourth one again by assumption (since $f^\text{R}(\mathcal{C}_{\leq0})\subseteq\mathcal{B}_{\leq0}$ and since $i_\mathcal{B}(x)\in\mathcal{B}_{\geq0}$ is $0$-connective, the space of maps $i_\mathcal{B}(x)\to f^\text{R}i_\mathcal{C}(y)$ is discrete, again by the same argument above). 
    The last equality is the definition of $f^\text{R}_\heartsuit$.
\end{proof}

\subsection{Finite objects in a $t$-structure}
Let $\mathcal{C}\in\Pr^\text{L}$ be  a presentable category. Recall that an object $x\in\mathcal{C}$ is \emph{almost compact} if for every integer $n\geq0$ the truncation $\tau_{\leq n}x$ is a compact object of $\tau_{\leq n}\mathcal{C}$.
Here $\tau_{\leq n}\mathcal{C}$ denotes the  full subcategory of $\mathcal{C}$ spanned by the $n$-truncated objects, that is, by those objects $x\in\mathcal{C}$ such that the mapping space $\Hom_\mathcal{C}(y,x)$ is $n$-truncated for all $y\in\mathcal{C}$.

\begin{definition}\label{definition: pseudo-coherent}
    Let $\mathcal{C}\in\Pr^\text{L}_\st$ be a presentable stable category equipped with an accessible $t$-structure. 
    We will say that an object $x\in\mathcal{C}$ is \emph{pseudo-coherent} if it is $n$-connective $x\in\mathcal{C}_{\geq n}$ for some $n\in\mb{Z}$ and almost compact as an object of $\mathcal{C}_{\geq n}$. We will furthermore say that $x$ is \emph{coherent} if pseudo-coherent and coconnective, that is $x\in\mathcal{C}_{\leq m}$ for some $m\in\mb{Z}$.
    
    We let $\Coh{C}\subseteq\PCoh{C}$ denote the full subcategories of $\mathcal{C}$ spanned by the coherent and pseudo-coherent objects, respectively. 
\end{definition}
\begin{remark}
    The nomenclature we have chosen here comes from algebraic geometry. 
    Pseudo-coherent objects were first introduced by Illusie in \cite{berthelot1971generalites} via a slightly different definition. 
    There, pseudo-coherent complexes on a scheme  are defined as complexes which, locally, are (quasi-)isomorphic to bounded above complexes which admit projective resolutions by finitely generated projectives.
    If the scheme appears to be noetherian, then pseudo-coherent complexes are precisely the bounded-above complexes whose cohomology is coherent. 
    A similar statement can be deduced from \autoref{proposition: pseudo coherent and pi_n} and from \autoref{section: examples}.
\end{remark}

\begin{remark}
    Let $\mathcal{C}$ be a stable category equipped with a $t$-structure $\ts{C}$. Then an object $x\in\mathcal{C}_{\geq n}$ is $k$-truncated  as an object of $\mathcal{C}_{\geq n}$ if and only if  $x\in\mathcal{C}_{\leq n+k}$.
    In particular, $\tau_{\leq k}\mathcal{C}_{\geq n} =\mathcal{C}_{\geq n}\cap \mathcal{C}_{\leq n+ k}$.
\end{remark}

The first result of this section shows some basic properties of these subcategories.
\begin{lemma}\label{lemma: stupid properties of Pcoh and coh}
     Let $\mathcal{C}\in\Pr^\text{L}_\st$ be a presentable stable category equipped with an accessible $t$-structure. 
     Then $\Coh{C}\subseteq\PCoh{C}$ are idempotent-complete stable subcategories of $\mathcal{C}$. 
\end{lemma}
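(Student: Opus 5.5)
We need to show that $\PCoh{C}$ contains $0$ and is closed under shifts, fibres/cofibres and retracts. Since $\mathcal{C}^+$ is a stable, retract-closed subcategory of $\mathcal{C}$, the same properties for $\Coh{C}=\PCoh{C}\cap\mathcal{C}^+$ then follow.

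The first step is a reformulation. We claim that $x\in\mathcal{C}_{\geq n}$ is almost compact in $\mathcal{C}_{\geq n}$ if and only if, for every $m\geq n$, the truncation $\tau_{\leq m}x$ is compact in $\mathcal{C}_{\geq n}\cap\mathcal{C}_{\leq m}$. This follows from the remark identifying $\tau_{\leq k}\mathcal{C}_{\geq n}=\mathcal{C}_{\geq n}\cap\mathcal{C}_{\leq n+k}$. In mapping-space terms, the condition is that $\Hom_\mathcal{C}(x,-)$ commutes with filtered colimits computed in $\mathcal{C}_{\geq n}\cap\mathcal{C}_{\leq m}$. Two consequences are needed.

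\emph{Independence of $n$.} If $x\in\mathcal{C}_{\geq n}$ is almost compact there, it is also almost compact in $\mathcal{C}_{\geq n'}$ for every $n'\leq n$. To see this, take a filtered diagram $y_\alpha$ in $\mathcal{C}_{\geq n'}\cap\mathcal{C}_{\leq m}$. Maps from $x$ factor through $\tau_{\geq n}$, so $\Hom(x,y_\alpha)\simeq\Hom(x,\tau_{\geq n}y_\alpha)$. The functor $\tau_{\geq n}$ restricted to $\mathcal{C}_{\geq n'}$ is a right adjoint. It preserves filtered colimits of uniformly bounded-above objects, because the colimit in $\mathcal{C}_{\geq n'}\cap\mathcal{C}_{\leq m}$ is $\tau_{\leq m}$ of the colimit in $\mathcal{C}_{\geq n'}$, and the cofibre sequence $\tau_{\geq n}\to\id\to\tau_{\leq n-1}$ together with the fact that colimits commute with cofibres handles the rest.

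I expect this to be the main technical nuisance. Without assuming compatibility with filtered colimits, one must be careful to work throughout with colimits inside the truncated pieces, where $\mathcal{C}_{\geq n}\cap\mathcal{C}_{\leq m}$ is an $(m-n+1)$-category. In particular, filtered colimits there are computed by applying $\tau_{\leq m}$ to the colimit formed in $\mathcal{C}_{\geq n}$. By this independence, given finitely many pseudo-coherent objects we may take a common $n$.

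\emph{Shifts and zero.} The object $0$ is trivially pseudo-coherent. Since $\susp$ is an equivalence $\mathcal{C}_{\geq n}\simeq\mathcal{C}_{\geq n+1}$ that preserves truncation levels, $\PCoh{C}$ is stable under $\susp^{\pm1}$.

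\emph{Cofibres.} Let $x\to y\to z$ be a cofibre sequence with $x,y\in\mathcal{C}_{\geq n}$ almost compact. Then $z\in\mathcal{C}_{\geq n}$. For a filtered diagram $w_\alpha$ in $\mathcal{C}_{\geq n}\cap\mathcal{C}_{\leq m}$, consider the fibre sequence $\Hom(z,w)\to\Hom(y,w)\to\Hom(x,w)$. Applying this with $w$ replaced by its loops and deloops inside larger truncated windows shows that $\Hom(z,-)$ commutes with these colimits on all homotopy groups, by the five lemma. This uses that compactness of $x$ at all levels $m$ gives the needed control on $\pi_i$ for every $i$, and that the filtered colimit of spaces is exact. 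Hence $z$ is almost compact.

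\emph{Fibres and retracts.} Fibres are cofibres shifted by $\susp^{-1}$, so closure under fibres follows from closure under cofibres and shifts. For retracts, if $x$ is a retract of a pseudo-coherent $y\in\mathcal{C}_{\geq n}$, then $x\in\mathcal{C}_{\geq n}$, and each $\tau_{\leq m}x$ is a retract of the compact object $\tau_{\leq m}y$. Since compact objects are closed under retracts, $x$ is pseudo-coherent.
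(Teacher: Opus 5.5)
\textbf{The gap is in your ``independence of $n$'' step.} This is exactly where the lemma needs an argument; the paper's one-line proof, via closure of compact objects under finite colimits, shifts and retracts, silently uses a common $n$.

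You assert that $\tau_{\geq n}\colon \mathcal{C}_{\geq n'}\cap\mathcal{C}_{\leq m}\to\mathcal{C}_{\geq n}\cap\mathcal{C}_{\leq m}$ preserves filtered colimits. Passing to colimits in $\tau_{\geq n}y_\alpha\to y_\alpha\to\tau_{\leq n-1}y_\alpha$ identifies the colimit of the $\tau_{\geq n}y_\alpha$ with $\tau_{\geq n}$ of the colimit only if $\tau_{\leq m}\colim_\alpha\tau_{\leq n-1}y_\alpha$ is still $(n-1)$-coconnective. That is a form of compatibility with filtered colimits, which is not assumed. Working inside the windows does not help, since $\tau_{\leq m}$ does not remove homotopy that the colimit creates in degrees $n,\dots,m$.

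In fact your assertion is false for accessible $t$-structures. Take the Happel--Reiten--Smal{\o} tilt of $D(\mathbb{Z})$ at the torsion pair (divisible, reduced), so $\mathcal{C}_{\geq 0}=\{X\in D(\mathbb{Z})_{\geq 0}: H_0X \text{ divisible}\}$; this aisle is generated by $\mathbb{Q}$, the $\mathbb{Z}/p^\infty$ and $\susp\mathbb{Z}$. Every term of $\mathbb{Z}\xrightarrow{2}\mathbb{Z}\xrightarrow{3}\cdots$ (in degree $0$) lies in $\mathcal{C}_{\geq -1}\cap\mathcal{C}_{\leq -1}$ and is killed by $\tau_{\geq 0}$. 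Yet the colimit $\mathbb{Q}$ lies in the heart. So your argument is valid only when $\mathcal{C}_{\leq 0}$ is closed under filtered colimits, where it is essentially \autoref{lemma: NP 6.1.1}.

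\textbf{The conclusion is still true for any accessible $t$-structure.} The fix is to shift the test diagram up instead of truncating it.
\begin{itemize}
\item Let $x\in\mathcal{C}_{\geq n}$ be almost compact there, and let $(y_\alpha)$ be filtered in $\mathcal{C}_{\geq n-1}\cap\mathcal{C}_{\leq m}$. Its colimit there is $Y=\tau_{\leq m}\colim_\alpha y_\alpha$, since aisles are closed under colimits in $\mathcal{C}$.
\item Then $(\susp y_\alpha)$ is filtered in $\mathcal{C}_{\geq n}\cap\mathcal{C}_{\leq m+1}$, with colimit $\susp Y$ there.
\item Compactness of $\tau_{\leq m+1}x$ in that window gives $\colim_\alpha \Hom(x,\susp y_\alpha)\simeq \Hom(x,\susp Y)$.
\item Take loops at the zero map, which commutes with filtered colimits of spaces, and use $\Omega\Hom(x,\susp w)\simeq\Hom(x,w)$. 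This yields $\colim_\alpha\Hom(\tau_{\leq m}x,y_\alpha)\simeq\Hom(\tau_{\leq m}x,Y)$.
\item Induct to lower $n$ further.
\end{itemize}

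With a common $n$ in hand, the rest of your proof goes through and follows the paper's route: shifts, cofibres, retracts, and $\Coh{C}=\PCoh{C}\cap\mathcal{C}^+$. For cofibres it is quicker to note that $\tau_{\leq m}\colon\mathcal{C}_{\geq n}\to\mathcal{C}_{\geq n}\cap\mathcal{C}_{\leq m}$ is a left adjoint, so $\tau_{\leq m}z$ is a pushout of compact objects. If you keep your five-lemma version, it needs only $\pi_k$, $k\geq 0$, of the mapping spaces into $w$ itself. It does not need loops of $w$, which would leave $\mathcal{C}_{\geq n}$ and reintroduce the same issue.
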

\begin{proof}
    To show that $\Coh{C}\subseteq\PCoh{C}$ are stable subcategories of $\mathcal{C}$ it suffices to prove that they are closed under finite colimits and desuspensions. This follows immediately since almost compact objects are closed under finite colimits and desuspensions (because compact objects are), together with the stability of connective and bounded objects.  
    A similar argument shows also that $\Coh{C}\subseteq\PCoh{C}$ are closed under retracts, since compact objects are. It follows that $\Coh{C}$ and $\PCoh{C}$ are idempotent-complete, since a stable thick subcategory of a presentable category is.
\end{proof}
\begin{warning}
    In the assumptions of \autoref{lemma: stupid properties of Pcoh and coh} we cannot conclude that $\Coh{C}$ and $\PCoh{C}$ are essentially small. There might exist objects $x\in\mathcal{C}^-$ such that $\tau_{\leq n}x\simeq 0$ for every $n\in\mb{Z}$. 
    To fix this issue it is sufficient to assume that the $t$-structure is left complete.
    Indeed, since the $t$-structure is accessible, each $\mathcal{C}_{\geq n}$ is presentable so that each truncation $\tau_{\leq m}\mathcal{C}_{\geq n}$ is presentable. In particular, its subcategory of compact objects is small. 
    Left completeness then identifies pseudo-coherent objects with the category of towers $x_0\to x_1\to\dots$ where each  $x_m\in(\tau_{\leq m}\mathcal{C}_{\geq n})^\omega$ and such that the maps are Postkinov compatible, and this category is  small.
\end{warning}
\begin{corollary}\label{corollary: smallnes of coherent/pseudo}
    Let $\mathcal{C}\in\Pr^\text{L}_\st$ be a presentable stable category equipped with an accessible left complete $t$-structure. Then $\Coh{C},\PCoh{C}\in\Cat^\perf$.  
\end{corollary}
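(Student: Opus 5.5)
The plan is to combine \autoref{lemma: stupid properties of Pcoh and coh} with the smallness argument sketched in the Warning just above. By \autoref{lemma: stupid properties of Pcoh and coh}, $\Coh{C}\subseteq\PCoh{C}$ are already idempotent-complete stable full subcategories of $\mathcal{C}$. So membership in $\Cat^\perf$ reduces to showing that both are essentially small. Since $\Coh{C}$ is a full subcategory of $\PCoh{C}$, it suffices to treat $\PCoh{C}$.

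\emph{Step 1: reduce to a fixed level.} Write $\PCoh{C}=\bigcup_{n\in\mb{Z}}\mathcal{P}_n$, where $\mathcal{P}_n$ is the full subcategory of objects $x\in\mathcal{C}_{\geq n}$ that are almost compact in $\mathcal{C}_{\geq n}$. A countable union of essentially small full subcategories is essentially small, so it is enough to show that each $\mathcal{P}_n$ is essentially small. Shifting by $\susp^{-n}$ identifies $\mathcal{P}_n$ with $\mathcal{P}_0$, so we may take $n=0$.

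\emph{Step 2: the truncation levels are small.} Accessibility of the $t$-structure makes $\mathcal{C}_{\geq0}$ presentable. Hence each $\tau_{\leq m}\mathcal{C}_{\geq0}=\mathcal{C}_{\geq0}\cap\mathcal{C}_{\leq m}$ is presentable, being an accessible localization, and its subcategory of compact objects $(\tau_{\leq m}\mathcal{C}_{\geq0})^\omega$ is essentially small.

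\emph{Step 3: use left completeness.} Left completeness gives an equivalence $\mathcal{C}\simeq\lim_m(\mathcal{C}_{\leq m},\tau_{\leq m})$, and this restricts to connective objects: $\mathcal{C}_{\geq0}\simeq\lim_m\tau_{\leq m}\mathcal{C}_{\geq0}$. Under this equivalence, $\mathcal{P}_0$ is identified with the full subcategory of the limit spanned by compatible towers $(x_m)_m$ with each $x_m$ in $(\tau_{\leq m}\mathcal{C}_{\geq0})^\omega$. This is precisely the definition of almost compactness, applied levelwise. Since $\tau_{\leq m}$ carries $\tau_{\leq m+1}\mathcal{C}_{\geq 0}$ into $\tau_{\leq m}\mathcal{C}_{\geq0}$, we can form the diagram $m\mapsto(\tau_{\leq m}\mathcal{C}_{\geq0})^\omega$. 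Its limit is the subcategory in question, and the pullback of full subcategories is respected.

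\emph{Main obstacle: the transition functors.} One must check that $\tau_{\leq m}$ preserves these compact objects. This holds because $\tau_{\leq m}:\tau_{\leq m+1}\mathcal{C}_{\geq0}\to\tau_{\leq m}\mathcal{C}_{\geq0}$ is left adjoint to an inclusion that preserves filtered colimits, since truncated objects in a presentable category are stable under filtered colimits. If one prefers to avoid this point, it suffices to observe that $\mathcal{P}_0$ embeds fully faithfully into $\lim_m\tau_{\leq m}\mathcal{C}_{\geq 0}$ with essential image contained in the full subcategory of the product $\prod_m(\tau_{\leq m}\mathcal{C}_{\geq0})^\omega$ cut out by compatibility.

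Either way, $\mathcal{P}_0$ is a full subcategory of a countable limit of essentially small categories, which is again essentially small. This gives $\PCoh{C}\in\Cat^\perf$, and $\Coh{C}\in\Cat^\perf$ follows as a thick full subcategory.
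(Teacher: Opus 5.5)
Your proposal is correct and follows the paper's argument: the corollary comes from \autoref{lemma: stupid properties of Pcoh and coh} plus the smallness argument in the Warning preceding it, which uses accessibility to make each $(\tau_{\leq m}\mathcal{C}_{\geq n})^\omega$ essentially small and left completeness to identify pseudo-coherent objects with Postnikov-compatible towers of such compact objects. One caveat: your first justification that $\tau_{\leq m}$ preserves compacts fails in this generality, because truncated objects of a presentable category need not be closed under filtered colimits (this needs e.g.\ compact generation, or compatibility of the $t$-structure with filtered colimits, and the corollary assumes neither), so rely on your fallback---the full subcategory of $\lim_m\tau_{\leq m}\mathcal{C}_{\geq 0}$ on towers with compact components---which is exactly what the paper does.
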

We can also analyse the interaction between (pseudo)-coherent objects and compact objects. 
\begin{remark}\label{remark: compact are pseudo-coherent}
    Let $\mathcal{C}\in\Pr^\text{L}_\st$ be a presentable stable category equipped with an accessible and compatible with filtered colimits $t$-structure. 
    \begin{enumerate*}
        \item If $\mathcal{C}^\omega\subseteq\mathcal{C}^-$, then $\mathcal{C}^\omega\subseteq\PCoh{C}$. 
        \item If $\mathcal{C}^\omega\subseteq\mathcal{C}^b$, then $\mathcal{C}^\omega\subseteq\Coh{C}$. 
    \end{enumerate*} 
    Indeed, being the $t$-structure compatible with filtered colimits, \autoref{lemma: NP 6.1.1} implies that each truncation functor $\tau_{\leq n}:\mathcal{C}\to\mathcal{C}_{\leq n}$ preserves compact objects, leaving us to prove that compact objects are bounded below for point $(1)$ and bounded for point $(2)$.  
\end{remark}

We now analyze the behaviour of pseudo-coherent and coherent objects under the formation of geometric realization.

\begin{lemma}\label{lemma: 0-connected pseudo-coheren are closed under the formation of geometric realization}
    Let $\mathcal{C}\in\Pr^\text{L}_\st$  be a presentable stable category equipped with an accessible $t$-structure. Then the full subcategory $\PCoh{C}\cap \mathcal{C}_{\geq0}\subseteq\mathcal{C}$ is closed under the formation of geometric realization of simplicial objects.
\end{lemma}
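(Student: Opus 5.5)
The plan is to reduce the geometric realization to a finite colimit after truncating, and then use that compact objects are closed under finite colimits. Let $X_\bullet$ be a simplicial object of $\mathcal{C}$ with each $X_k\in\PCoh{C}\cap\mathcal{C}_{\geq0}$. By \autoref{definition: pseudo-coherent}, each $X_k$ is then almost compact as an object of $\mathcal{C}_{\geq0}$; the reason is that an object of $\mathcal{C}_{\geq0}$ which is almost compact in some $\mathcal{C}_{\geq n}$ with $n\leq 0$ is also almost compact in $\mathcal{C}_{\geq0}$, since $\tau_{\leq m}\mathcal{C}_{\geq0}$ is closed under filtered colimits in $\tau_{\leq m}\mathcal{C}_{\geq n}$. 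Since $\mathcal{C}_{\geq0}\subseteq\mathcal{C}$ is closed under colimits, $|X_\bullet|$ is computed equally in $\mathcal{C}_{\geq0}$ and in $\mathcal{C}$, and it is connective. It therefore suffices to show that, for every $n\geq0$, the truncation $\tau_{\leq n}|X_\bullet|$ is compact in $\tau_{\leq n}\mathcal{C}_{\geq0}=\mathcal{C}_{\geq0}\cap\mathcal{C}_{\leq n}$.

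\textbf{Step 1 (skeletal filtration).} Write $|X_\bullet|\simeq\colim_m|\mathrm{sk}_m X_\bullet|$. The standard cofibre sequences
\[
|\mathrm{sk}_{m-1}X_\bullet|\to|\mathrm{sk}_mX_\bullet|\to\susp^m\,\mathrm{cofib}(L_mX\to X_m)
\]
hold, where $L_mX$ is the $m$-th latching object. The latching object $L_mX$ is a finite colimit of the connective objects $X_k$ with $k<m$, so the third term is $m$-connective. Consequently $\mathrm{cofib}(|\mathrm{sk}_{n+1}X_\bullet|\to|X_\bullet|)$ is a filtered colimit of iterated extensions of $(n+2)$-connective objects, hence lies in $\mathcal{C}_{\geq n+2}$. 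It follows that $\tau_{\leq n}|\mathrm{sk}_{n+1}X_\bullet|\to\tau_{\leq n}|X_\bullet|$ is an equivalence. I expect this to be the main point needing care, specifically the identification of the successive cofibres with shifted latching cofibres, but it is standard (Lurie, HA §1.2.4).

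\textbf{Step 2 (finite colimits of almost compact objects).} The object $|\mathrm{sk}_{n+1}X_\bullet|$ is a colimit over the finite category $\Delta^{\op}_{\leq n+1}$ of the $X_k$, taken in $\mathcal{C}_{\geq0}$. The functor $\tau_{\leq n}:\mathcal{C}_{\geq0}\to\tau_{\leq n}\mathcal{C}_{\geq0}$ is a left adjoint, so it preserves this colimit. Hence $\tau_{\leq n}|\mathrm{sk}_{n+1}X_\bullet|$ is the finite colimit in $\tau_{\leq n}\mathcal{C}_{\geq0}$ of the objects $\tau_{\leq n}X_k$, each of which is compact by assumption. Compact objects in the presentable category $\tau_{\leq n}\mathcal{C}_{\geq0}$ are closed under finite colimits, so $\tau_{\leq n}|X_\bullet|$ is compact, which concludes the argument.
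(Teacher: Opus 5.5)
Your proof is correct. Its outer structure matches the paper's: both show that $\tau_{\leq n}|X_\bullet|$ is the colimit in $\tau_{\leq n}\mathcal{C}_{\geq0}$ of $\tau_{\leq n}X_\bullet$ restricted to $\Delta^{\op}_{\leq n+1}$, and both conclude because compact objects are closed under finite colimits.

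The difference is in how that reduction is justified.

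\textbf{The paper's route.} It observes that $\tau_{\leq n}\mathcal{C}_{\geq0}$ is an $(n+1)$-category. It then uses the general fact that in an $(n+1)$-category geometric realizations are computed by restricting to $\Delta^{\op}_{\leq n+1}$. This uses no stability, so the same argument would work in any presentable setting with truncations.

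\textbf{Your route.} You work in the stable category $\mathcal{C}$ with the skeletal (Dold--Kan) filtration. The $m$-connectivity of $\susp^m\cofib(L_mX\to X_m)$ gives $\cofib(|\mathrm{sk}_{n+1}X_\bullet|\to|X_\bullet|)\in\mathcal{C}_{\geq n+2}$ directly. This relies on stability, but it is self-contained and gives an explicit connectivity bound; in fact $\mathrm{sk}_n$ would already suffice. It also makes explicit that the paper's display $|x_\bullet|\simeq\colim_{\Delta^{\op}_{\leq n+1}}x_k$ is only meant after applying $\tau_{\leq n}$.

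Two small points, both shared with the paper:

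\emph{Finiteness of the colimit.} $\Delta^{\op}_{\leq n+1}$ has finitely many morphisms but contains non-identity idempotents. Its nerve is therefore not a finite simplicial set, and calling its colimits ``finite colimits'' needs a word of justification. The latching-object pushouts behind your Step~1 supply it: they build $|\mathrm{sk}_{n+1}X_\bullet|$ from the $X_k$ by finitely many pushouts and colimits over finite posets, all computed in $\mathcal{C}_{\geq0}$ and preserved by $\tau_{\leq n}$.

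\emph{The witness in the definition.} Your preliminary reduction only handles a witness $n\leq 0$ in the definition of pseudo-coherence. A witness $n>0$ would need the inclusion $\tau_{\leq m}\mathcal{C}_{\geq n}\subseteq\tau_{\leq m}\mathcal{C}_{\geq0}$ to preserve compact objects, which is clear when the $t$-structure is compatible with filtered colimits. The paper sidesteps this by reading $\PCoh{C}\cap\mathcal{C}_{\geq0}$ directly as connective objects that are almost compact in $\mathcal{C}_{\geq0}$, and you can do the same.
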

\begin{proof} 
    Let $x_\bullet$ be a simplicial object of $\PCoh{C}\cap \mathcal{C}_{\geq0}$ such that  each $x_k$ is a compact object of $\tau_{\leq n}(\PCoh{C}\cap \mathcal{C}_{\geq0})$.
    To show that the geometric realization $|x_\bullet|$ can be computed in $\PCoh{C}\cap \mathcal{C}_{\geq0}$ and that it is preserved by the inclusion $\PCoh{C}\cap \mathcal{C}_{\geq0}\subseteq\mathcal{C}$ it suffices to notice that $\tau_{\leq n}(\PCoh{C}\cap \mathcal{C}_{\geq0})$ is equivalent to a $(n+1)$-category, so that the equivalence
    \[
    |x_\bullet|\simeq \colim_{[k]\in\Delta^\op_{\leq n+1}}x_k
    \]
    exhibits the geometric realization $|x_\bullet|$ as a finite colimit, which is then preserved by the inclusion $\PCoh{C}\cap \mathcal{C}_{\geq0}\subseteq\mathcal{C}$.
\end{proof}

Our next goal is to prove a converse of this result. 
\begin{remark}[\protect{\cite[Lemma 1.2]{neeman2025triangulatedcategoriessinglecompact}}]\label{remark: Neeman lemma 1.2}
    Let $\mathcal{C}$ be a stable category  equipped with a $t$–structure. If $x\in\mathcal{C}^-$ and $\pi_l(x)=0$ for $l<i$, then $x\in\mathcal{C}_{\geq i}$.
    Indeed, since $x$ belongs to $\mathcal{C}^-$, it belongs to some $\mathcal{C}_{\geq -n}$ for some $n>0$. Thus the canonical map $\tau_{\geq-n}x\to x$ is an equivalence. 
    Now, since $\pi_l(x)=0$ for every $l<i$, the cofibre sequence $\tau_{\geq 1+l}\to \tau_{\geq l}x\to \susp^{-l}\pi_l(x)$  implies that, as long as $l<i$, the map $\tau_{\geq 1+l}\to \tau_{\geq l}x$ is  an equivalence. The chain of equivalences  $\tau_{\geq i}x\to \tau_{\geq i-1}x\to \dots \to \tau_{\geq -n}x\to x$ shows then the claim. 
\end{remark}
\begin{remark}[\protect{\cite[Lemma 1.3]{neeman2025triangulatedcategoriessinglecompact}}]\label{remark: Neeman lemma 1.3}
    Let $\mathcal{C}$ be a stable category and equipped with a left separated $t$–structure. 
    Then: 
    \begin{enumerate*}
        \item Every object $x \in \mathcal{C}^-$, with $\pi_n(x)\iso 0$ for all $n\in\mb{Z}$, must vanish.
        \item If $f : x\to y$ is a morphism in $\mathcal{C}^-$ such that $\pi_n(f)$ is an isomorphism for every $n\in\mb{Z}$, then $f$ is an equivalence.
    \end{enumerate*}
    Indeed,  $x$ belongs to $\cap_l \mathcal{C}_{\geq l}$ by \autoref{remark: Neeman lemma 1.2} and left separatedness implies $x\simeq 0$. 
    Point $(2)$  follows by applying $(1)$ to the cofibre of $f$.
\end{remark}

Recall that a full subcategory $S\subseteq\mathcal{C}_{\geq0}$ is a generating  subcategory if for every object $x\in\mathcal{C}_{\geq0}$ there exists a morphism $\oplus_{i\in I}c_i\to x$ which induces an epimorphism on $\pi_0$ and each $c_i$  is $S$.

\begin{lemma}\label{lemma: assumption S}
    Let $\mc{C}\in\Pr^{\L,\omega}_\st$ be a compactly generated stable category equipped with a right complete and left separated presentable $t$-structure. 
    Assume also that:
    \begin{enumerate}
    \item[(S)] There exists a generating subcategory $S\subseteq \mathcal{C}_{\geq0}$ which is closed under finite colimits and extensions and  consists of pseudo-coherent connective objects of $\mathcal{C}$.
    \end{enumerate} 
    Let $x\in\PCoh{C}\cap\mc{C}_{\geq0}$ be a connective pseudo-coherent object.
    Then $x$ can be obtained as the geometric realization of a simplicial object $x_\bullet$ such that each $x_n$ is in $S$.
\end{lemma}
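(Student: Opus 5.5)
We follow the standard ``resolution by generators'' argument in the style of \cite[Proposition 7.2.4.11]{Lurie-HA} or Lurie's construction of simplicial resolutions. The plan is to build, inductively on $n$, a sequence of $n$-skeleta: simplicial objects $x_\bullet^{(n)}$ with each term in $S$ (more precisely, $n$-truncated simplicial objects, i.e.\ functors out of $\Delta^\op_{\leq n}$) together with augmentations $|x^{(n)}_\bullet|\to x$ whose fibre $F_n$ is $n$-connective. We then take $x_\bullet$ to be the union of these skeleta. The key tool is the generating hypothesis in (S). We apply it not to $x$ alone but to connective pseudo-coherent objects. Since $S$ consists of pseudo-coherent objects and is closed under finite colimits, we can arrange that any connective pseudo-coherent $y$ receives a map $s\to y$ from a \emph{single} $s\in S$ which is surjective on $\pi_0$. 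To do this we first use generation to get $\bigoplus_{i} c_i\to y$ surjective on $\pi_0$. Then, because $\pi_0 y$ is a compact object of $\mathcal{C}^\heartsuit$ (it is $\tau_{\leq 0}$ of an almost compact object), a finite subsum already surjects. This finite sum lies in $S$ by closure under finite colimits.

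The inductive step uses the Dold--Kan/skeletal description of simplicial objects in a stable category. Having $|x^{(n)}_\bullet|\to x$ with fibre $F_n\in\mathcal{C}_{\geq n}$, we observe that $F_n$ is again pseudo-coherent, since $\PCoh{C}$ is stable by \autoref{lemma: stupid properties of Pcoh and coh}. Hence $\susp^{-n}F_n$ is connective pseudo-coherent, and the previous paragraph gives $s_{n+1}\in S$ with $\susp^n s_{n+1}\to F_n$ surjective on $\pi_n$. We then attach $s_{n+1}$ as a new nondegenerate $(n+1)$-simplex. Concretely, $x^{(n+1)}_{k}$ is obtained from $x^{(n)}_k$ by adding copies of $s_{n+1}$ indexed by surjections $[k]\to[n+1]$, via the usual latching construction. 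The realization then changes by the cofibre sequence $|x^{(n)}_\bullet|\to|x^{(n+1)}_\bullet|\to\susp^{n+1}s_{n+1}$, with the attaching map induced by $\susp^n s_{n+1}\to F_n$. A long exact sequence in $\pi_*$ then shows that the new fibre $F_{n+1}$ is $(n+1)$-connective. Each $x_k$ is a finite direct sum of objects of $S$, hence lies in $S$.

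To conclude, let $x_\bullet$ be the union of the $x^{(n)}_\bullet$, which stabilize levelwise. We have $|x_\bullet|\simeq\colim_n|x^{(n)}_\bullet|$, and the fibre of $|x_\bullet|\to x$ is $\colim_n F_n$. This fibre lies in $\bigcap_n\mathcal{C}_{\geq n}$ because the $t$-structure is compatible with filtered colimits, so connectivity passes to the colimit. It is also eventually connective, since everything is connective. Its homotopy groups vanish, so \autoref{remark: Neeman lemma 1.3} together with left separatedness gives that it is zero. Hence $|x_\bullet|\simeq x$.

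The main obstacle is the inductive attaching step. We must make the simplicial bookkeeping precise, namely that adding a nondegenerate simplex in degree $n+1$ changes the realization by exactly $\susp^{n+1}s_{n+1}$ and that the needed attaching map exists. The cleanest route is the stable Dold--Kan correspondence: simplicial objects in $\mathcal{C}$ correspond to filtered objects, and the realization is the colimit of the filtration with associated graded $\susp^k N_k$, where $N_k$ is the normalized piece. Working on the filtered side, we choose $N_{n+1}=s_{n+1}$ with the map to the $n$-th stage lifting $\susp^n s_{n+1}\to F_n$. Then each $x_k$ is a finite sum of the $N_j$, $j\leq k$, and so lies in $S$.
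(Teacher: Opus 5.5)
Your proposal is correct and is essentially the paper's argument. In both, you inductively produce a $\pi_0$-surjection from a finite sum of objects of $S$, using that $\pi_0$ of a connective pseudo-coherent object is compact in the heart. You then build a sequence whose successive cofibres are $\Sigma^{n} s_n$ with $s_n\in S$, pass to a simplicial object via Dold--Kan, and conclude by left separatedness. The only cosmetic difference is at the end: you show the fibre $\colim_n F_n$ lies in $\bigcap_n \mathcal{C}_{\geq n}$, whereas the paper compares homotopy groups via Neeman's Lemma 1.3. This step holds simply because each $\mathcal{C}_{\geq m}$ is closed under colimits, so compatibility with filtered colimits is not what is needed there.
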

\begin{proof}
    By the (prestable) Dold-Kan correspondence \cite[Theorem C.1.3.1]{Lurie-SAG}, it suffices to construct a sequential diagram
    \[
    D(0)\xrightarrow{f_1}D(1)\to \dots
    \]
    mapping to $x$ such that the cofibre of each map $f_n$ is a $n$-fold suspension of an object of $S$ and such that the canonical map $\colim_{n\in\N}D(n)\to x$ is an equivalence. 
    Equivalently, it suffices to produce a cofibre sequence 
    \[
    D(n)\to x \to \susp^n c(n)
    \]
    for each $n\in\N$ such that $D(n)$ is in $S$ and $c(n)$ is connective (and pseudo-coherent). 
    The argument goes by induction. 
    Set, by convention, $D(0)$ to be the zero, so that the cofibre of $0\to x$ is $x$. 
    The inductive hypothesis constructs a diagram   
    \[
    D(n)\to  x\to \susp^n c(n)
    \]
    with $D(n)$ in $S$  and $c(n)$ connective and pseudo-coherent (thus $\susp^nc(n)\in\mathcal{C}_{\geq n}$). 
    In particular $\susp^nc(n)$ is in  $\PCoh{C}\cap\mathcal{C}_{\geq n}$.
    Thus $c(n)\in\PCoh{C}\cap\mathcal{C}_{\geq 0}$ is connective and pseudo-coherent. 
    Since $S$ is a generating subcategory of $\mathcal{C}_{\geq0}$, there exists a morphism $\oplus_{i\in I} p_i\to c(n)$ with $p_i\in S$ for every $i\in I$ inducing a $\pi_0$-epimorphism. 
    Since $\pi_0c(n)$ is compact, being $c(n)$ pseudo-coherent, there exists a finite set $I_0\subseteq I$ such that $\oplus_{i\in I_0} p_i\to c(n)$ induces the required $\pi_0$-epimorphism. 
    Set $P=\oplus_{i\in I_0} p_i$. 
    Since $S$ is closed under finite colimits, it follows that $P\in S$. 
    Define $D(n+1)$ via the pullback square
    \[\begin{tikzcd}[cramped]
	{D(n+1)} & {\susp^nP} \\
	x & {\susp^nc(n)}
	\arrow[from=1-1, to=1-2]
	\arrow[from=1-1, to=2-1]
	\arrow[from=1-2, to=2-2]
	\arrow[from=2-1, to=2-2]
    \end{tikzcd}\]
    Then the cofibre sequence $D(n)\to D(n+1)\to \susp^nP$ exhibits $D(n+1)$ in $S$, being $S$ closed under extensions. 
    Furthermore, the equivalence $\cofib(D(n+1)\to x)\simeq \susp^{n+1}\fib(P\to c(n))$ shows that the map $D(n+1)\to x$ induces an  isomorphism on $\pi_i$ for $0\leq i \leq n-2$. It follows by \autoref{remark: Neeman lemma 1.3} that the canonical map $\colim_{n\in N} D(n)\to x$ is an equivalence. Since, by construction, the cofiber of each map $D(n)\to D(n+1)$ is an $n$-fold suspension of  an object of $S$, the claim follows.
\end{proof}
\begin{remark}\label{remark: assumption S}
    Let $\mathcal{C}$ be as in the assumptions. Denote by $\mathcal{C}^\omega_{\geq0}$  the full subcategory of the connective half $\mathcal{C}_{\geq0}$ spanned by the compact objects. 
    Then \autoref{lemma: NP 6.1.1} implies that $\mathcal{C}^\omega\cap\mathcal{C}_{\geq0} = \mathcal{C}^\omega_{\geq0}$. 
    It follows that if $\mathcal{C}_{\geq0}$ is compactly generated, then  assumption (S)  is satisfied.
\end{remark}

There is another notion of finiteness that we should explore.
\begin{definition}\label{definition: tor dimension}
    Let $\mathcal{C}$ be a stable symmetric monoidal category equipped with a $t$-structure compatible with $\otimes$. 
    Let $a\leq b\in\mb{Z}$.
    We will say that an object $x\in\mathcal{C}$ has \emph{tor-amplitude in $[a,b]$} if $x\otimes-:\mathcal{C}\to\mathcal{C}$ is right $t$-exact up to $-a$ and left $t$-exact up to $b$. 
    We will say that an object has \emph{finite tor-dimension} if it has tor-amplitude in $[a,b]$ for some $a\leq b\in\mb{Z}$.
    We let  $\Tor{C}$ denote the full subcategory of $\mathcal{C}$ spanned by the objects of finite tor-dimension.
\end{definition}
In other words, $x\in\mathcal{C}$ has tor-amplitude in $[a,b]$ if tensoring with $x$ restricts to functors $x\otimes-:\mathcal{C}_{\geq0}\to\mathcal{C}_{\geq a}$ and  $x\otimes-:\mathcal{C}_{\leq 0}\to\mathcal{C}_{\leq b}$. 


The next observation connects our definition of finite tor-amplitude with the classical one. 
\begin{remark}\label{remark: tor amplitude classical}
    Let $\mathcal{C}$ be a stable symmetric monoidal category equipped with a $t$-structure compatible with $\otimes$. Let $x\in\mathcal{C}$ and let $n\in\mb{Z}$. Consider the following properties.
    \begin{enumerate*}
        \item For every $y\in\mathcal{C}_{\geq0}$ the object $x\otimes y\in\mathcal{C}_{\geq n}$ is $n$-connective. 
        Equivalently, $x\in\mathcal{C}_{\geq n}$.
        \item For every $m\in\N$ and for every $y\in\mathcal{C}_{\geq0}\cap\mathcal{C}_{\leq m}$ the object $x\otimes y\in\mathcal{C}_{\geq n}$ is $n$-connective.
        \item For every $y\in\mathcal{C}^\heartsuit$ the object $x\otimes y\in\mathcal{C}_{\geq n}$ is $n$-connective.
    \end{enumerate*}
    Clearly $(1)\Rightarrow(2)\Rightarrow(3)$. 
    The implication  $(3)\Rightarrow(2)$ follows by induction.
    Indeed, if $y\in\mathcal{C}_{\geq0}\cap\mathcal{C}_{\leq m}$, then tensoring the cofibre sequence $\susp^k\pi_k(y)\to\tau_{\leq k}y\to\tau_{\leq k-1}y $ with $x$ produces a cofibre sequence $x\otimes\susp^k\pi_k(y) \to x\otimes\tau_{\leq k}y\to x\otimes\tau_{\leq k-1}y$. 
    The first term belongs to $\mathcal{C}_{\geq n+k}\subseteq\mathcal{C}_{\geq n}$, and the claim follows since the connective half of a $t$-structure is closed under extensions. 
    The implication $(2)\Rightarrow(1)$ is not true in general.
    It holds for instance if the monoidal unit is (connective and) eventually coconnective.
    More generally, it holds if $x\in\mathcal{C}_{\geq a}$ is eventually connective: indeed it follows by choosing $m$ with $a+m+1\geq n$ and using the triangle $\tau_{\geq m+1}\to y\to\tau_{\leq m}y$ with $y\in\mathcal{C}_{\geq0}$.
\end{remark}
\begin{lemma}\label{lemma: tor amplitude vs classical}
    Let $\mathcal{C}$ be a stable symmetric monoidal category equipped with a $t$-structure compatible with $\otimes$. Let $x\in\mathcal{C}$.
    \begin{enumerate*}
    \item If $x$ has tor-amplitude in $[a,b]$, then $\pi_n(x\otimes y)\simeq 0$ for every $y\in\mathcal{C}^\heartsuit$ and $n\notin [a,b]$.
    \end{enumerate*}
    Assume that $\mathcal{C}\in\CAlg(\Pr^\L_\st)$ is presentable, that $\otimes$ preserves colimits in each variable and that the $t$-structure is right-complete presentable. Assume also the monoidal unit is eventually coconnective (or that $x\in\mathcal{C}_{\geq A}$).
    \begin{enumerate*}
    \item[(2)]  If $x$ is such that $\pi_n(x\otimes y)\simeq 0$ for every $y\in\mathcal{C}^\heartsuit$ and $n\notin [a,b]$, then $x$ has tor-amplitude in $[a,b]$.
    \end{enumerate*} 
\end{lemma}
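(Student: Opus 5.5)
Part (1) is immediate from the definition of tor-amplitude. Given $y\in\mathcal{C}^\heartsuit$, we have $y\in\mathcal{C}_{\geq0}$, so $x\otimes y\in\mathcal{C}_{\geq a}$ because $x\otimes-$ is right $t$-exact up to $-a$. Likewise $y\in\mathcal{C}_{\leq 0}$, so $x\otimes y\in\mathcal{C}_{\leq b}$ by left $t$-exactness up to $b$. An object of $\mathcal{C}_{\geq a}\cap\mathcal{C}_{\leq b}$ has $\pi_n=0$ for $n<a$ and $n>b$, which is exactly the claim.

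For part (2) we must show that $x\otimes-$ sends $\mathcal{C}_{\geq0}$ into $\mathcal{C}_{\geq a}$ and $\mathcal{C}_{\leq0}$ into $\mathcal{C}_{\leq b}$. We begin with bounded objects. If $y\in\mathcal{C}_{\geq k}\cap\mathcal{C}_{\leq m}$, induct on the length $m-k$ using the cofibre sequences $\susp^m\pi_m(y)\to\tau_{\leq m}y\to\tau_{\leq m-1}y$, exactly as in \autoref{remark: tor amplitude classical}. The hypothesis gives $x\otimes\susp^j\pi_j(y)\in\mathcal{C}_{\geq a+j}\cap\mathcal{C}_{\leq b+j}$. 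Since connective and coconnective halves are closed under extensions, $x\otimes y\in\mathcal{C}_{\geq a+k}\cap\mathcal{C}_{\leq b+m}$ for every bounded $y$.

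Coconnective objects are handled by right completeness together with compatibility with filtered colimits. For $y\in\mathcal{C}_{\leq0}$, right completeness gives $y\simeq\colim_{n}\tau_{\geq -n}y$, and each $\tau_{\geq-n}y$ is bounded with amplitude in $[-n,0]$. Since $\otimes$ preserves colimits in each variable, $x\otimes y\simeq\colim_n x\otimes\tau_{\geq-n}y$. Each term lies in $\mathcal{C}_{\leq b}$ by the bounded case, and $\mathcal{C}_{\leq b}$ is closed under filtered colimits. Hence $x\otimes y\in\mathcal{C}_{\leq b}$.

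Connective objects are the main obstacle, and this is where eventual coconnectivity of the unit (or the assumption $x\in\mathcal{C}_{\geq A}$) enters. First reduce to showing $x\in\mathcal{C}_{\geq A}$ for some $A$. When $\mathbb{1}\in\mathcal{C}_{\leq m}$ is bounded (it is connective by compatibility with $\otimes$), the bounded case gives $x\simeq x\otimes\mathbb{1}\in\mathcal{C}_{\geq a}$. Now take $y\in\mathcal{C}_{\geq0}$ and choose $m$ with $A+m+1\geq a$. Tensor the cofibre sequence $\tau_{\geq m+1}y\to y\to\tau_{\leq m}y$ with $x$. The middle term is an extension of $x\otimes\tau_{\leq m}y$, which lies in $\mathcal{C}_{\geq a}$ by the bounded case, and $x\otimes\tau_{\geq m+1}y$. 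Compatibility with $\otimes$ gives $x\otimes\tau_{\geq m+1}y\in\mathcal{C}_{\geq A+m+1}\subseteq\mathcal{C}_{\geq a}$. Hence $x\otimes y\in\mathcal{C}_{\geq a}$, which finishes the proof. This last step is the only one that uses the extra hypothesis, as anticipated by the implication $(2)\Rightarrow(1)$ in \autoref{remark: tor amplitude classical}.
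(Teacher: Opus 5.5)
Your argument is the paper's proof step for step: part (1) is identical, and in part (2) you use the same Postnikov induction for bounded objects, the same appeal to right completeness and compatibility with filtered colimits for $\mathcal{C}_{\leq 0}$, and the same truncation triangle from \autoref{remark: tor amplitude classical} for $\mathcal{C}_{\geq 0}$. Those steps are fine. The gap is the sentence ``the hypothesis gives $x\otimes\susp^j\pi_j(y)\in\mathcal{C}_{\geq a+j}\cap\mathcal{C}_{\leq b+j}$''. Vanishing of $\pi_n(x\otimes y)$ for $n\notin[a,b]$ does not by itself put $x\otimes y$ in $\mathcal{C}_{\geq a}\cap\mathcal{C}_{\leq b}$; the paper's proof makes the same silent identification. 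The lower bound can be repaired. The object $\tau_{\leq a-1}(x\otimes y)$ is eventually coconnective with all homotopy objects zero, so by the dual of \autoref{remark: Neeman lemma 1.2} it lies in $\bigcap_n\mathcal{C}_{\leq -n}$. That intersection vanishes, since right completeness forces right separatedness. The upper bound cannot be repaired from the stated hypotheses. The object $\tau_{\geq b+1}(x\otimes y)$ is connective with vanishing homotopy, so it is only known to lie in $\bigcap_n\mathcal{C}_{\geq n}$, and nothing assumed makes that intersection zero.

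In fact part (2) fails as stated. Let $\mathcal{C}=\Fun(\Delta^1,\Mod_{H\mathbb{Z}})$ with the pointwise tensor product, and let $\mathcal{C}_{\geq0}$ consist of the arrows $w_0\to w_1$ with $w_1$ connective. Then:
\begin{itemize}
\item $\mathcal{C}_{\leq0}$ consists of the equivalences $w_0\xrightarrow{\sim}w_1$ with $w_1$ coconnective, and $\pi_n(w)=(\pi_n w_1\xrightarrow{\id}\pi_n w_1)$;
\item the $t$-structure is accessible, compatible with filtered colimits, right complete and compatible with $\otimes$;
\item the unit $\mb{1}=(\mathbb{Z}\xrightarrow{\id}\mathbb{Z})$ lies in the heart.
\end{itemize}
The object $x=(0\to\mathbb{Z})$ is connective. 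For $y=(M\xrightarrow{\id}M)$ in the heart, $x\otimes y=(0\to M)$ has homotopy concentrated in degree $0$. Yet $x\simeq x\otimes\mb{1}$ lies in no $\mathcal{C}_{\leq b}$, because $\tau_{\geq 1}x\simeq(\susp^{-1}\mathbb{Z}\to 0)$ is a nonzero object of $\bigcap_n\mathcal{C}_{\geq n}$. So an extra hypothesis is needed, for instance that the $t$-structure is left separated. This holds for the left complete $t$-structures of tor-finite categories and of $\QCoh(X)$, which is where the lemma is actually used. Under left separatedness, $\tau_{\geq b+1}(x\otimes y)=0$ by \autoref{remark: Neeman lemma 1.3}, which justifies your sentence, and the rest of your proof goes through unchanged.
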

\begin{proof}
    Assume that $x$ has tor-amplitude in $[a,b]$  and let $y\in\mathcal{C}^\heartsuit$ be discrete. Then $x\otimes y\in\mathcal{C}_{\geq A}\cap\mathcal{C}_{\leq b}$ so that $\pi_n(x\otimes y)\simeq 0$ for $n\notin [a,b]$. This proves $(1)$.

    Consider $(2)$. Since the monoidal unit is connective and eventually coconnective (or since $x\in\mathcal{C}_{\geq A}$), \autoref{remark: tor amplitude classical} coupled with the assumption of homotopy groups implies that tensoring with $x$ produces a functor $\mathcal{C}_{\geq0}\to\mathcal{C}_{\geq a}$. To show that $x\otimes-$ is left $t$-exact up to $b$, consider $y\in\mathcal{C}_{\leq0}$. 
    Since the $t$-structure is right complete, the canonical map $\colim_{n\in\N}\tau_{\geq-n}y\to y$ is an equivalence. 
    Since $\tau_{\leq0}$ is a left adjoint, the above map produces an equivalence $\colim_{n\in\N}\tau_{\leq0}\tau_{\geq-n}y\to y$. 
    Since $\mathcal{C}_{\leq0}$ is comatible with filtered colimits it suffices to show that $x\otimes y\in\mathcal{C}_{\leq b}$ for $y\in\mathcal{C}_{\geq -n}\cap\mathcal{C}_{\leq0}$. 
    This can be proved by induction, similarly to \autoref{remark: tor amplitude classical}, on the cofibre sequence $\tau_{\geq-k}y\to \tau_{\geq -k-1}y\to\susp^{-k-1}\pi_{-k-1}y$ for $0\leq k\leq n$. 
\end{proof}

We now study the categorical properties of objects of finite tor-amplitude.
\begin{lemma}\label{lemma: stupid properties of tor}
    Let $\mathcal{C}$ be a stable symmetric monoidal category equipped with a $t$-structure. Then $\Tor{C}$ is a stable subcategory, closed under retracts. In other words, it is a thick subcategory. 
    If $\mathcal{C}$ is also presentable, then $\Tor{C}$ is idempotent-complete.
\end{lemma}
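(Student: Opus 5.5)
The plan is to verify closure under each operation directly from \autoref{definition: tor dimension}, by bookkeeping on the amplitude bounds. Fix $x,y\in\Tor{C}$. Enlarging intervals is harmless, since $\mathcal{C}_{\geq a}\subseteq\mathcal{C}_{\geq a'}$ for $a'\leq a$ and similarly for coconnective aisles. So we may assume both $x$ and $y$ have tor-amplitude in a common interval $[a,b]$. The zero object has every tor-amplitude, because $0\otimes-\simeq 0$.

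For shifts, use that $\susp^k x\otimes-\simeq\susp^k(x\otimes-)$, since $\otimes$ is exact in each variable. Hence $\susp^kx$ has tor-amplitude in $[a+k,b+k]$. For cofibres, take a cofibre sequence $x\to y\to z$. Tensoring with an object $w$ gives a cofibre sequence $x\otimes w\to y\otimes w\to z\otimes w$.
\begin{itemize}
\item If $w\in\mathcal{C}_{\geq0}$, then $x\otimes w$ and $y\otimes w$ lie in $\mathcal{C}_{\geq a}$. Since $z\otimes w$ is the cofibre of a map between them, and $\mathcal{C}_{\geq a}$ is closed under cofibres, we get $z\otimes w\in\mathcal{C}_{\geq a-1}$.
\item If $w\in\mathcal{C}_{\leq0}$, the rotated sequence $y\otimes w\to z\otimes w\to \susp x\otimes w$ exhibits $z\otimes w$ as an extension of $\susp(x\otimes w)\in\mathcal{C}_{\leq b+1}$ by $y\otimes w\in\mathcal{C}_{\leq b}$. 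Coconnective aisles are closed under extensions, so $z\otimes w\in\mathcal{C}_{\leq b+1}$.
\end{itemize}
Thus $z$ has tor-amplitude in $[a-1,b+1]$, and $\Tor{C}$ is a stable subcategory.

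For retracts, suppose $x'$ is a retract of $x$. Then $x'\otimes w$ is a retract of $x\otimes w$ for every $w$. Both aisles $\mathcal{C}_{\geq a}$ and $\mathcal{C}_{\leq b}$ are closed under retracts, being defined by orthogonality conditions. Hence $x'$ has the same tor-amplitude $[a,b]$, and $\Tor{C}$ is thick.

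For idempotent-completeness in the presentable case, note that a presentable stable category has all colimits and is therefore idempotent-complete. Given an idempotent on $x\in\Tor{C}$, its splitting exists in $\mathcal{C}$ and is a retract of $x$. By the retract step it lies in $\Tor{C}$, so idempotents split in $\Tor{C}$. This is the same argument as in \autoref{lemma: stupid properties of Pcoh and coh}.

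None of the steps is a real obstacle. The only point needing care is the cofibre step: the connective and coconnective bounds must be handled with the appropriate closure property of each aisle (cofibres for the connective aisle, extensions for the coconnective aisle), and the interval has to be widened by one on each side.
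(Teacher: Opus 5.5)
Your proposal is correct and follows essentially the same route as the paper: you check pointedness, shifts via exactness of $\otimes$, cofibres via the closure properties of the aisles (with a shift on the coconnective side), retracts because both aisles are closed under retracts, and idempotent-completeness inherited from the presentable ambient category. One minor remark: no widening is needed on the connective side, since $\mathcal{C}_{\geq a}$ is itself closed under cofibres, so $z$ in fact has tor-amplitude in $[a,b+1]$.
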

\begin{proof}
    It suffices to show that $\Tor{C}$ is pointed (which is trivial), stable under cofibres and desuspensions. Closure under desuspensions follows since $\otimes$ is exact in each variable separately.
    Closure under cofibres follows since the connective half is closed under cofibres, and the same happens to coconnective halves, after a possible shift.
    Finally,  since the connective and coconnective half of a $t$-structure are closed under retracts, the claim follows for $\Tor{C}$. 
    The last claim is a general fact of stable thick subcategories of presentable stable categories.
\end{proof}

Objects of finite tor-amplitude control many \enquote{geometric} properties.
Let $\mathcal{C}\in\CAlg(\Pr^{\text{L},\omega}_\st)$ be a commutative algebra with a right-complete presentable $t$-structure compatible with $\otimes$. 
Then we may define $\Perf(\mathcal{C})$ to be the intersection $\PCoh{C}\cap\Tor{C}$, and call its objects \emph{perfect objects}. 
The next result is an instance of \enquote{perfect object controlling the geoemtry}.  
\begin{lemma}\label{lemma: tor and compact}
    Let $\mathcal{C}\in\CAlg(\Pr^{\text{L},\omega}_\st)$ be a commutative algebra with a right-complete presentable $t$-structure compatible with $\otimes$. 
    Assume that $\mathcal{C}^\omega \subseteq \Tor{C}$. 
    \begin{enumerate*}
    \item Then $\mathcal{C}^\omega\subseteq\PCoh{C}$.
    \item If the unit of the monoidal structure is eventually coconnective, then $\mathcal{C}^\omega\subseteq\Coh{C}$.
    \end{enumerate*} 
\end{lemma}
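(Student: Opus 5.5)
The plan is to reduce both claims to \autoref{remark: compact are pseudo-coherent}: a compact object is pseudo-coherent as soon as it is eventually connective, and coherent as soon as it is bounded. The tor-amplitude hypothesis gives exactly these boundedness properties once we write $x\simeq x\otimes\mb{1}_\mathcal{C}$.

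\emph{Step 1 (setup).} Fix $x\in\mathcal{C}^\omega$. By hypothesis $x\in\Tor{C}$, so $x$ has tor-amplitude in $[a,b]$ for some $a\leq b\in\mb{Z}$. By \autoref{definition: tor dimension} this means $x\otimes-$ sends $\mathcal{C}_{\geq0}$ into $\mathcal{C}_{\geq a}$ and $\mathcal{C}_{\leq0}$ into $\mathcal{C}_{\leq b}$. Since $x\otimes-$ is exact, it also sends $\mathcal{C}_{\leq m}$ into $\mathcal{C}_{\leq m+b}$ for every $m$.

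\emph{Step 2 (point (1)).} Compatibility with $\otimes$ gives $\mb{1}_\mathcal{C}\in\mathcal{C}_{\geq0}$. Hence $x\simeq x\otimes\mb{1}_\mathcal{C}\in\mathcal{C}_{\geq a}$, and every compact object is eventually connective, i.e.\ $\mathcal{C}^\omega\subseteq\mathcal{C}^-$. A right-complete presentable $t$-structure is accessible and compatible with filtered colimits, so point $(1)$ of \autoref{remark: compact are pseudo-coherent} applies and gives $\mathcal{C}^\omega\subseteq\PCoh{C}$.

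\emph{Step 3 (point (2)).} If in addition $\mb{1}_\mathcal{C}\in\mathcal{C}_{\leq m}$ for some $m$, then Step 1 gives $x\simeq x\otimes\mb{1}_\mathcal{C}\in\mathcal{C}_{\leq m+b}$. Together with Step 2 this shows $x\in\mathcal{C}_{\geq a}\cap\mathcal{C}_{\leq m+b}\subseteq\mathcal{C}^b$, so $\mathcal{C}^\omega\subseteq\mathcal{C}^b$. Point $(2)$ of \autoref{remark: compact are pseudo-coherent} then gives $\mathcal{C}^\omega\subseteq\Coh{C}$; alternatively, point (1) plus boundedness is exactly the definition of coherent.

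\emph{Main obstacle.} The only delicate point is hidden in \autoref{remark: compact are pseudo-coherent}. We need $\tau_{\leq n}x$ to be compact in $\tau_{\leq n}\mathcal{C}_{\geq a}=\mathcal{C}_{\geq a}\cap\mathcal{C}_{\leq n}$, whereas \autoref{lemma: NP 6.1.1}(7) only gives compactness in $\mathcal{C}_{\leq n}$. To bridge this, observe that filtered colimits in $\mathcal{C}_{\geq a}\cap\mathcal{C}_{\leq n}$ are computed in $\mathcal{C}$, because both halves of the $t$-structure are closed under filtered colimits. Since $\mathcal{C}_{\geq a}\cap\mathcal{C}_{\leq n}$ is a full subcategory of $\mathcal{C}_{\leq n}$, compactness in $\mathcal{C}_{\leq n}$ then restricts to compactness in $\mathcal{C}_{\geq a}\cap\mathcal{C}_{\leq n}$. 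I would spell this out briefly to make the appeal to the remark airtight.
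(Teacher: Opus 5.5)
Your proof is correct and follows the paper's own argument. Like the paper, you reduce to \autoref{remark: compact are pseudo-coherent} by writing $x\simeq x\otimes\mb{1}_\mathcal{C}$ and combining the tor-amplitude bounds of $x$ with the unit being connective (resp.\ eventually coconnective); your direct use of $x\otimes\mathcal{C}_{\leq m}\subseteq\mathcal{C}_{\leq m+b}$ is just a cleaner form of the paper's ``usual argument by induction''. Your extra check that compactness in $\mathcal{C}_{\leq n}$ passes to the full subcategory $\mathcal{C}_{\geq a}\cap\mathcal{C}_{\leq n}$ correctly fills a step the remark leaves implicit; with the paper's indexing it reads $\tau_{\leq k}\mathcal{C}_{\geq a}=\mathcal{C}_{\geq a}\cap\mathcal{C}_{\leq a+k}$, which is harmless since $k$ ranges over all integers $\geq 0$.
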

\begin{proof} 
    This is an application of \autoref{remark: compact are pseudo-coherent}. Indeed point $(1)$ follows since the monoidal unit is connective so that every compact object is eventually connective, whereas point $(2)$ follows since the unit $\mb{1}_\mathcal{C}$ is eventually coconnective (and the usual argument by induction shows that every compact object is eventually coconnective). 
\end{proof}

We can now analyze the behaviour of pseudo-coherent and coherent objects under tensor products. 
\begin{lemma}\label{lemma: pcoh and coh are Cc submodules}
    Let $\mathcal{C}\in\CAlg^\rig(\Pr^{\text{L},\omega}_\st)$ be a rigid commutative algebra equipped with a right-complete presentable $t$-structure compatible with $\otimes$.
    Assume that $\mathcal{C}^\omega\subseteq \Tor{C}$. Then $\PCoh{C}$ and $\Coh{C}$ are  $\mathcal{C}^\omega$-modules.
\end{lemma}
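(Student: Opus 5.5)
The plan is to show that for every compact $c\in\mathcal{C}^\omega$ the endofunctor $c\otimes-$ preserves $\PCoh{C}$ and $\Coh{C}$. Once this holds, the $\mathcal{C}^\omega$-module structure on $\mathcal{C}$ (the restriction of the tensor product) restricts to these full subcategories, and the coherence data is inherited automatically. By \autoref{lemma: stupid properties of Pcoh and coh} these are stable full subcategories, so only this closure statement needs checking.

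\emph{Boundedness.} Fix $c\in\mathcal{C}^\omega$. By hypothesis $c\in\Tor{C}$, so $c$ has tor-amplitude in some $[a,b]$. Hence:
\begin{itemize}
\item if $x\in\mathcal{C}_{\geq n}$, then $c\otimes x\in\mathcal{C}_{\geq n+a}$;
\item if $x\in\mathcal{C}_{\leq m}$, then $c\otimes x\in\mathcal{C}_{\leq m+b}$.
\end{itemize}
So $c\otimes-$ preserves eventually connective objects and bounded objects. It remains to show that almost compactness is preserved.

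\emph{Almost compactness.} Let $x\in\PCoh{C}\cap\mathcal{C}_{\geq n}$. I must show that $\tau_{\leq k}(c\otimes x)$ is compact in $\mathcal{C}_{\geq n+a}\cap\mathcal{C}_{\leq k}$ for each $k$.

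First I reduce to truncations of $x$. Consider the cofibre sequence $\tau_{\geq k-a+1}x\to x\to\tau_{\leq k-a}x$. Tensoring with $c$, the first term lands in $\mathcal{C}_{\geq k+1}$. Therefore
\[
\tau_{\leq k}(c\otimes x)\simeq\tau_{\leq k}\bigl(c\otimes\tau_{\leq k-a}x\bigr).
\]

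Next I use rigidity. The functor $c\otimes-$ has right adjoint $c^\vee\otimes-$, which preserves colimits. The dual $c^\vee$ is again compact, so it also has finite tor-amplitude, say $[a',b']$. For $y\in\mathcal{C}_{\geq n}\cap\mathcal{C}_{\leq k-a}$ and $z\in\mathcal{C}_{\geq n+a}\cap\mathcal{C}_{\leq k}$, adjunction and the truncation adjunction give
\[
\Hom\bigl(\tau_{\leq k}(c\otimes y),z\bigr)\simeq\Hom\bigl(y,c^\vee\otimes z\bigr)\simeq\Hom\bigl(y,\tau_{\geq n}\tau_{\leq k-a}(c^\vee\otimes z)\bigr).
\]
The second step holds because $y$ lies in $\mathcal{C}_{\geq n}\cap\mathcal{C}_{\leq k-a}$.

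Now I check that the composite $z\mapsto\tau_{\geq n}\tau_{\leq k-a}(c^\vee\otimes z)$ preserves filtered colimits. This uses three facts: $c^\vee\otimes-$ commutes with colimits; $\tau_{\leq}$ and $\tau_{\geq}$ commute with filtered colimits by compatibility and \autoref{lemma: NP 6.1.1}; and filtered colimits in $\mathcal{C}_{\geq n+a}\cap\mathcal{C}_{\leq k}$ are computed in $\mathcal{C}$.

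Since $y=\tau_{\leq k-a}x$ is compact in $\mathcal{C}_{\geq n}\cap\mathcal{C}_{\leq k-a}$, it follows that $\tau_{\leq k}(c\otimes x)$ is compact. This gives pseudo-coherence of $c\otimes x$, and combined with the boundedness step, coherence when $x$ is coherent.

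The main obstacle is this compactness transfer. One must keep the index bookkeeping between the truncation levels consistent, and make sure the right adjoint of $\tau_{\leq k}(c\otimes-)$ between the truncated subcategories really is the displayed composite. The crucial input is rigidity: $c^\vee$ is compact, so it has finite tor-amplitude. Without this, the right adjoint $c^\vee\otimes-$ could fail to land in a bounded window, and the argument would break.
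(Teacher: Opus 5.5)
You follow the paper's strategy: use the adjunction $c\otimes-\dashv c^\vee\otimes-$, note that $c^\vee\otimes-$ lands in a bounded window, and let almost compactness of $x$ commute the filtered colimit. Your preliminary reduction to $y=\tau_{\leq k-a}x$ is correct. However, the step $\Hom(y,c^\vee\otimes z)\simeq\Hom\bigl(y,\tau_{\geq n}\tau_{\leq k-a}(c^\vee\otimes z)\bigr)$ is not justified by ``$y$ lies in $\mathcal{C}_{\geq n}\cap\mathcal{C}_{\leq k-a}$''.

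The $\tau_{\geq n}$ part is fine, because $\tau_{\geq n}$ is right adjoint to the inclusion of $\mathcal{C}_{\geq n}$. But $\tau_{\leq k-a}$ is a \emph{left} adjoint, so coconnectivity of the source gives no control over maps into $\tau_{\geq k-a+1}w$. In general $\Hom(y,w)\to\Hom(y,\tau_{\leq k-a}w)$ is not an equivalence. For example, take $k-a=0$, $y\in\mathcal{C}^\heartsuit$ and $w=\Sigma y$: the target is trivial, while $\pi_0\Hom(y,\Sigma y)$ (an $\mathrm{Ext}^1$ group) need not vanish.

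The arbitrary amplitude $[a',b']$ you attach to $c^\vee$ does not rescue the step either. It only gives $c^\vee\otimes z\in\mathcal{C}_{\leq k+b'}$. If $k+b'>k-a$, compactness of $y$ in $\mathcal{C}_{\geq n}\cap\mathcal{C}_{\leq k-a}$ says nothing about filtered colimits in the larger window unless the $t$-structure is coherent (compare \autoref{lemma: restriction of t-structure on pcoh}). This is exactly the bookkeeping you flagged as the main obstacle.

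There are two short repairs.

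\emph{First repair.} Take $u\in\mathcal{C}_{\geq k-a+1}$ and $z\in\mathcal{C}_{\leq k}$. Then $c\otimes u\in\mathcal{C}_{\geq k+1}$, so $\Hom(u,c^\vee\otimes z)\simeq\Hom(c\otimes u,z)\simeq *$. Hence $c^\vee\otimes z\in\mathcal{C}_{\leq k-a}$ already, and $\tau_{\leq k-a}$ is redundant. The right adjoint is $z\mapsto\tau_{\geq n}(c^\vee\otimes z)$, which preserves filtered colimits exactly as you argue. In other words, duality forces $c^\vee$ to have amplitude $[-b,-a]$; the needed bound comes from the adjunction, not from compactness of $c^\vee$.

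\emph{Second repair.} Drop the reduction, as the paper does, and map $x$ itself into $c^\vee\otimes z$. This object lies in $\mathcal{C}_{\leq m}$ for a fixed $m$ (any $m\geq\max(n,k+b')$). Then $\Hom(x,c^\vee\otimes z)\simeq\Hom\bigl(\tau_{\leq m}x,\tau_{\geq n}(c^\vee\otimes z)\bigr)$, with $\tau_{\leq m}x$ compact in $\mathcal{C}_{\geq n}\cap\mathcal{C}_{\leq m}$. Here the truncation is applied to the source, where it is legitimate.

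With either repair the rest of your proof goes through, including the boundedness argument for $\Coh{C}$.
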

\begin{proof}
    Since $\mb{1}_\mathcal{C}\in\mathcal{C}_{\geq0}$, it follows that $\Tor{C}\subseteq \mathcal{C}^-$. In particular, $\mathcal{C}^\omega\subseteq\mathcal{C}^-$. 
    Let $x\in\PCoh{C}$ be pseudo-coherent and let $c\in\mathcal{C}^\omega$ be compact. It follows that $x\otimes c$ is in $\mathcal{C}^-$, being the  $t$-structure compatible with $\otimes$.
    To be precise, if  $x\in\mathcal{C}_{\geq N}$ and $c\in\mathcal{C}_{\geq M}$ then $c\otimes x\in\mathcal{C}_{\geq NM}$.
    Fix $n\in\N$ and consider a filtered diagram $y:I\to\tau_{\leq n}(\mathcal{C}_{\geq NM})$. Then
    \[\begin{aligned}
        \colim_{i\in I}\Hom_{\tau_{\leq n}(\mathcal{C}_{\geq NM})}(\tau_{\leq n}(c\otimes x), y_i)
        &\simeq \colim_{i\in I}\Hom_{\mathcal{C}}(c\otimes x, y_i)\\
        &\simeq \colim_{i\in I}\Hom_\mathcal{C}(x, c^\vee\otimes y_i)\\
        &\simeq \Hom_{\mathcal{C}}(x, \colim_{i\in I} c^\vee\otimes y_i)\\
        &\simeq \Hom_\mathcal{C}(x, c^\vee\otimes (\colim_{i\in I}y_i))\\
        &\simeq \Hom_{\mathcal{C}}(c\otimes x, \colim_{i\in I}y_i)\\
        &\simeq \Hom_{\tau_{\leq n}(\mathcal{C}_{\geq NM})}(\tau_{\leq n}(c\otimes x), \colim_{i\in I}y_i).
    \end{aligned}\] 
    Here the first equivalence follows by fully-faithfullness and the second one by rigidity.
    For the third one note that, since $c^\vee\in\mathcal{C}^\omega$ is of finite tor-dimension, the objects $c^\vee\otimes y_i$ are in some truncation of a shift of the connective half, so that pseudo-coherence of $x$ allows to commute the colimit. 
    The fourth equivalence follows since $\otimes$ commutes with small colimits in each variable, the fifth one by rigidity and the last one by fully-faithfullness.
    The case of coherent objects follows then from above plus the fact that tensoring with a compact object (hence a tor-finite object) preserves bounded objects.
\end{proof}
We summarize (again) the situation.
\begin{remark}\label{remark: pseudo-coherent objects in the standard $t$-structure}
    Let $\mathcal{C}\in\CAlg^\rig(\Pr^{\text{L},\omega}_\st)$ be a rigid commutative algebra and let $S\subseteq\mathcal{C}^\omega$ a collection of compact objects. Let $\ts{C}$ be the accessible $t$-structure generated by $S$. Assume that:
    \begin{enumerate*}
        \item The collection $S$ generates.
        \item The t-structure is left complete (and hence left separated).
        \item The $t$-structure is compatible with $\otimes$.
        \item Compact objects $\mathcal{C}^\omega\subseteq\Tor{C}$ are of finite tor-dimension.
    \end{enumerate*} 
    Then \autoref{remark: summary 1} implies that  the $t$-structure is right-complete presentable, compactly generated and compatible with $\otimes$. 
    Regarding pseudo-coherence, point $(4)$ together with \autoref{lemma: pcoh and coh are Cc submodules} imply that $\Coh{C},\PCoh{C}$ are $\mathcal{C}^\omega$-modules. 
    It also implies that $\mathcal{C}^\omega\subseteq\PCoh{C}$, and, if the unit is eventually coconnective, also $\mathcal{C}^\omega\subseteq\Coh{C}$.
    In any case, point $(2)$ together with \autoref{corollary: smallnes of coherent/pseudo} imply that $\Coh{C},\PCoh{C}\in\Mod_{\mathcal{C}^\omega}(\Cat^\perf)$.  
    Furthermore, assumption $(S)$ of \autoref{lemma: assumption S} is satisfied: it follows that every connective pseudo-coherent object can be realized as the geometric realization of a simplicial object whose pieces are in $\mathcal{C}_{\geq0}^\omega=\mathcal{C}^\omega\cap\mathcal{C}_{\geq0}$.
\end{remark}
We will come back to these type of categories in \autoref{definition: tor finite categories}.

\subsection{Coherent categories}\label{subsection: Coherent categories}
Even if pseudo-coherent and coherent objects behaved well categorically, it is not entirely clear how to compute them explicitly.  
The goal of this section is to fill this gap by showing that, under some coherence assumption, $\PCoh{C}$ can be computed in terms of the homotopy groups of the $t$-structure. 

\begin{definition}\label{definition: coherent t-structure}
    Let $\mathcal{C}\in\Pr^{\text{L},\omega}_\st$ be a compactly generated stable category equipped with a right complete presentable $t$-structure. We will say  that the $t$-structure is \emph{coherent} if it restricts to $\PCoh{C}$.
\end{definition}

Before exploiting all the properties of coherent $t$-structures we point out a few discrepancies between our definition and the one already available in the literature. 
\begin{warning}
    Our definition of coherent category is \emph{not} the same of Ben-Zvi, Nadler and Preygel \cite[Definition 6.2.7]{ben2017integral}. 
    Lemma 6.2.5 in loc. shows how to compute pseudo-coherent objects in terms of their homotopy groups, proving the same statement of \autoref{proposition: pseudo coherent and pi_n}.
    The author believes that this claim is actually false. Indeed, if $R$ is a connective $\mb{E}_\infty$-ring spectrum with $\pi_0(R)$  coherent, then $\Mod_R$ is coherent in the sense of Definition 6.2.7. 
    In particular, since $R\in(\Mod_R)^\omega\subseteq\PCoh{\Mod_R}$, Lemma 6.2.5 implies that every $\pi_n(R)$ is finitely presented as a  $\pi_0(R)$-module. This is true if and only if $R$ is  coherent in the sense of \cite[Definition 7.2.4.16]{Lurie-HA}. 
\end{warning}
\begin{warning}
    Furthermore, our definition is, in general, different form the Canonaco-Neeman-Stellari \cite[Definition 10.1.1]{canonaco2024passage}.
    It agrees under the assumptions of \autoref{proposition: PCoh = C-c}.
\end{warning}

We now study the heart of a coherent $t$-structure. 

\begin{lemma}\label{lemma: restriction of t-structure on pcoh}
    Let $\mathcal{C}\in\Pr^{\text{L},\omega}_\st$ be a compactly generated  stable category equipped with a right-complete presentable $t$-structure. 
    Then the following conditions are equivalent.
    \begin{enumerate*}
        \item The $t$-structure on $\mathcal{C}$ restricts to one on $\PCoh{C}$, that is the $t$-structure is coherent. 
        \item The inclusion $i_{\leq0}^{\leq1} : \mathcal{C}_{\leq0} \into \mathcal{C}_{\leq1}$ preserves compact objects.
        \item Desuspending $\susp^{-1}: \mathcal{C}_{\leq0}\to\mathcal{C}_{\leq0}$ preserves compact objects.
    \end{enumerate*}
    Assume  that $\mathcal{C}^\omega\subseteq\mathcal{C}^-$. Then in this case $\Coh{C}^\heartsuit =\PCoh{C}^\heartsuit = (\mathcal{C}^\heartsuit)^\omega$. 
    In particular, 
    \begin{enumerate*}
        \item[(4)] The subcategory of compact objects in the heart $(\mathcal{C}^\heartsuit)^\omega \subseteq\mathcal{C}^\heartsuit$ is abelian.
    \end{enumerate*}
\end{lemma}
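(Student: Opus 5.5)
The plan is to reformulate pseudo-coherence in terms of the coconnective categories $\mathcal{C}_{\leq n}$, then read each of (1)–(3) as a compactness statement about shifts and truncations. The reformulation I aim for first is:
\begin{quote}
an object $x\in\mathcal{C}^-$ is pseudo-coherent if and only if $\tau_{\leq n}x$ is compact in $\mathcal{C}_{\leq n}$ for every $n$.
\end{quote}
To prove it, fix $x\in\mathcal{C}_{\geq m}$. Compactness of $\tau_{\leq n}x$ in $\tau_{\leq n-m}\mathcal{C}_{\geq m}=\mathcal{C}_{\geq m}\cap\mathcal{C}_{\leq n}$ is equivalent to compactness in $\mathcal{C}_{\leq n}$. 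One direction holds because filtered colimits in $\mathcal{C}_{\geq m}\cap\mathcal{C}_{\leq n}$ are computed in $\mathcal{C}_{\leq n}$, by compatibility with filtered colimits (\autoref{lemma: NP 6.1.1}). The other holds because for $z\in\mathcal{C}_{\leq n}$ one has $\Hom(\tau_{\leq n}x,z)\simeq\Hom(\tau_{\leq n}x,\tau_{\geq m}z)$, and $\tau_{\geq m}$ preserves filtered colimits, again by \autoref{lemma: NP 6.1.1}. With this dictionary in hand I would carry out the steps below in order.

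\emph{Step 1: (2)$\iff$(3).} This is formal. The shift $\susp^{-1}$ gives an equivalence $\mathcal{C}_{\leq1}\simeq\mathcal{C}_{\leq0}$, and the composite of $i_{\leq0}^{\leq1}$ with it is $\susp^{-1}\colon\mathcal{C}_{\leq0}\to\mathcal{C}_{\leq0}$. Equivalences preserve compact objects, so one functor preserves compacts if and only if the other does.

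\emph{Step 2: (2)$\Rightarrow$(1).} By stability of $\PCoh{C}$ (\autoref{lemma: stupid properties of Pcoh and coh}), it suffices to show that $\tau_{\leq -1}x$, equivalently $\tau_{\leq0}x$ after a shift, is pseudo-coherent for $x$ pseudo-coherent. It is clearly in $\mathcal{C}^-$. For every $n\geq0$ we have $\tau_{\leq n}\tau_{\leq0}x=\tau_{\leq0}x$, which is compact in $\mathcal{C}_{\leq0}$ by the dictionary. Iterating (2) then shows it is compact in $\mathcal{C}_{\leq n}$; for $n<0$ we have $\tau_{\leq n}\tau_{\leq0}x=\tau_{\leq n}x$, which is compact by the dictionary. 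Hence $\tau_{\leq0}x\in\PCoh{C}$.

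\emph{Step 3: (1)$\Rightarrow$(2), the main obstacle.} Let $y$ be compact in $\mathcal{C}_{\leq0}$; I want $y$ compact in $\mathcal{C}_{\leq1}$. I would first reduce to $y$ bounded. By right completeness, $y\simeq\colim_n\tau_{\geq-n}y$, a filtered colimit inside $\mathcal{C}_{\leq0}$, so compactness makes $y$ a retract of some bounded $\tau_{\geq-n}y$. Next I would use that $y$ is a retract of $\tau_{\leq0}c$ with $c\in\mathcal{C}^\omega$ (\autoref{lemma: NP 6.1.1}(a)). The goal is to produce a pseudo-coherent $x$ with $\tau_{\leq0}x$ having $y$ as a retract; then (1) and the dictionary give compactness of $\tau_{\leq 0}x$ in $\mathcal{C}_{\leq 1}$, and hence of its retract $y$. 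The natural candidate is $c$ itself when $c\in\mathcal{C}^-$ (then $c\in\PCoh{C}$ by \autoref{remark: compact are pseudo-coherent}). In general I would try to replace $c$ by a bounded-below compact approximation. Concretely, I would use compactness of $y$ together with right completeness applied to $\tau_{\leq0}c\simeq\colim_n\tau_{\geq-n}\tau_{\leq0}c$ to factor $\id_y$ through a stage obtained from an eventually connective compact object, for instance via the generating connective compacts of \autoref{remark: assumption S}. This reduction is where I expect the real difficulty. A naive attempt is circular, since showing that bounded compact objects of $\mathcal{C}_{\leq0}$ are pseudo-coherent requires (2) itself.

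\emph{Step 4: the heart, assuming $\mathcal{C}^\omega\subseteq\mathcal{C}^-$.} For $x$ in the heart, pseudo-coherence means compactness of $x=\tau_{\leq n}x$ in $\mathcal{C}_{\geq0}\cap\mathcal{C}_{\leq n}$ for all $n\geq0$. The case $n=0$ is exactly $x\in(\mathcal{C}^\heartsuit)^\omega$, which gives $\PCoh{C}^\heartsuit\subseteq(\mathcal{C}^\heartsuit)^\omega$. For the converse, take $x$ compact in the heart. Using $\mathcal{C}^\omega\subseteq\mathcal{C}^-\cap\PCoh{C}$, I would realise $x$ as a retract of $\pi_0$ of a compact, hence pseudo-coherent, object, via the dictionary and \autoref{lemma: NP 6.1.1}(a). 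Coherence (1) then shows that $\pi_0$ of a pseudo-coherent object is pseudo-coherent, so $x\in\PCoh{C}$. Heart objects are bounded, so $\Coh{C}^\heartsuit=\PCoh{C}^\heartsuit$. Finally, (4) follows because $(\mathcal{C}^\heartsuit)^\omega$ is the heart of the restricted $t$-structure on $\PCoh{C}$, and hearts are abelian.
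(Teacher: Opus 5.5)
Your Steps 1, 2 and 4 are sound. Steps 1 and 2 are essentially the paper's argument. In Step 4 you get $(\mathcal{C}^\heartsuit)^\omega\subseteq\PCoh{C}$ by writing a compact object of the heart as a retract of $\pi_0 c$ and invoking (1), whereas the paper pushes it up the tower $\mathcal{C}_{\leq0}\subseteq\mathcal{C}_{\leq1}\subseteq\cdots$ using (2); either works.

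The genuine gap is Step 3: you never prove (1)$\Rightarrow$(2). The suggested repair, replacing $c$ by a ``bounded-below compact approximation'' via connective compact generators, is not an argument. Moreover, the tool you point to, \autoref{remark: assumption S}, needs $\mathcal{C}_{\geq0}$ to be compactly generated, and that is not among the hypotheses of this lemma.

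The missing idea is that the case you already handled, $c\in\mathcal{C}^-$, is the only case: under the lemma's hypotheses every compact object of $\mathcal{C}$ is eventually connective. Take $c\in\mathcal{C}^\omega$.
\begin{enumerate*}
\item Right completeness gives $c\simeq\colim_n\tau_{\geq-n}c$ in $\mathcal{C}$.
\item Compactness of $c$ in $\mathcal{C}$ makes $\id_c$ factor through some $\tau_{\geq-n}c\to c$.
\item Hence $c$ is a retract of an object of $\mathcal{C}_{\geq-n}$, so $c\in\mathcal{C}_{\geq-n}$.
\end{enumerate*}
This is exactly the retract trick you applied to $y$ inside $\mathcal{C}_{\leq0}$, applied instead to $c$ inside $\mathcal{C}$.

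With it, Step 3 closes at once:
\begin{enumerate*}
\item $c\in\PCoh{C}$ by \autoref{remark: compact are pseudo-coherent}.
\item Condition (1) gives $\tau_{\leq0}c\in\PCoh{C}$.
\item Your dictionary at level $n=1$ says $\tau_{\leq0}c=\tau_{\leq1}\tau_{\leq0}c$ is compact in $\mathcal{C}_{\leq1}$.
\item Every compact $y\in\mathcal{C}_{\leq0}$ is a retract of such a $\tau_{\leq0}c$ by \autoref{lemma: NP 6.1.1}(a), so $y$ is compact in $\mathcal{C}_{\leq1}$.
\end{enumerate*}
Your preliminary reduction to bounded $y$ then becomes unnecessary. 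The extra hypothesis $\mathcal{C}^\omega\subseteq\mathcal{C}^-$ in the second half of the statement is in fact automatic.

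The paper's own proof of (1)$\Leftrightarrow$(2) is only a brief sketch. The observation above is precisely what the direction (1)$\Rightarrow$(2) needs.
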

\begin{proof}
    Consider $(1)\iff(2)$. 
    First of all, recall that the $t$-structure restricts to $\PCoh{C}$ if and only if for every $x\in\PCoh{C}$ the truncation $\tau_{\leq n}x\in\mathcal{C}_{\leq n}$ is again pseudo-coherent.
    Since $\tau_{\leq m}\circ\tau_{\leq n}\simeq \tau_{\leq\text{min}(n,m)}$, the claim then follows by noting that $\tau_{\leq n}x \in\mathcal{C}_{\leq n}$ is compact if and only if  $\tau_{\leq m}x \in\mathcal{C}_{\leq m}$ is compact for all $m\geq n$ and $i_{\leq m}^{\leq n}:\mathcal{C}_{\leq m}\to\mathcal{C}_{\leq n}$ preserves compact objects for $m\geq n$.
    The equivalence $(2)\iff(3)$ follows since $\mathcal{C}_{\leq1}= \susp^{1}\mathcal{C}_{\leq0}$.  
    Assume now the equivalent conditions $(1)$-$(2)$ and $(3)$. 
    First of all, note that the $t$-structure also restricts to $\Coh{C}$, thus producing hearts $\Coh{C}^\heartsuit$ and $\PCoh{C}^\heartsuit$ so that the equality $\Coh{C}^\heartsuit =\PCoh{C}^\heartsuit$ is essentially by definition. 
    To prove that $\PCoh{C}^\heartsuit= (\mathcal{C}^\heartsuit)^\omega$, note that $(\subseteq)$ is always true and that $(\supseteq)$ follows since $i_{\leq0}^{\leq1}$ preserves compact objects. 
    Finally, since the heart of a $t$-structure is always abelian, the above equalities  prove $(4)$.
\end{proof}
Recall that a \emph{locally coherent abelian $1$-category} is a compactly generated Grothendieck abelian $1$-category  whose compact objects form an abelian category. Examples include modules over a coherent ring and quasi-coherent sheaves over a coherent scheme. 
\begin{lemma}
    Let $\mathcal{C}\in\Pr^{\text{L},\omega}_\st$ be a compactly generated  stable category equipped with a coherent $t$-structure such that $\mathcal{C}^\omega\subseteq\mathcal{C}^-$.
    Then $\mathcal{C}^\heartsuit$ is a locally coherent abelian $1$-category.
\end{lemma}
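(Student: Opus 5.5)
To show that $\mathcal{C}^\heartsuit$ is locally coherent, I will check three things: that it is Grothendieck abelian, that it is compactly generated, and that its compact objects form an abelian subcategory. The last point is exactly point $(4)$ of \autoref{lemma: restriction of t-structure on pcoh}. That lemma applies because the $t$-structure is coherent and $\mathcal{C}^\omega\subseteq\mathcal{C}^-$. So the work lies in the first two points.

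\emph{Grothendieck abelian.} The heart $\mathcal{C}^\heartsuit=\mathcal{C}_{\geq0}\cap\mathcal{C}_{\leq0}$ is the category of $0$-truncated objects of $\mathcal{C}_{\geq0}$. Since the $t$-structure is accessible, $\mathcal{C}_{\geq0}$ is presentable, so $\tau_{\leq0}\mathcal{C}_{\geq0}$ is a presentable $1$-category. Filtered colimits in the heart are computed in $\mathcal{C}$, because $\mathcal{C}_{\leq0}$ and $\mathcal{C}_{\geq0}$ are both closed under filtered colimits by compatibility with filtered colimits. They are therefore exact: a short exact sequence in the heart is a cofibre sequence in $\mathcal{C}$, and a filtered colimit of cofibre sequences is again one. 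A presentable abelian $1$-category with exact filtered colimits is Grothendieck, since presentability supplies a generator. This is the standard argument of \cite[Remark 1.3.5.23]{Lurie-HA}, and I would simply cite it.

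\emph{Compact generation.} By \autoref{lemma: NP 6.1.1}(a), $\mathcal{C}_{\leq0}$ is compactly generated by retracts of $\tau_{\leq0}c$ with $c\in\mathcal{C}^\omega$. Given $x\in\mathcal{C}^\heartsuit$, write $x\simeq\colim_i \tau_{\leq0}c_i$ as a filtered colimit in $\mathcal{C}_{\leq0}$. Each $c_i$ is eventually connective, say $c_i\in\mathcal{C}_{\geq -n_i}$, so I cannot apply $\pi_0$ directly. Instead I use that $\tau_{\geq0}$ commutes with filtered colimits (\autoref{lemma: NP 6.1.1}(5)) and that $x\simeq\tau_{\geq0}x$. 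This gives $x\simeq\colim_i\tau_{\geq0}\tau_{\leq0}c_i=\colim_i\pi_0(c_i)$, a filtered colimit in the heart.

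It remains to see that each $\pi_0(c_i)$ is compact in $\mathcal{C}^\heartsuit$. The object $c_i$ is pseudo-coherent by \autoref{remark: compact are pseudo-coherent}. Since the $t$-structure is coherent, it restricts to $\PCoh{C}$, so $\pi_0(c_i)\in\PCoh{C}^\heartsuit=(\mathcal{C}^\heartsuit)^\omega$, again by \autoref{lemma: restriction of t-structure on pcoh}. This step is the only mild obstacle. Without coherence, $\pi_0$ of a compact object need not be compact in the heart, because the connective truncation $\tau_{\geq0}$ does not obviously preserve compactness in $\mathcal{C}_{\leq0}$. The coherence hypothesis is exactly what resolves this.

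Putting the three points together, $\mathcal{C}^\heartsuit$ is a compactly generated Grothendieck abelian $1$-category whose compact objects form an abelian subcategory, that is, a locally coherent abelian $1$-category.
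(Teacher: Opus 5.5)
Your proof is correct, but it proves compact generation of the heart by a different route than the paper.

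\textbf{The paper's route.} The paper treats the Grothendieck property as implicit and says only that it suffices to prove compact generation. For that it argues abstractly: $\mathcal{C}_{\leq 0}$ is compactly generated by \autoref{lemma: NP 6.1.1}, and coherence makes $i^{\leq 1}_{\leq 0}$ preserve compact objects. It then invokes \cite[Corollary 5.5.7.3]{Lurie-HTT} through an identification $\mathcal{C}^\heartsuit\simeq\mathcal{C}_{\leq1}/\mathcal{C}_{\leq0}$.

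\textbf{Your route.} You produce explicit generators. Since $\pi_0=\tau_{\leq0}\tau_{\geq0}$ commutes with filtered colimits, every object of the heart is a filtered colimit of objects $\pi_0(c)$ with $c\in\mathcal{C}^\omega$. Coherence, applied to $c\in\PCoh{C}$, puts $\pi_0(c)$ in $\PCoh{C}\cap\mathcal{C}^\heartsuit$. That intersection lies in $(\mathcal{C}^\heartsuit)^\omega$, so the generators are compact.

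\textbf{Comparison.} Your argument is more elementary and avoids the rather opaque quotient identification. It pinpoints exactly where coherence enters, namely that $\tau_{\geq0}$ of a non-connective compact object need not be compact. As a bonus it identifies $(\mathcal{C}^\heartsuit)^\omega$ as the retracts of the $\pi_0(c)$. You also check the Grothendieck axiom explicitly via \cite[Remark 1.3.5.23]{Lurie-HA}, which the paper's proof skips. The paper's argument is shorter but leans on a localization theorem whose applicability to the heart it does not spell out.

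\textbf{A cosmetic fix.} Compact objects of $\mathcal{C}_{\leq0}$ are only retracts of objects $\tau_{\leq0}c$. It is cleaner to write $x\simeq\colim_i c_i$ in $\mathcal{C}$ with $c_i\in\mathcal{C}^\omega$ and apply $\pi_0$ directly; nothing of substance changes.
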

\begin{proof}
    Since $(\mathcal{C}^\heartsuit)^\omega$ forms an abelian subcategory of $\mathcal{C}^\heartsuit$ by \autoref{lemma: restriction of t-structure on pcoh}, it suffices to show that $\mathcal{C}^\heartsuit$ is compactly generated. 
    Since $\mathcal{C}$ is compactly generated, \autoref{lemma: NP 6.1.1} implies that $\mathcal{C}_{\leq0}$ is compactly generated.
    Since \autoref{lemma: restriction of t-structure on pcoh} implies that $i_{\leq0}^{\leq1} : \mathcal{C}_{\leq0} \into \mathcal{C}_{\leq1}$ preserves compact objects, \cite[Corollary 5.5.7.3]{Lurie-HTT} implies that $\mathcal{C}^\heartsuit\simeq\mathcal{C}_{\leq1}/\mathcal{C}_{\leq 0}$ is compactly generated. 
\end{proof}
We may wonder if condition $(4)$ of  \autoref{lemma: restriction of t-structure on pcoh} is equivalent to the other coniditions. 
This is not always the case as we need the following. 
\begin{lemma}
    Let $\mathcal{C}\in\Pr^{\text{L},\omega}_\st$ be a compactly generated  stable category equipped with a right-complete presentable $t$-structure such that $\mathcal{C}^\omega\subseteq\mathcal{C}^-$. 
    Consider the following condition for $n\geq0$.
    \begin{enumerate}
        \item[$(\text{HC})_n$] The category $\mathcal{C}_{\geq0}\cap\mathcal{C}_{\leq n}$ is compactly generated and the compact objects are closed under finite limits.
    \end{enumerate}
    Then the $t$-structure is coherent if and only if $(\text{HC})_n$ holds for every $n\geq0$.
\end{lemma}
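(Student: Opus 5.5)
We want: the $t$-structure is coherent iff for all $n\ge 0$ the category $\mathcal{C}_{\geq0}\cap\mathcal{C}_{\leq n}$ is compactly generated with compact objects closed under finite limits. By \autoref{lemma: restriction of t-structure on pcoh}, coherence is equivalent to the inclusion $i_{\leq0}^{\leq1}$ preserving compact objects, equivalently to every inclusion $\mathcal{C}_{\leq m}\into\mathcal{C}_{\leq n}$ ($m\le n$) preserving compacts. The key bridge is the identification
\[
(\mathcal{C}_{\geq0}\cap\mathcal{C}_{\leq n})^\omega=\PCoh{C}\cap\mathcal{C}_{\geq0}\cap\mathcal{C}_{\leq n}.
\]
This holds because, by the Remark on truncated objects, $\mathcal{C}_{\geq0}\cap\mathcal{C}_{\leq n}=\tau_{\leq n}\mathcal{C}_{\geq0}$. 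Moreover, $\tau_{\leq n}$ on $\mathcal{C}_{\geq0}$ is a left adjoint to an inclusion preserving filtered colimits (compatibility with filtered colimits), so it preserves compacts. An $n$-truncated connective object is therefore compact in $\tau_{\leq n}\mathcal{C}_{\geq0}$ iff it is almost compact in $\mathcal{C}_{\geq0}$.

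Compact generation of $\mathcal{C}_{\geq0}\cap\mathcal{C}_{\leq n}$ holds unconditionally, and I would prove it first. Since $\mathcal{C}^\omega\subseteq\mathcal{C}^-$, the objects $\tau_{\geq0}$ of compacts lie in $\mathcal{C}_{\geq0}$. More directly, \autoref{lemma: NP 6.1.1} shows $i_{\geq0}$ preserves compacts, so $\mathcal{C}_{\geq0}$ is compactly generated by $\tau_{\geq0}$ of shifts of compacts (these are compact in $\mathcal{C}_{\geq 0}$ since $\tau_{\ge0}$ preserves filtered colimits). Then $\tau_{\leq n}\mathcal{C}_{\geq0}$ is a localization of $\mathcal{C}_{\geq0}$ with filtered-colimit-preserving inclusion, hence compactly generated by the $\tau_{\leq n}$ of these generators. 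So $(\mathrm{HC})_n$ reduces to closure of compacts under finite limits.

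For ($\Rightarrow$), assume coherence. Finite limits in $\mathcal{C}_{\geq0}\cap\mathcal{C}_{\leq n}$ are generated by the terminal object and pullbacks, and the pullback of $x\to z\leftarrow y$ is computed as $\tau_{\geq0}$ of the pullback $P$ in $\mathcal{C}$. Here $P$ is pseudo-coherent and lies in $\mathcal{C}_{\geq -1}\cap\mathcal{C}_{\leq n}$, since $\PCoh{C}$ is stable by \autoref{lemma: stupid properties of Pcoh and coh}. Coherence says the $t$-structure restricts to $\PCoh{C}$, so $\tau_{\geq0}P$ is pseudo-coherent, hence compact in $\mathcal{C}_{\geq0}\cap\mathcal{C}_{\leq n}$ by the bridge.

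For ($\Leftarrow$), I would verify condition (3) of \autoref{lemma: restriction of t-structure on pcoh}, namely that the objects $\tau_{\le n}$-truncations of pseudo-coherent objects stay pseudo-coherent. Equivalently, for $x$ pseudo-coherent and connective, $\tau_{\leq n}x$ must be compact in $\mathcal{C}_{\leq m}$ for all $m\ge n$; by the bridge this means $\tau_{\leq n}x\in(\mathcal{C}_{\geq0}\cap\mathcal{C}_{\leq m})^\omega$. Now $\tau_{\leq m}x$ is compact in $\mathcal{C}_{\geq0}\cap\mathcal{C}_{\leq m}$, and $\tau_{\geq n+1}\tau_{\leq m}x$ is the fibre, computed in the $m$-truncated prestable world, of the map to $\tau_{\le n}x$. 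Rather than this circular description, I would use induction on homotopy groups: $\pi_0 x=\tau_{\le0}x$ is compact in $\mathcal{C}_{\geq 0}\cap\mathcal{C}_{\le 0}$ by \autoref{lemma: NP 6.1.1}(7). The key step is that in $\mathcal{C}_{\geq0}\cap\mathcal{C}_{\leq m}$ the loop of a compact $y$, namely $\tau_{\geq0}\susp^{-1}y=\Omega y$, is a finite limit (the pullback $0\times_y0$), hence compact by $(\mathrm{HC})_m$. Then $\tau_{\geq 1}\tau_{\le m}x=\fib(\tau_{\le m}x\to\pi_0x)$ computed in the prestable category (fibres there are $\tau_{\geq0}$ of fibres, and the fibre is already connective) is a finite limit of compacts, hence compact. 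Its desuspension, $\Omega$ in $\mathcal{C}_{\ge 0}\cap\mathcal{C}_{\le m}$, is then compact and equals $\susp^{-1}\tau_{\geq1}\tau_{\le m}x$, which is $\tau_{\le m-1}$ of the pseudo-coherent object $\susp^{-1}\tau_{\ge1}x$. Iterating, every $\pi_k x$ is compact in every $\mathcal{C}_{\geq0}\cap\mathcal{C}_{\leq m}$, and since $\tau_{\leq n}x$ is a finite extension of $\susp^k\pi_kx$ and compact objects are closed under extensions, $\tau_{\le n}x$ is compact in $\mathcal{C}_{\leq m}$.

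The main obstacle is the bookkeeping in this induction: one must check that compactness in $\mathcal{C}_{\geq0}\cap\mathcal{C}_{\leq m}$ agrees with compactness in $\mathcal{C}_{\leq m}$ for connective objects, and that the fibre and loop constructions in the prestable category agree with the stable ones after $\tau_{\geq0}$.
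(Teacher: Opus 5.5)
Your plan follows the paper's. For $(\Rightarrow)$ you compute finite limits in $\mathcal{E}_n:=\mathcal{C}_{\geq0}\cap\mathcal{C}_{\leq n}$ as $\tau_{\geq0}$ of limits in $\mathcal{C}$ and use that the $t$-structure restricts to $\PCoh{C}$. For $(\Leftarrow)$ you use the sequence $\tau_{\geq1}x\to x\to\pi_0x$ together with closure of compacts under finite limits. However, your $(\Leftarrow)$ direction is circular at its base.

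\textbf{The gap in $(\Leftarrow)$.} To call $\fib_{\mathcal{E}_m}(\tau_{\leq m}x\to\pi_0x)$ ``a finite limit of compacts'' you need $\pi_0x$ to be compact in $\mathcal{E}_m$. You have only established compactness of $\pi_0x$ in $\mathcal{E}_0=\mathcal{C}^\heartsuit$, and that is all pseudo-coherence of $x$ gives you. Compactness of $\pi_0x$ in every $\mathcal{E}_m$ is precisely pseudo-coherence of $\tau_{\leq0}x$, i.e.\ an instance of the coherence you are trying to prove. The same problem recurs at every stage of your iteration. The paper's proof passes over the same point: it asserts that $\tau_{\leq k}x$ is ``clearly'' pseudo-coherent and then feeds $\pi_0x\in\PCoh{C}$ into the fibre step.

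\textbf{The repair.} Apply your loop observation in the other order. Let $z=\tau_{\leq m}x\in\mathcal{E}_m^\omega$.
\begin{itemize}
\item By $(\mathrm{HC})_m$, $\Omega_{\mathcal{E}_m}z=\tau_{\geq0}\Sigma^{-1}z\simeq\Sigma^{-1}\tau_{\geq1}z$ is compact.
\item Its suspension in $\mathcal{E}_m$ is a finite colimit, computed as $\tau_{\leq m}\Sigma$, and equals $\tau_{\geq1}z$; so $\tau_{\geq1}z$ is compact.
\item The cofibre in $\mathcal{E}_m$ of $\tau_{\geq1}z\to z$ is $\tau_{\leq m}$ of the cofibre in $\mathcal{C}$, namely $\tau_{\leq0}z=\pi_0x$; so $\pi_0x$ is compact.
\end{itemize}
More generally $\Sigma^{k+1}_{\mathcal{E}_m}\Omega^{k+1}_{\mathcal{E}_m}z\simeq\tau_{\geq k+1}z$. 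Hence $\tau_{\leq m}$ of both $\tau_{\geq k+1}x$ and $\tau_{\leq k}x$ is compact in $\mathcal{E}_m$ for every $m$, which says exactly that these truncations are pseudo-coherent. This uses only closure under loops, and it makes your final extension-closure step unnecessary.

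\textbf{Two justifications in $(\Rightarrow)$.} The claims are true there, but the reasons you give are not.
\begin{itemize}
\item Compact generation of $\mathcal{E}_n$ is not ``unconditional'' by your argument. That $\tau_{\geq0}$ preserves filtered colimits makes $i_{\geq0}$ preserve compacts; it does not make $\tau_{\geq0}c$ compact. For instance, for a ring $R$ and $a\in R$, $\tau_{\geq0}$ of the perfect complex $R\xrightarrow{a}R$ (in degrees $0,-1$) is $\mathrm{Ann}(a)$, which need not be finitely presented. Moreover, compact generation of $\mathcal{C}_{\geq0}$ would amount to the $t$-structure being compactly generated, which is not assumed. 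Under coherence you can argue instead: any $y\in\mathcal{E}_n$ is $\colim_i\tau_{\leq n}\tau_{\geq0}c_i$ for a filtered diagram of compacts $c_i$ with colimit $y$. Each $\tau_{\leq n}\tau_{\geq0}c_i$ is pseudo-coherent, since $\mathcal{C}^\omega\subseteq\PCoh{C}$ and the $t$-structure restricts, hence compact in $\mathcal{E}_n$.
\item Your ``bridge'' is only half unconditional. A compact object of $\mathcal{E}_n$ is almost compact in $\mathcal{C}_{\geq0}$ only if it stays compact in $\mathcal{E}_k$ for $k>n$. That is condition (2) of \autoref{lemma: restriction of t-structure on pcoh}, not a consequence of $\tau_{\leq n}$ preserving compacts. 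In $(\Rightarrow)$ you may use it because coherence is assumed, but that is where it must come from.
\end{itemize}
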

\begin{proof}
    Assume that the $t$-structure is coherent. 
    Fix $n\geq0$ and consider the truncation $\tau_{\geq 0}:\mathcal{C}_{\leq n}\to\mathcal{C}_{\geq0}\cap\mathcal{C}_{\leq n}$ with kernel $\mathcal{C}_{\leq -1}$.
    Since $\mathcal{C}_{\geq0}$ is compactly generated by \autoref{lemma: NP 6.1.1} and since the inclusion $\mathcal{C}_{\leq0}\into\mathcal{C}_{\leq 1}$ preserves compacts, an application of \cite[Corollary 5.5.7.3]{Lurie-HTT} implies that $\mathcal{C}_{\geq0}\cap\mathcal{C}_{\leq n}$ is compactly generated. 
    To show that the comapct objects of $\mathcal{C}_{\geq0}\cap\mathcal{C}_{\leq n}$ are closed under finite limits, since $\tau_{\geq0}$ is right exact, it suffices to show that the compact objects of $\mathcal{C}_{\leq n}$ are closed under finite limits. This follows by \autoref{lemma: restriction of t-structure on pcoh}. 

    Conversely, assume that $(\text{HC})_n$ holds for every $n\geq0$. 
    To show that the $t$-structure is coherent it suffices to show that the it restricts to $\PCoh{C}$. 
    Let $x\in\PCoh{C}$ and assume without loss of generality that $x\in\mathcal{C}_{\geq0}$. 
    Fix $k\in\mb{Z}$. 
    Then $\tau_{\leq k}x$ is clearly connective and almost compact in $\tau_{\leq k}\mathcal{C}_{\geq0}$, thus $\tau_{\leq k}x\in\PCoh{C}$.
    For the other truncation, consider the fibre sequence $\tau_{\geq1}x\to x\to \pi_0(x)$. 
    Apply the truncation $\tau_{\leq n}$ for every $n\geq0$ to produce a fibre sequence $\tau_{\leq n}\tau_{\geq1}x\to\tau_{\leq n}x\to\pi_0(x)$. 
    Since all the terms belong to $\mathcal{C}_{\geq0}\cap\mathcal{C}_{\leq n}$, it follows that $\tau_{\leq n}\tau_{\geq1}x\simeq \fib(\tau_{\leq n}x\to\pi_0(x))$ in $\mathcal{C}_{\geq0}\cap\mathcal{C}_{\leq n}$. 
    Since $x$ and $\pi_0(x)$ belong to $\PCoh{C}\cap \mathcal{C}_{\geq0}$, the first one by assumption and the second one being $\pi_0(x)\simeq \tau_{\leq0}x$ the truncation of a pseudo-coherent object, it follows by assumption $(\text{HC})_n$ that $\tau_{\leq n}\tau_{\geq1}x$ is compact in $\mathcal{C}_{\geq0}\cap\mathcal{C}_{\leq n}$. 
    Since this holds for every $n\geq0$, it follows that $\tau_{\geq1}x\in\PCoh{C}$.
    In particular, the $t$-structure restricts to $\PCoh{C}$.
\end{proof}

As we will see, even though condition $(\text{HC})_n$ seems very hard to check in practice, many interesting categories satisfy it.

Then we have the following.
\begin{theorem}\label{proposition: pseudo coherent and pi_n}
   Let $\mathcal{C}\in\Pr^{\text{L},\omega}_\st$ be a compactly generated stable category with a coherent $t$-structure. 
   Assume that the $t$-structure is left separated and $\mathcal{C}^\omega\subseteq\mathcal{C}^-$.
   Assume also that:
   \begin{enumerate}
    \item[(S)] There exists a generating subcategory $S\subseteq\mathcal{C}_{\geq0}$ which is closed under finite colimits and extensions and consists of pseudo-coherent connective objects of $\mathcal{C}$.
   \end{enumerate}
   Then:
    \begin{enumerate*}
        \item  The heart $\Coh{C}^\heartsuit= \Coh{C}\cap\mathcal{C}^\heartsuit$ consists precisely of the compact objects of $\mathcal{C}^\heartsuit$.
        \item An object $x\in\PCoh{C}$ if and only if $\pi_nx\in(\mathcal{C}^\heartsuit)^\omega$ and $\pi_n x=0$ for $n<<0$.
        \item An object $x\in\Coh{C}$ if and only if $\pi_nx\in(\mathcal{C}^\heartsuit)^\omega$ and $\pi_n x=0$ for all but finitely many $n$.
    \end{enumerate*}
    In particular, if the $t$-structure  is left complete, then $\PCoh{C}$ is the left $t$-completion of $\Coh{C}$.
\end{theorem}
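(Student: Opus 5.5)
The plan is to reduce everything to the description of the heart obtained in \autoref{lemma: restriction of t-structure on pcoh}, and then handle objects by Postnikov induction. The one genuinely new input is right completeness, which forces bounded-below homotopy to give an eventually connective object. For (1): the $t$-structure is coherent and $\mathcal{C}^\omega\subseteq\mathcal{C}^-$, so the last part of \autoref{lemma: restriction of t-structure on pcoh} already gives $\Coh{C}^\heartsuit=\PCoh{C}^\heartsuit=(\mathcal{C}^\heartsuit)^\omega$. It remains only to note that $\Coh{C}\cap\mathcal{C}^\heartsuit$ equals $\PCoh{C}\cap\mathcal{C}^\heartsuit$, since objects of the heart are bounded.

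For the forward direction of (2), take $x\in\PCoh{C}$. Then $x\in\mathcal{C}_{\geq N}$ for some $N$, so $\pi_nx=0$ for $n<N$. Coherence says the $t$-structure restricts to $\PCoh{C}$. Hence $\pi_nx=\tau_{\leq0}\tau_{\geq0}\susp^{-n}x$ lies in $\PCoh{C}^\heartsuit=(\mathcal{C}^\heartsuit)^\omega$ by (1).

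For the converse of (2), suppose $\pi_nx$ is compact in the heart for all $n$ and $\pi_nx=0$ for $n<N$. The first step is to show $x\in\mathcal{C}_{\geq N}$ without assuming $x\in\mathcal{C}^-$. Put $y=\tau_{\leq N-1}x$; all of its homotopy groups vanish. For each $m$, the object $\tau_{\geq -m}y$ is bounded with zero homotopy, so it vanishes by finite induction on the cofibre sequences $\tau_{\geq k+1}\to\tau_{\geq k}\to\susp^k\pi_k$. Right completeness gives $y\simeq\colim_m\tau_{\geq-m}y\simeq0$. Therefore $x\simeq\tau_{\geq N}x$, and after shifting we may assume $N=0$, so $x\in\mathcal{C}_{\geq0}$.

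Next I show that each $\tau_{\leq n}x$ is pseudo-coherent, by induction on $n$ using the cofibre sequences
\[
\susp^n\pi_nx\to\tau_{\leq n}x\to\tau_{\leq n-1}x .
\]
By (1), each $\pi_nx$ lies in $\PCoh{C}$, and $\PCoh{C}$ is a stable subcategory by \autoref{lemma: stupid properties of Pcoh and coh}. Hence $\tau_{\leq n}x\in\PCoh{C}\cap\mathcal{C}_{\geq0}$. Since $\tau_{\leq n}(\tau_{\leq n}x)\simeq\tau_{\leq n}x$, each $\tau_{\leq n}x$ is compact in $\tau_{\leq n}\mathcal{C}_{\geq0}$. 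This is exactly almost compactness of $x$ in $\mathcal{C}_{\geq0}$, so $x\in\PCoh{C}$.

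Part (3) follows from (2): the extra condition that $\pi_nx=0$ for $n\gg0$ amounts to $x\in\mathcal{C}^+$, using the dual argument with left separatedness on $\tau_{\geq M}x$ once $x\in\mathcal{C}^-$ is known.

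For the final claim, assume left completeness. Then $x\in\PCoh{C}$ is the limit of its tower $\tau_{\leq n}x$, and by the previous step each $\tau_{\leq n}x$ lies in $\Coh{C}$. Conversely, a Postnikov-compatible tower in $\Coh{C}$ glues in $\mathcal{C}$, by left completeness, to an object whose homotopy groups are compact and bounded below, which lies in $\PCoh{C}$ by (2). This identifies $\PCoh{C}$ with the left $t$-completion $\lim_n\Coh{C}_{\leq n}$. This is the step where I expect (S) to enter: \autoref{lemma: assumption S} presents connective pseudo-coherent objects as geometric realizations of objects of $S$. I would use that presentation to check that the comparison functor is fully faithful and preserves the relevant limits inside $\mathcal{C}$, rather than just in the formal completion.

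The main obstacle I anticipate is this last identification: making sure that the limit is computed in $\mathcal{C}$ and lands in $\PCoh{C}$, and pinning down exactly where (S) is needed. The first step, proving eventual connectivity from right completeness alone, is the other delicate point.
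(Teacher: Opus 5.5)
Your proof is correct. For the converse of (2), however, it takes a genuinely different and much shorter route than the paper.

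The paper reduces to $x$ connective and builds a resolution $D(0)\to D(1)\to\cdots$ whose successive cofibres are $\susp^n$ of objects of $S$. It keeps each remainder $c(n)$ connective with compact homotopy groups, and this is where three inputs enter: generation by $S$, the forward direction applied to objects of $S$, and the fact that $(\mathcal{C}^\heartsuit)^\omega$ is abelian. Left separatedness then gives $\colim_n D(n)\simeq x$, and Dold--Kan together with \autoref{lemma: 0-connected pseudo-coheren are closed under the formation of geometric realization} finishes the argument.

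You instead observe that almost compactness of a connective $x$ only involves the finite Postnikov sections $\tau_{\leq n}x$. Each of these is a finite extension of the objects $\susp^k\pi_kx$ for $0\leq k\leq n$, which are pseudo-coherent by (1), so stability of $\PCoh{C}$ concludes. One point is left implicit: a connective pseudo-coherent object is almost compact in $\mathcal{C}_{\geq0}$ itself, not merely in some $\mathcal{C}_{\geq m}$. This follows from compatibility with filtered colimits and is worth stating.

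What each route buys:
\begin{itemize}
\item Your route never uses (S), and it needs left separatedness only in (3), to turn vanishing of high homotopy into coconnectivity.
\item Your route also supplies the right-completeness argument for eventual connectivity, which the paper simply asserts with ``both conditions imply that $x$ is eventually connective''.
\item The paper's route yields more: an explicit presentation of $x$ as a geometric realization of objects of $S$, the analogue of a resolution by finitely generated projectives.
\end{itemize}

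The paper's proof does not address the final clause at all. Your worry there is unfounded, and (S) is not needed. Full faithfulness of $\PCoh{C}\to\lim_n\Coh{C}_{\leq n}$ is just the restriction to full subcategories of the equivalence $\mathcal{C}\simeq\lim_n\mathcal{C}_{\leq n}$ given by left completeness. For essential surjectivity, the glued object $x$ satisfies $\pi_kx\simeq\pi_kx_0$ for $k\leq 0$, so it is uniformly bounded below with compact homotopy groups. It therefore lies in $\PCoh{C}$ by (2), exactly as you sketched.
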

\begin{proof}
    Point $(1)$ follows by \autoref{lemma: restriction of t-structure on pcoh}. 
    Since point $(3)$ follows immediately from point $(2)$ by noticing that coherent objects are defined to be bounded above, it suffices to show $(2)$. 
    Let $x\in\mathcal{C}$. 
    Since both conditions imply that $x$ is eventually connective, it is safe to assume that $x\in\mathcal{C}_{\geq0}$ is connective. 
    Suppose first that $x\in\PCoh{C}\cap\mathcal{C}_{\geq0}$ is pseudo-coherent and connective. 
    Since $x$ is connective, $\pi_n(x)\simeq 0$ for $n<0$.
    Since the $t$-structure restricts to $\PCoh{C}$ by \autoref{lemma: restriction of t-structure on pcoh}, it follows that $\tau_{\geq n}x$ belongs to $\PCoh{C}\cap\mathcal{C}_{\geq n}$ for every $n\geq0$. 
    The cofibre sequence $\tau_{\geq n+1}x\to\tau_{\geq n}x\to\susp^n\pi_n(x)$ implies then that $\susp^n\pi_n(x)$ belongs to $\PCoh{C}\cap\mathcal{C}_{\geq n}$. 
    Since $\susp^n\pi_n(x)$ belongs to $\mathcal{C}_{\leq n}$, it follows that $\susp^n\pi_n(x)\in \PCoh{C}\cap \susp^n\mathcal{C}^\heartsuit$.
    By desuspending and applying point $(4)$ of \autoref{lemma: restriction of t-structure on pcoh} it follows that $\pi_n(x)\in\PCoh{C}\cap\mathcal{C}^\heartsuit=(\mathcal{C}^\heartsuit)^\omega$ is compact for every $n\geq0$. 

    Assume now that $x\in\mathcal{C}_{\geq0}$ is connective and that $\pi_n(x)\in(\mathcal{C}^\heartsuit)^\omega$ for every $n\geq0$.
    By \autoref{lemma: 0-connected pseudo-coheren are closed under the formation of geometric realization} it suffices to prove that $x$ can be obtained as geometric realization of a simplicial object $x_\bullet$  such that every $x_i$ is in $S$.
    As in the proof of \autoref{lemma: assumption S}, it will suffice to construct a sequential diagram
    \[
    D(0) \xrightarrow{f_1} D(1) \xrightarrow{f_1} D(2) \to\dots
    \]
    mapping to $x$ such that the cofibre of each map $f_n$ is a $n$-fold suspension of an object of $S$ and such that the canonical map $\colim_{n\in\N} D(n) \to x$ is an equivalence. 
    Equivalently, it suﬀices to produce a cofibre sequence $D(n)\to x\to \susp^nc(n)$ for each $n \in\N$ such that $D(n)$ is in $S$ and $c(n)$ is connective. 
    The proof will show that $c(n)$ can be constructed to have compact homotopy groups. 
    The argument goes by induction. 
    Set, by convention, $D(0)$ to be the zero, so that the cofibre of $0 \to x$ is $x$, hence connective with compact homotopy groups. 
    The inductive hypothesis constructs a diagram $D(n)\to x\to \susp^n c(n)$ with $D(n)$ in $S$ and $c(n)$ connective with compact homotopy groups. 
    In particular, since $\pi_0c(n)\in(\mathcal{C}^\heartsuit)^\omega$ and since $S$ is generating, there exists a finite set $I$ and a morphism $\oplus_{i\in I} p_i\to c(n)$ with $p_i\in S$ for every $i\in I$ inducing a $\pi_0$-epimorphism. 
    Set $P=\oplus_{i\in I_0} p_i$. 
    Since $S$ is closed under finite colimits, it follows that $P\in S$. 
    Define $D(n+1)$ via the pullback square
    \[\begin{tikzcd}[cramped]
	{D(n+1)} & {\susp^nP} \\
	x & {\susp^nc(n)}
	\arrow[from=1-1, to=1-2]
	\arrow[from=1-1, to=2-1]
	\arrow[from=1-2, to=2-2]
	\arrow[from=2-1, to=2-2]
    \end{tikzcd}\]
    Then the cofibre sequence $D(n)\to D(n+1)\to \susp^nP$ exhibits $D(n+1)$ in $S$, being $S$ closed under extensions. 
    Furthermore, the equivalence $\cofib(D(n+1)\to x)\simeq \susp^{n+1}\fib(P\to c(n))$ identifies $c(n+1)$ with $\fib(P\to c(n))$. 
    Since $c(n)$ has compact homotopy groups and since $P$ is pseudo-coherent (thus it has compact homotopy groups by the previous implication) and since $(\mathcal{C}^\heartsuit)^\omega$ is abelian, it follows that the fibre of $P\to c(n)$ has also compact homotopy groups $\pi_i$ which vanish for $i\leq0$ (being $P\to c(n)$ a $\pi_0$-epimorphism), thus concluding the induction hypotheses.
    The above equivalence shows also that the map $D(n+1)\to x$ induces an  isomorphism on $\pi_i$ for $0\leq i \leq n-2$. 
    It follows by \autoref{remark: Neeman lemma 1.3} that the canonical map $\colim_{n\in N} D(n)\to x$ is an equivalence. 
    Since, by construction, the cofiber of each map $D(n)\to D(n+1)$ is an $n$-fold suspension of  an object of $S$, the claim follows by  the (prestable) Dold-Kan correspondence.
\end{proof}

\subsection{Pseudo-compact objects}\label{subsection: pseudo-compact}
We now introduce Neeman's pseudo-compact objects. 
\begin{definition}
    Let $\mathcal{C}$ be a stable category equipped with a $t$-structure $\ts{C}$. An object $x\in\mathcal{C}$ is called \emph{pseudo-compact} if for every $n>0$ there exists a cofibre sequence $c\to x\to d$ where $c\in\mathcal{C}^\omega$ is compact and $d\in\mathcal{C}_{\geq n}$ is $n$-connective.
    We will denote by $\mathcal{C}^-_c$ the full subcategory of $\mathcal{C}$ spanned by the pseudo-compact objects.
    We also denote by $\mathcal{C}^b_c$ the intersection $\mathcal{C}^-_c\cap \mathcal{C}^b$,
\end{definition}
Let  $\mathcal{C}$ be compactly generated and equipped with a compact generator $G\in\mathcal{C}^\omega$ such that $G\in\mathcal{C}_{\geq-N}$ and $\pi_0\Hom_\mathcal{C}(G,\mathcal{C}_{\geq N})=0$ for some integer $N>0$. Then \cite[Lemma 2.9 and Proposition 2.10]{neeman2025triangulatedcategoriessinglecompact} show that $\mathcal{C}^-_c$ and $\mathcal{C}^b_c$ are stable thick subcategories of $\mathcal{C}$. 
In particular, if $\mathcal{C}\in\Pr^\text{L}_\st$ is presentable, then $\mathcal{C}^-_c$ and $\mathcal{C}^b_c$ are idempotent-complete.
\begin{warning}
    A priori there is no reason to expect $\mathcal{C}^-_c$ to be a small category. This issue will be fixed in \autoref{lemma: lemma 7.5 Neeman}.
\end{warning}

\begin{lemma}
    Let $\mathcal{C}\in\CAlg(\Pr^{\text{L},\omega}_\st)$ be commutative algebra equipped with a right-complete presentable $t$-structure compatible with $\otimes$. 
    Let $G\in\mathcal{C}^\omega$ be a compact generator which is $(-N)$-connective $G\in\mathcal{C}_{\geq -N}$ and $\pi_0\Hom_\mathcal{C}(G,\mathcal{C}_{\geq N})=0$  for $N\in\N$. 
    \begin{enumerate*}
        \item Then $\mathcal{C}^-_c$ is a $\mathcal{C}^\omega$-module. 
        \item If $\mathcal{C}^\omega\subseteq \Tor{C}$, then $\mathcal{C}^b_c$ is a $\mathcal{C}^\omega$-module. 
    \end{enumerate*}
    Notice that, contrary to \autoref{lemma: pcoh and coh are Cc submodules}, there is no need to assume that $\mathcal{C}$ is rigid.   
\end{lemma}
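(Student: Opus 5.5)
The plan is to check directly that tensoring with a compact object preserves the defining cofibre sequences of pseudo-compact objects, up to a uniform shift in connectivity. Fix $c'\in\mathcal{C}^\omega$ and $x\in\mathcal{C}^-_c$. We must show $c'\otimes x\in\mathcal{C}^-_c$. Since $\mathcal{C}^-_c$ is already known to be a thick stable subcategory (by the cited results of Neeman, which apply because of the generator $G$), the only content is closure under the action of $\mathcal{C}^\omega$.

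\textbf{Step 1: a uniform connectivity bound for $c'$.} Because $G$ is a compact generator, $\mathcal{C}^\omega$ is the thick closure of $G$. So $c'$ lies in the thick subcategory generated by $G$, and since $G\in\mathcal{C}_{\geq -N}$ and $\mathcal{C}^-$ is thick, $c'\in\mathcal{C}_{\geq -M}$ for some $M$ depending on $c'$. Compatibility with $\otimes$ then gives $c'\otimes\mathcal{C}_{\geq n}\subseteq\mathcal{C}_{\geq n-M}$. This step uses no rigidity and no tor-finiteness, only that connective objects are closed under $\otimes$ and that the unit is connective.

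\textbf{Step 2: tensoring the approximating sequences.} Given $n>0$, choose a cofibre sequence $c\to x\to d$ with $c\in\mathcal{C}^\omega$ and $d\in\mathcal{C}_{\geq n+M}$. Tensoring with $c'$ yields the cofibre sequence $c'\otimes c\to c'\otimes x\to c'\otimes d$. Here $c'\otimes d\in\mathcal{C}_{\geq n}$ by Step 1. The remaining point is that $c'\otimes c$ is compact; this is the one place where we must avoid rigidity. Because $\otimes$ preserves colimits in each variable and $\mathcal{C}\in\CAlg(\Pr^{\L,\omega}_\st)$, the tensor product restricts to compact objects. Alternatively, fixing $c$, the class of $c'$ for which $c'\otimes c$ is compact is thick and contains the unit, hence all of $\mathcal{C}^\omega$ provided the unit lies in the thick closure of $G$, which holds since the unit is compact. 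Together this proves (1). Naturality of the action is inherited from $\mathcal{C}$, so $\mathcal{C}^-_c$ is a sub-$\mathcal{C}^\omega$-module.

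\textbf{Step 3: the bounded case.} For (2), it remains to show that $c'\otimes-$ preserves $\mathcal{C}^b$. The left bound comes from Step 1. The right bound follows from $c'\in\Tor{C}$: with tor-amplitude in $[a,b]$, we get $c'\otimes\mathcal{C}_{\leq m}\subseteq\mathcal{C}_{\leq m+b}$. Intersecting with (1) gives that $\mathcal{C}^b_c$ is a $\mathcal{C}^\omega$-module.

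I expect the main obstacle to be Step 2, namely justifying that compact objects are closed under $\otimes$ without assuming rigidity. I would rely on the convention that algebras in $\Pr^{\L,\omega}_\st$ have a tensor product that preserves compacts. Failing that, I would fall back on the thick-closure argument from the compact unit, after first checking that the unit lies in $\mathcal{C}^\omega$, as is implicit in the setting.
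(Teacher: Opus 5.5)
Your proof is correct and is essentially the paper's: you bound the connectivity of the compact object using the eventually connective generator, tensor an approximating cofibre sequence of correspondingly higher connectivity, get compactness of $c'\otimes c$ because the multiplication of an algebra in $\Pr^{\L,\omega}_{\st}$ preserves compact objects, and use tor-finiteness for the upper bound in the bounded case. One caveat is that your fallback argument is backwards: a thick class containing the unit contains all of $\mathcal{C}^\omega$ only if the unit thickly generates $\mathcal{C}^\omega$, and the unit lying in the thick closure of $G$ does not give this; your primary justification already suffices, so nothing is lost.
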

\begin{proof}
    Let $x\in\mathcal{C}^-_c$ and $c\in\mathcal{C}^\omega$. 
    Since $\mathcal{C}$ is generated by a single compact object which is eventually connective, it follows that $c\in\mathcal{C}_{\geq -m}$ for some $m>0$. 
    Fix $n>0$. Being $x\in\mathcal{C}^-_c$  pseudo-compact,   the definition with  $n+m>0$ produces a cofibre sequence $c'\to x\to d$ with $c'\in\mathcal{C}^\omega$ and $d\in\mathcal{C}_{\geq n+m}$.
    Tensoring with $c$ produces then the cofibre sequence $c'\otimes_\mathcal{C} c\to x\otimes_\mathcal{C} c\to d\otimes_\mathcal{C} c$ where $c'\otimes_\mathcal{C} c$ is again compact and $d\otimes_\mathcal{C} c \in \mathcal{C}_{\geq n+m}\otimes_\mathcal{C}\mathcal{C}_{\geq -m}\subseteq\mathcal{C}_{\geq n}$.  This proves point $(1)$. Point $(2)$ follows similarly.
\end{proof} 

In the following it will be useful to work with an equivalence class of $t$-structures rather than a single one. 
\begin{remark}
    Let $\mathcal{C}$ be a stable category with a pair of $t$–structures $(\mathcal{C}_{\geq0}^1,\mathcal{C}_{\leq0}^1)$ and $(\mathcal{C}_{\geq0}^2,\mathcal{C}_{\leq0}^2)$.
    We will say that the $t$–structures are \emph{equivalent} if there exists an integer $A\in\N$ such that  $\mathcal{C}_{\geq A}^1\subseteq \mathcal{C}_{\geq0}^2\subseteq \mathcal{C}_{\geq-A}^1$.
    Equivalently, if $\mc{C}_{\leq -A}^1\subseteq \mc{C}_{\leq0}^2\subseteq \mc{C}_{\leq A}^1$.
    Being equivalent is an equivalence relation on the (possibly large or even very large) set of $t$-structures on a stable category.
    It is a common belief that all the meaningful constructions associated to a $t$-structure should only depend on the equivalence class. 
    For example, the full subcategories $\mathcal{C}^-,\mathcal{C}^+$ and $\mathcal{C}^b$ are insensitive to  equivalent $t$-structures.

    Notice, however, that not every property of $t$-structures is preserved by passing to equivalence classes.
    For example:
    \begin{enumerate*}
        \item Being accessible is not preserved.
        \item Being compatible with filtered colimits is not preserved.
        \item Being compactly generated is not preserved. 
        \item The heart of the $t$-structure depends on a representative. 
    \end{enumerate*}
    On the other hand, being right or left complete is instead preserved by the passing to the quotient.
\end{remark}

\begin{remark}
    Let $\mathcal{C}\in\Pr^{\L,\omega}_\st$ be a compactly generated stable category, and assume that $\mathcal{C}$ admits a single compact generator. 
    Then this compact generator determines an equivalence class of $t$-structures.
    In particular, \cite[Lemma 0.13]{neeman2025triangulatedcategoriessinglecompact} and \cite[Example 0.17]{neeman2025triangulatedcategoriessinglecompact} imply that this equivalence class is insensitive to compact generators, in the sense that two different compact generators generate the same equivalence class of $t$-structure. 
    We will refer to this equivalence class as the \emph{preferred equivalence class}. 
\end{remark}
The next observation is a useful criterion to produce left complete $t$-structures.
\begin{remark}\label{lemma: excellent t-structure are preserved by equivalence class}
    Let $\mathcal{C}\in\Pr^{\L,\omega}_\st$ be a compactly generated stable category equipped with a compact generator $G\in\mathcal{C}^\omega$ such that $\pi_0\Hom_\mathcal{C}(G,\susp^iG)=0$ for $i>>0$.
    Then \cite[Lemma 3.6]{Burke_2023} implies every $t$-structure in the preferred equivalence class is left and right complete. 
\end{remark}

\begin{definition}{\protect{\cite[Definition 8.3]{neeman2025triangulatedcategoriessinglecompact}}}
    Let $\mc{C}\in\Pr^{\L,\omega}_\st$ be a compactly generated stable category equipped with right-complete presentable $t$-structure. Assume that $\mc{C}$ comes equipped with a compact generator $G\in\mc{C}^\omega$ such that the $t$-structure is in the preferred equivalence class and there exists an integer $N>0$ such that $G$ is $(-N)$-connective, $G\in\mc{C}_{\geq-N}$ and $\pi_0\Hom_\mc{C}(G,\mc{C}_{\geq N})=0$. 
    A \emph{strong  $\langle G\rangle_n$–approximating system} is a sequence of objects and morphisms $D(1)\to D(2)\to\dots$ such that: 
    \begin{enumerate*}
        \item Each $D(i)$  belongs to $\langle G\rangle_n$.
        \item The map $\pi_l(D(i)) \to \pi_l(D(i+1))$ is an isomorphism in $\mathcal{C}^\heartsuit$ whenever $l\leq i$.
    \end{enumerate*}
    In this definition we also allow $n = \infty$ by declaring $\langle G\rangle_\infty= \mathcal{C}^\omega$. 
    Given an object $x\in\mathcal{C}$, a \emph{strong $\langle G\rangle_n$-approximating system for $x$} is a strong  $\langle G\rangle_n$–approximating system $D(1)\to D(2)\to\dots$ with a map to $x$ that induces isomorphism $\pi_l(D(i))\to \pi_l(x)$  whenever $i\geq l$.    
\end{definition}
The notation $\langle G\rangle_n$ is borrowed from \cite[Reminder 0.12]{neeman2025triangulatedcategoriessinglecompact}. 
It denotes the full thick subcategory of $\mathcal{C}$ whose objects are finite coproducts of at most $n$-extensions of the objects  $\susp^i G$, for $i\in\mb{Z}$. 

Our next  goal is to show that every pseudo-compact object  admits a $\mathcal{C}^\omega$-strong approximating system.
This is a key result in Neeman's paper, and we now present his argument for completeness. But first we recall a couple of technical results.

\begin{remark}[\protect{\cite[Lemma 1.4 and Remark 1.5]{neeman2025triangulatedcategoriessinglecompact}}]\label{remark: Neeman lemma 1.4 and remark 1.5}
    Let $\mathcal{C}\in\Pr^\text{L}_\st$ be a presentable stable category. Assume that $\mathcal{C}$ comes  equipped with a $t$-structure with both $\mathcal{C}_{\geq0}$ and $\mathcal{C}_{\leq0}$ closed under  small colimits in $\mathcal{C}$. Let $D(1)\to D(2)\to\dots$ be a sequence of objects and morphisms in $\mathcal{C}$. 
    Then, for every $l\in\mb{Z}$, there is an exact sequence 
    \[
    0\to \colim_i \pi_l(D(i))\to \pi_l(\colim_i D(i))\to \colim^1_i\pi_{l-1}(D(i))\to 0
    \]
    in the heart $\mathcal{C}^\heartsuit$. Here $\colim^1$ is the derived functor of the colimit. 
    In particular, if the sequences $\pi_l(D(1))\to \pi_l(D(2))\to\dots$ eventually stabilize for every $l$,  then the $\colim^1$ terms all vanish, and the natural map $\colim_i \pi_l(D(i))\to \pi_l(\colim_i D(i))$ is an isomorphism.
\end{remark}
\begin{remark}[\protect{\cite[Lemma 2.8]{neeman2025triangulatedcategoriessinglecompact}}]\label{remark: neeman Lemma 2.8}
    Let $\mathcal{C}\in\Pr^\text{L}_\st$ be a presentable stable category. Let  
    $\ts{C}$ be a $t$–structure on $\mathcal{C}$. 
    Assume that $\mathcal{C}$ comes equipped with  a compact generator $G\in\mathcal{C}^\omega$ and an integer $N>0$ such that $\pi_0\Hom_\mathcal{C}(G, \mathcal{C}_{\geq N})=0$.
    Then for any compact object $c\in\mathcal{C}^\omega$ there exists an integer $n > 0$, depending on $c$, with $\pi_0\Hom_\mathcal{C}(c, \mathcal{C}_{\geq n})=0$.
    Indeed, since $G$ is a compact generator, it follows that $c\in\mathcal{C}^\omega=\langle G\rangle$ must belong to some $\langle G\rangle^{[-m,m]}$. By picking  $n=m+N$ the claim follows.
\end{remark}

We can now prove the following.
\begin{lemma}[\protect{\cite[Lemma 8.5]{neeman2025triangulatedcategoriessinglecompact}}]\label{lemma: lemma 7.5 Neeman}
    Let $\mc{C}\in\Pr^{L,\omega}_\st$ be a compactly generated stable category equipped with a right-complete presentable $t$-structure. Assume that $\mc{C}$ comes equipped with a compact generator $G\in\mc{C}^\omega$ such that the $t$-structure is in the preferred equivalence class and there exists an integer $N>0$ such that $G$ is $(-N)$-connective, $G\in\mc{C}_{\geq-N}$ and $\pi_0\Hom_\mc{C}(G,\mc{C}_{\geq N})=0$. 
    Then:
     \begin{enumerate*}
        \item Every $\langle G\rangle_n$-strong approximating system $D(1)\to D(2) \to \dots$ is a strong $\langle G\rangle_n$-approximating system for the sequential colimit $\colim_i D(i)$. Moreover $\colim_i D(i)$ belongs to $\mathcal{C}^-_c$.
        \item Given $x\in\mathcal{C}^-$ and a strong $\langle G\rangle_n$-approximating system $D$ for $x$, then the canonical map $\colim_{i} D(i) \to x$ is an equivalence.
         \item Every object $x\in\mathcal{C}^-_c$ admits a strong $\mathcal{C}^\omega$–approximating system.  
     \end{enumerate*}
\end{lemma}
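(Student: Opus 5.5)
We prove the three points in order, using the Milnor sequence of \autoref{remark: Neeman lemma 1.4 and remark 1.5} and the vanishing of \autoref{remark: neeman Lemma 2.8}.

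\textbf{Point (1).} Let $D(1)\to D(2)\to\dots$ be a strong $\langle G\rangle_n$-approximating system and set $y=\colim_i D(i)$.
Fix $l$. By condition (2) of the definition, $\pi_l(D(i))\to\pi_l(D(i+1))$ is an isomorphism for $i\geq l$, so these sequences stabilize. Hence the $\colim^1$ terms vanish and $\colim_i\pi_l(D(i))\simeq\pi_l(y)$. Using the Milnor sequence here requires both aisles to be closed under countable colimits. The coconnective aisle is closed under filtered colimits by the compatibility assumption, and the connective aisle is closed under all colimits. Since the $\pi_l$ stabilize from $i=l$ on, $\pi_l(D(i))\to\pi_l(y)$ is an isomorphism for $i\geq l$, which is the approximating-system property for $y$.

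For pseudo-compactness, fix $m>0$ and look at the cofibre $E_i=\cofib(D(i)\to y)$. The long exact sequence gives $\pi_l(E_i)=0$ for $l\leq i-1$; the edge case uses injectivity of $\pi_i(D(i))\to\pi_i(y)$. We must also know $E_i\in\mathcal{C}^-$. This holds because every $D(i)$ is compact, hence bounded below, uniformly in $i$. Indeed, each $D(i)$ is in $\langle G\rangle_n$, and for finite $n$ the degrees of $G$ involved can be bounded, since $\pi_l(D(i))$ stabilizes and the $D(i)$ agree in low degrees. For $n=\infty$ one instead argues that $\pi_l(y)=0$ for $l\ll0$ because $\pi_l(y)=\pi_l(D(1))$ for $l\leq 1$, and $D(1)$ is bounded below. \autoref{remark: Neeman lemma 1.2} then puts $E_i$ in $\mathcal{C}_{\geq i}$. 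Choosing $i\geq m$, the sequence $D(i)\to y\to E_i$ exhibits $y\in\mathcal{C}^-_c$.

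\textbf{Point (2).} Given $x\in\mathcal{C}^-$ with a strong system mapping to it, the induced map $\colim_i D(i)\to x$ is a $\pi_l$-isomorphism for every $l$, by point (1) and the hypothesis. Both sides lie in $\mathcal{C}^-$, and preferred $t$-structures of this type are left separated (\autoref{lemma: excellent t-structure are preserved by equivalence class}, or directly). So \autoref{remark: Neeman lemma 1.3}(2) makes the map an equivalence.

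\textbf{Point (3).} This is the main step: we build the system inductively. For $x\in\mathcal{C}^-_c$, choose cofibre sequences $c_k\to x\to d_k$ with $c_k$ compact and $d_k\in\mathcal{C}_{\geq k+1}$. Then $\pi_l(c_k)\to\pi_l(x)$ is an isomorphism for $l<k$ and surjective for $l=k$. The difficulty is to make these compatible, with maps $c_k\to c_{k+1}$ over $x$. We fix this as follows.

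Given $c_k\to x$, use pseudo-compactness at a much higher level $M$ to get $c'\to x\to d'$ with $d'\in\mathcal{C}_{\geq M}$. By \autoref{remark: neeman Lemma 2.8} applied to the compact object $c_k$, we may take $M$ with $\pi_0\Hom(\susp c_k,\mathcal{C}_{\geq M})=0$ and $\pi_0\Hom(c_k,\mathcal{C}_{\geq M})=0$. Then the composite $c_k\to x\to d'$ vanishes, so $c_k\to x$ factors through $c'$, uniquely up to homotopy. Set $D(k+1)=c'$, with $M$ chosen at least $k+2$, so that $D(k+1)$ computes $\pi_l(x)$ for $l\leq k+1$.

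We then reindex so that $D(i)\to x$ is an isomorphism on $\pi_l$ for $l\leq i$, strengthening the surjectivity at the edge by going one level higher. The transition maps commute with the maps to $x$, so $\pi_l(D(i))\to\pi_l(D(i+1))$ is an isomorphism for $l\leq i$. This gives a strong $\mathcal{C}^\omega$-approximating system. Producing compatible maps between the successive approximations is the main obstacle, and the factorization via \autoref{remark: neeman Lemma 2.8} is what resolves it.
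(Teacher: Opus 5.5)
Your proposal follows the paper's proof essentially step by step. For (1) you use the Milnor sequence of \autoref{remark: Neeman lemma 1.4 and remark 1.5} together with \autoref{remark: Neeman lemma 1.2}, and for (2) left separatedness plus \autoref{remark: Neeman lemma 1.3}. For (3) you use the same induction as the paper: the next pseudo-compact approximation is taken at a level beyond the vanishing bound that \autoref{remark: neeman Lemma 2.8} gives for the previous compact $D(m)$, so that $D(m)\to x$ factors through $D(m+1)$.

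The one step to tighten is the bounded-belowness of $\colim_i D(i)$ in (1). Bounding the shifts of $G$ is not justified, and observing only that $\pi_l(\colim_i D(i))=0$ for $l\ll 0$ gives membership in $\mathcal{C}^-$ only after you invoke right-completeness. Instead, do as the paper does: apply \autoref{remark: Neeman lemma 1.2} to each compact $D(i)$, using $\pi_l(D(i))\cong\pi_l(D(1))$ for $l\leq 1$. This gives a uniform bound $D(i)\in\mathcal{C}_{\geq -n}$, which then passes to the colimit.
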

\begin{proof} 
    We recall the proof for completeness.
    Consider $(1)$. Take  a $\langle G\rangle_n$-strong approximating system $D(1)\to D(2) \to \dots$ and let $d=\colim_i D(i)$ be the colimit. 
    We first show that $d$ is in $\mathcal{C}_{\geq -n}$ for some $n>0$. 
    Since every factor $D(i)$ is in $\langle G\rangle_n$, it is also in $\mathcal{C}^\omega$. 
    In particular, being $G\in\mathcal{C}_{\geq-N}$ , every  $D(i)$ is in $\mathcal{C}^-$.

    Choose now $n>0$ such that $D(1)\in\mathcal{C}_{\geq-n}$. Since the map $\pi_l(D(1))\to\pi_l(D(m))$ is an isomorphism for all $l\leq 1$, it follows that $\pi_l(D(m))=0$  for all $l>-n$ and all $m$. 
    \autoref{remark: Neeman lemma 1.2} implies now that the $D(m)$ all lie in $\mathcal{C}_{\geq -n}$. 
    Hence the  colimit $d$ also belongs to $\mathcal{C}_{\geq -n}$.
    Now \autoref{remark: Neeman lemma 1.4 and remark 1.5} implies that the map $\colim_i \pi_l(D(i))\to \pi_l(d)$ is an isomorphism for every $l\in\mb{Z}$. On the other side, the cofibre sequence $D(m)\to D\to \cofib_m$ lies in $\mathcal{C}^-$, and since the  map $\pi_l(D(m))\to\pi_l(d)$ is an isomorphism for $m\geq l$ it follows that $\pi_l(\cofib_m) = 0$ for all $m\geq l$.
    Once again \autoref{remark: Neeman lemma 1.2} implies that $\cofib_m\in\mathcal{C}_{\geq m+1}$, and as $D(m)\in\langle G\rangle_n\subseteq\mathcal{C}^\omega$ it follows that $d\in\mathcal{C}^-_c$

    Consider $(2)$.
    Since the map $\colim_i D(i)\to x$  is a morphism from an object of $\mathcal{C}^-_c$ to an object in $\mathcal{C}^-$, it must be a morphism in $\mathcal{C}^-$. \autoref{remark: Neeman lemma 1.4 and remark 1.5} implies then that the natural map $\pi_l(\colim_i D(i))\to \pi_l(x)$ is an isomorphism in $\mathcal{C}^\heartsuit$.
    Apply now \autoref{remark: Neeman lemma 1.3} to deduce the claim.

    Consider $(3)$. The proof is by induction.
    Since $x\in\mathcal{C}^-_c$, there exists a cofibre sequence $D(1)\to x\to \cofib_1$  with $D(1)\in\mathcal{C}^\omega$ and $\cofib_1\in\mathcal{C}_{\geq3}$. 
    When $l \leq 1$, the exact sequence 
    \[
    \pi_{l+1}(\cofib_1) \to\pi_l(D(1))\to\pi_l (x)\to\pi_l(\cofib_1)
    \]
    has $\pi_{l+1}(\cofib_1)=0=\pi_l(\cofib_1)$, starting the construction of $D(n)$. 
    Assume that the sequence has been constructed up to an integer $n > 0$, so that there exists a map $f_m : D(m)\to x$, with $D(m)\in\mathcal{C}^\omega$, and so that $\pi_l(f_m)$ is an isomorphism for all $m\leq l$. 
    \autoref{remark: neeman Lemma 2.8} allows one to choose an integer $N > 0$ so that $\pi_0\Hom_\mathcal{C}(D(m),\mathcal{C}_{\geq N})=0$. 
    Because $x$ belongs to $\mathcal{C}^-_c$, there exists  a cofibre sequence $D(m+1)\to x\to \cofib_{m+1}$  with $D(m+1)\in\mathcal{C}^\omega$ and $\cofib_{m+1}\in\mathcal{C}_{\geq N+m+3}$.  As in the paragraph above, the map $\pi_l(D(m+1))\to\pi_l(x)$  is an isomorphism for all $m+1\geq l$. Since the composite $D(m)\to x\to D(m+1)$ is null-homotopic, the map $f_m$ must factor as $D(m)\to D(m+1)\to x$, concluding the proof.
\end{proof}

We can now prove our comparison result between pseudo-coherent and pseudo-compact objects by means of \autoref{lemma: assumption S}.
\begin{proposition}\label{proposition: PCoh = C-c}
    Let $\mc{C}\in\Pr^{L,\omega}_\st$ be a compactly generated stable category equipped with right-complete presentable $t$-structure. 
    Assume that $\mathcal{C}$ comes equipped with a compact generator $G\in\mathcal{C}^\omega$ such that the $t$-structure is in the preferred equivalence class and such that $\pi_0\Hom_\mathcal{C}(G,\susp^i G)=0$ for $i>>0$.
    Assume also that:
    \begin{enumerate}
    \item[(S)] The subcategory $\mathcal{C}^\omega\cap\mathcal{C}_{\geq0}\subseteq \mathcal{C}_{\geq0}$ is a generating subcategory of $\mathcal{C}_{\geq0}$.
    \end{enumerate} 
    Then $\PCoh{C}=\mathcal{C}^-_c$.
\end{proposition}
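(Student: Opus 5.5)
We prove the two inclusions separately. Throughout, by \autoref{lemma: excellent t-structure are preserved by equivalence class} the $t$-structure is left and right complete, hence left separated. Moreover, \autoref{lemma: thick, compact are eventually connective} (applied to a suitable shift of $G$, which is eventually connective since the $t$-structure lies in the preferred equivalence class) gives $\mathcal{C}^\omega\subseteq\mathcal{C}^-$. Consequently \autoref{remark: compact are pseudo-coherent} gives $\mathcal{C}^\omega\subseteq\PCoh{C}$.

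\emph{Inclusion $\mathcal{C}^-_c\subseteq\PCoh{C}$.} Let $x$ be pseudo-compact. First, $x\in\mathcal{C}^-$: a cofibre sequence $c\to x\to d$ with $c$ compact (hence eventually connective) and $d\in\mathcal{C}_{\geq1}$ shows that $x$ is eventually connective, say $x\in\mathcal{C}_{\geq -m}$. Fix $n$ and choose a cofibre sequence $c\to x\to d$ with $c\in\mathcal{C}^\omega$ and $d\in\mathcal{C}_{\geq n+2}$. Applying $\tau_{\leq n}$ gives $\tau_{\leq n}c\simeq\tau_{\leq n}x$, because the fibre and cofibre of $c\to x$ are highly connective. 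We may enlarge $m$ so that $c\in\mathcal{C}_{\geq -m}$ as well. By \autoref{lemma: NP 6.1.1}, $\tau_{\leq n}c$ is compact in $\mathcal{C}_{\leq n}$, hence also compact in $\tau_{\leq n+m}\mathcal{C}_{\geq -m}$, since that inclusion is closed under filtered colimits. Therefore $x$ is almost compact in $\mathcal{C}_{\geq -m}$, and this step is formal.

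\emph{Inclusion $\PCoh{C}\subseteq\mathcal{C}^-_c$.} This is the main content. Let $x\in\PCoh{C}$; after shifting we may assume $x\in\PCoh{C}\cap\mathcal{C}_{\geq0}$, since $\mathcal{C}^-_c$ is stable under shifts. Assumption (S) here, together with $\mathcal{C}^\omega\subseteq\PCoh{C}$, shows that $S=\mathcal{C}^\omega\cap\mathcal{C}_{\geq0}$ satisfies hypothesis (S) of \autoref{lemma: assumption S}: it is closed under finite colimits and extensions, and it consists of connective pseudo-coherent objects. I would then rerun the inductive construction in the proof of that lemma rather than merely quote its conclusion. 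That construction produces, for each $n$, a cofibre sequence $D(n)\to x\to\susp^n c(n)$ with $D(n)\in S\subseteq\mathcal{C}^\omega$ and $c(n)$ connective. Since $\susp^n c(n)\in\mathcal{C}_{\geq n}$, this is exactly the data required by the definition of pseudo-compactness, and hence $x\in\mathcal{C}^-_c$.

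The main obstacle is verifying that the induction in \autoref{lemma: assumption S} really runs under the present hypotheses. It needs a finite subsum $P$ of generators surjecting on $\pi_0 c(n)$, which requires $c(n)$ to stay pseudo-coherent at every stage. This holds because $c(n+1)\simeq\fib(P\to c(n))$ and $\PCoh{C}$ is stable (\autoref{lemma: stupid properties of Pcoh and coh}). It then uses that $\pi_0 c(n)$ is compact in the heart, i.e.\ that $\tau_{\leq0}$ of a connective almost compact object is compact in $\mathcal{C}_{\geq0}\cap\mathcal{C}_{\leq0}$, which is part of the definition. Finally, the fibre must stay connective, which follows from $P\to c(n)$ being a $\pi_0$-epimorphism. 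The argument does not need the $t$-structure to be coherent, because we never pass to homotopy objects beyond $\pi_0$.
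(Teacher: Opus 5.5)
Your proof is correct. For $\PCoh{C}\subseteq\mathcal{C}^-_c$ you follow the paper: you rerun the induction of \autoref{lemma: assumption S} with $S=\mathcal{C}^\omega\cap\mathcal{C}_{\geq0}$ and read off pseudo-compactness from the finite-stage cofibre sequences $D(n)\to x\to\susp^n c(n)$.

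For $\mathcal{C}^-_c\subseteq\PCoh{C}$ your route is genuinely different from the paper's. The paper proceeds in three steps:
\begin{enumerate}
\item It builds a strong approximating system for $x$ by compact connective objects, via a modification of \autoref{lemma: lemma 7.5 Neeman}(3).
\item It identifies $x$ with the colimit of that system, using left separatedness and \autoref{remark: Neeman lemma 1.3}.
\item It concludes with prestable Dold--Kan and \autoref{lemma: 0-connected pseudo-coheren are closed under the formation of geometric realization}.
\end{enumerate}
You instead use a single approximation $c\to x\to d$ with $d\in\mathcal{C}_{\geq n+2}$. It gives $\tau_{\leq n}c\simeq\tau_{\leq n}x$, and $\tau_{\leq n}$ preserves compact objects by \autoref{lemma: NP 6.1.1}. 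This needs only compatibility with filtered colimits and $\mathcal{C}^\omega\subseteq\mathcal{C}^-$.

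A small point: your step ``enlarge $m$'' is unnecessary. The object $\tau_{\leq n}c\simeq\tau_{\leq n}x$ already lies in $\mathcal{C}_{\geq -m}$ for the original bound $m$ of $x$.

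The paper's route gives a stronger structural output, namely an explicit presentation of $x$ as a sequential colimit of compact connective objects. For the statement itself, yours is more economical. In fact neither of your two inclusions uses left separatedness. In the first direction you only use the finite stages, never the identification $\colim_n D(n)\simeq x$. So your argument shows that the hypothesis $\pi_0\Hom_\mathcal{C}(G,\susp^iG)=0$ for $i\gg0$ is not needed; it enters only through \autoref{lemma: excellent t-structure are preserved by equivalence class}. You can drop your opening appeal to left completeness.
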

\begin{proof}
    Let $x\in\mathcal{C}$.
    First of all, since $\mathcal{C}$ has an eventually connective compact generator,  every compact object is  bounded below. 
    In particular, every object in $\mathcal{C}^-_c$ is an extension of bounded below objects, and hence bounded  below. Since  every object in $\PCoh{C}$  is bounded below, it is safe to assume that $x\in\mathcal{C}_{\geq0}$ is $0$-connective. 

    Assume first that $x\in\PCoh{C}\cap\mathcal{C}_{\geq0}$. The proof of \autoref{lemma: assumption S} showed that  $x$ can be written as a sequential colimit  $x\simeq\colim_n D(n)$ where each $D(n)\in\mathcal{C}^\omega\cap\mathcal{C}_{\geq0}$ is compact and connective.  
    Fix now $n>0$. Since $D(n+1)$ is in $\mathcal{C}^\omega\cap\mathcal{C}_{\geq0}$ and the $t$-structure is compatible with filtered colimits, point $(6)$ of \autoref{lemma: NP 6.1.1} implies that $D(n+1)$ is compact in $\mathcal{C}$. 
    In particular, the map $D(n+1)\to x$ is a morphism from a compact object  to $x$, and it is a $\pi_i$-equivalence for every $i\leq n$, hence its cofibre lies in $\mathcal{C}_{\geq n}$, thus showing $x\in\mathcal{C}^-_c$.  

    Assume now that $x\in\mathcal{C}^-_c\cap\mathcal{C}_{\geq0}$. We will prove that $x$ is pseudo-coherent by showing that it can be written as a sequential colimit $x\simeq \colim_i D(i)$ of $0$-connective and pseudo-coherent objects $D(i)$. 
    Then the $\infty$-Dold-Kan correspondence coupled with \autoref{lemma: 0-connected pseudo-coheren are closed under the formation of geometric realization} will conclude. 
    Now we can easily modify the proof of  point $(3)$ of \autoref{lemma: lemma 7.5 Neeman} and pick the $D(i)$'s in $\mathcal{C}_{\geq0}\cap\mathcal{C}^\omega$. Since $x$ is $0$-connective (hence in $\mathcal{C}^-$), point $(2)$ allows us to apply \autoref{remark: Neeman lemma 1.3}. We deduce that the canonical map $\colim_i D(i)\to x$ is an equivalence, thus concluding the proof. This implication does not use assumption (S).
\end{proof}

\section{Dualities}\label{section: Neeman Dualities}
We now take the first steps towards the two main results of this paper. 
\subsection{Tor-finite categories}\label{subsection: Tor-finite categories}
We start by axiomatizing the categories which are \enquote{tor-finite}.
\begin{construction}
    Let $\mathcal{C}\in\CAlg^\rig(\Pr^{\L,\omega}_\st)$ be a rigidly-compactly generated stable category. 
    We let $\Acc{C}$ denote the (possibly very very large) set of  small collection of compact objects $S\subseteq \mathcal{C}^\omega$ for which the inclusion $S\into\mathcal{C}^\omega$ is symmetric monoidal. 
    We regard $\Acc{C}$ as a poset ordered by inclusion. 
    Then every rigid functor $f^\L:\mathcal{C}\to\mathcal{D}$ in $\Pr^{\L,\omega}_\st$ determines a functor $f:\Acc{C}\to\Acc{D}$. 
    In particular, the assignment $\mathcal{C}\mapsto \Acc{C}$ may be regarded as a covariant functor $\CAlg^\rig(\Pr^{\L,\omega}_\st)\to\widehat{\widehat{\text{Cat}}}$ to very very large categories. 
    The cocartesian unstraightening produces then a cocartesian fibration $\Pr^{S,\otimes}\to\CAlg^\rig(\Pr^{\L,\omega}_\st)$. 
    Objects of $\Pr^{S,\otimes}$ may be identified with a pair $(\mathcal{C},S_\mathcal{C})$ consisting of a rigidly-compactly generated stable category $\mathcal{C}$ and a symmetric monoidal embedding $S_\mathcal{C}\subseteq \mathcal{C}^\omega$; morphisms in $\Pr^{S,\otimes}$ are then rigid functors $f^\L:\mathcal{C}\to\mathcal{D}$ in $\Pr^{\L,\omega}_\st$ such that $f^\L(S_\mathcal{C})\subseteq S_\mathcal{D}$.
\end{construction}
\begin{remark}
    Let $(\mathcal{C},S_\mathcal{C})$ be in $\Pr^{S,\otimes}$. 
    Accordingly to \autoref{remark: lurie construction of t}, picking a  small skeleton of $S_\mathcal{C}$ produces an accessible $t$-structure on $\mathcal{C}$. 
    The construction of this $t$-structure is insensitive to the choice of the skeleton generating it. 
    In particular, the symmetric monoidal embedding $S_\mathcal{C}\into\mathcal{C}^\omega$ shows that the $t$-structure on $\mathcal{C}$  is compatible with $\otimes$. 
    It follows that $(\mathcal{C},S_\mathcal{C})$ may be identified with a rigidly-compactly generated stable category $\mathcal{C}$ together with an accessible $t$-structure on $\mathcal{C}$ which is compatible with $\otimes$. 
    In the following we will always tacitly assume this type of construction.
\end{remark}

\begin{remark}
    Let $(\mathcal{C},S_\mathcal{C})$ be in $\Pr^{S,\otimes}$ and assume that $S_\mathcal{C}$ generates, meaning that there exists a small skeleton of generators inside $S_\mathcal{C}$. 
    Then \autoref{lemma: conditions of t-structure} implies that the accessible $t$-structure generated by $S_\mathcal{C}$ is right-complete and compatible with filtered colimits. 
    In other words it is right-complete presentable and compatible with $\otimes$. 
\end{remark}

\begin{definition}\label{definition: tor finite categories}
    We let $\Pr^\text{tor}$ denote the non-wide non-full subcategory of $\Pr^{S,\otimes}$ on those $(\mathcal{C},S_\mathcal{C})$ such that $S_\mathcal{C}$ generates and the resulting the $t$-structure is left complete and $\mathcal{C}^\omega\subseteq \Tor{C}$ and on those functors whose right adjoint  is right $t$-exact up to a shift. 
    We will refer to objects and functors in $\Pr^\text{tor}$ as \emph{tor-finite categories} and \emph{functors of tor-finite categories}, respectively.
\end{definition}
\begin{remark}
    To spell it out:
    \begin{enumerate*}
        \item An object in $\Pr^\text{tor}$ consists of a rigidly-compactly generated category $\mathcal{C}$ together with a $t$-structure generated by a collection of compact objects $S_\mathcal{C}$. 
        The $t$-structure is accessible, compatible with $\otimes$, compatible with filtered colimits, left-complete and right-complete. 
        Furthermore, $\mathcal{C}^\omega\subseteq \Tor{C}$.
        \item A functor in $\Pr^\text{tor}$ consists of a rigid functor $f^\L:\mathcal{C}\to\mathcal{D}$ which is right $t$-exact and such that its right adjoint $f^\R:\mathcal{D}\to\mathcal{C}$ is left $t$-exact and right $t$-exact up to a shift. 
        Notice that $f^\L$ is an internal left adjoint in $\Pr^\L_\st$, so that $f^\R$ preserves all colimits.
    \end{enumerate*}
    Furthermore, since the monoidal unit is connective, \autoref{lemma: tor and compact} implies that  $\mathcal{C}^\omega\subseteq\PCoh{C}\cap \Tor{C}$. 
    More importantly, \autoref{corollary: smallnes of coherent/pseudo} and \autoref{lemma: pcoh and coh are Cc submodules} imply that $\Coh{C},\,\PCoh{C}\in\Mod_{\mathcal{C}^\omega}(\Cat^\perf)$ are small idempotent-complete stable modules over $\mathcal{C}^\omega$. 
\end{remark}
The next observation links tor-finite categories to the theory of approximation of Neeman.
\begin{remark}
    Let $\mathcal{B}\in\Pr^\L_\st$ be a compactly generated stable category and let $S_\mathcal{B}\subseteq \mathcal B^\omega$ be a small collection of compact object that generates. 
    Then for every connective object $x\in\mathcal{B}_{\geq0}$ there exists a cofibre sequence $s\to x\to c$ where $s$ is a sum of objects of $S_\mathcal{B}$ and $c\in\mathcal{B}_{\geq1}$ is $1$-connective.

    Indeed since $S_\mathcal{B}$ is generating there exists a $\pi_0$-epimorphism $s\to x$ with $s$  a sum of objects of $S_\mathcal{B}$. The long exact sequence in homotopy associated to the cofibre sequence $s\to x\to c$ shows that not only $c\in\mathcal{B}_{\geq0}$, but $c\in\mathcal{B}_{\geq1}$ since $\pi_0c\simeq 0$.
\end{remark}

\begin{remark}\label{remark: thick generation}
    Let $\mathcal{B}\in\Pr^\L_\st$ be a compactly generated stable category and let $S\subseteq \mathcal B^\omega$ be a small collection of compact object that generates. 
    In other terms, the smallest localizing subcategory $\text{Loc}_{\mathcal B}(S)$ containing $S$ is all of $\mathcal B$.
    Then $S$ \emph{thickly generates the compact objects}, in the sense that  $\mathcal{B}^\omega = \text{Thick}_{\mathcal{B}}(S)$, where $\text{Thick}_{\mathcal B}(S)\subseteq \mathcal B^\omega$ denotes the smallest idempotent-complete stable subcategory containing $S$.
    Indeed, let $\text{Loc}_{\mathcal B}(S)\subseteq \mathcal B$ be the localizing subcategory generated by $S$.
    Since $S$ consists of compact objects, the compact objects of $\text{Loc}_{\mathcal B}(S)$ are exactly the thick closure of $S$, that is $(\text{Loc}_{\mathcal B}(S))^\omega=\text{Thick}_{\mathcal B}(S)$.
    In particular, the equality $\text{Loc}_{\mathcal B}(S)=\mathcal B$,  yields $\mathcal B^\omega=\text{Thick}_{\mathcal B}(S)$.
\end{remark}

\subsection{Quasi-perfect and quasi-proper functors}\label{subsection: Quasi-Perfect and Quasi-Proper Functors}
The goal of this section is to study the behaviour of (pseudo-)coherent object under morphisms of commutative algebras.

\begin{lemma}\label{lemma: f^l preserves pseudo-coherent}
    Let $f^\L:\mathcal{A}\to\mathcal{B}$ be an internal left adjoint in $\Pr^\L_\st$ and assume that $\mathcal{A}$ and $\mathcal{B}$ are equipped with accessible  and compatible with filtered colimits $t$-structures such that $f^\L$ is right $t$-exact and $f^\R$ is right $t$-exact up to a shift.
     Then $f^\L$ restricts to a functor $f^\L:\PCoh{A}\to\PCoh{B}$. 
\end{lemma}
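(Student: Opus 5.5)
Given $x\in\PCoh{A}$, say $x\in\mathcal{A}_{\geq n}$, we must show two things: that $f^\L(x)$ is eventually connective, and that each truncation $\tau_{\leq m}f^\L(x)$ is compact in $\tau_{\leq m}\mathcal{B}_{\geq n}$. Connectivity is immediate, since $f^\L$ is right $t$-exact and so $f^\L(x)\in\mathcal{B}_{\geq n}$. After shifting we may assume $n=0$. Fix $m\geq0$. We must show that $\Hom_{\mathcal{B}}(f^\L x,-)$ commutes with filtered colimits of diagrams $y:I\to\mathcal{B}_{\geq0}\cap\mathcal{B}_{\leq m}$, where the colimit is computed in $\mathcal{B}_{\geq0}\cap\mathcal{B}_{\leq m}$. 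By \autoref{lemma: NP 6.1.1}, compatibility with filtered colimits means both $\mathcal{B}_{\geq0}$ and $\mathcal{B}_{\leq m}$ are closed under filtered colimits in $\mathcal{B}$. Hence this colimit agrees with the colimit in $\mathcal{B}$, and truncating $f^\L x$ at level $m$ does not change these mapping spaces.

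The key step is adjunction: $\Hom_\mathcal{B}(f^\L x, y_i)\simeq\Hom_\mathcal{A}(x,f^\R y_i)$. Here I use the hypotheses on $f^\R$. It is left $t$-exact, because $f^\L$ is right $t$-exact, so $f^\R y_i\in\mathcal{A}_{\leq m}$. It is right $t$-exact up to a shift $r$, so $f^\R y_i\in\mathcal{A}_{\geq -r}$. Thus all $f^\R y_i$ lie in $\mathcal{A}_{\geq -r}\cap\mathcal{A}_{\leq m}=\tau_{\leq m+r}\mathcal{A}_{\geq -r}$. Since $f^\L$ is an internal left adjoint, $f^\R$ preserves colimits, so $f^\R(\colim y_i)\simeq\colim f^\R y_i$. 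This colimit again lies in the same subcategory, by closure of both halves under filtered colimits.

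It remains to use pseudo-coherence of $x$ relative to the shifted aisle. Since $x\in\mathcal{A}_{\geq0}\subseteq\mathcal{A}_{\geq -r}$, I need that $x$ is almost compact in $\mathcal{A}_{\geq -r}$, i.e.\ that $\tau_{\leq k}x$ is compact in $\tau_{\leq k}\mathcal{A}_{\geq -r}$ for all $k$. This independence of the chosen connectivity bound is the only slightly delicate point. I would handle it as follows. For $z\in\mathcal{A}_{\geq -r}\cap\mathcal{A}_{\leq M}$, maps $x\to z$ agree with maps $x\to\tau_{\geq0}z$. The functor $\tau_{\geq0}$ preserves filtered colimits by \autoref{lemma: NP 6.1.1}(5), and it lands in $\mathcal{A}_{\geq0}\cap\mathcal{A}_{\leq M}$, where $x$ is almost compact by assumption. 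So commuting $\Hom_\mathcal{A}(x,-)$ with filtered colimits in $\tau_{\leq M}\mathcal{A}_{\geq -r}$ reduces to the case already known.

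Chaining these identifications gives
\[
\colim_i\Hom_\mathcal{B}(f^\L x,y_i)\simeq\colim_i\Hom_\mathcal{A}(x,f^\R y_i)\simeq\Hom_\mathcal{A}(x,\colim_i f^\R y_i)\simeq\Hom_\mathcal{B}(f^\L x,\colim_i y_i).
\]
Hence $\tau_{\leq m}f^\L x$ is compact for every $m$, and so $f^\L x\in\PCoh{B}$. The main obstacle I anticipate is exactly the bookkeeping in the previous paragraph, where the shift $r$ from $f^\R$ must be absorbed using the compatibility of $\tau_{\geq0}$ with filtered colimits. All other steps are formal.
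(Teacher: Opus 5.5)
Your argument is correct and follows the paper's route. You pass to $\mathcal{A}$ by adjunction, note that $f^\R y_i\in\mathcal{A}_{\geq -r}\cap\mathcal{A}_{\leq m}$ and that $f^\R$ commutes with the filtered colimit, and transfer compactness along $\mathcal{A}_{\geq 0}\into\mathcal{A}_{\geq -r}$ because $\tau_{\geq 0}$ preserves filtered colimits (\autoref{lemma: NP 6.1.1}). Your bookkeeping is cleaner than the paper's, which asserts $\tau_{\leq n}f^\L(x)\simeq f^\L\tau_{\leq n}(x)$; that identification fails for a merely right $t$-exact $f^\L$ (e.g.\ base change along $k[t]\to k$ applied to $k$), and your use of $\Hom_\mathcal{B}(\tau_{\leq m}f^\L x,y)\simeq\Hom_\mathcal{B}(f^\L x,y)$ for $m$-coconnective $y$ shows it is not needed.
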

\begin{proof}
    Let $x\in\PCoh{A}$ be pseudo-coherent, and assume without loss of generality, that $x\in\mathcal{A}_{\geq0}$. Since $f^\L$ is right $t$-exact, $f^\L(x)$ is in $\mathcal{B}_{\geq0}$. Let $y_i: I\to\tau_{\leq n}(\mathcal{B}_{\geq0})$ be a filtered diagram. Then:
    \[\begin{aligned}
        \colim_{i\in I}\Hom_{\tau_{\leq n}(\mathcal{B}_{\geq0})}(\tau_{\leq n}f^\L(x), y_i)
        &\simeq \colim_{i\in I}\Hom_{\mathcal{B}_{\geq0}}(\tau_{\leq n}f^\L(x), y_i)\\
        &\simeq \colim_{i\in I}\Hom_{\mathcal{B}}(\tau_{\leq n}f^\L(x), y_i)\\
        &\simeq\colim_{i\in I}\Hom_{\mathcal{B}}(f^\L\tau_{\leq n}(x), y_i)\\
        &\simeq\colim_{i\in I}\Hom_{\mathcal{A}}(\tau_{\leq n}(x),f^\R y_i)
    \end{aligned}\]
    Here the first and second equivalence follows by fully-faithfullness, the third one by \cite[Proposition 5.5.6.28]{Lurie-HTT} and the last one by adjunction. 
    Since $f^\R$ is left $t$-exact and right $t$-exact up to a shift $N\in\N$, it follows that $f^\R(y_i)\in\tau_{\leq n}\mathcal{A}_{\geq-N}$.
    Thus the claim will be proved (after translating back through the preceding equivalences) if the inclusion $\mathcal{A}_{\geq0}\into\mathcal{A}_{\geq-N}$ preserves compact objects.  This follows by \autoref{lemma: NP 6.1.1}, being the $t$-structure compatible with filtered colimits. 
\end{proof}

The situation is much easier with pseudo-compact objects.
\begin{lemma}\label{remark: pullback preserves pseudo-compact}
    Let $f^\L:\mathcal{A}\to\mathcal{B}$ be in $\CAlg^\rig(\Pr^{\L,\omega}_\st)$. 
    Assume that  $\mathcal{A}$ and $\mathcal{B}$ are equipped with $t$-structures such that $f^\L$ is right $t$-exact.
    Then $f^\L$ preserves pseudo-compact objects. 
\end{lemma}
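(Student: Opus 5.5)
The argument will be a direct unwinding of the definition of pseudo-compactness. The two inputs are that $f^\L$ preserves compact objects and that $f^\L$ sends $n$-connective objects to $n$-connective objects. The second holds because $f^\L$ is right $t$-exact and exact; in particular it commutes with $\susp^n$, so $f^\L(\mathcal{B}_{\geq n})=f^\L(\susp^n\mathcal{B}_{\geq0})\subseteq\susp^n\mathcal{C}_{\geq0}$ (writing the source and target generically).

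For compactness, I would argue as in \autoref{example: internal left adjoint for rigidly-compactly generated}. Since $f^\L$ is a symmetric monoidal functor between rigidly-compactly generated categories, it preserves dualizable objects. In $\mathcal{A}$ and $\mathcal{B}$ the dualizable objects are exactly the compact ones: compact implies dualizable by rigidity, and dualizable implies compact because the unit is compact and $\Hom(d,-)\simeq\Hom(\mb{1},d^\vee\otimes-)$ commutes with filtered colimits. Hence $f^\L(\mathcal{A}^\omega)\subseteq\mathcal{B}^\omega$. Alternatively, one can simply note that morphisms in $\CAlg^\rig(\Pr^{\L,\omega}_\st)$ preserve compact objects by definition of $\Pr^{\L,\omega}_\st$.

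Now let $x\in\mathcal{A}^-_c$ and fix $n>0$. By definition there is a cofibre sequence $c\to x\to d$ with $c\in\mathcal{A}^\omega$ and $d\in\mathcal{A}_{\geq n}$. Applying the exact functor $f^\L$ gives a cofibre sequence $f^\L(c)\to f^\L(x)\to f^\L(d)$. By the previous paragraph $f^\L(c)\in\mathcal{B}^\omega$, and by right $t$-exactness $f^\L(d)\in\mathcal{B}_{\geq n}$. Since $n$ was arbitrary, $f^\L(x)\in\mathcal{B}^-_c$.

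There is no real obstacle here. The only point needing a line of justification is the compactness of $f^\L(c)$, which is handled by rigidity as above. If one also wants the bounded version $\mathcal{B}^b_c$, one would additionally need $f^\L$ to preserve boundedness; this is not claimed in the statement, so I would not address it.
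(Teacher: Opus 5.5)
Your proposal is correct and follows essentially the same route as the paper: apply the exact functor $f^\L$ to a cofibre sequence $c\to x\to d$ with $c$ compact and $d\in\mathcal{A}_{\geq n}$, then use that $f^\L$ preserves compact objects (being rigid) and sends $n$-connective objects to $n$-connective objects (being right $t$-exact). Your justification of compact preservation via dualizability, or directly from the definition of $\Pr^{\L,\omega}_\st$, is fine and just spells out what the paper states in one clause.
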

\begin{proof}
    If $x\in\mathcal{A}^-_c$, then for every $m>0$ there exists a cofibre sequence $b\to x\to e$ where $b\in\mathcal{A}_c$ is compact and $e\in\mathcal{A}_{\geq m}$. Exactness of $f^\L$ produces then the cofibre sequence $f^\L(b)\to f^\L(x)\to f^\L(e)$, which exhibits $f^\L(x)$ as a pseudo-compact object of $\mathcal{B}$ since $f^L$ preserves compacts being rigid.
\end{proof}
We are more interested in understanding when the right adjoint $f^\R:\mathcal{B}\to\mathcal{A}$ preserves pseudo-coherent objects. 
Note that, when $f^\R$ sends pseudo-coherent to pseudo-coherent, then it also preserves coherent objects, being left $t$-exact. 
Anyway, let us give a name to these functors.
\begin{definition}
    Let $f^\L:\mathcal{A}\to\mathcal{B}$ be a functor of tor-finite categories. 
    We will say that $f^\L$ is \emph{quasi-proper} if the right adjoint $f^\R:\mathcal{B}\to\mathcal{A}$ preserves pseudo-coherent objects. 
\end{definition}
The nomenclature comes from algebraic geometry.  
Indeed, a map $f:X\to Y$ between quasi-compact quasi-separated schemes is called quasi-proper if the (derived) pushforward $f_*$ preserves pseudo-coherent complexes. 
We suggest Lipman and Neeman's article \cite{lipman2007quasi} for a nice review of all the categorical properties of the (derived) pushforward. 
For us what matters is that they also review the notion of quasi-perfect maps, that is those maps of quasi-compact quasi-separated schemes whose (derived) pushforward  preserves perfect complexes.  
This notion has an immediate generalization in our context.
\begin{definition}
    Let $f^\L:\mathcal{A}\to\mathcal{B}$ be a functor of tor-finite categories. 
    We will say that $f^\L$ is \emph{quasi-perfect} if the right adjoint $f^\R:\mathcal{B}\to\mathcal{A}$ preserves compact objects. 
\end{definition}
\begin{remark}
     Let $f^\L:\mathcal{A}\to\mathcal{B}$ be a functor of tor-finite categories. 
     Then $f^\L$ is quasi-perfect if and only the right adjoint $f^\R$ is an internal left adjoint in $\Mod_\mathcal{A}(\Pr^{\L}_\st)$ by \autoref{example: internal left adjoint for rigidly-compactly generated}. In particular, the Grothendieck-Neeman duality, that is \autoref{theorem: Grothendieck Duality}, applies.
\end{remark}

The main result of the above mentioned article is \cite[Theorem 1.2]{lipman2007quasi}. It says that a quasi-compact quasi-separated map $f:X\to Y$ between quasi-compact quasi-separated schemes is quasi-perfect if and only if it is quasi-proper and the twisted inverse image $f^{(1)}$ is $t$-bounded (or, equivalently, that $f$ is quasi-proper and of finite tor-dimension). We can prove a (partial) analogue of this statement in the abstract setting we are developing.

\begin{lemma}\label{lemma: equivalent condition for quasi-proper}
    Let $f^\L:\mathcal{A}\to\mathcal{B}$ be a functor of tor-finite categories. 
    Then $f^\L$ is quasi-proper if and only if $f^\R$ carries compact objects to pseudo-coherent objects.
\end{lemma}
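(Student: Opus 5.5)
The forward implication is immediate: in a tor-finite category $\mathcal{B}^\omega\subseteq\PCoh{B}$ (by \autoref{lemma: tor and compact}, as recorded in the remark after \autoref{definition: tor finite categories}). So if $f^\R$ preserves pseudo-coherent objects, it sends compact objects to pseudo-coherent ones. All the work is in the converse: assume $f^\R(\mathcal{B}^\omega)\subseteq\PCoh{A}$ and show $f^\R(\PCoh{B})\subseteq\PCoh{A}$.

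Since $f^\R$ is exact and both $\PCoh{B}$ and $\PCoh{A}$ are stable, we may shift and take $x\in\PCoh{B}\cap\mathcal{B}_{\geq0}$. By \autoref{remark: pseudo-coherent objects in the standard $t$-structure}, assumption (S) holds with $S=\mathcal{B}^\omega\cap\mathcal{B}_{\geq0}$. The proof of \autoref{lemma: assumption S} then gives a sequential diagram $D(0)\to D(1)\to\cdots\to x$ with $D(n)\in\mathcal{B}^\omega\cap\mathcal{B}_{\geq0}$ and cofibre sequences $D(n)\to x\to\susp^n c(n)$, where $c(n)$ is connective. In particular $\cofib(D(n)\to x)\in\mathcal{B}_{\geq n}$.

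Apply $f^\R$. It is right $t$-exact up to a fixed shift $N$, so $f^\R(\susp^n c(n))\in\mathcal{A}_{\geq n-N}$, and $f^\R(D(n))\in\PCoh{A}$ by hypothesis. The key observation is that pseudo-coherence is detected by such approximations. Fix $m$ and choose $n$ with $n-N>m+1$. Then the map $f^\R(D(n))\to f^\R(x)$ induces an equivalence on $\tau_{\leq m}$. Also $f^\R(x)$ is eventually connective, namely in $\mathcal{A}_{\geq -N}$.

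Recall that $f^\R(x)$ is pseudo-coherent exactly when, for every $m$, $\tau_{\leq m}f^\R(x)$ is compact in $\tau_{\leq m+N}\mathcal{A}_{\geq -N}$; here $\tau_{\leq m}$ for $m$ large is the relevant truncation. We have $\tau_{\leq m}f^\R(x)\simeq\tau_{\leq m}f^\R(D(n))$. Moreover $f^\R(D(n))\in\PCoh{A}\cap\mathcal{A}_{\geq -N}$, so its truncation is compact in $\tau_{\leq m+N}\mathcal{A}_{\geq -N}$. Here we use \autoref{lemma: NP 6.1.1} to say that, when a pseudo-coherent object lies in $\mathcal{A}_{\geq -N}$, almost compactness can be taken in $\mathcal{A}_{\geq-N}$: the inclusions $\mathcal{A}_{\geq k}\into\mathcal{A}_{\geq k'}$ and the truncations preserve compact objects because the $t$-structure is compatible with filtered colimits.

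The main obstacle is this bookkeeping of connectivity levels. Pseudo-coherence is defined relative to some $\mathcal{A}_{\geq n}$, so we must check that the level at which $f^\R(D(n))$ is almost compact can be chosen uniformly, namely $-N$, independent of $n$. That is what \autoref{lemma: NP 6.1.1}(6),(7) supply. No left-completeness argument is then needed, since each truncation of $f^\R(x)$ is matched by the truncation of a single $f^\R(D(n))$.
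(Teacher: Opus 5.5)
Your argument is correct, but it takes a different route from the paper's.

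\textbf{The paper's route.} The paper writes $x\simeq\colim_n D(n)$, which is where \autoref{lemma: assumption S} uses left separatedness. Since $f^\L$ is an internal left adjoint, $f^\R$ preserves colimits, so $f^\R(x)\simeq\colim_n f^\R(D(n))$. The successive cofibres are $n$-fold suspensions of objects of $\PCoh{A}\cap\mathcal{A}_{\geq -N}$. The paper then turns this into a simplicial object via prestable Dold--Kan and concludes with \autoref{lemma: 0-connected pseudo-coheren are closed under the formation of geometric realization}.

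\textbf{Your route.} You never form the colimit. You use only three things:
\begin{enumerate}
\item the cofibre sequences $D(n)\to x\to\susp^n c(n)$ from the inductive step;
\item exactness of $f^\R$;
\item bounded right $t$-amplitude of $f^\R$,
\end{enumerate}
which together give $\tau_{\leq m}f^\R(x)\simeq\tau_{\leq m}f^\R(D(n))$ once $n-N\geq m+2$. In effect this is the statement that pseudo-coherent objects are pseudo-compact in Neeman's sense (the first half of \autoref{proposition: PCoh = C-c}), transported along $f^\R$.

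\textbf{Comparison.} Your route is more elementary. It needs neither colimit preservation of $f^\R$, nor Dold--Kan, nor the geometric-realization closure lemma, nor the identification of $\colim_n D(n)$ with $x$. The paper's route is shorter given its machinery and matches the simplicial-resolution viewpoint it uses again in \autoref{theorem: functors out of Cc giovanni proof}.

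\textbf{Level-independence.} Both routes need the independence of almost compactness from the connectivity level; you isolate it, while the paper uses it silently. Citing \autoref{lemma: NP 6.1.1}(6),(7) is not quite what is needed. What you actually use, for $k'\leq k$, is two facts:
\begin{enumerate}
\item the inclusion $\mathcal{A}_{\geq k}\cap\mathcal{A}_{\leq m}\hookrightarrow\mathcal{A}_{\geq k'}\cap\mathcal{A}_{\leq m}$ preserves compact objects, because its right adjoint $\tau_{\geq k}$ preserves filtered colimits (condition (5) there);
\item $\mathcal{A}_{\geq k}\cap\mathcal{A}_{\leq m}$ is closed under filtered colimits in $\mathcal{A}_{\geq k'}\cap\mathcal{A}_{\leq m}$, which handles the other direction.
\end{enumerate}
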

\begin{proof}
    The implication $(\Rightarrow)$ is trivial since $\mathcal{B}^\omega\subseteq\PCoh{B}$ in a tor-finite category. 
    Consider $(\Leftarrow)$. 
    Let $N\in\N$ be the error of right $t$-exactness of $f^\R$ so that $f^\R(\mathcal{B}_{\geq0})\subseteq \mathcal{A}_{\geq -N}$.
    Let $x\in\PCoh{B}$ be pseudo-coherent. 
    After a shift, it is safe to assume that $x\in\mathcal{B}_{\geq0}$ is connective. 
    In paritcular, \autoref{lemma: assumption S} implies that $x$ can be written as a sequential colimit $x\simeq \colim_{n\in N}D(n)$ where the cofibre of each map $D(n-1)\to D(n)$ is a $n$-fold suspension of an object of $\mathcal{B}^\omega_{\geq0} = \mathcal{B}^\omega\cap\mathcal{B}_{\geq0}$, thus of a compact and connective object of $\mathcal{B}$. 
    Since $f^\L$ is an internal left adjoint, the functor $f^\R$ preserves colimits, thus $f^\R(x)\simeq f^\R(\colim_{n\in N}D(n))\simeq \colim f^\R(D(n))$. 
    Now the cofibre of each $f^\R(D(n-1))\to f^\R(D(n))$ is given by the $n$-fold suspension of an object of $\PCoh{A}\cap\mathcal{A}_{\geq-N}$ since $f^\R$ carries compact objects to pseudo-coherent objects. 
    By the $\infty$-Dold-Kan correspondence, this sequential colimit corresponds to a simplicial object whose pieces are pseudo-coherent and $(-N)$-connective. Thus, after a shift,  \autoref{lemma: 0-connected pseudo-coheren are closed under the formation of geometric realization} implies that $f^\R(x)$ is pseudo-coherent. 
\end{proof}

\begin{lemma}\label{lemma: right adjoint preserves tor finite}
    Let $f^\L:\mathcal{A}\to\mathcal{B}$ be a functor of tor-finite categories which is left $t$-exact up to a shift. 
    Then $f^\R$ restricts to a functor $\Tor{B}\to\Tor{A}$. 
\end{lemma}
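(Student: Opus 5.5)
The plan is to use the projection formula to rewrite $x\otimes f^\R(-)$ as $f^\R$ of something, and then read off the $t$-exactness bounds from those of $f^\L$ and $f^\R$. Since $f^\L$ is an internal left adjoint of $\mathcal{A}$-modules (see \autoref{example: internal left adjoint for rigidly-compactly generated}), the right adjoint $f^\R$ is $\mathcal{A}$-linear. Hence for $y\in\mathcal{B}$ and $a\in\mathcal{A}$ there is a natural equivalence
\[
f^\R(y)\otimes a\simeq f^\R(y\otimes f^\L(a)).
\]
Fix $y\in\Tor{B}$ with tor-amplitude in $[p,q]$. The goal is to show that $a\mapsto f^\R(y\otimes f^\L(a))$ is right $t$-exact up to a shift and left $t$-exact up to a shift on $\mathcal{A}$.

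\textbf{Connective half.} Take $a\in\mathcal{A}_{\geq0}$. Since $f^\L$ is right $t$-exact, $f^\L(a)\in\mathcal{B}_{\geq0}$. The tor-amplitude of $y$ gives $y\otimes f^\L(a)\in\mathcal{B}_{\geq p}$. Since $f^\R$ is right $t$-exact up to a shift $N$ (part of the definition of a functor of tor-finite categories), we get $f^\R(y\otimes f^\L(a))\in\mathcal{A}_{\geq p-N}$.

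\textbf{Coconnective half.} Take $a\in\mathcal{A}_{\leq0}$. By the extra hypothesis, $f^\L$ is left $t$-exact up to some shift $l$, so $f^\L(a)\in\mathcal{B}_{\leq l}$. Then $y\otimes f^\L(a)\in\mathcal{B}_{\leq q+l}$. Since $f^\R$ is left $t$-exact (it is right adjoint to the right $t$-exact $f^\L$), $f^\R(y\otimes f^\L(a))\in\mathcal{A}_{\leq q+l}$.

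Together these show that $f^\R(y)$ has tor-amplitude in $[p-N,q+l]$. This proves the lemma, possibly after enlarging the interval so that it is of the form required by \autoref{definition: tor dimension}.

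The only point that needs care is the projection formula in the correct variance. It says $f^\R$ is strictly $\mathcal{A}$-linear, with the $\mathcal{A}$-action on $\mathcal{B}$ given by $(a,b)\mapsto f^\L(a)\otimes b$. The lax structure map $a\otimes f^\R(b)\to f^\R(f^\L(a)\otimes b)$ is an equivalence by the definition of internal left adjoint, and \autoref{remark: ramzi corollary 3.8} guarantees this in the rigidly-compactly generated setting. I expect this identification to be the main step to justify; the remaining bounds are formal. Symmetry of the tensor product handles the order of the factors.
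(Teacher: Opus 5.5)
Your proposal is correct and follows the same route as the paper: the projection formula $f^\R(y)\otimes - \simeq f^\R(y\otimes f^\L(-))$ exhibits tensoring with $f^\R(y)$ as a composite of functors that are right and left $t$-exact up to shifts. Your explicit bounds $[p-N,\,q+l]$ spell out the paper's one-line argument.
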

\begin{proof}
    Let $x\in\Tor{B}$. To show that tensoring with $f^\R(x)$ defines a left $t$-exact and right $t$-exact functor up to a shift it suffices to notice that $f^\R(x)\otimes-\simeq f^\R(x\otimes f^\L(-))$ is a composition of left $t$-exact and right $t$-exact functors up to a shift, since $f^\L$ is left $t$-exact up to a shift.
\end{proof}
In particular, we deduce the following result. 
\begin{corollary}\label{corollary: quasi-perfect implies quasi-proper}
    Let $f^\L:\mathcal{A}\to\mathcal{B}$ be a functor of tor-finite categories. 
    \begin{enumerate*}
        \item  If $f^\L$ is quasi-perfect, then it is quasi-proper. 
    \end{enumerate*}
    Assume that $\mathcal{A}^\omega = \PCoh{A}\cap\Tor{A}$.
    \begin{enumerate*}
        \item[(2)] 
        If $f^\L$ is quasi-proper and left $t$-exact up to a shift, then it is quasi-perfect.
    \end{enumerate*}
\end{corollary}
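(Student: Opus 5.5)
For (1), I will use \autoref{lemma: equivalent condition for quasi-proper}: it suffices to show that $f^\R$ sends $\mathcal{B}^\omega$ into $\PCoh{A}$. If $f^\L$ is quasi-perfect, then $f^\R(\mathcal{B}^\omega)\subseteq\mathcal{A}^\omega$. Because $\mathcal{A}$ is tor-finite, we have $\mathcal{A}^\omega\subseteq\Tor{A}$, and \autoref{lemma: tor and compact} (the monoidal unit being connective) then gives $\mathcal{A}^\omega\subseteq\PCoh{A}$. So $f^\R$ carries compacts to pseudo-coherent objects, and $f^\L$ is quasi-proper. This part is formal.

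For (2), let $c\in\mathcal{B}^\omega$. I must show $f^\R(c)\in\mathcal{A}^\omega$, and by the assumption $\mathcal{A}^\omega=\PCoh{A}\cap\Tor{A}$ it is enough to check the two memberships separately. Pseudo-coherence follows from quasi-properness, since $\mathcal{B}^\omega\subseteq\PCoh{B}$ in a tor-finite category (again \autoref{lemma: tor and compact}). For finite tor-dimension, I apply \autoref{lemma: right adjoint preserves tor finite}, which requires exactly that $f^\L$ be left $t$-exact up to a shift. It gives $f^\R(\Tor{B})\subseteq\Tor{A}$, and $c\in\mathcal{B}^\omega\subseteq\Tor{B}$ because $\mathcal{B}$ is tor-finite. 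Hence $f^\R(c)\in\PCoh{A}\cap\Tor{A}=\mathcal{A}^\omega$, which is quasi-perfectness.

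The one step needing care is the projection formula $f^\R(x)\otimes a\simeq f^\R(x\otimes f^\L(a))$ hidden inside \autoref{lemma: right adjoint preserves tor finite}. This is $\mathcal{A}$-linearity of $f^\R$, and it holds because every rigid functor is an internal left adjoint by \autoref{example: internal left adjoint for rigidly-compactly generated}. I also need the $t$-exactness bookkeeping there: $f^\L$ is right $t$-exact and left $t$-exact up to a shift, $x\otimes-$ has bounded amplitude, and $f^\R$ is left $t$-exact and right $t$-exact up to a shift. Composing these gives the bounded amplitude of $f^\R(x)\otimes-$. Since the lemma is stated earlier in the paper, I take it as given, so no further obstacle is expected.
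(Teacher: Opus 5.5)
Your proposal is correct and follows the paper's own argument. Part (1) reduces to the criterion that $f^\R$ carries compacts to pseudo-coherent objects, using $\mathcal{A}^\omega\subseteq\PCoh{A}$. Part (2) combines quasi-properness with the tor-dimension lemma for $f^\R$ to land in $\PCoh{A}\cap\Tor{A}=\mathcal{A}^\omega$, and your justification of the underlying projection formula via $f^\L$ being an internal left adjoint is right.
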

\begin{proof}
    For point $(1)$, if $f^\L$ is quasi-perfect, then its right adjoint $f^\R$ preserves compact objects. Then the second condition of \autoref{lemma: equivalent condition for quasi-proper} applies.
    For point $(2)$, since $f^\R$ preserves pseudo-coherent objects and since $f^\L$ is left $t$-exact up to a shift, \autoref{lemma: right adjoint preserves tor finite} implies that $f^\R$ restricts to a functor $\mathcal{B}^\omega\subseteq\PCoh{B}\cap\Tor{B}\to\PCoh{A}\cap\Tor{A}=\mathcal{A}^\omega$. Thus $f^\R$ preserves compact objects, hence $f^\L$ is quasi-perfect.
\end{proof}

\subsection{Functors out of $(\mathcal{B}^\omega)^\op$}\label{subsection: Functors out of compact objects}

Let $f^\L:\mathcal{A}\to\mathcal{B}$ be a functor of tor-finite categories.
Then $f^\L$ restricts to a functor between compact objects and exhibits $\mathcal{B}^\omega$ as a small idempotent-complete stable commutative algebra under $\mathcal{A}^\omega$. In particular, we may regard $\mathcal{B}^\omega$ as an $\mathcal{A}^\omega$-module in $\Cat^\perf$. 
Thus an application of \autoref{corollary: equivalence ind and exact functors} produces an equivalence
\[
\mathcal{B}\to\Fun_{\mathcal{A}^\omega}^\ex((\mathcal{B}^\omega)^\op,\mathcal{A})
\]
in $\Mod_\mathcal{A}(\Pr^\L_\st)$. 
The equivalence is explicitly given by $x\mapsto \mathcal{B}(-,x)$ where $\mathcal{B}(-,-)= f^\R\IHom_\mathcal{B}(-,-)$ is the linear Yoneda. 
Our goal is now to study the above Yoneda embedding when the source is a $\mathcal{B}^\omega$-module $\mathcal{B}_0\subseteq\mathcal{B}$, such as $\Coh{B}\subseteq\PCoh{B}$. 
In general, by restricting the source to a $\mathcal{B}^\omega$-module $\mathcal{B}_0\subseteq\mathcal{B}$, the linear Yoneda embedding will still be fully-faithful. 
However, it  will cease to be an equivalence, since not every exact and $\mathcal{A}^\omega$-linear functor $(\mathcal{B}^\omega)^\op\to\mathcal{A}$ arises from an object of $\mathcal{B}_0$. 
For this reason, it is helpful to approach the problem from the other side. 
Instead of restricting the source of the linear Yoneda embedding, it is easier to restrict its target, and then identify its kernel.

We now study the case of (pseudo-)coherent objects in tor-finite categories.

\begin{lemma}\label{lemma: quasi-proper functor and pseudo}
    Let $f^\L:\mathcal{A}\to\mathcal{B}$ be a quasi-proper functor. 
    Then the $\mathcal{A}^\omega$-linear Yoneda embedding produces functors
    \[
    \PCoh{B}\to\Fun^\ex_{\mathcal{A}^\omega}((\mathcal{B}^\omega)^\op,\PCoh{A}),
    \qquad
    \Coh{B}\to\Fun^\ex_{\mathcal{A}^\omega}((\mathcal{B}^\omega)^\op,\Coh{A})
    \]
    which are fully-faithful.
\end{lemma}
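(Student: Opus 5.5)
We deduce the statement from the equivalence $\mathcal{B}\to\Fun^\ex_{\mathcal{A}^\omega}((\mathcal{B}^\omega)^\op,\mathcal{A})$, $x\mapsto\mathcal{B}(-,x)=f^\R\IHom_\mathcal{B}(-,x)$, obtained above from \autoref{corollary: equivalence ind and exact functors}. The argument has two parts. First, the restriction of this Yoneda functor to $\PCoh{B}$ (respectively $\Coh{B}$) lands in functors with values in $\PCoh{A}$ (respectively $\Coh{A}$). Second, the restricted functor is fully faithful. Fully faithfulness is essentially formal. The sources $\PCoh{B}\subseteq\mathcal{B}$ and $\Coh{B}\subseteq\mathcal{B}$ are full subcategories. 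The targets $\Fun^\ex_{\mathcal{A}^\omega}((\mathcal{B}^\omega)^\op,\PCoh{A})$ and $\Fun^\ex_{\mathcal{A}^\omega}((\mathcal{B}^\omega)^\op,\Coh{A})$ are full subcategories of $\Fun^\ex_{\mathcal{A}^\omega}((\mathcal{B}^\omega)^\op,\mathcal{A})$. This holds because $\PCoh{A}$ and $\Coh{A}$ are full $\mathcal{A}^\omega$-submodules of $\mathcal{A}$ by \autoref{lemma: pcoh and coh are Cc submodules}, so an $\mathcal{A}^\omega$-linear structure on a functor with values in them is the same as one with values in $\mathcal{A}$. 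The restricted functor is therefore the restriction of an equivalence to full subcategories, hence fully faithful once it is well defined.

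The real content is the factorization. Fix $x\in\PCoh{B}$ and $c\in\mathcal{B}^\omega$. By rigidity, $\IHom_\mathcal{B}(c,x)\simeq c^\vee\otimes x$ with $c^\vee\in\mathcal{B}^\omega$. By \autoref{lemma: pcoh and coh are Cc submodules}, applicable since $\mathcal{B}$ is tor-finite and so $\mathcal{B}^\omega\subseteq\Tor{B}$, the object $c^\vee\otimes x$ is again pseudo-coherent. Quasi-properness of $f^\L$ then gives $\mathcal{B}(c,x)=f^\R(c^\vee\otimes x)\in\PCoh{A}$.

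For the coherent case, take $x\in\Coh{B}$. Then $c^\vee\otimes x$ is coherent, again by \autoref{lemma: pcoh and coh are Cc submodules}, since tensoring with a compact, hence finite tor-dimension, object preserves bounded objects. Its image under $f^\R$ is pseudo-coherent by quasi-properness. It is also bounded above, because $f^\R$ is left $t$-exact as the right adjoint of the right $t$-exact functor $f^\L$. Hence $f^\R(c^\vee\otimes x)\in\Coh{A}$.

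Finally, the functor $c\mapsto f^\R(c^\vee\otimes x)$ is exact and $\mathcal{A}^\omega$-linear, since it is the restriction of the functor $\mathcal{B}(-,x)$, which already carries this structure in $\Fun^\ex_{\mathcal{A}^\omega}((\mathcal{B}^\omega)^\op,\mathcal{A})$. The only point requiring care is the identification of the full subcategories of linear functors. I expect this to follow from the description of $\Fun^\ex_{\mathcal{A}^\omega}$ as a pullback, with everything inherited from the ambient $\mathcal{A}$-valued functor category.
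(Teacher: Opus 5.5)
Your proposal is correct and follows the paper's own argument. Like the paper, you get the factorization from closure of $\PCoh{B}$ and $\Coh{B}$ under tensoring with compacts (via $\IHom_\mathcal{B}(c,x)\simeq c^\vee\otimes x$), quasi-properness, and left $t$-exactness of $f^\R$, and you get full faithfulness by restricting the equivalence $\mathcal{B}\simeq\Fun^\ex_{\mathcal{A}^\omega}((\mathcal{B}^\omega)^\op,\mathcal{A})$; the fact you flag, that linear functors into a full $\mathcal{A}^\omega$-submodule form a full subcategory, is also used implicitly in the paper.
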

\begin{proof}
    Indeed, being $\PCoh{B}$ a small idempotent-complete stable $\mathcal{B}^\omega$-module,  the tensor product of $\mathcal{B}$ restricts to a functor $\PCoh{B}\times\mathcal{B}^\omega\to\PCoh{B}$. 
    Since $f^\L$ is quasi-proper, the right adjoint $f^\R$ sends $\PCoh{B}$ to $\PCoh{A}$. 
    It follows that the first functor is fully-faithful, being the restriction of a fully-faithful functor.
    The claim for coherent objects follows since coherent objects are the coconnective pseudo-coherent objects, and $f^\R$ is left $t$-exact. 
\end{proof}

We begin with some simple observations. 
\begin{remark}
    Let $f^\L:\mathcal{A}\to\mathcal{B}$ be a functor of tor-finite categories. 
    Then for every $b\in\mathcal{B}^\omega$ the evaluation at $b$ admits a left adjoint 
    \[
    b_!:\mathcal{A}\rightleftarrows \Fun^\ex_{\mathcal{A}^\omega}((\mathcal{B}^\omega)^\op,\mathcal{A}): \text{ev}_b
    \]
    and a right adjoint.
    Indeed, since limits and colimits in the functor category are computed pointwise and are preserved by $\text{ev}_b$, the usual yoga with Kan-extension implies that $\text{ev}_b$ has adjoints on both sides. 
\end{remark}
\begin{remark}\label{remark: component of counit has a section}
    Let $f^\L:\mathcal{A}\to\mathcal{B}$ be a functor of tor-finite categories. 
    The counit $\epsilon_b:b_!\circ\text{ev}_b\to \id$ evaluated at an exact $\mathcal{A}^\omega$-linear functor $F:(\mathcal{B}^\omega)^\op\to\mathcal{A}$ and at an object $b\in\mathcal{B}^\omega$ admits a section given by the unit $\mb{1}_\mathcal{A}\to\mathcal{B}(b,b)$ as a consequence of the triangle identities.
    In particular, $\text{ev}_b(\epsilon_b(F)): \mathcal{B}(b,b)\otimes_\mathcal{A}F(b)\to F(b)$ is a split epimorphism. 
\end{remark}
\begin{remark}\label{remark: t-structure on functor category}
    Let $f^\L:\mathcal{A}\to\mathcal{B}$ be a functor of tor-finite categories. 
    Denote by $F_{\geq0}$ the smallest full subcategory of $\Fun_{\mathcal{A}^\omega}^\ex((\mathcal{B}^\omega)^\op,\mathcal{A})$ closed under small colimits and extensions and spanned by $b_!(a)$ for $a\in S_\mathcal{A}$ and $b\in S_\mathcal{B}$. 
    Regard it as a $t$-structure via \autoref{remark: lurie construction of t}. 
    Since $f^\L(S_\mathcal{A})\subseteq S_\mathcal{B}$ and since the monoidal unit of $\mathcal{A}$ is connective, the equivalence $\mathcal{B}\to\Fun_{\mathcal{A}^\omega}^\ex((\mathcal{B}^\omega)^\op,\mathcal{A})$ extends to a $t$-exact equivalence. 
    It follows that the $t$-structure generated by $F_{\geq0}$ inherits all the properties enjoyed by the $t$-structure on $\mathcal{B}$.
    Moreover, $b_!$ is right $t$-exact and $\text{ev}_b$ is left $t$-exact for every $b\in S_\mathcal{B}$ (and hence for every $b\in\mathcal{B}_{\geq0}$). 
\end{remark}
Let $f^\L:\mathcal{A}\to\mathcal{B}$ be a functor of tor-finite categories. The previous observation implies that the $\mathcal{A}^\omega$-linear Yoneda restricts to equivalence
\[
\PCoh{B}\to\text{PCoh}(\Fun^\ex_{\mathcal{A}^\omega}((\mathcal{B}^\omega)^\op,\mathcal{A})),
\qquad
\Coh{B}\to\text{Coh}(\Fun^\ex_{\mathcal{A}^\omega}((\mathcal{B}^\omega)^\op,\mathcal{A}))
\]
between the pseudo-coherent and coherent objects. 
What is left now to show is that $\text{PCoh}$ and $\text{Coh}$ can \enquote{enter into the second member of the functor category}.
There are three ingredients.
The first one is the following.
\begin{lemma}\label{lemma: b_! sends pseudo to pseudo}
    Let $f^\L:\mathcal{A}\to\mathcal{B}$ be a functor of tor-finite categories. 
    Let $a\in\PCoh{A}$ and $b\in\mathcal{B}^\omega$. 
    Then $b_!(a)$ defines an exact and $\mathcal{A}^\omega$-linear functor $(\mathcal{B}^\omega)^\op\to \mathcal{A}$ which is pseudo-coherent.
    If $f^\L$ is left $t$-exact up to a shift, the same holds if $a\in\Coh{A}$ is coherent.
\end{lemma}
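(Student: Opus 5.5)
The plan is to identify $b_!(a)$ explicitly under the equivalence $\mathcal{B}\simeq\Fun^\ex_{\mathcal{A}^\omega}((\mathcal{B}^\omega)^\op,\mathcal{A})$, $x\mapsto\mathcal{B}(-,x)=f^\R\IHom_\mathcal{B}(-,x)$, and then to reduce pseudo-coherence in the functor category to pseudo-coherence in $\mathcal{B}$. Exactness and $\mathcal{A}^\omega$-linearity of $b_!(a)$ are automatic, since $b_!$ lands in $\Fun^\ex_{\mathcal{A}^\omega}((\mathcal{B}^\omega)^\op,\mathcal{A})$ by construction. By \autoref{remark: t-structure on functor category} the Yoneda equivalence is $t$-exact. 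Being an equivalence, it therefore matches pseudo-coherent objects with pseudo-coherent objects and coherent objects with coherent objects. So it suffices to show that the object of $\mathcal{B}$ corresponding to $b_!(a)$ is pseudo-coherent, respectively coherent.

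\textbf{Step 1: identify the object of $\mathcal{B}$ corresponding to $b_!(a)$.} Transport $\text{ev}_b$ along the equivalence. It becomes $x\mapsto\mathcal{B}(b,x)=f^\R\IHom_\mathcal{B}(b,x)\simeq f^\R(b^\vee\otimes x)$, using rigidity of $\mathcal{B}$. This functor has left adjoint $a\mapsto b\otimes f^\L(a)$. This is a chain of adjunctions, namely $f^\L\dashv f^\R$ together with $b\otimes-\dashv b^\vee\otimes-$ from \autoref{lemma: action restricts}. By uniqueness of adjoints, $b_!(a)$ corresponds to $b\otimes f^\L(a)$, that is, to the functor $\mathcal{B}(-,b\otimes f^\L(a))$. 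Some care is needed to check that the $\mathcal{A}^\omega$-linear structures match, but these only affect the enhancement, not the underlying object.

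\textbf{Step 2: pseudo-coherence.} Since $f^\L$ is a functor of tor-finite categories, it is right $t$-exact and $f^\R$ is right $t$-exact up to a shift. Hence \autoref{lemma: f^l preserves pseudo-coherent} shows that $f^\L(a)\in\PCoh{B}$. Since $b\in\mathcal{B}^\omega$, \autoref{lemma: pcoh and coh are Cc submodules} (which applies because $\mathcal{B}$ is rigid and $\mathcal{B}^\omega\subseteq\Tor{B}$) gives $b\otimes f^\L(a)\in\PCoh{B}$. Step 1 and $t$-exactness of the Yoneda equivalence then conclude.

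\textbf{Step 3: coherence.} Now let $a\in\Coh{A}$. By Step 2, $f^\L(a)$ is pseudo-coherent. If $f^\L$ is left $t$-exact up to a shift, then $f^\L(a)$ is also eventually coconnective, hence lies in $\Coh{B}$. The coherent half of \autoref{lemma: pcoh and coh are Cc submodules} then gives $b\otimes f^\L(a)\in\Coh{B}$, because tensoring with a compact object, which has finite tor-dimension, preserves boundedness.

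The only non-formal point is Step 1, namely making sure the left adjoint of $\text{ev}_b$ really is $b\otimes f^\L(-)$ after transport. I would verify it directly on mapping spaces:
\[
\Hom(\mathcal{B}(-,b\otimes f^\L a),\mathcal{B}(-,x))\simeq\Hom_\mathcal{B}(b\otimes f^\L a,x)\simeq\Hom_\mathcal{A}(a,f^\R(b^\vee\otimes x)),
\]
where the first equivalence uses full faithfulness of the Yoneda equivalence.
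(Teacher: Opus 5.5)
Your proof is correct and follows essentially the same route as the paper. Both identify $b_!(a)$ with $\mathcal{B}(-,f^\L(a)\otimes b)$, use \autoref{lemma: f^l preserves pseudo-coherent}, and transport along the $t$-exact Yoneda equivalence of \autoref{remark: t-structure on functor category}. You are slightly more careful: you justify the identification by uniqueness of adjoints, and you explicitly invoke \autoref{lemma: pcoh and coh are Cc submodules} for tensoring with $b$, a step the paper's proof leaves implicit.
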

\begin{proof}
    By definition $b_!(a)\simeq a\otimes_\mathcal{A}\mathcal{B}(-,b)\simeq \mathcal{B}(-,f^\L(a)\otimes_\mathcal{B}b)$ since $(-)\in\mathcal{B}^\omega$ is compact (hence dualizable). 
    Since $a\in\PCoh{A}$ and $b\in\mathcal{B}^\omega$ it follows that $f^\L(a)\otimes_\mathcal{B}b$ is in $\PCoh{B}$ by \autoref{lemma: f^l preserves pseudo-coherent}. 
    The equivalence $\PCoh{B}\simeq \text{PCoh}(\Fun^\ex_{\mathcal{A}^\omega}((\mathcal{B}^\omega)^\op,\mathcal{A}))$ of \autoref{remark: t-structure on functor category} implies then that $b_!(a)$ is pseudo-coherent.
    The coherent case is analogus. 
\end{proof}

The second ingredient is a particular comonad on the functor category.
\begin{remark}\label{remark: comonad}
    Let $f^\L:\mathcal{A}\to\mathcal{B}$ be a functor of tor-finite categories and let $S_\mathcal{B}\subseteq\mathcal{B}^\omega$ be generating.
    Then there exists an adjunction
    \[
    L:\mathcal{A}^{S_\mathcal{B}}\rightleftarrows \Fun^\ex_{\mathcal{A}^\omega}((\mathcal{B}^\omega)^\op,\mathcal{A}): R
    \]
    where the left adjoint $L$ is given by $L((a_s)_{s\in S})= \bigoplus_{s\in S_\mathcal{B}} s_!(a_s)$ and the right adjoint is given by $R(F)=(\text{ev}_s(F))_{s\in S_\mathcal{B}}$. 
    Let $T$ be the induced comonad and let $T^\bullet:\Delta^\op\to \Fun^\ex_{\mathcal{A}^\omega}((\mathcal{B}^\omega)^\op,\mathcal{A})$ denote the associated simplicial object.
\end{remark}

The next result explains our interest in the comonad $T$.
\begin{lemma}\label{lemma: T equivalence}
    Let $f^\L:\mathcal{A}\to\mathcal{B}$ be a functor of tor-finite categories and let $S_\mathcal{B}\subseteq\mathcal{B}^\omega$ be generating. 
    Let $F\in\Fun^\ex_{\mathcal{A}^\omega}((\mathcal{B}^\omega)^\op,\mathcal{A})$.  
    Then $F\simeq |T^{\bullet+1}F|$.
\end{lemma}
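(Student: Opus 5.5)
The plan is to apply the Barr--Beck--Lurie comonadicity theorem to the adjunction $L\dashv R$ of \autoref{remark: comonad}. If $L$ is comonadic, then every object $F$ is the totalization-dual of its comonadic resolution. Concretely, the augmented simplicial object $T^{\bullet+1}F\to F$ is $L$-split, and its colimit is preserved by $R$, which forces $|T^{\bullet+1}F|\simeq F$. Since the target is stable, it is in fact enough to check two things: that $R$ is conservative, and that the augmented simplicial object $R T^{\bullet+1}F\to RF$ admits extra degeneracies.

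\emph{Step 1: $R$ preserves colimits.} Colimits in $\Fun^\ex_{\mathcal{A}^\omega}((\mathcal{B}^\omega)^\op,\mathcal{A})$ are computed pointwise, and each $\text{ev}_s$ preserves them. Hence $R=(\text{ev}_s)_{s\in S_\mathcal{B}}$ commutes with geometric realizations.

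\emph{Step 2: $R$ is conservative.} Use the equivalence $\mathcal{B}\simeq\Fun^\ex_{\mathcal{A}^\omega}((\mathcal{B}^\omega)^\op,\mathcal{A})$, $x\mapsto\mathcal{B}(-,x)=f^\R\IHom_\mathcal{B}(-,x)$. Under it, $\text{ev}_s$ becomes $x\mapsto f^\R(s^\vee\otimes x)$. Suppose $\text{ev}_s(F)\simeq0$ for all $s\in S_\mathcal{B}$. The functor $F$ is exact, and $S_\mathcal{B}$ thickly generates $\mathcal{B}^\omega$ by \autoref{remark: thick generation}. Since the kernel of an exact functor is thick, $F(b)\simeq0$ for every $b\in\mathcal{B}^\omega$, so $F\simeq0$. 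Stability then upgrades this to conservativity.

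\emph{Step 3: the split augmentation.} By the standard comonad yoga, the augmented simplicial object $RT^{\bullet+1}F\to RF$, obtained by applying $R$, acquires extra degeneracies from the unit $\id\to RL$. It is therefore a split augmented simplicial object in $\mathcal{A}^{S_\mathcal{B}}$, and so it is a colimit diagram there. By Step 1, $R|T^{\bullet+1}F|\simeq|RT^{\bullet+1}F|\simeq RF$, and the comparison map is the one induced by the augmentation. By Step 2, $|T^{\bullet+1}F|\to F$ is an equivalence.

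The main obstacle is Step 2. One has to be careful that "$S_\mathcal{B}$ generates" is used through thick generation of compacts, and not through a $t$-structure statement. One also has to check that $\text{ev}_s$ detects zero for exact functors out of $(\mathcal{B}^\omega)^\op$ rather than merely on $S_\mathcal{B}$. The exactness argument above handles this directly, so I expect no rigidity input beyond the fact that the objects are exact functors. If one prefers, Steps 1--3 can be packaged into a single citation of Lurie's comonadic Barr--Beck theorem (the dual of \cite[Theorem 4.7.3.5]{Lurie-HA}). The only $R$-split colimits that need to be checked are geometric realizations, and they are preserved pointwise.
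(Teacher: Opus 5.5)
Your argument is correct and is essentially the paper's. The paper also applies Barr--Beck to $L\dashv R$, notes that $R$ preserves colimits because colimits are computed pointwise, and gets conservativity by observing that the locus in $\mathcal{B}^\omega$ where a natural transformation is an equivalence is thick and contains $S_\mathcal{B}$, hence is everything by \autoref{remark: thick generation}. One terminological fix: the theorem you are using is monadicity of $R$ (\cite[Theorem 4.7.3.5]{Lurie-HA} itself, not its dual), and $T^{\bullet+1}F\to F$ is $R$-split rather than $L$-split. Your Steps 1--3 already prove exactly this, so nothing substantive changes.
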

\begin{proof}
    By the Barr-Beck theorem it is sufficient to check that $R$ is conservative and that it preserves geometric realization of $R$-split simplicial objects. 
    Since colimits in $\Fun^\ex_{\mathcal{A}^\omega}((\mathcal{B}^\omega)^\op,\mathcal{A})$ are computed pointwise, each evaluation $\text{ev}_s$ preserves colimits, so that $R$ does that too. 
    For conservativity, let $\alpha: G\to G'$ be natural transformations and let $\mathcal{U}$ denote the full subcategory of $\mathcal{B}^\omega$ spanned by those $x$ such that $\alpha_x: G(x)\to G'(x)$ is an equivalence. 
    Since $G$ and $G'$ are exact functors, $\mathcal{U}$ is closed under shifts, cofibers, and retracts.
    If $\alpha_s$ is an equivalence for all $s\in S_\mathcal{B}$, then $S_\mathcal{B}\subseteq \mathcal{U}$, and the thickness of $S_\mathcal{B}$ explained in \autoref{remark: thick generation}  implies $\mathcal{B}^\omega = \mathcal{U}$. 
    That is, $R$ is conservative. 
\end{proof}

The final ingredient needed for the main result of this section are projective classes.
\begin{remark}
    Let $\mathcal{C}\in\Pr^\L_\st$ be a presentable stable category. 
    Recall that a \emph{projective class} $(\mathcal{P},\mathcal{I})$ on $\mathcal{C}$ consists of a class of objects $\mathcal{P}$ of $\mathcal{C}$ and an ideal of morphisms $\mathcal{I}$ of $\mathcal{C}$ such that: 
    \begin{enumerate*}
        \item The ideal $\mathcal{I}$ is orthogonal to $\mathcal{P}$, in the sense that $(x\to y)\in\mathcal{I}$ if and only if $\pi_0\hom(p,x)\to\pi_0\hom(p,y)$ is zero for all $p\in \mathcal{P}$.
        \item There are enough projectives, in the sense that for every $x\in\mathcal{C}$ there exists a cofibre sequence $p\to x\to y$ where $p\in\mathcal{P}$ and $(x\to y)\in\mathcal{I}$.
    \end{enumerate*} 
    Projective classes are \enquote{over-determined}. 
    Indeed, given a class of objects $\mathcal{P}$ of $\mathcal{C}$, let $\mathcal{P}\text{-null}$ denote the class of morphisms $x\to y$ in $\mathcal{C}$ such that for every $p\in\mathcal{P}$ the induced map $\pi_0\Hom_{\mathcal{C}}(p,x)\to \pi_0\Hom_{\mathcal{C}}(p,y)$ is the zero homomorphism.
    Then $\mathcal{P}\text{-null}$  is an ideal and the pair $(\mathcal{P},\mathcal{P}{\text{-null}})$ is a projective class if and only if there are enough projectives.  
\end{remark}
The next result shows how to construct projective classes.
\begin{proposition}\label{proposition: construction of projective classes}
    Let $\mathcal{C}\in\Pr^\L_\st$ be a presentable stable category, and let $S\subseteq \mathcal{C}$ be an essentially small collection of objects.
    Choose a small skeleton  $\{s_i\}_{i\in I}$ in $S$ and let $\mathcal{P}=\text{Thick}_1(\mathbb{Z}S)$ be the smallest  subcategory  of $\mathcal{C}$ closed under small coproducts, retracts and shifts that contains $S$.
    Then the pair $(\mathcal{P},\mathcal{P}\text{-null})$ is a projective class on $\mathcal{C}$.
\end{proposition}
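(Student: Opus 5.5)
The plan is to verify the two axioms of a projective class for the pair $(\mathcal{P},\mathcal{P}\text{-null})$. First I check that $\mathcal{P}\text{-null}$ is an ideal of morphisms. If $g\colon x\to y$ is $\mathcal{P}$-null, then for any $h\colon y\to z$ and $k\colon w\to x$ the composites $hgk$ still induce the zero map on $\pi_0\Hom_\mathcal{C}(p,-)$, since they factor through $\pi_0\Hom_\mathcal{C}(p,g)=0$. Closure under sums of parallel maps holds because $\pi_0\Hom_\mathcal{C}(p,-)$ is additive. Orthogonality, axiom (1), then holds by definition of $\mathcal{P}\text{-null}$. By the preceding remark, it therefore only remains to produce enough projectives.

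For enough projectives, fix $x\in\mathcal{C}$. I would form the canonical object
\[
p_x \;=\; \bigoplus_{i\in I}\;\bigoplus_{n\in\mathbb{Z}}\;\bigoplus_{\phi\in \pi_0\Hom_\mathcal{C}(\susp^n s_i,x)} \susp^n s_i,
\]
which is a small coproduct because $I$ is small and each $\pi_0\Hom$ is a set (by local smallness of the presentable category $\mathcal{C}$). It lies in $\mathcal{P}$ by closure under shifts and small coproducts, and it carries the tautological map $e\colon p_x\to x$. Complete $e$ to a cofibre sequence $p_x\to x\xrightarrow{g} y$. I then need to show $g\in\mathcal{P}\text{-null}$. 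Since the composite $g\circ e$ is null, exactness of $\pi_0\Hom_\mathcal{C}(p,-)$ gives the following: whenever $\pi_0\Hom_\mathcal{C}(p,p_x)\to\pi_0\Hom_\mathcal{C}(p,x)$ is surjective, the map $\pi_0\Hom_\mathcal{C}(p,x)\to\pi_0\Hom_\mathcal{C}(p,y)$ vanishes.

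So the key step is to show that the class $\mathcal{Q}$ of objects $p$ for which $e_*\colon\pi_0\Hom_\mathcal{C}(p,p_x)\to\pi_0\Hom_\mathcal{C}(p,x)$ is surjective contains $\mathcal{P}$. I would verify that $\mathcal{Q}$ contains $S$ and is closed under shifts, small coproducts and retracts, which gives $\mathcal{P}\subseteq\mathcal{Q}$ by minimality.
\begin{itemize}
\item \emph{Shifts and $S$.} Every map $\phi\colon\susp^n s_i\to x$ factors through $e$ via the inclusion of the $\phi$-summand. This handles all shifts of the $s_i$, and hence all of $S$ up to equivalence.
\item \emph{Coproducts.} For $p=\bigoplus p_j$ we have $\pi_0\Hom_\mathcal{C}(\bigoplus p_j,-)=\prod_j\pi_0\Hom_\mathcal{C}(p_j,-)$, and a product of surjections is surjective.
\item \emph{Retracts.} If $p$ is a retract of $q\in\mathcal{Q}$, then $e_*$ for $p$ is a retract of the surjection $e_*$ for $q$, hence is surjective.
\end{itemize}

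Since $\mathcal{Q}$ is closed under shifts in both directions, the factorization for $\susp^n s_i$ must hold for all $n\in\mathbb{Z}$; this is why the sum ranges over every integer $n$.

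The main (mild) obstacle I expect is set-theoretic: making sure $p_x$ is a genuinely small coproduct. This is exactly why one passes to a small skeleton of the essentially small $S$. I also need to note that replacing $S$ by its skeleton does not change $\mathcal{P}$ or $\mathcal{P}\text{-null}$, since both are invariant under equivalence of objects.
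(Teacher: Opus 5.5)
Your argument is the paper's own: the same canonical object $p_x$, indexed by $I\times\mathbb{Z}$ and by all classes of maps $\susp^n s_i\to x$, and the same cofibre sequence. Like the paper, you pass from the $\susp^n s_i$ to all of $\mathcal{P}$ through coproducts and retracts. The only cosmetic difference is that you propagate surjectivity of $\pi_0\Hom_\mathcal{C}(p,p_x)\to\pi_0\Hom_\mathcal{C}(p,x)$ and then use exactness, whereas the paper propagates the vanishing of $\pi_0\Hom_\mathcal{C}(p,x)\to\pi_0\Hom_\mathcal{C}(p,y)$ directly.

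One step needs repair. Your minimality argument needs $\mathcal{Q}$ to be closed under shifts. Your first bullet only shows that $\mathcal{Q}$ contains every $\susp^n s_i$, and in general $\mathcal{Q}$ is not shift-closed: surjectivity on $\pi_0\Hom_\mathcal{C}(p,-)$ gives no control over $\pi_0\Hom_\mathcal{C}(\susp^{\pm1}p,-)$. For instance, take $D(\mathbb{Z})$ with $S=\{\mathbb{Z}\}$ and $x=\mathbb{Z}/2$.
\begin{itemize}
\item The object $\susp\mathbb{Z}/2$ lies in $\mathcal{Q}$, because $\pi_0\Hom(\susp\mathbb{Z}/2,\mathbb{Z}/2)=0$.
\item The object $\mathbb{Z}/2$ does not lie in $\mathcal{Q}$: every composite $\mathbb{Z}/2\to p_x\to x$ is zero, since the only nonzero maps $\mathbb{Z}/2\to p_x$ land in the $\susp\mathbb{Z}$-summand indexed by the zero class, while $\mathrm{id}_{\mathbb{Z}/2}\neq 0$.
\end{itemize}

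The fix is immediate. Since $\susp^{\pm1}$ are equivalences commuting with coproducts and retracts, the class of retracts of small coproducts of objects $\susp^n s_i$ is already closed under shifts, coproducts and retracts, so it equals $\mathcal{P}$. Your coproduct and retract bullets then give $\mathcal{P}\subseteq\mathcal{Q}$. Alternatively, run your minimality argument with the shift-stable class $\{p : \susp^n p\in\mathcal{Q}\text{ for all } n\in\mathbb{Z}\}$. This concrete description of $\mathcal{P}$ is also what the paper's phrase ``objects obtained from them by shifts, coproducts, and retracts'' implicitly relies on.
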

\begin{proof}
    To show that there are enough projectives, fix $x\in\mathcal{C}$.
    Since $I$ is small and since $\mathcal{C}$ is locally small, the disjoint union $J_x:=\amalg_{(i,n)\in I\times\mathbb{Z}} \pi_0\Hom_{\mathcal{C}}(\Sigma^n s_i,x)$ is a set. 
    Define $p_x:=\amalg_{(i,n,\alpha)\in J_x}\Sigma^n s_i \in \mathcal{P}$ and let $\epsilon: p_x\to x$ be the morphism whose component on the summand indexed by $(i,n,\alpha)$ is the chosen map $\alpha: \Sigma^n s_i\to x$.
    Let $y_x$ be the cofibre of $\epsilon$.
    Applying $\hom_{\mathcal{C}}(\Sigma^n s_i,-)$ yields an exact sequence on $\pi_0$,
    \[
    \pi_0\hom(\Sigma^n s_i,p_x)\to \pi_0\hom(\Sigma^n s_i,x)\to \pi_0\hom(\Sigma^n s_i,y_x).
    \]
    Since the first arrow is surjective by construction, the second map is zero for all $(i,n)$. Since $\hom_\mathcal{C}(-,x)$ sends coproducts in the first variable to products, and retracts to retracts, vanishing (or surjectivity) for each $\Sigma^n s_i$ implies the same for every object obtained from them by shifts, coproducts, and retracts.
    Therefore $x\to y_x$ lies in $\mathcal{P}\text{-null}$. 
\end{proof}
We will apply the previous construction to a specific essentially small collection of objects. 
\begin{remark}\label{remark: projective classes and inclusion}
    Let $\mathcal{C}\in\Pr^{\L,\omega}_\st$ be a compactly generated stable category and let $S\subseteq \mathcal{C}$ be an essentially small collection of objects that thickly generates the compact objects, in that $\text{thick}(S)=\mathcal{C}^\omega$. 
    Then \autoref{proposition: construction of projective classes} applied to $\text{thick}(S)=\mathcal{C}^\omega$ determines a projective class whose projective objects are given by retracts of arbitrary coproducts of compact objects.
    We will refer to this projective class as the \emph{projective class of compact objects}.
    Notice that this projective class depends only on the compact objects of $\mathcal{C}$. 
    In other terms, if $S_1$ and $S_2$ are two essentially small collections of objects that thickly generate the compact objects, then 
    \[
    \mathcal{P}_1
    :=\text{Thick}_1(\text{thick}(S_1))
    = \text{Thick}_1(\mathcal{C}^\omega)
    = \text{Thick}_1(\text{thick}(S_2))
    =:\mathcal{P}_2
    \]
    define the same projective class.
\end{remark}

\begin{remark}
    Let $\mathcal{C}\in\Pr^\L_\st$ be a presentable stable category. 
    Let $(\mathcal{P},\mathcal{I})$ be a projective class on $\mathcal{C}$.
    Let $\mathcal{P}_1=\mathcal{P}$ and inductively define $\mathcal{P}_n$ to be the class of all retracts of objects $y$ that sit in a cofibre sequence $x\to y\to p$ with $x\in\mathcal{P}_{n-1}$ and $p\in\mathcal{P}$. 
    Define also $\mathcal{I}^n$ by the $n$-fold composites of $\mathcal{I}$. 
    Then \cite[Theorem 1.1]{christensen2013idealstriangulatedcategoriesphantoms} implies that $(\mathcal{P}_n,\mathcal{I}^n)$ is a projective class.
\end{remark}

In the setting of tor-finite categories there is a natural projective class.

\begin{notation}
    Let $f^\L:\mathcal{A}\to\mathcal{B}$ be a functor of tor-finite categories and let $S_\mathcal{A}\subseteq\mathcal{A}^\omega$ and $S_\mathcal{B}\subseteq\mathcal{B}^\omega$ be generating.
    We will denote by $S_!$ the full subcategory of $\Fun^\ex_{\mathcal{A}^\omega}((\mathcal{B}^\omega)^\op,\mathcal{A})$ spanned by $s_!(a)$ for $s\in S_\mathcal{B}$ and $a\in S_\mathcal{A}$.
    Then $S_!$ thickly generates the compact objects of  $\Fun^\ex_{\mathcal{A}^\omega}((\mathcal{B}^\omega)^\op,\mathcal{A})$.
    In particular, by picking a small skeleton, \autoref{remark: projective classes and inclusion} equips $\Fun^\ex_{\mathcal{A}^\omega}((\mathcal{B}^\omega)^\op,\mathcal{A})$ with the projective class generated by compact objects. 
    We will denote it by $(\mathcal{P}_!,\mathcal{P}_!\text{-null})$.
    Notice that this projective class, under the equivalence $\mathcal{B}\simeq\Fun^\ex_{\mathcal{A}^\omega}((\mathcal{B}^\omega)^\op,\mathcal{A})$, corresponds to the projective class of compact objects of $\mathcal{B}$. 
\end{notation}

The following observation is the crucial ingredient in the proof of the main result of this section, since it allows us to move from infinite data to finite data. 
\begin{remark}\label{remark: crucial}
    Let $f^\L:\mathcal{A}\to\mathcal{B}$ be a functor of tor-finite categories and let $S_\mathcal{A}\subseteq\mathcal{A}^\omega$ and $S_\mathcal{B}\subseteq\mathcal{B}^\omega$ be generating.
    Assume now that there exists a finite subcategory $S_\text{fin}\subseteq S_\mathcal{B}$ consisting of compact generators of $\mathcal{B}$. 
    Let $\mathcal{P}_\text{fin}$ be the smallest  full subcategory closed under small coproducts and retracts  containing $\text{thick}(\{s_!(a) \text{ for }s\in S_\text{fin},a\in S_\mathcal{A}\})$. 
    Then $\mathcal{P}_\text{fin}= \mathcal{P}_!$ by \autoref{remark: projective classes and inclusion} since the thick closure of the $s_!(a)$ for $s\in S_\text{fin}$ and $a\in S_\mathcal{A}$ agrees with the compact objects. 
\end{remark}

\begin{lemma}\label{lemma: cofibre of counit is phantom}
    Let $f^\L:\mathcal{A}\to\mathcal{B}$ be a functor of tor-finite categories and let $F:(\mathcal{B}^\omega)^\op\to\mathcal{A}$ be an exact $\mathcal{A}^\omega$-linear functor. 
    Then the cofibre of the counit $TF\to F$ is $\mathcal{P}_!$-null.
\end{lemma}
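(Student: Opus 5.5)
The plan is to reduce the statement to two facts: (a) $\mathcal{P}_!$ is generated, as a class closed under coproducts, shifts and retracts, by the objects $s_!(a)$ with $s\in S_\mathcal{B}$ and $a\in S_\mathcal{A}$; (b) maps out of $s_!(a)$ are computed by the adjunction $s_!\dashv \text{ev}_s$. By (a), and since $\pi_0\Hom(-,x)\to\pi_0\Hom(-,y)$ turns coproducts into products and retracts into retracts (exactly as in the proof of \autoref{proposition: construction of projective classes}), it suffices to show the following for the cofibre sequence $TF\xrightarrow{\epsilon_F}F\to C$. For every generator $s_!(a)$ with $s\in S_\mathcal{B}$, $a\in S_\mathcal{A}$ and $n\in\mb{Z}$, the map $\pi_0\Hom(\susp^n s_!(a),F)\to\pi_0\Hom(\susp^n s_!(a),C)$ is zero. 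By exactness of the long sequence, this is equivalent to every map $\susp^n s_!(a)\to F$ factoring through $\epsilon_F$.

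The key step is then the factorization. By \autoref{remark: comonad}, $T=LR$ with $TF=\bigoplus_{t\in S_\mathcal{B}}t_!(\text{ev}_t F)$, and $\epsilon_F$ restricted to the summand $t$ is the counit $\epsilon_t(F):t_!\text{ev}_t F\to F$ of $t_!\dashv\text{ev}_t$. Take a map $g:\susp^n s_!(a)\to F$. It corresponds under adjunction to $g^\flat:\susp^n a\to \text{ev}_s F$, and then $g=\epsilon_s(F)\circ s_!(g^\flat)$. This is the triangle identity, or equivalently the universal property of the counit. Hence $g$ factors as
\[
\susp^n s_!(a)\xrightarrow{s_!(g^\flat)} s_!(\text{ev}_sF)\hookrightarrow TF\xrightarrow{\epsilon_F}F,
\]
where the middle arrow is the inclusion of the $s$-summand. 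So $g$ lifts along $TF\to F$, its image in $\pi_0\Hom(\susp^n s_!(a),C)$ vanishes, and the cofibre map $F\to C$ is $\mathcal{P}_!$-null. The counit of $L\dashv R$ and the sum of the individual counits $\epsilon_t$ agree because $L$ is a coproduct of the $t_!$ and $R$ is the product of the $\text{ev}_t$.

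The only step needing care is (a): that the null condition may be checked on the $s_!(a)$ alone, rather than on all compact objects. The notation paragraph states that $S_!$ thickly generates the compacts. By \autoref{remark: projective classes and inclusion}, $\mathcal{P}_!=\text{Thick}_1(\text{thick}(S_!))$, which is not just the coproduct/retract closure of $S_!$, so I must also pass through cofibres. I would handle this by not testing arbitrary compacts directly. Instead, use that a compact $c$ lies in $\text{thick}(S_!)$ and argue by induction on its building steps that every map $c\to F$ lifts to $TF$. Surjectivity of $\pi_0\Hom(c,TF)\to\pi_0\Hom(c,F)$ is not obviously closed under cofibres, and I expect this to be the main obstacle. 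My fallback is to avoid it: define the projective class directly via \autoref{proposition: construction of projective classes} applied to the small skeleton of $S_!$, and show that its nulls coincide with the $\mathcal{P}_!$-nulls. The nulls of the coproduct/shift/retract closure of a set $S$ agree with those of its thick closure only up to the ideal power structure of \cite{christensen2013idealstriangulatedcategoriesphantoms}. So I would verify that $\mathcal{P}_!$-null is being used only as the class relevant for $S_!$, which is what the next steps of the paper (the finiteness in \autoref{remark: crucial}) require. If needed I would read the lemma as: the map $F\to C$ is null on $S_!$, hence lies in the ideal generated by $\text{Thick}_1(\mathbb{Z}S_!)$.
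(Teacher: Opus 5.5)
Your treatment of the generators is correct and coincides with the paper's first step. The paper phrases it as split surjectivity of $\hom_\mathcal{A}(a,(TF)(s))\to\hom_\mathcal{A}(a,F(s))$ via \autoref{remark: component of counit has a section}, which is the same triangle identity. Your argument actually shows a little more: $F\to C$ is null on every retract of an object in the image of $L$, and in particular on $\text{Thick}_1(\mathbb{Z}S_!)$.

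The obstacle you flag is genuine, and it is exactly where the paper's proof fails. The paper asserts that the class $\mathcal{U}$ of compact $P$ for which $\pi_*\hom(P,TF)\to\pi_*\hom(P,F)$ is surjective is thick. That class is closed under shifts, finite sums and retracts, but not under cofibres. If a single map is null on $P_1$ and $P_2$, the ghost lemma only makes twofold composites of such maps null on $\cofib(P_1\to P_2)$.

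In fact the lemma as stated is false. Take $\mathcal{A}=\Mod_k$ with $S_\mathcal{A}=\{k\}$, $\mathcal{B}=\Mod_{k[x]}$ with $S_\mathcal{B}=\{k[x]\}$, and $f^\L=k[x]\otimes_k-$; this is a functor of tor-finite categories. Under $\mathcal{B}\simeq\Fun^\ex_{\mathcal{A}^\omega}((\mathcal{B}^\omega)^\op,\mathcal{A})$, the comonad $T$ becomes $M\mapsto f^\L f^\R M=k[x]\otimes_k M$, and the counit becomes the action map. For the compact object $M=k=\cofib(x\colon k[x]\to k[x])$, the counit is the augmentation $k[x]\to k$. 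If $F\to C$ were null on $k$, then $\id_k$ would lift along the augmentation, exhibiting $k$ as a retract of $k[x]$. That is impossible, since $x$ acts by zero on $k$ and injectively on $k[x]$. So the missing step in your proposal cannot be supplied by any argument.

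What you do prove, nullity relative to $\text{Thick}_1(\mathbb{Z}S_!)$, is the correct statement. It agrees with the stated one whenever every compact object has a shift in $S_\mathcal{B}$, for example $S_\mathcal{B}=\mathcal{B}^\omega\cap\mathcal{B}_{\geq0}$. This holds because $s_!(\mb{1}_\mathcal{A})\simeq\mathcal{B}(-,s)$ and compact objects are eventually connective.

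I disagree, however, with your closing claim that this weaker form is all the subsequent steps need. \autoref{lemma: cofibre sequence pseudo-coherent} runs the same argument for $T_\text{fin}$ and requires nullity against $\mathcal{P}_\text{fin}=\mathcal{P}_!$. By \autoref{remark: crucial}, that class is built from the \emph{thick} closure of $\{s_!(a)\}_{s\in S_\text{fin}}$, and the example above already has $S_\text{fin}=S_\mathcal{B}=\{k[x]\}$. Likewise, the ghost-lemma step in \autoref{theorem: functors out of Cc giovanni proof} uses $F\in(\mathcal{P}_!)_4$ for this full class. So the weakened lemma does not supply what is used there, and that part of the argument would need a different input.
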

\begin{proof}
    Consider the cofibre sequence $TF\to F\to D$ of the counit $TF\to F$. 
    Then for every $a\in S_\mathcal{A}$ and $s\in S_\mathcal{B}$ there is a fibre sequence
    \[
    \hom_{\Fun^\ex_{\mathcal{A}^\omega}((\mathcal{B}^\omega)^\op,\mathcal{A})}(s_!(a),TF)
    \to
    \hom_{\Fun^\ex_{\mathcal{A}^\omega}((\mathcal{B}^\omega)^\op,\mathcal{A})}(s_!(a),F)
    \to \hom_{\Fun^\ex_{\mathcal{A}^\omega}((\mathcal{B}^\omega)^\op,\mathcal{A})}(s_!(a),D)
    \]
    in $\Sp$ which, by adjunction, is equivalent to 
    \[
    \hom_{\mathcal{A}}(a,(TF)(s))
    \to
    \hom_{\mathcal{A}}(a,F(s))
    \to \hom_{\Fun^\ex_{\mathcal{A}^\omega}((\mathcal{B}^\omega)^\op,\mathcal{A})}(s_!(a),D).
    \]
    By \autoref{remark: component of counit has a section} the component of the counit has a section, so that the first map is split surjective. 
    In particular, the second map must be zero (as it follows by the universal property of pullbacks in a stable category), and hence zero on all homotopy groups. 

    Let $\mathcal U\subseteq\mathcal \Fun^\ex_{\mathcal{A}^\omega}((\mathcal{B}^\omega)^\op,\mathcal{A})^\omega$ be the full subcategory spanned by those compact objects $P$ such that
\[
\hom_{\Fun^\ex_{\mathcal{A}^\omega}((\mathcal{B}^\omega)^\op,\mathcal{A})}(P,T F)\to \hom_{\Fun^\ex_{\mathcal{A}^\omega}((\mathcal{B}^\omega)^\op,\mathcal{A})}(P,F)
\]
is surjective on all homotopy groups. 
    The subcategory $\mathcal U$ is thick. 
    Indeed, this follows from the long exact sequences on homotopy groups associated to cofibre sequences in the variable $p$. 
    By the previous paragraph, $\mathcal U$ contains the generators $s_!(a)$ for $s\in S_\mathcal B$ and $a\in S_\mathcal A$. 
    Since these objects thickly generate $\Fun^\ex_{\mathcal{A}^\omega}((\mathcal{B}^\omega)^\op,\mathcal{A})^\omega$, it follows that $\mathcal U=\Fun^\ex_{\mathcal{A}^\omega}((\mathcal{B}^\omega)^\op,\mathcal{A})^\omega$.
    Therefore, for every $P\in\Fun^\ex_{\mathcal{A}^\omega}((\mathcal{B}^\omega)^\op,\mathcal{A})^\omega$, the map
\[
\pi_0\hom_{\Fun^\ex_{\mathcal{A}^\omega}((\mathcal{B}^\omega)^\op,\mathcal{A})}(P,F)\to \pi_0\hom_{\Fun^\ex_{\mathcal{A}^\omega}((\mathcal{B}^\omega)^\op,\mathcal{A})}(P,D)
\]
    is zero. 
    Since $\mathcal P_!=\text{Thick}_1(\Fun^\ex_{\mathcal{A}^\omega}((\mathcal{B}^\omega)^\op,\mathcal{A})^\omega)$ is obtained from compact objects by small coproducts, retracts and shifts, the same holds for every $P\in\mathcal P_!$. Thus $F\to D$ is $\mathcal P_!$-null.
\end{proof}

\begin{lemma}\label{lemma: cofibre sequence pseudo-coherent}
    Let $f^\L:\mathcal{A}\to\mathcal{B}$ be a functor of tor-finite categories and assume that $S_\text{fin}\subseteq S_\mathcal{B}$ is a finite generating set. 
    Let $F:(\mathcal{B}^\omega)^\op\to\PCoh{A}$ be an exact $\mathcal{A}^\omega$-linear functor. 
    Then for every integer $n\in\N$ there exists a cofibre sequence $F_n\to F\to D_n$ where $F_n$ is pseudo-coherent and $(F\to D_n)\in (\mathcal{P}_!\text{-null})^n$.
\end{lemma}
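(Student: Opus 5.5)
We argue by induction on $n$, using a finite version of the comonad $T$ of \autoref{remark: comonad}. Let
\[
T_\text{fin}F=\bigoplus_{s\in S_\text{fin}} s_!(F(s)),
\]
with counit $T_\text{fin}F\to F$; this is the comonad of the adjunction $L\dashv R$ with $S_\mathcal{B}$ replaced by the finite set $S_\text{fin}$. For $n=0$ take $F_0=0$ and $D_0=F$, so that $F\to D_0$ is the identity, which lies in the $0$-fold composite ideal (all maps). For $n=1$ take $F_1=T_\text{fin}F$. Each $F(s)$ lies in $\PCoh{A}$ by assumption and $s\in\mathcal{B}^\omega$, so $s_!(F(s))$ is pseudo-coherent by \autoref{lemma: b_! sends pseudo to pseudo}. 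A finite sum of pseudo-coherent objects is pseudo-coherent by \autoref{lemma: stupid properties of Pcoh and coh}; this is where finiteness of $S_\text{fin}$ is used. Hence $F_1$ is pseudo-coherent in the functor category. Repeating the argument of \autoref{lemma: cofibre of counit is phantom} with $S_\text{fin}$ in place of $S_\mathcal{B}$, the map $F\to D_1=\cofib(T_\text{fin}F\to F)$ kills $\pi_0\hom(s_!(a),-)$ for all $s\in S_\text{fin}$ and $a\in S_\mathcal{A}$, because of the split section of \autoref{remark: component of counit has a section}. The thick-closure argument then extends this to all of $\mathcal{P}_\text{fin}$, and $\mathcal{P}_\text{fin}=\mathcal{P}_!$ by \autoref{remark: crucial}. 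So $F\to D_1$ is $\mathcal{P}_!$-null.

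For the inductive step we need $D_1$ to again take values in $\PCoh{A}$, so that the construction can be iterated. This holds because $D_1(b)=\cofib(T_\text{fin}F(b)\to F(b))$, and $T_\text{fin}F(b)=\bigoplus_s \mathcal{B}(b,s)\otimes F(s)$ is pseudo-coherent in $\mathcal{A}$. Indeed, $f^\R$ sends $\IHom(b,s)=b^\vee\otimes s\in\mathcal{B}^\omega\subseteq\PCoh{B}$ into $\PCoh{A}$ since $f^\L$ is quasi-proper, and $\PCoh{A}$ is stable under tensoring with pseudo-coherent objects only up to care. The cleaner route is to use the identification $b_!(a)\simeq\mathcal{B}(-,f^\L(a)\otimes b)$ from the proof of \autoref{lemma: b_! sends pseudo to pseudo}, together with quasi-properness. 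We therefore carry quasi-properness of $f^\L$ along as in the theorem being proved; if the lemma is meant without it, we instead keep all constructions inside the full functor category and only use pseudo-coherence of the $F_k$.

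To avoid relying on $D_n$ having pseudo-coherent values, the more robust plan is to iterate on $F$ directly, via the octahedral axiom. Suppose we have $F_n\to F\to D_n$ with $F_n$ pseudo-coherent and $F\to D_n$ in $(\mathcal{P}_!\text{-null})^n$. Apply the $n=1$ step to the functor $D_n$, obtaining $T_\text{fin}D_n\to D_n\to D_{n+1}$ with $D_n\to D_{n+1}$ being $\mathcal{P}_!$-null. The composite $F\to D_n\to D_{n+1}$ then lies in $(\mathcal{P}_!\text{-null})^{n+1}$. By the octahedral axiom, $F_{n+1}=\fib(F\to D_{n+1})$ sits in a cofibre sequence $F_n\to F_{n+1}\to T_\text{fin}D_n$. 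It is therefore pseudo-coherent, provided $T_\text{fin}D_n$ is. That reduces to $D_n(s)\in\PCoh{A}$ for $s\in S_\text{fin}$, which follows from the cofibre sequence $F_n(s)\to F(s)\to D_n(s)$: evaluation $\text{ev}_s$ sends pseudo-coherent functors to $\PCoh{A}$, since pseudo-coherent functors correspond to $\PCoh{B}$ under the $t$-exact equivalence of \autoref{remark: t-structure on functor category}, and $f^\R$ preserves pseudo-coherence.

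The main obstacle is exactly this last point: showing $D_n(s)\in\PCoh{A}$ so that $T_\text{fin}D_n$ is pseudo-coherent. It needs the values of pseudo-coherent functors at compact objects to be pseudo-coherent in $\mathcal{A}$. We will first try to deduce this from $\text{ev}_s(\mathcal{B}(-,x))=f^\R(s^\vee\otimes x)$ and the quasi-proper hypothesis, or from \autoref{lemma: quasi-proper functor and pseudo}.
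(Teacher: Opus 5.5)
Your argument is the paper's: the base case uses $T_\text{fin}$, pseudo-coherence of $T_\text{fin}F$ comes from \autoref{lemma: b_! sends pseudo to pseudo}, and the inductive step applies the base step to $D_n$ and uses the octahedral axiom to get $F_n\to F_{n+1}\to T_\text{fin}D_n$.

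You are right that the paper's sentence ``since $F_n$ and $F$ factor through $\PCoh{A}$'' silently uses quasi-properness, via \autoref{lemma: quasi-proper functor and pseudo}. This hypothesis cannot be dropped, so your fallback of working ``inside the full functor category'' does not work. To see this, take a field $k$, $B=k[x_1,x_2,\dots]$ and $f^\L\colon\Mod_k\to\Mod_B$.
\begin{itemize}
\item The functor $F=\mathcal{B}(-,k)$ takes values in $\Perf(k)$.
\item $k=\operatorname{colim}_n B/(x_1,\dots,x_n)$ is a sequential colimit of compacts, so $F\in(\mathcal{P}_!)_2$.
\item The statement for $n=2$ would then force $F\to D_2$ to be zero, making $F$ a retract of the pseudo-coherent $F_2$.
\item But $k$ is not finitely presented over $B$, so $F$ is not pseudo-coherent.
\end{itemize}
You must assume $f^\L$ quasi-proper.

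The genuine gap is in the base case, at the step ``the thick-closure argument then extends this to all of $\mathcal{P}_\text{fin}$''. The split section only shows that $\hom(P,T_\text{fin}F)\to\hom(P,F)$ is surjective on homotopy for $P=s_!(a)$ with $s\in S_\text{fin}$. The class of compact $P$ with this property is closed under shifts, finite sums and retracts, but not under cofibres: the four lemma would also need injectivity at the adjacent term.

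It really fails, even under all hypotheses of the theorem. Take $f^\L=R\otimes_{\Z}-\colon\Mod_{\Z}\to\Mod_R$ with $R=\Z[\varepsilon]/\varepsilon^2$ (finite free, hence quasi-perfect) and $S_\text{fin}=\{R\}$. Then $T_\text{fin}F$ corresponds to $R\otimes_{\Z}x$, and the counit corresponds to the action map. Now take the compact object $C=\cofib(\varepsilon\colon R\to R)$ and $F=\mathcal{B}(-,C)$. The identity of $C$ lifts through $R\otimes_{\Z}C\simeq R\oplus\Sigma R$ only if $\pi_0C=R/\varepsilon$ is a direct summand of $R$, which it is not. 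Hence $F\to D_1$ is not $\mathcal{P}_!$-null. The lemma's conclusion is not contradicted here, since $F$ is already pseudo-coherent, but your construction does not establish it.

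What your induction does prove is weaker. Each map $F\to D_1$ and $D_k\to D_{k+1}$ is annihilated by $\text{ev}_s$ for every $s\in S_\text{fin}$. So each is null against the class generated by the $s_!(a)$ with $s\in S_\text{fin}$, without thick closure, and \autoref{remark: crucial} does not identify that class with $\mathcal{P}_!$.

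This step is taken over from the paper: the same thickness claim appears in the proof of \autoref{lemma: cofibre of counit is phantom}. So the paper's proof has the same hole, and closing it needs a genuinely new input controlling all compact objects, not just the generators.
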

\begin{proof}
    Since $S_\text{fin}\subseteq\mathcal{B}^\omega$ is a finite generating set, \autoref{remark: crucial} implies that $\mathcal{P}_\text{fin}= \mathcal{P}_!$ and $\mathcal{P}_\text{fin}\text{-null}= \mathcal{P}_!\text{-null}$.
    Let $T_\text{fin}=\bigoplus_{s\in S_\text{fin}}s_!\circ\text{ev}_s$ be the comonad of \autoref{remark: comonad} restricted to $S_\text{fin}$.

    The proof now goes by induction. 
    For the base step, consider the cofibre sequence $T_\text{fin}F\to F\to D_1$.
    Since  for every $s\in S_\mathcal{B}$ the object $\text{ev}_s(F)$ belongs to $\PCoh{A}$, \autoref{lemma: b_! sends pseudo to pseudo} implies that $s_!(\text{ev}_s(F))$ is pseudo-coherent.
    Since $S_\text{fin}$ is finite, the object $T_\text{fin}F=\bigoplus_{s\in S_\text{fin}}s_!(\text{ev}_s(F))$ is again pseudo-coherent. 
    To check that the map $F\to D_1$ is $\mathcal{P}_!\text{-null}$ it suffices to check that it is $\mathcal{P}_\text{fin}\text{-null}$, but this follows by adapting  proof of \autoref{lemma: cofibre of counit is phantom}: the cofibre of the counit $T_\text{fin}F\to F$ is $\mathcal{P}_\text{fin}\text{-null}$, hence $\mathcal{P}_!\text{-null}$.
    This proves the base case.
    Assume therefore that there exists a cofibre sequence $F_n\to F\to D_n$ where $F_n$ is pseudo-coherent and $(F\to D_n)\in (\mathcal{P}_!\text{-null})^n$. 
    Since $F_n$ and $F$ factor through  $\PCoh{A}$, the same happens for $D_n$. 
    In particular, the cofibre sequence $T_\text{fin}D_n\to D_n\to D_{n+1}$ is such that $T_\text{fin}D_n$ is pseudo-coherent and $D_n\to D_{n+1}$ is in $\mathcal{P}_!\text{-null}$. 
    Consider then the composite $F\to D_n\to D_{n+1}$, which lies in $(\mathcal{P}_!\text{-null})^{n+1}$, and compute its fibre  $F_{n+1}\to F\to D_{n+1}$.
    The octahedral axiom produces then a cofibre sequence $F_n\to F_{n+1}\to T_\text{fin}D_n$, which shows that $F_{n+1}$ is pseudo-coherent, proving the claim.
\end{proof}
We can finally prove the main result. 

\begin{theorem}[Functors out of $(\mathcal{B}^\omega)^\op$]\label{theorem: functors out of Cc giovanni proof}
    Let $f^\L:\mathcal{A}\to\mathcal{B}$ be a quasi-proper functor and assume that $S_\text{fin}\subseteq S_\mathcal{B}$ is a finite generating set. 
    Assume furthermore  that the family $(\text{ev}_s)_{s\in S_\mathcal{B}}$ detects coconnective objects. 
    Then there are equivalences of categories
    \[
    \PCoh{B}\to\Fun_{\mathcal{A}^\omega}^\ex((\mathcal{B}^\omega)^\op,\PCoh{A}),\qquad
    \Coh{B}\to\Fun_{\mathcal{A}^\omega}^\ex((\mathcal{B}^\omega)^\op,\Coh{A})
    \]
    induced by the $\mathcal{A}^\omega$-linear Yoneda embedding.
\end{theorem}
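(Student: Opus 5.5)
By \autoref{lemma: quasi-proper functor and pseudo} both functors are already fully faithful, so only essential surjectivity is at stake. Let $\mathcal{F}=\Fun^\ex_{\mathcal{A}^\omega}((\mathcal{B}^\omega)^\op,\mathcal{A})$. Under the $t$-exact equivalence $\mathcal{B}\simeq\mathcal{F}$ of \autoref{remark: t-structure on functor category}, the pseudo-coherent and coherent objects of $\mathcal{B}$ correspond to $\text{PCoh}(\mathcal{F})$ and $\text{Coh}(\mathcal{F})$. The task is therefore: given an exact $\mathcal{A}^\omega$-linear $F:(\mathcal{B}^\omega)^\op\to\PCoh{A}$, viewed as an object of $\mathcal{F}$, show that $F\in\text{PCoh}(\mathcal{F})$. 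If $F$ lands in $\Coh{A}$, we must further show that $F$ is bounded.

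First I show $F$ is eventually connective. Since $S_\text{fin}$ is finite and each $\text{ev}_s(F)$ is pseudo-coherent, there is a uniform $N$ with $\text{ev}_s(F)\in\mathcal{A}_{\geq -N}$ for all $s\in S_\text{fin}$. Then $T_\text{fin}F=\bigoplus_{s\in S_\text{fin}} s_!\text{ev}_s(F)$ is $(-N)$-connective, because $s_!$ is right $t$-exact. Iterating the construction of \autoref{lemma: cofibre sequence pseudo-coherent}, every $F_n$ is an iterated extension of objects $T_\text{fin}D_k$ with uniformly bounded connectivity. Here I also use that the $D_k$ take values in $\PCoh{A}$ and have connectivity controlled by that of $F$ and $T_\text{fin}$, via the cofibre sequences.

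The core step is to show that $\tau_{\leq m}F_n\to\tau_{\leq m}F$ is an equivalence for $n\gg m$. This reduces to $\tau_{\leq m}D_n\simeq 0$ modulo connectivity, i.e.\ to showing that a map in $(\mathcal{P}_!\text{-null})^n$ between objects of bounded-below connectivity is sufficiently highly connective. My plan is a phantom-type argument. The compact projectives $s_!(a)$, for $a\in S_\mathcal{A}$ and $s\in S_\text{fin}$ with shifts, detect homotopy in degrees up to a shift by tor-finiteness. The composite $F\to D_n$ kills all maps from $\Sigma^k s_!(a)$. Using the cofibre sequences $F_n\to F\to D_n$ and the fact that $\mathcal{P}_!$-null maps into the cofibre raise connectivity by one against the generators, I conclude that $\text{ev}_s(F_n)\to\text{ev}_s(F)$ is an equivalence on $\tau_{\leq m}$ for $n\gg m$.

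To upgrade this pointwise statement to the truncation $\tau_{\leq m}$ in $\mathcal{F}$, I use the hypothesis that $(\text{ev}_s)_{s\in S_\mathcal{B}}$ detects coconnective objects. With it, $F$ is a retract of, or agrees in all truncations with, a colimit of pseudo-coherent $F_n$, so each $\tau_{\leq m}F$ is compact in the appropriate truncated category. Hence $F$ is almost compact in $\mathcal{F}_{\geq -N}$, so $F\in\text{PCoh}(\mathcal{F})\simeq\PCoh{B}$.

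For the coherent statement, let $F$ land in $\Coh{A}$. Then $\text{ev}_s(F)\in\mathcal{A}_{\leq M}$ uniformly over the finite set $S_\text{fin}$, and hence over all of $S_\mathcal{B}$ after a thick-closure argument using tor-finiteness. Detection of coconnective objects gives $F\in\mathcal{F}_{\leq M'}$, so $F$ is coherent.

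I expect the main obstacle to be the phantom-to-connectivity step: converting $F\to D_n\in(\mathcal{P}_!\text{-null})^n$ into the statement that $D_n$ is increasingly connective in the $t$-structure on $\mathcal{F}$. I would first try an induction showing that each $\mathcal{P}_\text{fin}$-null step in the cofibre sequence $T_\text{fin}D\to D\to D'$ forces $D'\in\mathcal{F}_{\geq c+1}$ whenever $D\in\mathcal{F}_{\geq c}$. The argument would use the $\pi_c$-surjectivity of $T_\text{fin}D\to D$ against the generators $s_!(a)$ together with left completeness of the $t$-structure on $\mathcal{B}$.
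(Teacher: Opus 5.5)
\textbf{There is a genuine gap.} It is the step you flag as the main obstacle, and that step is false, not merely hard. Knowing that $F\to D_n$ lies in $(\mathcal{P}_!\text{-null})^n$ gives no connectivity information, and the $D_n$ produced by \autoref{lemma: cofibre sequence pseudo-coherent} need not become more connective.

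Put $D'=\cofib(T_\text{fin}D\to D)$. The null condition does make $T_\text{fin}D\to D$ surjective on every $\pi_j$, because such maps vanish on homotopy objects when the $t$-structure is compatible with filtered colimits. The long exact sequence then gives $\pi_j(D')\cong\ker(\pi_{j-1}T_\text{fin}D\to\pi_{j-1}D)$. But $T_\text{fin}D$ is in general \emph{less} connective than $D$. The reason is that $\text{ev}_s\simeq f^\R(s^\vee\otimes-)$ is right $t$-exact only up to a shift, governed by the tor-amplitude of $s^\vee$ and by $f^\R$.

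Here is a concrete instance. Take $\mathcal{A}=\mathcal{B}=\Mod_k$ for a field $k$, $f^\L=\id$, and $S_\text{fin}=\{k\oplus\Sigma^2k\}$.
\begin{itemize}
\item Then $T_\text{fin}x\simeq x\oplus\Sigma^2x\oplus\Sigma^{-2}x\oplus x$ with counit $(1,0,0,1)$, so $\cofib(T_\text{fin}x\to x)\simeq\Sigma x\oplus\Sigma^3x\oplus\Sigma^{-1}x$.
\item Every map $F\to D_n$ is null, $D_n$ contains $\Sigma^{-n}F$ as a summand, and $F_n\simeq F\oplus\Sigma^{-1}D_n$.
\item Hence the $F_n$ are not uniformly bounded below, so your second paragraph also fails.
\item For $F\neq0$, the map $\tau_{\leq m}F_n\to\tau_{\leq m}F$ is not an equivalence for any large $n$.
\end{itemize}
So no induction of the form $D\in\mathcal{F}_{\geq c}\Rightarrow D'\in\mathcal{F}_{\geq c+1}$ can work, with or without left completeness. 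Likewise, the hypothesis that $(\text{ev}_s)_{s\in S_\mathcal{B}}$ detects coconnective objects does nothing to control truncations of $F_n\to F$. What you would need is for the $D_n$ themselves to become highly connective, and the example shows they do not.

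\textbf{The missing idea.} What the example does show is that $F$ is a \emph{retract} of $F_n$, and this is the mechanism the paper uses. The $F_n$ are not meant to converge to $F$ in the $t$-structure. Instead, the paper argues that $F$ has bounded $\mathcal{P}_!$-projective length, namely $F\in(\mathcal{P}_!)_4$. It uses the sequential-colimit presentation of the pseudo-coherent $F_n$ from \autoref{lemma: assumption S}, together with the bar resolution $F\simeq|T^{\bullet+1}F|$. By Christensen's theorem, $((\mathcal{P}_!)_k,(\mathcal{P}_!\text{-null})^k)$ is a projective class. So the map $F\to D_4$, which lies in $(\mathcal{P}_!\text{-null})^4$, is zero. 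Hence $F$ is a direct summand of the pseudo-coherent object $F_4$, and thickness of $\text{PCoh}(\mathcal{F})$ finishes. This orthogonality-and-retract step is what your proposal lacks.

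\textbf{The coherent part.} Your coherent step is fine in outline once pseudo-coherence is known. You do not need a uniform bound over $S_\mathcal{B}$, and a thick-closure argument would not give one anyway. Each $\text{ev}_s(F)$ is already eventually coconnective because $F$ lands in $\Coh{A}$, and the detection hypothesis then gives $F\in\mathcal{F}^+$.
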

\begin{proof}
    By construction there are equivalences of categories
    \[
    \PCoh{B}\to\text{PCoh}(\Fun^\ex_{\mathcal{A}^\omega}((\mathcal{B}^\omega)^\op,\mathcal{A})),
    \qquad
    \Coh{B}\to\text{Coh}(\Fun^\ex_{\mathcal{A}^\omega}((\mathcal{B}^\omega)^\op,\mathcal{A}))
    \]
    induced by the $\mathcal{A}^\omega$-linear Yoneda embedding.
    In particular, the claim will be proved if
    \[
    \text{PCoh}(\Fun^\ex_{\mathcal{A}^\omega}((\mathcal{B}^\omega)^\op,\mathcal{A})) \simeq \Fun^\ex_{\mathcal{A}^\omega}((\mathcal{B}^\omega)^\op,\PCoh{A})
    \]
    are equivalent subcategories of $\Fun^\ex_{\mathcal{A}^\omega}((\mathcal{B}^\omega)^\op,\mathcal{A})$. 
    The inclusion $(\subseteq)$ follows by \autoref{lemma: quasi-proper functor and pseudo} since $f^\L$ is quasi-proper. 
    For $(\supseteq)$, let  $F:(\mathcal{B}^\omega)^\op\to\mathcal{A}$ be an exact $\mathcal{A}^\omega$-linear functor and assume that it factors through $\PCoh{A}$.  
    By \autoref{lemma: cofibre sequence pseudo-coherent} for every integer $n\in\N$ there exists a cofibre sequence $F_n\to F\to D_n$ where $F_n$ is pseudo-coherent and $F\to D_n$ is in $(\mathcal{P}_!\text{-null})^n$.
    In particular, there are integers $i_n\in\Z$ such that  each $\susp^{i_n}F_n$ is connective. 
    Thus \autoref{lemma: assumption S} implies that every $\susp^{i_n}F_n$  may be written as a sequential colimit $\susp^{i_n}F_n\simeq \colim_{i\in \N} B^n(i)$ where each $B^n(i)$ is in $S_\mathcal{B}$. 
    Hence every $F_n$  can be written  as a sequential colimit of objects in $\mathcal{P}_!$, thus showing that $F_n\in(\mathcal{P}_!)_2$, being it realized as the cofibre of two terms in $\mathcal{P}_!$.
    Similarly, since $F$ is the geometric realization of $T^{\bullet+1}F$, it follows that  $F\in (\mathcal{P}_!)_4$. 
    Since in the cofibre sequence $F_4\to F\to D_4$ the last map is in  $(\mathcal{P}_!\text{-null})^4$, it follows that  $F$ must be a direct summand of $F_4$, and since pseudo-coherent objects are thick,  $F\in\text{PCoh}(\Fun^\ex_{\mathcal{A}^\omega}((\mathcal{B}^\omega)^\op,\mathcal{A}))$ is pseudo-coherent. 
    In particular, the restricted Yoneda 
    \[
    \PCoh{B}\to \Fun^\ex_{\mathcal{A}^\omega}((\mathcal{B}^\omega)^\op,\PCoh{A})
    \]
    is an equivalence. 
    For the coherent version, it suffices to show that an exact $\mathcal{A}^\omega$-linear functor $F:(\mathcal{B}^\omega)^\op\to\mathcal{A}$ factors through $\Coh{A}$ if and only if it factors through $\PCoh{A}$ and it is eventually coconnective. 
    Since the family $(\text{ev}_s)_{s\in S_\mathcal{B}}$ detects coconnective objects, the second equivalence follows.
\end{proof}
\begin{remark}\label{remark: improvement of theorem Cc}
    It seems unlikely to be able to choose inside $S_\mathcal{B}$ a finite generating set. 
    Fortunately, this happens in applications: in the notation of \autoref{example: moral}, one generally picks $S_\mathcal{B}$ to be \enquote{$\Perf(X)\cap\QCoh(X)_{\geq0}$} and then picks a finite set of generators $S_\text{fin}$. 
    Indeed, in this case the thick closure of  $S_\text{fin}$ consists of $\Perf(X)$ and clearly $\Perf(X)\cap\QCoh(X)_{\geq0}\subseteq\Perf(X)$.
\end{remark}

We conclude with an observation that already appeared in \cite[Lemma 3.0.7]{ben2017integral}.
\begin{remark}\label{remark: NP}
    Let $f^\L:\mathcal{A}\to\mathcal{B}$ be a functor of tor-finite categories and let $\mathcal{A}_0\subseteq\mathcal{A}$ be a small idempotent-complete stable $\mathcal{A}^\omega$-module.
    Recall that the compact pullback of $\mathcal{A}_0$ along $f^\L$ is defined as the pullback
    \[\begin{tikzcd}[cramped]
	{f^\#(\mathcal{A}_0)} & {\Fun^\ex_{\mathcal{A}^\omega}((\mathcal{B}^\omega)^\op,\mathcal{A}_0)} \\
	{\mathcal{B}} & {\Fun^\ex_{\mathcal{A}^\omega}((\mathcal{B}^\omega)^\op,\mathcal{A})}
	\arrow[from=1-1, to=1-2]
	\arrow[hook, from=1-1, to=2-1]
	\arrow[hook, from=1-2, to=2-2]
	\arrow[from=2-1, to=2-2]
	\arrow[from=2-1, to=2-2]
    \end{tikzcd}\]
    in $\Cat^\st$. 
    Since the bottom horizontal map is an equivalence, it follows that the linear Yoneda provides an equivalence $\yo_{\mathcal{A}^\omega}:f^\#(\mathcal{A}_0)\to\Fun^\ex_{\mathcal{A}^\omega}((\mathcal{B}^\omega)^\op,\mathcal{A}_0)$ of small idempotent-complete stable $\mathcal{A}^\omega$-modules.
    In particular, the $\mathcal{A}^\omega$-linear Yoneda induces equivalences
    \[
    f^\#(\PCoh{A})\to \Fun^\ex_{\mathcal{A}^\omega}((\mathcal{B}^\omega)^\op,\PCoh{A}), 
    \quad
    f^\#(\Coh{A})\to \Fun^\ex_{\mathcal{A}^\omega}((\mathcal{B}^\omega)^\op,\Coh{A}).
    \]
    These equivalences may be regarded as a generalization of \autoref{theorem: functors out of Cc giovanni proof} for \enquote{properly supported (pseudo-)coherent complexes}.
    Moreover, the proof of \autoref{theorem: functors out of Cc giovanni proof} shows that there are always inclusions $f^\#(\PCoh{A})\subseteq\PCoh{B}$ and $f^\#(\Coh{A})\subseteq\Coh{B}$ (the second one if there exists a finite family of generators detecting  coconnectivity) and that the opposite inclusions are provided by the quasi-properness of $f^\L$. 
\end{remark}

\subsection{Morphism of universal descent}\label{subsection: morphism of universal descent and h-covers}
We now introduce the theory of universal descent associated to a functor of tor-finite categories. 
\begin{notation}
    We will denote by $\Delta$ the simplex category and by $\Delta_{\leq n}$ the full subcategory of $\Delta$ spanned by those objects $[m]\in\Delta$ such that $m\leq n$. 
    We will also denote by $\Delta^+$ the augmented simplex category, and we will denote by $[\text{-}1]$ the augmented object. 
    Let $\mathcal{D}$ be a category. 
    An \emph{augmented cosimplicial object} in $\mathcal{D}$ is a functor $\mathcal{C}^\bullet:\Delta^+\to\mathcal{D}$. 
\end{notation}
\begin{notation}
    Let $h^\L:\mathcal{B}\to\mathcal{C}$ be a functor in $\CAlg^\rig(\Pr^{\L,\omega}_\st)$. 
    We define the following augmented cosimplicial objects.
    \begin{enumerate*}
    \item First of all, we can regard $\mathcal{C}$ as $\mathcal{B}$-module in $\Pr^{\L,\omega}_\st$. 
    In particular, we can form the Cech nerve of $h^\L$ inside $\Mod_\mathcal{B}(\Pr^{\L,\omega}_\st)$. 
    Objectwise, this augmented cosimplicial object is given by $[n]\mapsto \mathcal{C}\otimes_\mathcal{B}\dots\otimes_\mathcal{B}\mathcal{C}$, where the tensor product is taken $(n+1)$-times if $n\geq0$. 
    If $n=\texttt{-}1$ we set $[\texttt{-}1]\mapsto\mathcal{B}$. 
    The augmentation map is given by $h^\L:\mathcal{B}\to\mathcal{C}$. 
    We will generally regard it as a cosimplicial object in $\Pr^{\L,\omega}_\st$ and we will denote it by $\mathcal{B}^\bullet:\Delta^+\to \Pr^{\L,\omega}_\st$.  
    \item Since $h^\L$ is rigid, it restricts to a functor $h^\L:\mathcal{B}^\omega\to\mathcal{C}^\omega$. 
    Since $\mathcal{C}^\omega$ is a $\mathcal{B}^\omega$-module in $\Cat^\perf$, the same construction as before produces an augmented cosimplicial object $(\mathcal{B}^\omega)^\bullet:\Delta^+\to \Cat^\perf$.
\end{enumerate*}
    Since the Ind-completion furnishes a symmetric monoidal equivalence $\Ind:\Cat^\perf\to\Pr^{\L,\omega}_\st$ which extends to modules, the two augmented cosimplicial objects contain the same amount of data. However, it is easier to impose assumptions on the second augmented cosimplicial object. 
\end{notation}
\begin{definition}
    Let $(\mathcal{B}^\omega)^\bullet:\Delta^+\to\Cat^\perf$ be an  augmented cosimplicial object. 
    We will say that $(\mathcal{B}^\omega)^\bullet$ satisfies the \emph{Beck–Chevalley condition} if for any morphism $\alpha:[m]\to[n]$ in $\Delta^+$ the square
    \[\begin{tikzcd}[cramped]
	{(\mathcal{B}^\omega)^m} & {(\mathcal{B}^\omega)^{m+1}} \\
	{(\mathcal{B}^\omega)^n} & {(\mathcal{B}^\omega)^{n+1}}
	\arrow["{d^0}", from=1-1, to=1-2]
	\arrow["\alpha"', from=1-1, to=2-1]
	\arrow["\alpha", from=1-2, to=2-2]
	\arrow["{d^0}"', from=2-1, to=2-2]
    \end{tikzcd}\]
    is horizontally right adjointable, that is, the horizontal maps admit right adjoints and the canonical morphism $\alpha\circ d^0_*\to d^0_*\circ \alpha$ is an equivalence. 
\end{definition}
The following result is an instance of the claim made above.
\begin{lemma}\label{lemma: BC implies pushfoward}
    Let $h^\L:\mathcal{B}\to\mathcal{C}$ be a functor in $\CAlg^\rig(\Pr^{\L,\omega}_\st)$, and $(\mathcal{B}^\omega)^\bullet:\Delta^+\to\Cat^\perf$ the associated augmented cosimplicial object. 
    If $(\mathcal{B}^\omega)^\bullet$ satisfies the Beck–Chevalley condition, then the right adjoint restricts to a functor $h^\R:\mathcal{C}^\omega\to\mathcal{B}^\omega$.
\end{lemma}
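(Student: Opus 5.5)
The plan is to specialise the Beck–Chevalley condition to the single coface $\alpha=d^0:[\texttt{-}1]\to[0]$. In the augmented cosimplicial object $(\mathcal{B}^\omega)^\bullet$ we have $(\mathcal{B}^\omega)^{-1}=\mathcal{B}^\omega$, with the coface $d^0:(\mathcal{B}^\omega)^{-1}\to(\mathcal{B}^\omega)^0=\mathcal{C}^\omega$ equal to the augmentation $h^\L$. The Beck–Chevalley condition demands in particular that the horizontal maps of the relevant square admit right adjoints. The top horizontal map is exactly $h^\L:\mathcal{B}^\omega\to\mathcal{C}^\omega$. Horizontal right adjointability therefore gives a right adjoint $h^\omega_*:\mathcal{C}^\omega\to\mathcal{B}^\omega$ of the restriction $h^\L|_{\mathcal{B}^\omega}$, computed inside the small categories.

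It remains to identify $h^\omega_*$ with the restriction of the big right adjoint $h^\R$. I would use Ind-completion. Since $\Ind$ is a $2$-functor on $\Cat^\perf$, the adjunction $h^\L\dashv h^\omega_*$ of small categories induces an adjunction $\Ind(h^\L)\dashv\Ind(h^\omega_*)$ between $\mathcal{B}$ and $\mathcal{C}$. Here $\Ind(h^\L)$ is the original $h^\L$, because $h^\L$ is the unique colimit-preserving extension of its restriction to compacts. By uniqueness of right adjoints, $h^\R\simeq\Ind(h^\omega_*)$. Since $\Ind(h^\omega_*)$ restricted to $\mathcal{C}^\omega$ is $h^\omega_*$, which lands in $\mathcal{B}^\omega$, we conclude that $h^\R$ carries $\mathcal{C}^\omega$ into $\mathcal{B}^\omega$.

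The same conclusion can be reached without invoking $2$-functoriality, by a direct Yoneda argument. For $c\in\mathcal{C}^\omega$ and $b\in\mathcal{B}^\omega$ we have
\[
\Hom_\mathcal{B}(b,h^\omega_*c)\simeq\Hom_{\mathcal{C}^\omega}(h^\L b,c)\simeq\Hom_\mathcal{B}(b,h^\R c).
\]
Compact objects generate $\mathcal{B}$, and both sides are natural in $b$. The resulting map $h^\omega_*c\to h^\R c$, adjoint to the counit $h^\L h^\omega_*c\to c$, therefore induces an equivalence on $\Hom(b,-)$ for all compact $b$. Hence it is an equivalence, and $h^\R c\simeq h^\omega_*c$ is compact.

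The only potential obstacle is bookkeeping rather than mathematics. One must check that in the paper's indexing the square for $\alpha=d^0:[\texttt{-}1]\to[0]$ does have $h^\L$ as a horizontal map; if instead the horizontal maps for $m=\texttt{-}1$ are $d^0:\mathcal{B}^\omega\to\mathcal{C}^\omega$, this holds directly. Only the existence of right adjoints is used, not the equivalence $\alpha\circ d^0_*\simeq d^0_*\circ\alpha$.
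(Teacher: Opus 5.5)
Your proposal is correct and follows the paper's proof. The paper takes $\alpha=\text{id}_{[-1]}$ instead of the coface $[-1]\to[0]$, but any $\alpha$ with source $[-1]$ puts $d^0=h^\L\colon\mathcal{B}^\omega\to\mathcal{C}^\omega$ as the top horizontal map; your identification of the resulting small right adjoint with $h^\R|_{\mathcal{C}^\omega}$ is the paper's ``by Ind-completing'' step, which your Yoneda argument over the compact generators of $\mathcal{B}$ spells out explicitly.
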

\begin{proof}
    Apply the definition to $\alpha=\text{id}_{[-1]}$ and use $d^0=h^\L$ to deduce the existence of a right adjoint $r:\mathcal{C}^\omega\to\mathcal{B}^\omega$. 
    By Ind-completing it follows that $r$ must be equivalent to $h^\R$ restricted to the compact objects. 
\end{proof}
\begin{remark}
    Let  $h^\L:\mathcal{B}\to\mathcal{C}$ be a functor in $\CAlg^\rig(\Pr^{\L,\omega}_\st)$.
    Let $H:\mathcal{B}\to\mathcal{B}$ be  the induced monad.
    Then there exists a filtration $\dots\to \phi_n\to \dots \to \phi_1\to\phi_0$ of small colimit (and finite limit) preserving functors $\mathcal{B}\to\mathcal{B}$ constructed as follows.
    For every integer $n\geq0$ let $\phi_n:\mathcal{B}\to\mathcal{B}$  be the $n$-th partial totalization $\Tot_n(H^\bullet)$ of the cosimplicial object determined by $H$. 
    In other terms,  
    \[
    \phi_n \simeq\lim_{\Delta_{\leq n}}(h^\R  h^\L\rightrightarrows h^\R h^\L h^\R h^\L\mathrel{\substack{\textstyle{\rightarrow}\\[-0.6ex]
                      \textstyle{\rightarrow} \\[-0.6ex]
                      \textstyle{\rightarrow}}}\dots ).
    \]
    The natural transformations $\phi_n\to\phi_{n-1}$ are induced by restricting the limit along the inclusion $\Delta_{\leq n-1}\into\Delta_{\leq n}$ and, intuitively, they forget the top degree $n$.
\end{remark}

In particular, a careful analysis of the natural transformations involved, together with the triangle identities, shows the following result.
\begin{lemma}
    Let  $h^\L:\mathcal{B}\to\mathcal{C}$ be a functor in $\CAlg^\rig(\Pr^{\L,\omega}_\st)$ and let $I\simeq\cofib(\id_\mathcal{B}\to H)$ be augmentation ideal.
    For every integer $n\geq1$ the cofibre of $\phi_n\to\phi_{n-1}$ is equivalent to $H^{n}(I)$. 
\end{lemma}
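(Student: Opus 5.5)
I will proceed by induction on $n$, working with the Reedy (skeletal) description of partial totalizations. Write $X^\bullet=H^{\bullet+1}$ for the cosimplicial object in $\Fun^\L(\mathcal{B},\mathcal{B})$, where $H=h^\R h^\L$. Its cofaces are induced by the unit $\eta:\id\to H$ and its codegeneracies by the multiplication $H^2\to H$, the latter coming from the counit of $h^\L\dashv h^\R$. Since $\Fun^\L(\mathcal{B},\mathcal{B})$ is stable and the $\phi_n$ are finite limits of colimit-preserving functors, every step can be computed pointwise. So I will treat $\phi_n=\Tot_n(X^\bullet)$ as an ordinary partial totalization in a stable category.

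\emph{Step 1: the pullback square.} The inclusion $\Delta_{\leq n-1}\into\Delta_{\leq n}$ exhibits $\phi_n$ as a pullback
\[
\phi_n\simeq \phi_{n-1}\times_{M^n} X^n,
\]
where $M^n$ is the $n$-th matching object of $X^\bullet$, i.e.\ the limit of the codegeneracy diagram out of $X^n$. In a stable category a pullback square has equivalent horizontal and vertical cofibres. This gives
\[
\cofib(\phi_n\to\phi_{n-1})\simeq \cofib(X^n\to M^n).
\]
So the layer is governed by the single map $X^n=H^{n+1}\to M^n$. This step is formal.

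\emph{Step 2: split the codegeneracies using the unit.} Each codegeneracy $s^i:H^{n+1}\to H^n$ has a section, namely a coface $d^i$ given by inserting $\eta$. This follows from the triangle identities, in the same way as in \autoref{remark: component of counit has a section}. These splittings make $X^\bullet$ behave like a split cosimplicial object. The cofibre of $H^{n+1}\to M^n$ then becomes the ``reduced'' part of $H^{n+1}$: one peels off the image of $\eta$ in successive tensor slots. Concretely, I will prove by induction on $k$ that killing the image of $\eta$ in the last $k$ slots turns $H^{n+1}$ into $H^{n+1-k}\circ(\text{the last slot replaced by } I)$. Here I use exactness of $H$ to commute $H^{j}$ past the cofibre $\cofib(\id\to H)=I$ that defines the augmentation ideal. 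Running this through all $n$ codegeneracies leaves exactly one slot of $H$ per coface direction. This yields $H^n(I)=H^{\circ n}\circ I$ as the reduced piece, and hence $\cofib(\phi_n\to\phi_{n-1})\simeq H^n(I)$.

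\emph{Step 3 (main obstacle).} The delicate point is the bookkeeping in Step 2. I must check two things. First, the matching object's iterated pullbacks along the split codegeneracies collapse to a single cofibre of the $\eta$-insertion. Second, the natural transformation $\phi_n\to\phi_{n-1}$ is the restriction map, so the identification is natural. The same bookkeeping must also fix which side of $I$ the remaining $H$'s appear on. I will first verify the cases $n=1$ and $n=2$ by hand. For $n=1$, $\phi_1$ is the equalizer of $H\rightrightarrows H^2$, and its cofibre onto $\phi_0=H$ should be $\cofib(\id\to H)$ composed with $H$. I will then do the induction using the cubical form of the matching diagram, whose faces are split by the unit. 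In this argument I will track the simplicial-degree shifts strictly according to the paper's homological conventions, so that the result is literally $H^n(I)$ rather than a reindexed variant.
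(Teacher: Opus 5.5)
Your plan uses the same Dold--Kan strategy as the paper's sketch: identify the layers of the $\Tot$-tower with shifted normalized pieces, then compute the normalized piece of the cobar object $X^\bullet=H^{\bullet+1}$ using the unit. However, Steps 1 and 2 are wrong, and they are wrong in exactly the way needed to land on $H^n(I)$.

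\emph{Step 1.} The square built from $X^n\to M^nX$ commutes but is not cartesian. For $n=1$ (where $M^1X=X^0$) it would give $\phi_1\simeq\phi_0\times_{X^0}X^1\simeq H^2$. The cartesian square is $\phi_n\simeq\phi_{n-1}\times_P(X^n)^{\Delta^n}$ with $P=(X^n)^{\partial\Delta^n}\times_{(M^nX)^{\partial\Delta^n}}(M^nX)^{\Delta^n}$. In a stable category this yields $\fib(\phi_n\to\phi_{n-1})\simeq\Omega^nN^n$ with $N^n=\fib(X^n\to M^nX)$, i.e.\ $\cofib(\phi_n\to\phi_{n-1})\simeq\Sigma^{1-n}N^n$. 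Two further slips:
\begin{enumerate}
\item $\cofib(X^n\to M^nX)\simeq\Sigma N^n$, not the reduced part $N^n$ you identify it with.
\item $\phi_1$ is not the equalizer of $H\rightrightarrows H^2$, because the limit over $\Delta_{\le1}$ also sees $s^0$. For a constant cosimplicial object $Y$ the equalizer is $Y\oplus\Omega Y$, whereas $\Tot_1=Y$.
\end{enumerate}

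\emph{Step 2.} The count is off. Each of the $n$ codegeneracy directions contributes one copy of $I$, so $N^n\simeq H\circ I^{\circ n}$, not $H^{\circ n}\circ I$. Concretely, $H\simeq A\otimes-$ with $A=h^\R(\mb{1}_\mathcal{C})$ by the projection formula, and $N^n$ is the normalized cobar piece $A\otimes\bar A^{\otimes n}$ with $\bar A=\cofib(\mb{1}\to A)$. Your inductive description does not even keep a consistent number of factors as $k$ varies.

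\emph{What is actually true.} Carried out correctly, your route gives $\cofib(\phi_n\to\phi_{n-1})\simeq\Sigma^{1-n}H\circ I^{\circ n}$. A quicker proof: the standard identification of the cobar tower gives $\phi_n\simeq\cofib(J^{\circ(n+1)}\to\id)$ with $J=\fib(\id\to H)\simeq\Sigma^{-1}I$. The octahedral axiom for $J^{\circ(n+1)}\to J^{\circ n}\to\id$ then gives $\fib(\phi_n\to\phi_{n-1})\simeq\cofib(J^{\circ(n+1)}\to J^{\circ n})\simeq H\circ J^{\circ n}$.

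This agrees with $H^n(I)$ for $n=1$ but not in general for $n\geq2$. Take a field $k$ and $h^\L=(k\times k)\otimes_k-\colon\Mod_k\to\Mod_{k\times k}$. Then $H(V)=V\oplus V$ and $I\simeq\id$. So $H^2(I)(k)\simeq k^{\oplus4}$ sits in degree $0$, whereas $\cofib(\phi_2\to\phi_1)(k)\simeq\Sigma^{-1}k^{\oplus2}$.

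So the statement cannot be proved as literally written. The paper's one-line proof contains the matching slips: the layer is $\Sigma^{1-n}$ (not $\Sigma^{n}$) of the normalized piece, and that piece is $H\circ I^{\circ n}$ (not $\Sigma^{-n}H^n(I)$). Rather than tracking shifts so that the answer comes out literally as $H^n(I)$, you should aim for the corrected formula.
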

\begin{proof}
    This is standard Dold-Kan correspondence business. 
    Indeed, for a cosimplicial object $x^\bullet$ the cofibre of $\Tot_n(x^\bullet)\to\Tot_{n-1}(x^\bullet)$ is equivalent to the $n$-suspension of the normalized piece of $x^\bullet$.  
    If the cosimplicial object comes from a monad, then  this normalized piece is equivalent to $\susp^{-n}H^{n}(I)$. 
\end{proof}

Morphisms of universal descent are defined to be those for which the filtration becomes uninteresting for larger indexes. 
\begin{definition}\label{definition: morphism of universal descent}
    Let $h^\L:\mathcal{B}\to\mathcal{C}$ be a functor in $\CAlg^\rig(\Pr^{\L,\omega}_\st)$. 
    We will say that $h^\L$ is of \emph{universal descent} if:
    \begin{enumerate*}
        \item The associated augmented cosimplicial object $(\mathcal{B}^\omega)^\bullet$ satisfies the Beck–Chevalley condition. 
        \item There exists an integer $e\geq0$ such that the identity on $\mathcal{B}$ is a retract of $\phi_e$.
    \end{enumerate*}
    We will call $e$ the \emph{exponent of $h^\L$}.
\end{definition}

\begin{remark}
    Let $h^\L:\mathcal{B}\to\mathcal{C}$ be  of universal descent. 
    Since the  associated augmented cosimplicial object $(\mathcal{B}^\omega)^\bullet:\Delta^+\to\Cat^\perf$ satisfies the Beck–Chevalley condition, it follows that $h^\R$ preserves compact objects. 
    In particular,  the filtration $\dots\to \phi_n\to \dots \to \phi_1\to\phi_0$ restricts to a filtration of exact functors $\mathcal{B}^\omega\to\mathcal{B}^\omega$. 
    Furthermore, the identity on $\mathcal{B}^\omega$ is a retract of $\phi_e|_{\mathcal{B}^\omega}$ for some exponent $e\geq0$.
\end{remark}

Our next goal is to show that for a morphism of universal descent $h^\L:\mathcal{B}\to\mathcal{C}$ the category $\mathcal{B}^\omega$ can be reconstructed from the augmented cosimplicial object $(\mathcal{B}^\omega)^\bullet:\Delta^+\to\Cat^\perf$. 
By Ind-completing, the same result holds for $\mathcal{B}^\bullet:\Delta^+\to\Pr^{\L,\omega}_\st$.

In order to prove our main result we need first a technical lemma.
\begin{lemma}\label{lemma: conservativity of universal descent}
    Let $h^\L:\mathcal{B}\to\mathcal{C}$ be of universal descent. 
    Then $h^\L$ is conservative. 
\end{lemma}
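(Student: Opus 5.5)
The plan is to show that if $h^\L(x)\simeq 0$ for some $x\in\mathcal{B}$, then $x\simeq 0$. Since $h^\L$ is exact, this is equivalent to conservativity: a morphism $g$ with $h^\L(g)$ an equivalence has $h^\L(\cofib g)\simeq 0$. The strategy is to exploit the finite-stage retraction $\id_\mathcal{B}\to\phi_e\to\id_\mathcal{B}$ from \autoref{definition: morphism of universal descent}. The goal is to show that $\phi_e(x)\simeq 0$ whenever $h^\L(x)\simeq 0$. Then $x$ is a retract of $0$, hence zero.

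\textbf{Step 1: each term of the cosimplicial diagram vanishes.} By construction, $\phi_e=\Tot_e(H^\bullet)$ is a finite limit over $\Delta_{\leq e}$. The diagram is
\[
h^\R h^\L\rightrightarrows h^\R h^\L h^\R h^\L \cdots,
\]
and every term is of the form $H^{k}=(h^\R h^\L)^{k}$ with $k\geq 1$. There is no term $\id_\mathcal{B}$ in degree $0$, because the augmentation is not part of the diagram over which $\Tot_e$ is taken. Every such term begins, on the argument side, with $h^\L$. So $H^k(x)=(h^\R h^\L)^{k-1}h^\R(h^\L x)\simeq 0$ whenever $h^\L(x)\simeq 0$, because $h^\R$ is exact and preserves the zero object.

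\textbf{Step 2: the finite limit vanishes.} The evaluated diagram $\Delta_{\leq e}\to\mathcal{B}$ is therefore pointwise zero, and its limit is zero. Hence $\phi_e(x)\simeq 0$, and the retraction gives $x\simeq 0$.

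\textbf{Alternative route via the filtration.} One can also induct on the filtration using the lemma identifying $\cofib(\phi_n\to\phi_{n-1})\simeq H^n(I)$. Here $\phi_0=H$ vanishes on $x$. Each $H^n(I)(x)$, with $n\geq 1$, also vanishes: $H^n$ begins with $h^\L$ applied to $I(x)$, and $h^\L I(x)=\cofib(h^\L x\to h^\L h^\R h^\L x)\simeq 0$. Exactness of cofibre sequences then gives $\phi_n(x)\simeq 0$ for all $n\leq e$.

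\textbf{Main obstacle.} The only delicate point is a bookkeeping issue: confirming that $\Tot_e$ is indexed so that the identity functor never appears as a vertex. If it did, that would be circular. By the displayed formula defining $\phi_n$, the degree-$0$ term is $h^\R h^\L$, so the argument above goes through. The Beck–Chevalley condition is not needed here, beyond the fact that it keeps everything within compact objects if one prefers to argue in $\mathcal{B}^\omega$.
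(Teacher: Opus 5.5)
Your proof is correct and is essentially the paper's argument: the paper also uses that $\id_\mathcal{B}$ is a retract of $\phi_e$, and that every vertex of the finite diagram defining $\phi_e$ has the form $(h^\R h^\L)^k$ with $k\geq 1$. The only difference is cosmetic: the paper works directly with a morphism $\alpha$, so that $\phi_e(\alpha)$ is a finite limit of the equivalences $(h^\R h^\L)^k(\alpha)$, whereas you first reduce to objects via cofibres using exactness of $h^\L$.
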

\begin{proof}
    Let $\alpha:x\to x'$ in $\mathcal{B}$ be a morphism such that $h^\L(\alpha)$ is an equivalence in $\mathcal{C}$. 
    By assumption there exists some integer $e\geq0$ such that the identity $\text{id}_\mathcal{B}$ is a retract of $\phi_e$. 
    Since equivalences are stable under retracts, to show that $\alpha$ is an equivalence it suffices to show that $\phi_e(\alpha)$ is an equivalence. 
    This follows by the explicit definition of the $\phi_n$'s. 
\end{proof}

We then deduce the following. 
\begin{proposition}\label{proposition: universal descent and limits}
    Let $h^\L:\mathcal{B}\to\mathcal{C}$ be  of universal descent.
    Then $\mathcal{B}^\bullet$ is a limit diagram.
\end{proposition}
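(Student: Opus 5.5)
To prove that $\mathcal{B}^\bullet$ is a limit diagram, it suffices to show that the comparison functor $\mathcal{B}\to\lim_{\Delta}\mathcal{B}^\bullet$ is an equivalence. Since the cosimplicial object is the Čech (Amitsur) nerve of $h^\L$ in $\Mod_\mathcal{B}(\Pr^{\L,\omega}_\st)$, the limit $\lim_\Delta \mathcal{B}^\bullet$ identifies, via the Beck–Chevalley condition and Lurie's monadic (comonadic) Barr–Beck theorem in its cosimplicial form \cite[Corollary 4.7.5.3]{Lurie-HA}, with the category of comodules for the comonad $h^\L h^\R$ on $\mathcal{C}$. So the plan is to verify the two hypotheses of comonadic Barr–Beck for the left adjoint $h^\L:\mathcal{B}\to\mathcal{C}$:
\begin{enumerate*}
\item $h^\L$ is conservative;
\item $h^\L$ preserves totalizations of $h^\L$-split cosimplicial objects.
\end{enumerate*}
The Beck–Chevalley condition (condition (1) of \autoref{definition: morphism of universal descent}), which after Ind-completion holds for $\mathcal{B}^\bullet$ as well, ensures that the limit of the cosimplicial diagram is computed by the descent category.

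Conservativity is exactly \autoref{lemma: conservativity of universal descent}. For the second condition, let $x^\bullet$ be a cosimplicial object of $\mathcal{B}$ such that $h^\L x^\bullet$ is split. The key input is the exponent $e$: $\id_\mathcal{B}$ is a retract of $\phi_e=\Tot_e(H^\bullet)$, a \emph{finite} limit of functors of the form $(h^\R h^\L)^{k}$. I would show that $\Tot(x^\bullet)$ exists and is preserved by $h^\L$ as follows. The cosimplicial object $H^{k}(x^\bullet)=(h^\R h^\L)^k x^\bullet$ is split, because it is obtained by applying $h^\R(h^\L h^\R)^{k-1}$ to the split object $h^\L x^\bullet$. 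Its totalization is therefore preserved by any functor, in particular by $h^\L$, and it commutes with finite limits. Hence $\phi_e(x^\bullet)$, a finite limit of split cosimplicial objects, has a totalization that commutes with $h^\L$. Since $x^\bullet$ is levelwise a retract of $\phi_e(x^\bullet)$, naturally in the cosimplicial variable, the tower of partial totalizations of $x^\bullet$ is a retract of a tower that is pro-constant and compatible with $h^\L$. Retracts of pro-constant towers are pro-constant, and exact functors preserve this. So $\Tot(x^\bullet)\simeq\Tot_m(x^\bullet)$ for $m$ large, and $h^\L$, being exact, preserves it.

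Alternatively, and perhaps more cleanly, one can avoid Barr–Beck and argue directly. The unit $\mathcal{B}\to\lim_\Delta\mathcal{B}^\bullet$ is fully faithful because $\Hom(x,y)\to\Tot\Hom(h^\L x,\mathcal{C}^{\otimes\bullet+1}\otimes y)$ is identified with $\Hom(x,\Tot H^{\bullet+1}y)$, and $y\to\Tot H^{\bullet+1}y$ is an equivalence: the tower $\phi_n(y)$ is pro-equivalent to $y$, since $y$ is a retract of $\phi_e y$ and the augmented object is split after applying $H$. Essential surjectivity then follows from the right adjoint (totalization of the descent datum), using conservativity.

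The main obstacle will be making rigorous the passage from ``$\id$ is a retract of $\phi_e$'' to the statement that the Tot-tower of $x^\bullet$ (or of $H^{\bullet+1}y$) is pro-constant, i.e.\ nilpotence in Mathew's sense. I would first try to show that the map $\phi_{n+e+1}\to\phi_n$ factors through the identity up to retraction, or equivalently that the fibres $H^{n}(I)$ form a pro-zero tower. This is the content of Mathew's ``descendable'' theory, whose arguments transfer verbatim to this monad setting.
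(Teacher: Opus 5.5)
Your argument is essentially the paper's: comonadic Barr--Beck in cosimplicial form (Beck--Chevalley together with conservativity from \autoref{lemma: conservativity of universal descent}), with the $h^\L$-split condition handled by exhibiting $x^\bullet$ as a retract of $\phi_e(x^\bullet)$. That object is a finite limit of the split cosimplicial objects $(h^\R h^\L)^{k}x^\bullet$, $k\geq 1$, so it lies in the thick class $\mathcal{U}$ of cosimplicial objects whose totalization exists and is preserved by $h^\L$; you make explicit the use of $\phi_e$, which the paper's proof leaves implicit.

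One correction. Pro-constancy of the Tot tower does not give $\Tot(x^\bullet)\simeq\Tot_m(x^\bullet)$ for large $m$; this fails in general, even for split objects. You do not need it, though. In the idempotent-complete category $\mathcal{B}^\omega$ (or $\mathcal{B}$), a retract of a cosimplicial object whose totalization exists and is preserved by $h^\L$ inherits both properties, by splitting the induced idempotent. For the same reason, the Mathew-style nilpotence argument in your last paragraph is unnecessary.
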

\begin{proof}
    Since $h^\L$ is conservative by \autoref{lemma: conservativity of universal descent}, by the Barr-Beck theorem it suffices to show that the category $\mathcal{B}^\omega$ admits geometric realizations of $h^\L$-split simplicial objects, and those geometric realizations are preserved by $h^\L$. 
    Let $\mathcal{U}\subseteq\Fun(\Delta,\mathcal{B}^\omega)$ be the full subcategory spanned by those  cosimplicial objects which admit limit in $\mathcal{B}^\omega$ and  which is preserved by $h^\L$. 
    Notice that $\mathcal{U}$ is a thick subcategory (since $\mathcal{B}^\omega$ is idempotent-complete). 
    Moreover, it contains those cosimplicial objects which admit splittings (since they admit a limit, and since applying $h^\L$ furnishes cosimplicial objects which admit splittings, hence that have a limit). 
    Let now $x^\bullet:\Delta\to\mathcal{B}$ be an $h^\L$-split cosimplicial object.
    Since $x^\bullet\in\mathcal{B}$ by construction, the claim  is proved. 
\end{proof}
\begin{remark}
    Let $h^\L:\mathcal{B}\to\mathcal{C}$ be  of universal descent.
    It follows from the previous result that the canonical map
    \[
    \mathcal{B}^\omega \to \lim_{\Delta}\, (\mathcal{C}^\omega)^{\otimes_{\mathcal{B}^\omega}\bullet} 
    \]
    is an equivalence of categories. 
\end{remark}

We can also add the input of a $t$-structure. 
\begin{definition}\label{definition: t-geometric of universal descent}
    Let $h^\L:\mathcal{B}\to\mathcal{C}$ be a  functor of tor-finite categories.
    We will say that $h^\L$ is of \emph{universal descent} if:
    \begin{enumerate*}
        \item It is of universal descent in the sense of \autoref{definition: morphism of universal descent}.
        \item  It is left $t$-exact up to a shift. 
    \end{enumerate*}
\end{definition}

\begin{remark}\label{remark: t-geometric of universal descent is quasi-perfect}
    Let $h^\L:\mathcal{B}\to\mathcal{C}$ be a functor tor-finite categories of universal descent.
    Since \autoref{lemma: BC implies pushfoward} implies that $h^\R:\mathcal{R}\to\mathcal{C}$ preserves compact objects, it follows that $h^\L$ is quasi-perfect. 
    In particular, it is quasi-proper. 
\end{remark}

We conclude this section with one last observation regarding $t$-geometric functors of universal descent.  
\begin{lemma}\label{lemma: universal descent detect and preserve compact objects, connective objects and pseudo-coherent objects}
    Let $h^\L:\mathcal{B}\to\mathcal{C}$ be a functor tor-finite categories of universal descent.
    Then: 
    \begin{enumerate*}
        \item An object $x\in\mathcal{B}^\omega$ is compact if and only if $h^\L(x)\in\mathcal{C}^\omega$ is compact.
        \item An object $x\in\mathcal{B}^-$ is eventually connective if and only if $h^\L(x)\in\mathcal{C}^-$ is eventually connective.
        \item An object $x\in\PCoh{B}$ is pseudo-coherent if and only if $h^\L(x)\in\PCoh{C}$ is pseudo-coherent.
        \item An object $x\in\PCoh{B}$ is coherent if and only if $h^\L(x)\in\PCoh{C}$ is coherent.
    \end{enumerate*}
\end{lemma}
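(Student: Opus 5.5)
For all four points the forward implications hold because $h^\L$ is a functor of tor-finite categories. It is rigid, so it preserves compact objects. It is right $t$-exact, so it preserves eventually connective objects. By \autoref{lemma: f^l preserves pseudo-coherent} it preserves pseudo-coherent objects. Being left $t$-exact up to a shift (\autoref{definition: t-geometric of universal descent}), it also preserves eventually coconnective objects, hence coherent ones. The real content is the converse in each case, and the plan is to use a single mechanism: the identity of $\mathcal{B}$ is a retract of $\phi_e=\Tot_e(H^\bullet)$, where $H=h^\R h^\L$. Since $\phi_e$ is a finite limit of iterates $H^{k}=(h^\R h^\L)^{k}$ with $k\leq e+1$, any property $\mathcal{P}$ with the three features below transfers from $h^\L(x)$ to $x$:
\begin{itemize}
\item $\mathcal{P}$ is closed under finite limits (equivalently, finite colimits and shifts) and retracts;
\item $\mathcal{P}$ is preserved by $h^\R$;
\item $\mathcal{P}$ is preserved by $h^\L$.
\end{itemize}
Indeed, $H^{k}(x)=h^\R h^\L(\cdots)$ lies in $\mathcal{P}$ by induction on $k$, starting from $h^\L(x)\in\mathcal{P}$. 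Then $\phi_e(x)$ lies in the thick closure of these terms, and $x$ is a retract of $\phi_e(x)$.

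The key steps are to verify the three features for each class. For (1), compact objects form a thick subcategory, and $h^\R$ preserves them by \autoref{lemma: BC implies pushfoward} and \autoref{remark: t-geometric of universal descent is quasi-perfect}. For (2), $\mathcal{B}^-$ is thick, and $h^\R$ is right $t$-exact up to a shift by the definition of a functor of tor-finite categories. For (3), $\PCoh{B}$ is thick by \autoref{lemma: stupid properties of Pcoh and coh}, and $h^\R$ preserves pseudo-coherence because $h^\L$ is quasi-perfect, hence quasi-proper by \autoref{corollary: quasi-perfect implies quasi-proper}. For (4), $\Coh{B}$ is thick, $h^\R$ preserves coherent objects since it is quasi-proper and left $t$-exact, and $h^\L$ preserves boundedness because it is left $t$-exact up to a shift.

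The main subtlety is making the transfer argument precise, and this is the step I expect to be the main obstacle. The iterates mix $h^\L$ and $h^\R$, so the argument has to run through $\mathcal{C}$ and $\mathcal{B}$ alternately. The induction statement I would use is: $(h^\L h^\R)^{j}h^\L(x)$ lies in the relevant class of $\mathcal{C}$, and $(h^\R h^\L)^{j}(x)$ lies in the relevant class of $\mathcal{B}$. This needs the class to be preserved in both directions, which is exactly what was checked above.

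A second caveat applies to points (3) and (4): pseudo-coherence is defined inside the ambient category, and to conclude that $x$ is pseudo-coherent we need $x$ to actually be a retract of $\phi_e(x)$ in $\mathcal{B}$. This holds because the retraction $\id_\mathcal{B}\to\phi_e\to\id_\mathcal{B}$ is a natural transformation of functors on all of $\mathcal{B}$, not just on compact objects. Once this is in place, thickness of each subcategory (closure under the finite limit $\lim_{\Delta_{\leq e}}$ and under retracts) concludes all four points uniformly.
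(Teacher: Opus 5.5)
Your proof is correct. For points (2)--(4) it is the paper's argument: $x$ is a retract of $\phi_e(x)$, which is a finite limit of the iterates $(h^\R h^\L)^k(x)$ with $k\geq 1$. Each of $\mathcal{B}^-$, $\PCoh{B}$, $\Coh{B}$ is thick, so it suffices that it is preserved by both $h^\L$ and $h^\R$, and you check this exactly as the paper does.

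The only divergence is point (1). There the paper does not run the retract argument. It instead cites the descent statement of \autoref{proposition: universal descent and limits}, namely $\mathcal{B}^\omega\simeq\lim_\Delta(\mathcal{C}^\omega)^{\otimes_{\mathcal{B}^\omega}\bullet}$. You apply the same retract mechanism as in the other points, which needs only that $h^\R$ preserves compact objects (\autoref{lemma: BC implies pushfoward}).

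Your route is more elementary and treats all four points uniformly. The paper's citation, by contrast, says something about compact objects and descent data. To turn it into a statement about an arbitrary $x\in\mathcal{B}$ with $h^\L(x)$ compact, one must also compare descent for $\mathcal{B}$ with descent for $\mathcal{B}^\omega$, and the paper leaves that step unstated. So your version is, if anything, the cleaner proof of (1).
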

\begin{proof}
    First of all, since $h^\L:\mathcal{C}\to\mathcal{R}$ is of universal descent, there exists an exponent $e\geq0$ such that the identity on $\mathcal{C}$ is a retract of  $\phi=\phi_e:\mathcal{C}\to\mathcal{C}$ defined as above.
    With that being said,  point $(1)$ is exactly \autoref{proposition: universal descent and limits}  and the remark that follows it.
    Consider $(2)$. 
    Since $h^\L$ is right $t$-exact the implication $(\Rightarrow)$ is always true.
    For  $(\Leftarrow)$, assume that $x\in\mathcal{B}$ is such that $h^\L(x)\in\mathcal{C}_{\geq n}$ for some integer $n\in\mb{Z}$. 
    Since  $x$ is retract of  $\phi(x)$, it suffices to prove that this object is connective. 
    However, $\phi(x)$ is obtained as a finite limit
    \[
    \phi(x) = \lim_{\Delta_{\leq e}}(h^\R h^\L(x)\rightrightarrows 
            h^\R h^\L h^\R h^\L(x)
            \mathrel{\substack{\textstyle\rightarrow\\[-0.6ex]
                      \textstyle\rightarrow \\[-0.6ex]
                      \textstyle\rightarrow}}\dots)
    \]
    so it suffices to show that each term is connective. This follows since $h^\L$ is right $t$-exact and $h^\R$ is right $t$-exact up to a shift.

    Consider $(3)$. 
    Since every functor of tor-finite categories preserves pseudo-coherent objects the implication $(\Rightarrow)$ is always true, so that it suffices to prove $(\Leftarrow)$. 
    Let $x\in\mathcal{B}$ such that $h^\L(x)\in\PCoh{C}$. 
    Since $\PCoh{B}$ is stable under retracts, it suffices to show that $\phi(x)$ is pseudo-coherent. 
    This is obvious, since $\phi(x)$ is a finite limit involving powers of $h^\R h^\L$ and both $h^\R$ and $h^\L$ preserve pseudo-coherent objects.

    For $(4)$, the implication $(\Leftarrow)$ follows since $h^\L$ is left $t$-exact up to a shift. For $(\Rightarrow)$, if $x\in\mathcal{B}$ is such that $h^\L(x)\in\Coh{C}$ is coherent then the same argument above shows that $x$ is a retract of $\phi(x)$, which is a finite limit of coherent objects. 
\end{proof}

\subsection{Functors out of $\Coh{B}$}\label{subsection: Functors out of Coh}

To prove the second duality result, we will deduce it from a class of tor-finite categories for which it is automatically satisfied. 

\begin{definition}\label{definition: regular category}
    Let $\mathcal{R}$ be a tor-finite category. 
    We will say that $\mathcal{R}$ is \emph{regular} if the compact objects coincide with the coherent ones, that is $\mathcal{R}^\omega=\Coh{R}$.
\end{definition}

\begin{example}\label{example: regular not good for spectra}
    Let $A$ be a connective $\mb{E}_\infty$-ring and consider  the category of modules $\Mod_A$.
    In general, every perfect objects of $\Mod_A$, that is, every compact object, is pseudo-coherent, thus providing an inclusion $\text{Perf}(A)\subseteq\text{PCoh}(A)$. 
    If $A$ is also coconnective, then every perfect object is coconnective, thus proving  refining the previous inclusion to $\text{Perf}(A)\subseteq\text{Coh}(A)$.
    If $\Mod_A$ is regular then the other inclusion holds, and by \cite[Lemma 11.3.3.3]{Lurie-SAG} it follows that $A$ must be discrete. 

    For this reason, regular $\mb{E}_\infty$-rings are defined by just asking $\text{Coh}(A)\subseteq\text{Perf}(A)$. 
\end{example}
This implies that our definition of regular categories is not optimal, and therefore that all the arguments which we now propose suffer of the same problem.
Anyway, the our next goal is to show that for regular categories the second abstract Neeman duality is a consequence of the first one. 

\begin{lemma}\label{lemma: regular inf-categories satisfy ND2}
    Let  $f^\L:\mathcal{B}\to\mathcal{R}$ be a quasi-proper functor to a regular category.
    Assume that $S_\text{fin}\subseteq S_\mathcal{R}$ is a finite generating set such that $(\text{ev}_r)_{r\in S_\mathcal{R}}$ detects coconnective objects. 
    Then there exists an equivalence of categories
    \[
    (\mathcal{R}^\omega)^\op\to\Fun^\ex_{\mathcal{B}^\omega}(\Coh{R},\Coh{B})
    \]
    induced by the dual $\mathcal{B}^\omega$-Yoneda embedding. 
\end{lemma}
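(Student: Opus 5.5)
Since $\mathcal{R}$ is regular, $\mathcal{R}^\omega=\Coh{R}$, so the source category is $\mathcal{R}^\omega$ itself. The statement then reduces to the coherent half of \autoref{theorem: functors out of Cc giovanni proof}, combined with the self-duality of $\mathcal{R}^\omega$ coming from rigidity. The functors in question are exact $\mathcal{B}^\omega$-linear functors $\mathcal{R}^\omega\to\Coh{B}$, and by \autoref{lemma: action restricts}(1) taking duals gives a symmetric monoidal equivalence $(-)^\vee:\mathcal{R}^\omega\simeq(\mathcal{R}^\omega)^\op$. It is $\mathcal{B}^\omega$-linear because $f^\L$ is symmetric monoidal, so $f^\L(b)^\vee\simeq f^\L(b^\vee)$. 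Precomposition with this equivalence therefore gives an equivalence
\[
\Fun^\ex_{\mathcal{B}^\omega}(\Coh{R},\Coh{B})=\Fun^\ex_{\mathcal{B}^\omega}(\mathcal{R}^\omega,\Coh{B})\simeq\Fun^\ex_{\mathcal{B}^\omega}((\mathcal{R}^\omega)^\op,\Coh{B}).
\]

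Next I apply \autoref{theorem: functors out of Cc giovanni proof} to $f^\L:\mathcal{B}\to\mathcal{R}$ (with $\mathcal{A}=\mathcal{B}$ and $\mathcal{B}=\mathcal{R}$). Its hypotheses are exactly those assumed here: $f^\L$ is quasi-proper, $S_\text{fin}\subseteq S_\mathcal{R}$ is a finite generating set, and $(\text{ev}_r)_{r\in S_\mathcal{R}}$ detects coconnective objects. This gives that the $\mathcal{B}^\omega$-linear Yoneda embedding $\Coh{R}\to\Fun^\ex_{\mathcal{B}^\omega}((\mathcal{R}^\omega)^\op,\Coh{B})$, $x\mapsto f^\R\IHom_\mathcal{R}(-,x)$, is an equivalence. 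Using regularity again, its source is $\mathcal{R}^\omega$, which is equivalent to $(\mathcal{R}^\omega)^\op$ via $(-)^\vee$. Composing the two equivalences yields an equivalence $(\mathcal{R}^\omega)^\op\to\Fun^\ex_{\mathcal{B}^\omega}(\Coh{R},\Coh{B})$.

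The remaining point, which I expect to be the only real obstacle, is to check that this composite is the dual $\mathcal{B}^\omega$-Yoneda embedding $c\mapsto\mathcal{R}(c,-)=f^\R\IHom_\mathcal{R}(c,-)$ restricted to $\Coh{R}$. Tracing an object $c\in(\mathcal{R}^\omega)^\op$ through the composite gives the functor $y\mapsto f^\R\IHom_\mathcal{R}(y^\vee,c^\vee)$. By rigidity,
\[
\IHom_\mathcal{R}(y^\vee,c^\vee)\simeq y\otimes c^\vee\simeq\IHom_\mathcal{R}(c,y),
\]
and these identifications are natural in $y$ and $c$ and compatible with the $\mathcal{B}^\omega$-linear structures, since all of them come from the evaluation and coevaluation maps. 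So the composite is naturally equivalent to $c\mapsto\mathcal{R}(c,-)$, and the proof is complete. Some care is needed only to make sure the linear structures match under this natural equivalence. Everything else is formal once \autoref{theorem: functors out of Cc giovanni proof} is available.
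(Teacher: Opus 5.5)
Your proposal is correct and follows essentially the same route as the paper. The paper builds the commutative square from the duality $\Delta_\mathcal{R}:(\mathcal{R}^\omega)^\op\to\mathcal{R}^\omega=\Coh{R}$ and the coherent case of \autoref{theorem: functors out of Cc giovanni proof}, then concludes by two-out-of-three. You additionally verify the $\mathcal{B}^\omega$-linearity of the duality and the rigidity identification $\IHom_\mathcal{R}(y^\vee,c^\vee)\simeq\IHom_\mathcal{R}(c,y)$ that makes the square commute, both of which the paper leaves implicit.
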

\begin{proof}
    Since $\mathcal{R}^\omega=\Coh{R}$, the duality $\Delta_\mathcal{R}:(\mathcal{R}^\omega)^\op\to\mathcal{R}^\omega$ provides a commutative square
    \[\begin{tikzcd}[cramped]
	{(\mathcal{R}^\omega)^\op} & {\Fun_{\mathcal{B}^\omega}^\ex(\Coh{R},\Coh{B})} \\
	{\mathcal{R}^\omega} & {\Fun_{\mathcal{B}^\omega}^\ex((\mathcal{R}^\omega)^\op,\Coh{B})}
	\arrow["{\tilde{\yo}}", from=1-1, to=1-2]
	\arrow["{\Delta_\mathcal{R}}"', from=1-1, to=2-1]
	\arrow["{-\circ \Delta_\mathcal{R}}", from=1-2, to=2-2]
	\arrow["\yo"', from=2-1, to=2-2]
\end{tikzcd}\]
    Since the horizontal bottom map is an equivalence by \nameref{theorem: functors out of Cc giovanni proof} and the vertical maps are equivalence, the horizontal top map is an equivalence. 
\end{proof}

In order to prove the main result of this section we need some preliminary results. 
\begin{lemma}\label{lemma: dual yoneda restricts}
    Let $f^*:\mathcal{A}\to\mathcal{B}$ be a quasi-proper functor. 
    Then the dual $\mathcal{A}^\omega$-Yoneda embedding restricts to a functor
    \[
    \tilde{\yo}:(\mathcal{B}^\omega)^\op\to\Fun_{\mathcal{A}^\omega}^\ex(\PCoh{B},\PCoh{A}).
    \]
    The same is true with $\Coh{-}$ in place of $\PCoh{-}$ in the first  argument of the functor category.
\end{lemma}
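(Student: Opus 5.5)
The dual $\mathcal{A}^\omega$-linear Yoneda embedding sends a compact object $b\in\mathcal{B}^\omega$ to the functor
\[
\tilde{\yo}(b)=\mathcal{B}(b,-)=f^\R\IHom_\mathcal{B}(b,-):\mathcal{B}\to\mathcal{A}.
\]
Since $b$ is dualizable, there is an equivalence $\IHom_\mathcal{B}(b,x)\simeq b^\vee\otimes_\mathcal{B}x$. So $\tilde{\yo}(b)$ is the composite $f^\R\circ(b^\vee\otimes_\mathcal{B}-)$, and this is how I will handle it. The plan is to show, in three steps, that this composite sends $\PCoh{B}$ into $\PCoh{A}$ and is exact and $\mathcal{A}^\omega$-linear. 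The same then holds for its restriction to $\Coh{B}$.

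\textbf{Step 1: values are pseudo-coherent.} Let $x\in\PCoh{B}$. By \autoref{lemma: action restricts}, $b^\vee\in\mathcal{B}^\omega$. Since $\mathcal{B}$ is tor-finite, \autoref{lemma: pcoh and coh are Cc submodules} shows that $\PCoh{B}$ is a $\mathcal{B}^\omega$-module, so $b^\vee\otimes_\mathcal{B}x\in\PCoh{B}$. Quasi-properness of $f^\L$ says that $f^\R$ carries $\PCoh{B}$ into $\PCoh{A}$, hence $\tilde{\yo}(b)(x)\in\PCoh{A}$. If we restrict the source to $\Coh{B}\subseteq\PCoh{B}$, the values still land in $\PCoh{A}$, which is the second claim. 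As a remark, the values actually lie in $\Coh{A}$: tensoring with $b^\vee$ preserves boundedness because $b^\vee$ has finite tor-dimension, and $f^\R$ is left $t$-exact.

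\textbf{Step 2: exactness and linearity.} Exactness holds because $f^\R$ and $b^\vee\otimes_\mathcal{B}-$ are both exact. For $\mathcal{A}^\omega$-linearity, recall from \autoref{example: internal left adjoint for rigidly-compactly generated} that $f^\L$ is an internal left adjoint, so $f^\R$ is $\mathcal{A}$-linear. The projection formula then gives
\[
f^\R(b^\vee\otimes_\mathcal{B}(f^\L(a)\otimes_\mathcal{B}x))\simeq a\otimes_\mathcal{A}f^\R(b^\vee\otimes_\mathcal{B}x).
\]
So $\tilde{\yo}(b)$ is an object of $\Fun^\ex_{\mathcal{A}^\omega}(\PCoh{B},\PCoh{A})$. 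It is an exact $\mathcal{A}^\omega$-linear functor $\mathcal{B}\to\mathcal{A}$ that preserves the full subcategories $\PCoh{B}$ and $\PCoh{A}$, and both of these are $\mathcal{A}^\omega$-modules by \autoref{lemma: pcoh and coh are Cc submodules}.

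\textbf{Step 3: functoriality in $b$.} The assignment $b\mapsto\tilde{\yo}(b)$ is the restriction of the global dual Yoneda functor $(\mathcal{B}^\omega)^\op\to\Fun^\ex_{\mathcal{A}^\omega}(\mathcal{B},\mathcal{A})$. The target of the statement is the full subcategory of functors preserving these subcategories, and Step 1 shows that the image lies in it. So the functor factors through it, and no extra coherence data is needed.

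I do not expect a serious obstacle. The one point needing care is the linearity identification in Step 2. One must check that the $\mathcal{A}^\omega$-module structure on $\PCoh{B}$, given by $a\cdot x=f^\L(a)\otimes_\mathcal{B}x$, matches the module structure used in forming $\Fun^\ex_{\mathcal{A}^\omega}$. This reduces to the projection formula already available for internal left adjoints.
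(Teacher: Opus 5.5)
Your proof is correct and follows the paper's argument. You identify $\tilde{\yo}(b)(x)\simeq f^\R(b^\vee\otimes_\mathcal{B}x)$, use that $\PCoh{B}$ and $\Coh{B}$ are $\mathcal{B}^\omega$-modules (\autoref{lemma: pcoh and coh are Cc submodules}), conclude by quasi-properness, and get coherent values from left $t$-exactness of $f^\R$. Steps 2 and 3 spell out what the paper leaves implicit, with one small precision: the factorization is restriction along $\PCoh{B}\subseteq\mathcal{B}$ followed by the full subcategory $\Fun^\ex_{\mathcal{A}^\omega}(\PCoh{B},\PCoh{A})\subseteq\Fun^\ex_{\mathcal{A}^\omega}(\PCoh{B},\mathcal{A})$, not a full subcategory of $\Fun^\ex_{\mathcal{A}^\omega}(\mathcal{B},\mathcal{A})$.
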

\begin{proof}
    Let $x\in\mathcal{B}^\omega$ be a compact object and let $y\in\mathcal{B}$ be (pseudo-)coherent. 
    We wish to show that $\tilde{\yo}(x)(y)=\mathcal{B}(x,y)$ is (pseudo-)coherent. 
    Since $x$ is compact, this object can be  identified with $\mathcal{B}(x,y)= f^\R\IHom_\mathcal{B}(x,y) \simeq f^\R(x^\vee\otimes_\mathcal{B}y)$.
    Since  \autoref{lemma: pcoh and coh are Cc submodules} implies that the tensor product $x^v\otimes_\mathcal{B}y$ is (pseudo-)coherent, the quasi-properness of $f^\L$ concludes. 
\end{proof}

\begin{lemma}\label{lemma: universal descent to regular implies compact subsets of coehrent}
    Let $h^\L:\mathcal{B}\to\mathcal{R}$ be a functor of universal descent of tor-finite categories and assume that $\mathcal{R}$ is regular. 
    Then  $\mathcal{B}^\omega\subseteq\Coh{B}$. 
\end{lemma}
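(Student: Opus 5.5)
Let $x\in\mathcal{B}^\omega$ be compact. Since tor-finite categories satisfy $\mathcal{B}^\omega\subseteq\PCoh{B}$, we only need to show that $x$ is eventually coconnective. We first check that $h^\L(x)$ is coherent in $\mathcal{R}$, and then descend coherence back to $\mathcal{B}$ along $h^\L$.

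\textbf{Step 1.} Since $h^\L$ is rigid, it preserves compact objects, so $h^\L(x)\in\mathcal{R}^\omega$. Regularity of $\mathcal{R}$ gives $\mathcal{R}^\omega=\Coh{R}$, hence $h^\L(x)\in\Coh{R}$.

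\textbf{Step 2.} We apply point (4) of \autoref{lemma: universal descent detect and preserve compact objects, connective objects and pseudo-coherent objects} to the pseudo-coherent object $x$. Its $(\Rightarrow)$ direction, as written in that proof, states: if $h^\L(x)$ is coherent, then $x$ is a retract of $\phi_e(x)$, and $\phi_e(x)$ is a finite limit of coherent objects. We spell this out, since it is the real content.

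By universal descent there is an exponent $e\geq 0$ with $\id_\mathcal{B}$ a retract of $\phi_e=\Tot_e(H^\bullet)$. This is a finite limit over $\Delta_{\leq e}$ of the objects $(h^\R h^\L)^{k}(x)$ for $1\leq k\leq e+1$.

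\begin{itemize}
\item Each such object has the form $h^\R(y)$ with $y=h^\L(h^\R h^\L)^{k-1}(x)$. We check inductively that $y\in\mathcal{R}^\omega$: $h^\L$ preserves compacts by rigidity, and $h^\R$ preserves compacts by \autoref{lemma: BC implies pushfoward}, using the Beck--Chevalley condition.
\item By regularity $y\in\mathcal{R}^\omega=\Coh{R}$, so $y$ is bounded, and in particular lies in $\mathcal{R}_{\leq m}$ for some $m$.
\item $h^\R$ is left $t$-exact, because $h^\L$ is right $t$-exact. Hence $h^\R(y)\in\mathcal{B}_{\leq m}$.
\end{itemize}

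Thus every term of the diagram is eventually coconnective. The full subcategory $\mathcal{B}^+$ is stable: it is closed under finite limits and shifts (a fibre of a map between objects of $\mathcal{B}_{\leq m}$ lies in $\mathcal{B}_{\leq m+1}$). It is also closed under retracts. Therefore $\phi_e(x)\in\mathcal{B}^+$, and so $x\in\mathcal{B}^+$.

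Combining this with $x\in\PCoh{B}$ gives $x\in\Coh{B}$. The only place I expect care is needed is making sure the retract and finite-limit argument uses left $t$-exactness of $h^\R$ and not of $h^\L$. Coconnectivity is controlled on the $h^\R$ side, and left $t$-exactness of $h^\R$ holds for every functor of tor-finite categories. So the extra hypothesis that $h^\L$ is left $t$-exact up to a shift is not needed here.
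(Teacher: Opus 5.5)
Your proof is correct and follows the paper's route. The paper also observes that $h^\L(x)\in\mathcal{R}^\omega=\Coh{R}$ and then invokes point (4) of the descent lemma, i.e.\ that $x$ is a retract of the finite limit $\phi_e(x)$ of the terms $(h^\R h^\L)^k(x)$, with the only difference being how you unpack that step. You use Beck--Chevalley to keep every term $h^\L(h^\R h^\L)^{k-1}(x)$ compact, so regularity bounds it directly. As you say, this avoids the left $t$-exactness of $h^\L$ that the paper's general version of point (4) relies on to carry boundedness through the iterates.
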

\begin{proof}
    Since $\mathcal{B}$ is tor-finite, $\mathcal{B}^\omega\subseteq\PCoh{B}$. 
    Let $x\in\mathcal{B}^\omega$. Since $h^\L(x)\in\mathcal{R}^\omega$ is coherent by the regularity of $\mathcal{R}$, \autoref{lemma: universal descent detect and preserve compact objects, connective objects and pseudo-coherent objects} implies that $x\in\Coh{B}$ is coherent. 
    Thus $\mathcal{B}^\omega\subseteq\Coh{B}$. 
\end{proof}

\begin{lemma}\label{lemma: retract of a representable}
    Let $\mathcal{A}\in\CAlg(\Cat^\perf)$ and let $\mathcal{B}_1,\mathcal{B}_2\in\Mod_\mathcal{A}(\Cat^\perf)$ with $\mathcal{C}\subseteq\mathcal{B}$ a full subcategory.
    Let $\alpha: \mathcal{B}^\op\to\mathcal{A}$ be an  exact and $\mathcal{A}$-linear functor.
    Assume that $\alpha$ is a retract of $\mathcal{B}(-,x)$ for $x\in\mathcal{C}$. 
    Then $\alpha$ is represented by an object of $\mathcal{C}$.
\end{lemma}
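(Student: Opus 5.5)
The plan is to transport the retraction along the $\mathcal{A}$-linear Yoneda embedding, which is fully faithful, and then split the resulting idempotent inside $\mathcal{C}$. I read the statement with the standing conventions of this section. The module $\mathcal{B}$ is an object of $\Mod_\mathcal{A}(\Cat^\perf)$, so it is idempotent-complete. The subcategory $\mathcal{C}\subseteq\mathcal{B}$ is thick, in particular closed under retracts; in the intended applications it is $\Coh{B}$ or $\mathcal{B}^\omega$. The algebra $\mathcal{A}$ is rigid, so that the remark after \autoref{corollary: equivalence ind and exact functors} applies. That remark says $\yo_\mathcal{A}:\mathcal{B}\into\Fun^\ex_\mathcal{A}(\mathcal{B}^\op,\Ind(\mathcal{A}))$ is fully faithful. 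The functor $\alpha$, having values in $\mathcal{A}\subseteq\Ind(\mathcal{A})$, is an object of this target.

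First I encode the hypothesis as a retract diagram $\alpha\xrightarrow{i}\yo_\mathcal{A}(x)\xrightarrow{r}\alpha$ with $ri\simeq\id_\alpha$. By \cite[Section 4.4.5]{Lurie-HTT}, such a retract diagram is the same datum as a coherent idempotent $\Idem\to\Fun^\ex_\mathcal{A}(\mathcal{B}^\op,\Ind(\mathcal{A}))$ on $\yo_\mathcal{A}(x)$, whose splitting is $\alpha$. Alternatively, since everything is stable, it suffices to work with the homotopy idempotent $e=ir$: in an idempotent-complete stable category, homotopy idempotents lift to coherent ones, and their splittings are unique up to equivalence.

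Next, full faithfulness of $\yo_\mathcal{A}$ gives an equivalence $\Hom_\mathcal{B}(x,x)\simeq\Hom(\yo_\mathcal{A}(x),\yo_\mathcal{A}(x))$. Hence $e=\yo_\mathcal{A}(p)$ for a unique endomorphism $p:x\to x$. Moreover, the whole coherent idempotent on $\yo_\mathcal{A}(x)$ factors through the full subcategory spanned by $x$, since the restriction of $\yo_\mathcal{A}$ to that subcategory is an equivalence onto its image. Thus $p$ is a coherent idempotent in $\mathcal{B}$. Because $\mathcal{B}$ is idempotent-complete, $p$ splits as $x\to x'\to x$. Because $\mathcal{C}$ is closed under retracts and $x\in\mathcal{C}$, we get $x'\in\mathcal{C}$.

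Finally, $\yo_\mathcal{A}$ is exact, hence preserves retract diagrams. So $\yo_\mathcal{A}(x')$ is a splitting of $\yo_\mathcal{A}(p)=e$. Since $\alpha$ is also a splitting of $e$, and splittings of idempotents are unique up to canonical equivalence (being a colimit of the idempotent diagram), we obtain $\alpha\simeq\yo_\mathcal{A}(x')=\mathcal{B}(-,x')$ with $x'\in\mathcal{C}$.

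The only delicate step is the coherence issue: making sure that the idempotent produced on $x$ is a genuine $\infty$-categorical idempotent and not merely a homotopy one. I would handle it by the stable-category shortcut, using that idempotent-complete stable $\infty$-categories split homotopy idempotents. If that causes trouble, I would instead argue directly with the $\Idem$-diagram, using that a fully faithful functor reflects diagrams whose objects lie in its image.
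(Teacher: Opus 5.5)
Your proposal is correct and follows the paper's argument: lift the idempotent $i\circ r$ back along the fully faithful linear Yoneda embedding, split it, and conclude by uniqueness of splittings. There are two small differences. You split the idempotent in $\mathcal{B}$ and then use that $\mathcal{C}$ is closed under retracts, whereas the paper splits directly in $\mathcal{C}$ using that $\mathcal{C}$ is idempotent-complete. You also state explicitly two hypotheses the paper's proof uses without stating them: rigidity of $\mathcal{A}$, needed for full faithfulness, and coherence of the idempotent.
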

\begin{proof}
    Let $\alpha \xrightarrow{i} {\mathcal{B}(-,x)} \xrightarrow{r} \alpha$ be the given retract of functors $\mathcal{B}^\op\to\mathcal{A}$.
    Since the composition $i\circ r:\mathcal{B}(-,x)\to\mathcal{B}(x,-)$ is idempotent, there exists an idempotent morphism $f:x\to x$ in $\mathcal{C}$ such that $\mathcal{B}(f,-)\simeq i\circ r$.
    Since $\mathcal{C}$ is idempotent-complete, there exists a retract $y \xrightarrow{j} x \xrightarrow{q} y$, that is $f\simeq j\circ q$ and $q\circ j\simeq \id_y$ with $y\in\mathcal{C}$.
    By Yoneda this retract is sent to the one of $\alpha$, thus showing $\alpha\simeq \mathcal{B}(-,y)$ for $y\in\mathcal{C}$.
\end{proof}

We can now prove the main result of this section. 
\begin{theorem}[Functors out of $\Coh{B}$]\label{theorem: functors out of coh proof}
    Let $f^\L:\mathcal{A}\to\mathcal{B}$ be quasi-proper and assume that $\mathcal{B}$ admits a  morphism of universal descent $h^\L:\mathcal{B}\to\mathcal{R}$ to a regular category such that there exists a finite generating set $S_\text{fin}\subseteq S_\mathcal{R}$ for which $(\text{ev}_r)_{r\in S_\mathcal{R}}$ detects coconnective objects on $\mathcal{A}$.
    Then there exists an equivalence of categories
    \[
    (\mathcal{B}^\omega)^\op\to\Fun^\ex_{\mathcal{A}^\omega}(\Coh{B},\Coh{A})
    \]
    induced by the dual $\mathcal{A}^\omega$-linear Yoneda embedding.
\end{theorem}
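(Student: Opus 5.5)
Write $\tilde{\yo}\colon(\mathcal{B}^\omega)^\op\to\Fun^\ex_{\mathcal{A}^\omega}(\Coh{B},\Coh{A})$ for the dual Yoneda embedding of \autoref{lemma: dual yoneda restricts}, given by $x\mapsto\mathcal{B}(x,-)\simeq f^\R(x^\vee\otimes_\mathcal{B}-)$. The plan is to prove that $\tilde{\yo}$ is fully faithful and essentially surjective. Both steps reduce, via the universal descent morphism $h^\L\colon\mathcal{B}\to\mathcal{R}$, to the regular case \autoref{lemma: regular inf-categories satisfy ND2}, applied to the composite $h^\L f^\L\colon\mathcal{A}\to\mathcal{R}$.

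\emph{Step 1: the composite is quasi-proper.} By \autoref{remark: t-geometric of universal descent is quasi-perfect}, $h^\L$ is quasi-perfect, and by \autoref{corollary: quasi-perfect implies quasi-proper} it is quasi-proper. Since $f^\R$ preserves pseudo-coherent objects, so does $(h^\L f^\L)^\R=f^\R h^\R$. The hypotheses on $S_\text{fin}\subseteq S_\mathcal{R}$ are exactly those needed in \autoref{lemma: regular inf-categories satisfy ND2}. That lemma then gives an equivalence $(\mathcal{R}^\omega)^\op\simeq\Fun^\ex_{\mathcal{A}^\omega}(\Coh{R},\Coh{A})$, $r\mapsto \mathcal{R}(r,-)_{\mathcal{A}}=f^\R h^\R\IHom_\mathcal{R}(r,-)$.

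\emph{Step 2: fully faithful.} By \autoref{lemma: universal descent to regular implies compact subsets of coehrent}, $\mathcal{B}^\omega\subseteq\Coh{B}$, so $\Coh{B}$ contains the unit and all compact objects. Let $x,y\in\mathcal{B}^\omega$. An $\mathcal{A}^\omega$-linear natural transformation $\mathcal{B}(x,-)\to\mathcal{B}(y,-)$ on $\Coh{B}$ restricts to one on $\mathcal{B}^\omega$. There, the ordinary (non-dual) statement from \autoref{corollary: equivalence ind and exact functors}, applied after dualising $x\mapsto x^\vee$, identifies such transformations with $\Hom_\mathcal{B}(y,x)$. So $\Hom(y,x)$ is a retract of the mapping spectrum in $\Fun^\ex_{\mathcal{A}^\omega}(\Coh{B},\Coh{A})$. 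To see that restriction to $\mathcal{B}^\omega$ loses no information, I would write a coherent $z$ as a retract of the finite limit $\phi_e(z)$ of objects $(h^\R h^\L)^k z$, and use $h^\R h^\L z\simeq h^\R(h^\L z)$ with $h^\L z\in\Coh{R}=\mathcal{R}^\omega$, so that $h^\R h^\L z\in\mathcal{B}^\omega$ by Beck–Chevalley (\autoref{lemma: BC implies pushfoward}). Hence every coherent object is a retract of a finite limit of compact objects. The point is that $\Coh{B}$ lies in the thick closure of $\mathcal{B}^\omega$, and therefore $\Coh{B}=\mathcal{B}^\omega$ as thick subcategories.

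At this point the problem reduces to the dual statement on compacts. If $\Coh{B}=\mathcal{B}^\omega$, then $\Fun^\ex_{\mathcal{A}^\omega}(\mathcal{B}^\omega,\Coh{A})\simeq\Fun^\ex_{\mathcal{A}^\omega}((\mathcal{B}^\omega)^\op,\Coh{A})$ via $(-)^\vee$, and it remains to identify the latter with $\mathcal{B}^\omega$ inside $\Coh{B}$.

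\emph{Step 3: essential surjectivity.} Take $F\in\Fun^\ex_{\mathcal{A}^\omega}(\Coh{B},\Coh{A})$ and set $G=F\circ h^\R\colon\Coh{R}\to\Coh{A}$. This is well defined because $h^\R$ preserves coherent objects, and it is $\mathcal{A}^\omega$-linear by the projection formula. By Step 1, $G\simeq\mathcal{R}(r,-)_\mathcal{A}$ for some $r\in\mathcal{R}^\omega$. Using the adjunction, $\mathcal{R}(r,h^\L(-))$ relates to $\mathcal{B}(h^{(1)}$-type objects$,-)$, and I would instead apply the same argument to $F\circ (h^\R h^\L)^k$. This presents $F\circ\phi_e$ as a finite limit of functors $\mathcal{B}(x_k,-)$ with $x_k=h^\R(r_k)\in\mathcal{B}^\omega$ (here I use that $h^\R$ has the compact-preserving adjoint structure from Grothendieck duality, \autoref{theorem: Grothendieck Duality}, so that the corepresenting objects land in $\mathcal{B}^\omega$). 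Since $\tilde{\yo}$ is fully faithful and $\mathcal{B}^\omega$ is stable, this finite limit is corepresented by a finite colimit $x$ of the $x_k$. Then $F$ is a retract of $F\circ\phi_e\simeq\mathcal{B}(x,-)$, and the dual form of \autoref{lemma: retract of a representable}, using idempotent completeness of $\mathcal{B}^\omega$, shows that $F$ is corepresentable.

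The main obstacle is Step 3, specifically showing that $F\circ h^\R$ and its iterates are corepresented by objects whose transfer back to $\mathcal{B}$ is compact. This requires control of the left adjoint of $h^\R$ on compacts, coming from quasi-perfectness and Grothendieck–Neeman duality, and the compatibility of the retraction $\id\to\phi_e\to\id$ with $F$.
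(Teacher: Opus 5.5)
Your proposal is correct and is essentially the paper's proof. Step 3 is exactly the paper's argument: compose $F$ with $h^\R$, apply the regular case to $h^\L f^\L\colon\mathcal{A}\to\mathcal{R}$, show each $F\circ(h^\R h^\L)^n$ is corepresented by a compact object, deduce the same for the finite limit $F\circ\phi_e$, and conclude with the retract lemma. One correction: by the projection formula the corepresenting objects are $\bigl(h^\R(r_k)\bigr)^\vee$ with $r_k\in\mathcal{R}^\omega$, equivalently the $h_{(1)}$-images used in the paper, and not $h^\R(r_k)$ itself.

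Your Step 2 observation that $\Coh{B}=\mathcal{B}^\omega$ is correct but not stated in the paper; it shows the hypotheses already force $\mathcal{B}$ itself to be regular. It is not needed, though, because full faithfulness already follows from the $\mathcal{A}^\omega$-linear Yoneda embedding of $\Coh{B}^\op$.
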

\begin{proof}
    Since $h^\L:\mathcal{B}\to\mathcal{R}$ is of universal descent, there exists an exponent $e\geq0$ such that the identity on $\mathcal{B}$ is a retract of $\phi=\phi_e:\mathcal{B}\to\mathcal{B}$, defined as in the previous section.
    Notice that $\phi$ restricts to a functor $\Coh{B}\to\Coh{B}$ since $h^\L$ is left $t$-exact up to a shift. 
    Consider now the $\mathcal{A}$-linear Yoneda embedding 
    \[
    \tilde{\yo}: 
    \Coh{B}^\op \to\Fun^\ex_{\mathcal{A}^\omega}(\Coh{B},\mathcal{A})
    \]
    Restricting $\tilde{\yo}$ to $(\mathcal{B}^\omega)^\op$ and using \autoref{lemma: dual yoneda restricts} produces a fully faithful functor 
    \[
    \tilde{\yo}:(\mathcal{B}^\omega)^\op\to\Fun^\ex_{\mathcal{A}^\omega}(\Coh{B},\Coh{A}).
    \]
    In particular, to prove the claim it suffices to show that this functor is essentially surjective.
    Notice first of all that a computation with adjoints shows that the diagram
    \[\begin{tikzcd}[cramped]
	{(\mathcal{B}^\omega)^\op} & {\Fun_{\mathcal{A}^\omega}^\ex(\Coh{B},\Coh{A})} \\
	{(\mathcal{R}^\omega)^\op} & {\Fun_{\mathcal{A}^\omega}^\ex(\Coh{R},\Coh{A})}
	\arrow["{\tilde{\yo}}", from=1-1, to=1-2]
	\arrow["{(h^\L)^\op}"', from=1-1, to=2-1]
	\arrow["{-\circ h^\R}", from=1-2, to=2-2]
	\arrow["{\tilde{\yo}}"', from=2-1, to=2-2]
	\arrow["\simeq", from=2-1, to=2-2]
    \end{tikzcd}\]
    commutes.   
    Since $h^\L:\mathcal{B}\to\mathcal{R}$ is a  morphism of universal descent  such that there exists a finite generating set $S_\mathcal{R}\subseteq\mathcal{R}^\omega$ for which $(\text{ev}_r)_{r\in S_\mathcal{R}}$ detects coconnective objects on $\mathcal{A}$,  the horizontal bottom map is an equivalence  by \autoref{lemma: regular inf-categories satisfy ND2}.
    Let now $\alpha:\Coh{B}\to\Coh{A}$ be an exact and $\mathcal{A}^\omega$-linear functor.
    The above diagram shows that the composition $\alpha\circ h^\R$ is represented by a compact object $y\in(\mathcal{R}^\omega)^\op$ and the same happens for $\alpha\circ\phi$. 
    Indeed, since $h^\L$ is quasi-perfect, the abstract Grothendieck-Neeman duality (\autoref{theorem: Grothendieck Duality} and \autoref{example: internal left adjoint for rigidly-compactly generated}) implies the existence of adjunctions $h_{(1)}\dashv h^\L\dashv h^\R$ where the functor $h_{(1)}$ preserves compact objects (having a right adjoint that preserves filtered colimits), so  that the equivalences
    \[\begin{aligned}
    \alpha\circ (h^\R h^\L)^n
    &\simeq \alpha\circ h^\R \circ h^\L\circ( h^\R\circ h^\L)^{n-1}\\
    &\simeq \tilde{\yo}(y) \circ h^\L\circ (h^\R\circ h^\L)^{n-1}\\
    &\simeq f^\R h^\R\IHom_\mathcal{R}(y,  h^\L\circ (h^\R\circ h^\L)^{n-1}(-))
    \end{aligned}\]
    valid for every integer $1\leq n\leq e$, imply that $\alpha\circ(h^\R h^\L)^n$ is represented by $(h^\L h_{(1)})^n(y)$. 
    It follows from  the explicit definition of $\phi=\phi_e$  that  $\alpha\circ\phi$ is a finite limit of representable functors with compact representing object, and hence it must be representable (since the enriched Yoneda is exact, being the domain and codomain stable) by a compact object. 

    Now, since $\alpha$ is a retract of $\alpha\circ\phi$, that is, of a functor represented by a compact object, an application of the dual of \autoref{lemma: retract of a representable} implies that  $\alpha$ is represented by a compact object, thus proving essential surjectivity.
\end{proof}

\section{Examples}\label{section: examples}

We now present some applications, ranging from modules to  quasi-compact quasi-separated spectral algebraic spaces, passing through schemes.

\subsection{Modules}\label{subsection: modules}
We start by discussing a broad class of examples given by module categories.
\begin{remark}
    Let $\mathcal{C}\in\CAlg(\Pr^\L_\st)$ be a symmetric monoidal category and let $x\in\CAlg(\mathcal{C})$ be a commutative algebra. 
    Let $\Mod_x(\mathcal{C})$ denote the category of $x$-modules in $\mathcal{C}$ and regard it as a symmetric monoidal category under the relative tensor product $\otimes_x$. 
    Tensoring with $x$ determines a functor $x\otimes-:\mathcal{C}\to\Mod_x(\mathcal{C})$ which admits a conservative right adjoint $\text{res}:\Mod_x(\mathcal{C})\to\mathcal{C}$. 
\end{remark}
We now show that module objects are stable under many constructions introduced so far. 
Let us first analyze compact generation.
\begin{lemma}\label{lemma: rigidity of modules}
    Let $\mathcal{C}\in\CAlg^\rig(\Pr^{\L,\omega}_\st)$ be a rigid algebra  and let  $x\in\mathcal{C}$ be a commutative algebra. 
    Then the category $\Mod_x(\mathcal{C})\in\CAlg^\rig(\Pr^{\L,\omega}_\st)$ of $x$-module objects in $\mathcal{C}$ is a rigid algebra. 
    Moreover, $x\otimes-:\mathcal{C}\to\Mod_x(\mathcal{C})$ is rigid.
\end{lemma}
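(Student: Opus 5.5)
We need to show three things about $\Mod_x(\mathcal{C})$: it is compactly generated, its compact objects are exactly its dualizable objects, and the unit is compact. We also need that $x\otimes-$ preserves compact objects. The plan is to work entirely through the free--forgetful adjunction $x\otimes-\dashv\text{res}$ and the rigidity of $\mathcal{C}$.

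First, recall that $\Mod_x(\mathcal{C})$ is presentable stable. Its relative tensor product $\otimes_x$ preserves colimits in each variable, and the free functor $x\otimes-$ is symmetric monoidal (\cite[Section 4.5]{Lurie-HA}). The right adjoint $\text{res}$ is conservative and preserves all colimits, because colimits of modules are computed underlying in $\mathcal{C}$.

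\textbf{Compact generation.} Since $\text{res}$ preserves filtered colimits, $x\otimes-$ preserves compact objects. Hence for every $c\in\mathcal{C}^\omega$ the object $x\otimes c$ is compact in $\Mod_x(\mathcal{C})$. These objects generate: if $\pi_0\Hom(x\otimes\susp^n c,M)=\pi_0\Hom_\mathcal{C}(\susp^n c,\text{res}M)$ vanishes for all $c\in\mathcal{C}^\omega$ and all $n$, then $\text{res}M\simeq0$ because $\mathcal{C}$ is compactly generated. Conservativity of $\text{res}$ then gives $M\simeq0$. It follows that $\Mod_x(\mathcal{C})^\omega$ is the thick closure of $\{x\otimes c\}_{c\in\mathcal{C}^\omega}$, exactly as in \autoref{remark: thick generation}. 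In particular, the unit $x=x\otimes\mb{1}_\mathcal{C}$ is compact, since $\mb{1}_\mathcal{C}$ is compact by rigidity.

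\textbf{Compact equals dualizable.} For compact implies dualizable: $x\otimes-$ is symmetric monoidal and $c$ is dualizable in $\mathcal{C}$, so each $x\otimes c$ is dualizable with dual $x\otimes c^\vee$. Dualizable objects of a stable symmetric monoidal category with exact tensor product form a thick subcategory. This holds because duals of cofibres are fibres of duals, and retracts of dualizable objects are dualizable in an idempotent-complete setting. Consequently the whole thick closure, which is $\Mod_x(\mathcal{C})^\omega$, consists of dualizable objects. For dualizable implies compact: if $M$ is dualizable, then
\[
\Hom_x(M,-)\simeq\Hom_x(x,M^\vee\otimes_x-).
\]
Here $M^\vee\otimes_x-$ preserves colimits and $x$ is compact, so $M$ is compact.

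\textbf{Rigidity of the free functor.} Finally, $x\otimes-$ is symmetric monoidal and preserves compact objects, as shown above, so it is a morphism in $\CAlg^\rig(\Pr^{\L,\omega}_\st)$.

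The main obstacle is only bookkeeping. The step most likely to be glossed over is the thickness of dualizable objects, in particular closure under cofibres. I would prove it by exhibiting the dual of $\cofib(f)$ as $\susp^{-1}\cofib(f^\vee)$. The evaluation and coevaluation maps are induced by exactness of the internal hom $\IHom(-,x)$, and the triangle identities are checked against the identification of $\IHom(M,-)$ with $M^\vee\otimes-$, which is preserved under cofibre sequences in $M$.
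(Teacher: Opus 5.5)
Your proposal is correct and takes essentially the same route as the paper. You get compact generation from the free--forgetful adjunction $x\otimes-\dashv\text{res}$, and you get the equality of compact and dualizable objects from the symmetric monoidality of $x\otimes-$; beyond that, you only spell out what the paper leaves implicit, namely the thickness of dualizable objects and the deduction that dualizable objects are compact because the unit $x$ is compact.
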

\begin{proof}
    Since $\mathcal{C}$ is presentable stable and the forgetful functor $\text{res}:\Mod_x(\mathcal{C})\to\mathcal{C}$ preserves and detects all limits and colimits, it follows that $\Mod_x(\mathcal{C})$ is presentable stble. 
    It also follows that the relative tensor product commutes with small colimits in each variable, making $\Mod_x(\mathcal{C})\in\CAlg(\Pr^\L_\st)$. 
    Regarding compact generation, it suffices to notice that the essential image of the compact generators of $\mathcal{C}$ compactly generates $\Mod_x(\mathcal{C})$.
    Finally, since tensoring with $x$ is a symmetric monoidal functor, we deduce that dualizable and compact objects in $\Mod_x(\mathcal{C})$ coincide. 
    Thus $\Mod_x(\mathcal{C})\in\CAlg^\rig(\Pr^{\L,\omega}_\st)$.
    The statement about rigidity of $x\otimes -$ is then trivial.
\end{proof}
Let us now analyze the $t$-structure.
\begin{remark}
    Let $\mathcal{C}\in\CAlg(\Pr^{\L,\omega}_\st)$ be a compactly generated stable category with a compatible symmetric monoidal structure.
    Assume that the monoidal unit of $\mathcal{C}$ is a compact generator. 
    Then \autoref{remark: summary 1} implies that the monoidal unit defines a right-complete presentable compactly generated $t$-structure on $\mathcal{C}$ which is compatible with $\otimes$.
    Furthermore, every compact object is eventually connective $\mathcal{C}^\omega\subseteq\mathcal{C}^-$.
    Left completeness of this $t$-structure is not automatic.
\end{remark}
The previous observation propagates to module categories.
\begin{lemma}\label{lemma: tensor t-structure for module categories}
    Let $\mathcal{C}\in\CAlg(\Pr^{\L,\omega}_\st)$ be a compactly generated stable category with a compatible symmetric monoidal structure and assume that the monoidal unit of $\mathcal{C}$ is a compact generator. 
    Regard $\mathcal{C}$ with the induced $t$-structure. 
    Let $x\in\mathcal{C}$ be a commutative algebra object. 
    Then:
    \begin{enumerate*}
        \item The category $\Mod_x(\mathcal{C})$ can be equipped with a right-complete presentable compactly generated $t$-structure compatible with $\otimes$ such that $x\otimes-:\mathcal{C}\to\Mod_x(\mathcal{C})$ is right $t$-exact.
        \item If $\mathcal{C}^\omega\subseteq\Tor{C}$, then the same happens for $\Mod_x(\mathcal{C})$.
    \end{enumerate*}
    Assume that $x$ is connective in $\mathcal{C}$. Then:
    \begin{enumerate*}
        \item[(3)] The restriction $\text{res}:\Mod_x(\mathcal{C})\to\mathcal{C}$ is $t$-exact. In particular, it detects connective and coconnective objects.
        \item[(4)] If the $t$-structure on $\mathcal{C}$ is left complete, then  the $t$-structure on $\Mod_x(\mathcal{C})$ is left complete. 
    \end{enumerate*}
\end{lemma}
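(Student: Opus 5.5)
We equip $\Mod_x(\mathcal{C})$ with the $t$-structure generated, via \autoref{remark: lurie construction of t}, by the single compact object $x = x\otimes\mb{1}_\mathcal{C}$, which is the monoidal unit of $\Mod_x(\mathcal{C})$. Since $\mb{1}_\mathcal{C}$ is a compact generator of $\mathcal{C}$ and $x\otimes -$ is left adjoint to the conservative functor $\text{res}$, the object $x$ is a compact generator of $\Mod_x(\mathcal{C})$. For $(1)$, the remark preceding the statement therefore applies verbatim to $\Mod_x(\mathcal{C})$: by \autoref{lemma: conditions of t-structure} and \autoref{remark: summary 1}, the $t$-structure is right-complete presentable, compactly generated, and compatible with $\otimes$, because its generator is the unit. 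Right $t$-exactness of $x\otimes-$ holds because $\mathcal{C}_{\geq0}$ is generated under colimits and extensions by $\mb{1}_\mathcal{C}$, and $x\otimes -$ preserves colimits and sends $\mb{1}_\mathcal{C}$ to the generator $x$.

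For $(2)$, $\Mod_x(\mathcal{C})^\omega$ is the thick closure of $x\otimes\mathcal{C}^\omega$, and $\Tor{\Mod_x(\mathcal{C})}$ is thick by \autoref{lemma: stupid properties of Tor}. It therefore suffices to show that $x\otimes c$ has finite tor-dimension for each $c\in\mathcal{C}^\omega$. For $m\in\Mod_x(\mathcal{C})$ we have $(x\otimes c)\otimes_x m\simeq c\otimes m$, computed in $\mathcal{C}$ via $\text{res}$. If $\text{res}$ is $t$-exact, the bounds for $c$ transfer directly. In general we instead use that $\Mod_x(\mathcal{C})_{\geq0}$ is generated by $x$, so $c\otimes-$ maps it to $\Mod_x(\mathcal{C})_{\geq a}$, since $c\otimes x = x\otimes c$ lies in the thick closure with the corresponding connectivity. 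For the coconnective bound, we test against $\pi_0\Hom(x,\susp^{k}-)=\pi_0\Hom_\mathcal{C}(\mb{1},\susp^k\,\text{res}(-))$ and use $c^\vee$ together with rigidity. This second half is where care is needed, and it is the main obstacle: without connectivity of $x$ we do not know how $\text{res}$ interacts with the $t$-structures.

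For $(3)$, assume $x\in\mathcal{C}_{\geq0}$. Then $\text{res}(x)=x$ is connective, and $\text{res}$ preserves colimits and extensions, so $\text{res}$ is right $t$-exact. By adjunction, $m$ is coconnective in $\Mod_x(\mathcal{C})$ iff $\pi_0\Hom(\susp^n x,m)=0$ for $n>0$. Since $\Hom_x(\susp^n x,m)=\Hom_\mathcal{C}(\susp^n\mb{1},\text{res}\,m)$, this condition is the same as coconnectivity of $\text{res}(m)$ in $\mathcal{C}$, so $\text{res}$ preserves and detects coconnective objects. For the connective half, $\text{res}$ preserves connectivity as shown. Conversely, if $\text{res}(m)$ is connective, then $\text{res}(\tau_{\leq -1}m)\simeq\tau_{\leq-1}\text{res}(m)=0$, using that $\text{res}$ preserves both aisles and is exact. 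Conservativity of $\text{res}$ then gives $\tau_{\leq -1}m=0$, so $m$ is connective.

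For $(4)$, $\text{res}$ is a conservative, $t$-exact, limit-preserving functor, and it commutes with truncations. Hence the comparison map $m\to\lim_n\tau_{\leq n}m$ is sent by $\text{res}$ to the comparison map for $\text{res}(m)$, which is an equivalence by left completeness of $\mathcal{C}$. Conservativity then finishes the argument.
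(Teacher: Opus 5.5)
\textbf{The gap is in part (2).} Parts (1) and (3) are fine and agree with the paper: $x$ is a compact generator, you use the $t$-structure it generates, the identity $\Hom_x(\susp^n x,m)\simeq\Hom_{\mathcal{C}}(\susp^n\mb{1}_\mathcal{C},\text{res}\,m)$ handles coconnectivity, and conservativity gives detection. In (2), however, you leave the coconnective half of the tor-amplitude bound open, so (2) is not proved as written.

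Moreover, the obstacle you name does not exist. The adjunction computation in your (3) never uses that $x$ is connective. It shows in general that $\Mod_x(\mathcal{C})_{\leq 0}=\text{res}^{-1}(\mathcal{C}_{\leq 0})$. Connectivity of $x$ is only needed for $\text{res}$ to preserve \emph{connective} objects. So if $c$ has tor-amplitude in $[a,b]$ and $m\in\Mod_x(\mathcal{C})_{\leq 0}$, then $\text{res}((x\otimes c)\otimes_x m)\simeq c\otimes\text{res}(m)\in\mathcal{C}_{\leq b}$. Hence $(x\otimes c)\otimes_x m\in\Mod_x(\mathcal{C})_{\leq b}$, which is the missing bound. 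For the connective half, ``lies in the thick closure with the corresponding connectivity'' is not an argument. The actual reason is that $c\simeq c\otimes\mb{1}_\mathcal{C}\in\mathcal{C}_{\geq a}$ and $x\otimes-$ is right $t$-exact by (1), so $x\otimes c\in\Mod_x(\mathcal{C})_{\geq a}$.

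\textbf{The paper's route is shorter.} You already showed $x$ is a compact generator of $\Mod_x(\mathcal{C})$, so $\Mod_x(\mathcal{C})^\omega=\text{thick}(x)$ by \autoref{remark: thick generation}. The unit $x$ trivially has tor-amplitude $[0,0]$, since $x\otimes_x-\simeq\id$, and thickness of $\text{Tor}$ finishes. Equivalently, in your reduction, $\mathcal{C}^\omega=\text{thick}(\mb{1}_\mathcal{C})$, so it suffices to take $c=\mb{1}_\mathcal{C}$. In particular the hypothesis $\mathcal{C}^\omega\subseteq\Tor{C}$ is never needed; it holds automatically for $\mathcal{C}$ by the same reasoning.

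\textbf{A smaller point on (4).} You only check $m\simeq\lim_n\tau_{\leq n}m$, which is full faithfulness of $\Mod_x(\mathcal{C})\to\lim_n\Mod_x(\mathcal{C})_{\leq n}$. Left completeness also requires every compatible tower $(m_n)$ to be realized. The same method does this: $\text{res}$ preserves limits and commutes with truncations, so $\text{res}(\tau_{\leq n}\lim_k m_k)\simeq\tau_{\leq n}\lim_k\text{res}(m_k)\simeq\text{res}(m_n)$ by left completeness of $\mathcal{C}$. Conservativity then gives $\tau_{\leq n}\lim_k m_k\simeq m_n$. The paper makes the same reduction and justifies it by asserting that left and weak left completeness agree in this setting.
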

\begin{proof} 
    Let $\mb{1}_\mathcal{C}$ be the monoidal unit of $\mathcal{C}$. 
    since $\mb{1}_\mathcal{C}$ is a generator, the adjunction $x\otimes-\dashv\text{res}$, coupled with fact that $\text{res}$ is conservative, implies that $x$ is a compact generator of $\Mod_x(\mathcal{C})$. 
    The previous remark implies then point $(1)$.
    Consider $(2)$. 
    By  thickness it suffices to show that the monoidal unit $x$ of $\Mod_x(\mathcal{C})$ is of finite tor-dimension. 
    Since it is connective in the $t$-structure on the module category, it suffices to show that for every $y\in(\Mod_x(\mathcal{C}))_{\leq0}$ the object $x\otimes_x y$ is eventually coconnective. 
    But  $x\otimes_x y\simeq y$.

    Assume now that $x$ is connective in $\mathcal{C}$.
    Assertion $(3)$ regarding the $t$-exactness of the restriction functor is clear and the detection statement follows from conservativity.
    Consider $(4)$ and assume therefore that the $t$-structure on $\mathcal{C}$ is left complete. 
    Since $\Mod_x(\mathcal{C})$  is presentable and the $t$-structure compatible with filtered colimits, for the $t$-structure being left complete coincides with being weakly left complete. 
    Now since $\text{res}:\Mod_x(\mathcal{C})\to\mathcal{C}$ is $t$-exact, it follows that $\text{res}_x(\tau_{\leq n}M)\simeq \tau_{\leq n}\text{res}_x(M)$, and the conservativity of $\text{res}_x$ allows to conclude that  the canonical map $M\to \lim_{n\in \Z}\tau_{\leq n}M$ is an  equivalence  for every $x$-module $M$, being it an equivalence because of the left completeness  of $\mathcal{C}$.
\end{proof}
\begin{corollary}
    Let $\mathcal{C}\in\CAlg^\rig(\Pr^{\L,\omega}_\st)$ be a rigid algebra and assume that the monoidal unit of $\mathcal{C}$ is a compact generator such that the pair $(\mathcal{C},\{\mb{1}\})$ determines a tor-finite category. 
    If $x\in\CAlg(\mathcal{C})\cap \mathcal{C}_{\geq0}$ is a connective commutative algebra, then the pair $(\Mod_x(\mathcal{C}),\{x\})$ determines a tor-finite category for which $x\otimes-:\mathcal{C}\to\Mod_x(\mathcal{C})$ is a functor of tor-finite categories.
\end{corollary}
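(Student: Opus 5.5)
The plan is to check the axioms of \autoref{definition: tor finite categories} for $(\Mod_x(\mathcal{C}),\{x\})$ one at a time, taking them from \autoref{lemma: rigidity of modules} and \autoref{lemma: tensor t-structure for module categories}. Then we verify that $x\otimes-$ is a morphism in $\Pr^\text{tor}$.

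\textbf{Objects.} We proceed in four steps.
\begin{enumerate*}
    \item By \autoref{lemma: rigidity of modules}, $\Mod_x(\mathcal{C})\in\CAlg^\rig(\Pr^{\L,\omega}_\st)$.
    \item The singleton $\{x\}$ is the monoidal unit, so its inclusion into $\Mod_x(\mathcal{C})^\omega$ is symmetric monoidal. Hence $(\Mod_x(\mathcal{C}),\{x\})\in\Pr^{S,\otimes}$.
    \item The collection $\{x\}$ generates. As in the proof of \autoref{lemma: tensor t-structure for module categories}, $\pi_0\Hom(\susp^n x,M)\simeq\pi_0\Hom_\mathcal{C}(\susp^n\mb{1},\text{res}M)$, and $\mb{1}$ generates $\mathcal{C}$ while $\text{res}$ is conservative. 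So \autoref{lemma: conditions of t-structure} gives a right-complete presentable $t$-structure, compatible with $\otimes$ by \autoref{remark: summary 1}.
    \item Left completeness and $\Mod_x(\mathcal{C})^\omega\subseteq\Tor{\Mod_x(\mathcal{C})}$ follow from points (4) and (2) of \autoref{lemma: tensor t-structure for module categories}. These use that $x$ is connective and that $(\mathcal{C},\{\mb{1}\})$ is tor-finite, so $\mathcal{C}$ is left complete with $\mathcal{C}^\omega\subseteq\Tor{C}$.
\end{enumerate*}

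\textbf{Morphism.} The functor $x\otimes-$ is rigid by \autoref{lemma: rigidity of modules}, and it sends $\mb{1}\mapsto x$, so $f^\L(S_\mathcal{C})\subseteq S_{\Mod_x}$. It is right $t$-exact by point (1) of \autoref{lemma: tensor t-structure for module categories}; concretely, it sends the generator to the generator and preserves colimits and extensions. Its right adjoint $\text{res}$ is $t$-exact by point (3). In particular $\text{res}$ is left $t$-exact and right $t$-exact up to shift $0$, which is exactly the morphism condition.

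\textbf{Main obstacle.} The delicate point is that the $t$-structure produced in \autoref{lemma: tensor t-structure for module categories} is the one generated by $x$, matching the one determined by $S=\{x\}$. I would check this by showing that $M$ is coconnective in the generated $t$-structure exactly when $\pi_0\Hom(\susp^n x,M)=0$ for $n>0$. By adjunction this says $\text{res}M\in\mathcal{C}_{\leq0}$, which both identifies the two $t$-structures and justifies the $t$-exactness of $\text{res}$ on the coconnective side. On the connective side, $\text{res}(x)=x\in\mathcal{C}_{\geq0}$ by the connectivity hypothesis, and $\text{res}$ preserves colimits and extensions, so $\text{res}$ also preserves connective objects.
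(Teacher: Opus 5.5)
Your proposal is correct and is essentially the paper's argument: the paper states this corollary without a separate proof. It is an immediate combination of \autoref{lemma: rigidity of modules}, which gives rigidity of $\Mod_x(\mathcal{C})$ and of $x\otimes-$, and points (1)--(4) of \autoref{lemma: tensor t-structure for module categories}, which give generation by $x$, tor-finiteness of compacts, $t$-exactness of $\text{res}$ and left completeness; that is exactly how you verify the object and morphism axioms of $\Pr^\text{tor}$. Your extra check that the $t$-structure of that lemma is the one generated by $S=\{x\}$, via $\Hom(\susp^n x,M)\simeq\Hom_\mathcal{C}(\susp^n\mb{1},\text{res}M)$, usefully makes explicit a point the paper leaves implicit.
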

\begin{notation}
    In the same setup of the previous corollary, we will denote by $\text{Perf}(x)$,  $\text{PCoh}(x)$ and  $\text{Coh}(x)$ the full subcategories of compact, pseudo-coherent and coherent objects of $\Mod_x(\mathcal{C})$.
\end{notation}
Let us explicitly note that coherence is not preserved by taking module objects. 
\begin{remark}
    Assume that $\mathcal{C}$ is coherent and let $x\in\mathcal{C}$ be a commutative algebra object. 
    Then it is \emph{not} always true that $\Mod_x(\mathcal{C})$ is coherent.
    Indeed, if $R$ is a connective commutative ring spectrum, then $\Mod_R=\Mod_R(\Sp)$ is coherent if and only if $R$ is a coherent ring in the sense of \cite[Definition 7.2.4.16]{Lurie-HA}, whereas $\Sp\simeq\Mod_\mb{S}(\Sp)$ is coherent. 
\end{remark}
To avoid awkward terminology, let us give the following.
\begin{definition}
    Let $\mathcal{C}\in\CAlg(\Pr^{\L,\omega}_\st)$ be a compactly generated stable category and assume that the monoidal unit is a compact generator. 
    We will say that a commutative algebra object $x\in\CAlg(\mathcal{C})$ in  $\mathcal{C}$ is \emph{coherent} if the module category $\Mod_x(\mathcal{C})$ is coherent when equipped with the $t$-structure induced by the monoidal unit.
\end{definition}
We can also adopt a relative point of view.
\begin{remark}
    Let $\mathcal{C}\in\CAlg^\rig(\Pr^{\L,\omega}_\st)$ be a rigid algebra.  
    Let $f:x\to y$ be a map of commutative algebra objects in $\mathcal{C}$ and regard $y$ as an $x$-module. 
    Then there exists a commutative diagram
    \[\begin{tikzcd}[cramped]
	{\Mod_x(\mathcal{C})} && {\Mod_y(\mathcal{C})} \\
	& {\mathcal{C}}
	\arrow["{f^*\simeq y\otimes_x-}", from=1-1, to=1-3]
	\arrow["{x\otimes-}", from=2-2, to=1-1]
	\arrow["{y\otimes-}"', from=2-2, to=1-3]
    \end{tikzcd}\]
    where the functor $f^\L\simeq y\otimes_x-$ is called the \emph{extension of scalars}. 
    The right adjoint $f^\R=\text{res}_f$ of $f^\L$ is given by the forgetful functor, and is called the \emph{restriction along $f$}. 
    Since the functor $f^\L$ is clearly symmetric monoidal and colimit preserving, it is rigid (since the module categories are rigid by \autoref{lemma: rigidity of modules}). 
    Assume now that the monoidal unit is a compact generator and equip all the categories with the induced $t$-structure. 
    Then $f^\L$ is also right  $t$-exact. 
    Moreover, if $y\in(\Mod_x(\mathcal{C}))_{\geq0}$ is connective, then $\text{res}_f$ is also $t$-exact. 
    In  particular, $f^\L$  is a functor of tor-finite categories.
\end{remark}

\begin{definition}
    Let $\mathcal{C}\in\CAlg^\rig(\Pr^{\L,\omega}_\st)$ be a rigid algebra and assume that the monoidal unit of $\mathcal{C}$ is a compact generator such that the pair $(\mathcal{C},\{\mb{1}\})$ determines a tor-finite category. 
    Let $f:x\to y$ be a map of connective commutative algebra objects. 
    We will say that $f$ is \emph{quasi-proper} if $\text{res}_f(y)$ is a pseudo-coherent object of $\Mod_x(\mathcal{C})$.
\end{definition}
\begin{lemma}
    Let $\mathcal{C}\in\CAlg^\rig(\Pr^{\L,\omega}_\st)$ be a rigid algebra and assume that the monoidal unit of $\mathcal{C}$ is a compact generator such that the pair $(\mathcal{C},\{\mb{1}\})$ determines a tor-finite category. 
    Let $f:x\to y$ be a quasi-proper map between coherent objects. 
    Then $y\otimes_x-$ is quasi-proper.
\end{lemma}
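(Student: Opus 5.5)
By \autoref{lemma: equivalent condition for quasi-proper}, it suffices to show that $\text{res}_f$ carries compact objects of $\Mod_y(\mathcal{C})$ to pseudo-coherent objects of $\Mod_x(\mathcal{C})$. Here I use that $y\otimes_x-$ is a functor of tor-finite categories: $y$ is connective, so $\text{res}_f$ is $t$-exact by the preceding remark.

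The first step is a reduction to the unit. Compact objects of $\Mod_y(\mathcal{C})$ form the thick closure of $y$, because $y$ is a compact generator (\autoref{remark: thick generation}). $\text{res}_f$ is exact, and $\text{Coh}(x)\subseteq\text{PCoh}(x)$ is thick by \autoref{lemma: stupid properties of Pcoh and coh}. Hence the full subcategory of $\text{Perf}(y)$ consisting of objects sent to $\text{PCoh}(x)$ is thick. It therefore suffices to check the generator $y$, and $\text{res}_f(y)\in\text{PCoh}(x)$ is exactly the definition of $f$ being quasi-proper. This alone already gives the conclusion.

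I would also record a second route that uses the coherence hypotheses. These appear to be included so that $\text{PCoh}(x)$ can be described by homotopy groups via \autoref{proposition: pseudo coherent and pi_n}. In that description, $z\in\text{PCoh}(x)$ if and only if each $\pi_n z$ is compact in the heart and $\pi_n z=0$ for $n\ll0$. Assumption (S) holds by \autoref{remark: assumption S}, since the connective half is compactly generated by $x$. Left separatedness comes from tor-finiteness.

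On this route, one first shows that every $M\in\text{PCoh}(y)$ has $\pi_n M$ finitely presented over $\pi_0 y$. One then uses that $\text{res}_f$ is $t$-exact and identifies the hearts with modules over $\pi_0 y$ and $\pi_0 x$. Finally, a finitely presented $\pi_0 y$-module is finitely presented over $\pi_0 x$, because $\pi_0 y$ is itself finitely presented over the coherent ring $\pi_0 x$; this uses $\pi_0$ of the quasi-proper hypothesis.

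The step I expect to be the main obstacle is justifying that $y\otimes_x-$ is a functor of tor-finite categories. In particular, one needs $\text{res}_f$ to be right $t$-exact up to a shift and left $t$-exact; connectivity of $y$ handles both through $t$-exactness of $\text{res}_f$. The reduction to the generator $y$ is then formal.
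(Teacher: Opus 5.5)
Your proposal is correct and is essentially the paper's argument. The paper reduces to the generator $y$ via the subcategory $S_y$, then runs the geometric-realization argument (\autoref{lemma: assumption S} together with \autoref{lemma: 0-connected pseudo-coheren are closed under the formation of geometric realization}) inline; this is exactly the content of \autoref{lemma: equivalent condition for quasi-proper}, which you cite instead. As in the paper's proof, the coherence hypotheses on $x$ and $y$ play no role in your first route, so your second, homotopy-group route is optional.
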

\begin{proof}
    Let $S_y$ denote the smallest full subcategory of $\Mod_y(\mathcal{C})_{\geq0}$ closed under finite colimits and extensions and containing $y$.  
    Since $\text{res}_f(y)$ is a pseudo-coherent in $\Mod_x(\mathcal{C})$, thickness implies that  every object  of $S_y$ is sent to a pseudo-coherent object of $x$.
    Let now $c\in\text{PCoh}(y)$ be pseudo-coherent, and assume, without loss of generality, that it is connective. 
    Then \autoref{lemma: assumption S} implies that $c$ can be written as a geometric realization of objects $c_n\in S_y$. 
    It follows from above that  each $\text{res}_f(c_n)$ is  pseudo-coherent in $x$. 
    The $t$-exactness of the restriction implies furthermore that each $\text{res}_f(c_n)$ is connective.
    In particular, $\text{res}_f(c)$ can be written as a geometric realization of pseudo-coherent conenctive objects. 
    Apply then \autoref{lemma: 0-connected pseudo-coheren are closed under the formation of geometric realization} to deduce  $\text{res}_f(c)\in\text{PCoh}(x)$.
\end{proof}

We deduce the following consequence of \autoref{theorem: functors out of Cc giovanni proof}.
\begin{corollary}
    Let $\mathcal{C}\in\CAlg^\rig(\Pr^{\L,\omega}_\st)$ be a rigid algebra and assume that the monoidal unit of $\mathcal{C}$ is a compact generator such that the pair $(\mathcal{C},\{\mb{1}\})$ determines a tor-finite category. 
    Let $f:x\to y$ be  quasi-proper.
    Then the restricted Yoneda embedding induces  equivalences
    \[
    \text{PCoh}(y)\to \Fun^\ex_{\text{Perf}(x)}(\text{Perf}(y)^\op,\text{PCoh}(x)),
    \qquad
    \text{Coh}(y)\to \Fun^\ex_{\text{Perf}(x)}(\text{Perf}(y)^\op,\text{Coh}(x))
    \]
    of categories.
\end{corollary}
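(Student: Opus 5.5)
The plan is to deduce the statement from \autoref{theorem: functors out of Cc giovanni proof}, applied to the functor of tor-finite categories $f^\L=y\otimes_x-\colon\Mod_x(\mathcal{C})\to\Mod_y(\mathcal{C})$. It has three hypotheses, and I will check them one at a time.

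\emph{Quasi-properness.} By the remark on the relative point of view, $f^\L$ is a functor of tor-finite categories, since $y$ is connective and $\text{res}_f$ is $t$-exact. I will show that $\text{res}_f$ preserves pseudo-coherent objects in the same way as in the preceding lemma, but without invoking the coherence assumption, which that argument never really uses. Concretely, I will use \autoref{lemma: equivalent condition for quasi-proper}: it suffices that $\text{res}_f$ sends compact objects of $\Mod_y(\mathcal{C})$ to pseudo-coherent objects. Since $y$ is a compact generator, $\text{Perf}(y)=\text{Thick}(y)$ by \autoref{remark: thick generation}. Moreover $\text{res}_f(y)\in\text{PCoh}(x)$ by definition of quasi-proper, and $\text{PCoh}(x)$ is thick by \autoref{lemma: stupid properties of Pcoh and coh}. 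Together these give $\text{res}_f(\text{Perf}(y))\subseteq\text{PCoh}(x)$.

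\emph{Finite generating set.} I take $S_{\Mod_y}=\{y\}$ (closed under $\otimes_y$, containing the unit) and $S_\text{fin}=\{y\}$. This is a single compact generator, so \autoref{remark: crucial} applies directly. I also need $f^\L(S_x)\subseteq S_y$, which holds because $y\otimes_x x\simeq y$.

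\emph{Detection of coconnectivity.} For $F\colon\text{Perf}(y)^\op\to\Mod_x(\mathcal{C})$ corresponding to $M\in\Mod_y(\mathcal{C})$, the value $\text{ev}_y(F)=\mathcal{B}(y,M)=\text{res}_f\IHom_y(y,M)\simeq\text{res}_f(M)$. Since $\text{res}_f$ is $t$-exact and conservative, $M$ is coconnective if and only if $\text{ev}_y(F)$ is. So the family $(\text{ev}_s)_{s\in S}$ detects coconnective objects, and the theorem yields both equivalences.

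The main obstacle is the detection hypothesis, because its precise meaning in the theorem (pointwise detection through the identification of the functor category with $\mathcal{B}$) must be matched with the computation $\text{ev}_y\simeq\text{res}_f$ up to the $t$-exact equivalence of \autoref{remark: t-structure on functor category}. I will check this identification carefully: the equivalence sends $M$ to $\mathcal{B}(-,M)$, and evaluating at $y$ gives $\text{res}_f M$. Once that is settled, $t$-exactness of $\text{res}_f$ (which needs $y$ connective) settles the claim.
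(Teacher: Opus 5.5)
Your proposal is correct and follows the paper's proof. The paper likewise applies \autoref{theorem: functors out of Cc giovanni proof} to $y\otimes_x-$ with the generating set $\{y\}$, after identifying $\text{ev}_y$ with $\text{res}_f$. Your explicit check that $y\otimes_x-$ is quasi-proper uses the thick generation of $\text{Perf}(y)$ by $y$ together with \autoref{lemma: equivalent condition for quasi-proper}. This fills in a step the paper leaves implicit, and it correctly shows that the coherence hypothesis of the preceding lemma is not needed.
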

\begin{proof}
    It follows from \autoref{theorem: functors out of Cc giovanni proof} after having identified the evaluation at the compact generator $y$ with the restriction along $f$.
\end{proof}
We deduce the following.
\begin{example}
    The above theorem is particularly useful when applied to the category of spectra $\Sp$ with the standard $t$-structure. 
    If $f:A\to B$ is a quasi-proper map between connective $\mb{E}_\infty$-rings, then it provides equivalences
    \[
    \text{PCoh}(B)\to \Fun^\ex_{\text{Perf}(A)}(\text{Perf}(B)^\op, \text{PCoh}(A)), \qquad
    \text{Coh}(B)\to \Fun^\ex_{\text{Perf}(A)}(\text{Perf}(B)^\op, \text{Coh}(A))
    \]
    of categories. 
    Furthermore, when $A$ and $B$ are classical, the map $f$ is quasi-proper if and only if the derived pushforward is quasi-proper in  the sense of Lipman-Neeman \cite[Page 3]{lipman2007quasi}.
\end{example}

\subsection{Schemes}\label{subsection: schemes}
We now  study  the case of schemes.
Let $X$ be a quasi-compact quasi-separated scheme and let us denote by $\QCoh(X)$ the derived category of quasi-coherent sheaves on $X$. 
Then
\[
\QCoh(X) = \lim_{\Spec(R)\subseteq X} \Mod_{HR}
\]
where the limit is taken over the poset of open affine subsets of $X$. Here $\Mod_{HR}$ is the category of modules over the Eilenberg-MacLane spectrum $HR$, or equivalently the (unbounded) derived category of $R$-modules. 
\begin{remark}
    Let $X$ be a quasi-compact quasi-separated scheme. 
    It  follows formally that $\QCoh(X)\in\CAlg(\Pr^\L_\st)$  under the (derived) tensor product.
    Furthermore, the monoidal unit $\mathcal{O}_X$ of $\QCoh(X)$ is compact.  
    In particular, every dualizable object is compact. 
    Since dualizable objects coincide with the perfect objects $\text{Perf}(X)$, that is with those complexes which, locally, are quasi-isomorphic to bounded chain complexes of finitely generated projective modules (see \cite[\href{https://stacks.math.columbia.edu/tag/08JJ}{Lemma 08JJ}]{stacks-project} and \cite[\href{https://stacks.math.columbia.edu/tag/0FPV}{Lemma 0FPV}]{stacks-project}) then \cite[\href{https://stacks.math.columbia.edu/tag/09M1}{Proposition 09M1}]{stacks-project} implies  that any perfect object is compact, so that  dualizable and compact objects coincide. 
    Thus $\QCoh(X)\in\CAlg^\rig(\Pr^\L_\st)$.
\end{remark}
The compact generation of $\QCoh(X)$ is well understood.   
\begin{remark}
    A beautiful result of Bondal and van den Bergh \cite[Theorem 3.1.1]{bondal2002generators} shows that $\QCoh(X)$ is generated by a single compact object.
\end{remark}
We  continue the ideas of \autoref{example: moral}.
\begin{remark}
    Let $X$ be a quasi-compact quasi-separated scheme and equip $\QCoh(X)$ with the standard t-structure.
    Every quasi-compact quasi-separated scheme $X$ satisfies connective perfect generation by \cite[Proposition 9.6.1.2]{Lurie-SAG}. 
    This means that the connective half  $\QCoh(X)_{\geq0}$ is compactly generated by connective perfect complexes $\Perf(X)_{\geq0}= \Perf(X)\cap\QCoh(X)_{\geq0}$. 
    Furthermore, since $\QCoh(X)_{\geq0}\into\QCoh(X)$ is symmetric monoidal, it follows that $\Perf(X)_{\geq0}$ inherits a symmetric monoidal structure. 
    An application of \autoref{lemma: conditions of t-structure} and \autoref{remark: lurie construction of t} shows that the standard $t$-structure on $\QCoh(X)$ is  accessible, compatible with filtered colimits and right complete, as well as being compatible with $\otimes$.
    It is also left complete\footnote{A proof may be given via \autoref{lemma: excellent t-structure are preserved by equivalence class}.}.
    Finally, the compact generator can be choosen to be connective.
\end{remark}
\begin{remark}
    Let $X$ be a quasi-compact quasi-separated scheme. 
    Since the monoidal unit is coconnective, an application of  \autoref{lemma: tor amplitude vs classical} shows that being of finite tor-dimension in the  sense of \autoref{definition: tor dimension} reduces to the classical notion of being of finite tor-dimension. 
    In particular, every perfect object on $X$ is of finite tor-dimension. 
\end{remark}

We deduce the following. 
\begin{corollary}
    Let $X$ be a quasi-compact quasi-separated scheme. 
    The pair $(\QCoh(X), \Perf(X)_{\geq0})$ determines a tor-finite category. 
    Moreover, if $G\in\Perf(X)_{\geq0}$ is a single compact connective generator, then $\{G\}\subseteq\Perf(X)_{\geq0}$ is a finite generating family. 
\end{corollary}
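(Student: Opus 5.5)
The corollary has two claims. First, $(\QCoh(X),\Perf(X)_{\geq0})$ satisfies every axiom in \autoref{definition: tor finite categories}. Second, a single connective compact generator $G$ gives a finite generating family. I will check the axioms one at a time, using the remarks just above the statement.

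\emph{Object of $\Pr^{S,\otimes}$.} $\QCoh(X)$ is rigidly-compactly generated, since dualizable and compact objects agree and $\QCoh(X)$ is compactly generated by Bondal--van den Bergh. The inclusion $\Perf(X)_{\geq0}\into\Perf(X)$ is symmetric monoidal: $\mathcal{O}_X$ is connective and connective objects are closed under $\otimes$. It is only essentially small, so I pass to a small skeleton and close it under $\otimes$ and the unit, as in the footnote to \autoref{example: moral}; this changes neither the generated $t$-structure nor its compatibility with $\otimes$.

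\emph{Generation and the $t$-structure.} $\Perf(X)_{\geq0}$ contains a compact generator (a connective one exists by the previous remark), so it generates in the sense of \autoref{lemma: conditions of t-structure}. That lemma makes the generated $t$-structure right-complete presentable. It coincides with the standard one, because connective perfect generation \cite[Proposition 9.6.1.2]{Lurie-SAG} identifies the smallest colimit- and extension-closed subcategory containing $\Perf(X)_{\geq0}$ with $\QCoh(X)_{\geq0}$. Left completeness of the standard $t$-structure is the footnoted claim. My plan there is \autoref{lemma: excellent t-structure are preserved by equivalence class}: for a compact generator $G$, $\pi_0\Hom(G,\susp^iG)=0$ for $i\gg0$, because $G$ is bounded and has finite tor-amplitude and $X$ is quasi-compact, so $\IHom(G,G)=G^\vee\otimes G$ is bounded. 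The standard $t$-structure lies in the preferred equivalence class by \cite{neeman2025triangulatedcategoriessinglecompact}. This is the step most dependent on external input, and I expect it to be the main obstacle.

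\emph{Tor-finiteness of compacts.} By the preceding remark, finite tor-dimension in the sense of \autoref{definition: tor dimension} agrees with the classical notion, using \autoref{lemma: tor amplitude vs classical} since $\mathcal{O}_X$ is discrete. Perfect complexes have finite classical tor-amplitude locally, and quasi-compactness bounds it uniformly. Hence $\Perf(X)\subseteq\Tor{\QCoh(X)}$. This proves the first claim.

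\emph{Finite generating family.} If $G\in\Perf(X)_{\geq0}$ is a compact generator, then $\{G\}\subseteq\Perf(X)_{\geq0}$ is finite and generates, since $\pi_0\Hom(\susp^nG,x)=0$ for all $n$ forces $x=0$. By \autoref{remark: thick generation}, $\text{thick}(\{G\})=\Perf(X)$. This is exactly what \autoref{remark: crucial} and \autoref{remark: improvement of theorem Cc} need for $S_\text{fin}=\{G\}\subseteq S_{\QCoh(X)}$.
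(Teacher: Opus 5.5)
Your proposal is correct and follows essentially the paper's route. The paper deduces the corollary by assembling the preceding remarks: rigidity together with Bondal--van den Bergh, connective perfect generation to identify the generated $t$-structure with the standard one, left completeness via the Burke criterion as in the footnote, and the comparison with classical tor-amplitude; the finite generating family is then immediate. One small precision: to get $\pi_0\Hom(G,\susp^iG)\simeq\pi_0\Hom(\mathcal{O}_X,\susp^i(G^\vee\otimes G))=0$ for $i\gg0$, boundedness of $G^\vee\otimes G$ is not enough on its own. You also need that $\Gamma(X,-)$ has finite cohomological dimension on $\QCoh(X)$, which holds because $X$ is quasi-compact and quasi-separated.
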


We wish now to compute our (pseudo)-coherent objects (which we will denote by $\text{Coh}(X)\subseteq \text{PCoh(X)}$) for the standard $t$-structure on $\QCoh(X)$. 

\begin{remark}
    Let $X$ be a quasi-compact quasi-separated scheme. 
    Let $G$ be the single compact generator of $\QCoh(X)$. 
    Then one can show that $G$ satisfies the assumptions of \autoref{proposition: PCoh = C-c}. 
    It follows that  the pseudo-coherent objects coincide with the bounded pseudo-compact objects.
    Furthermore, if $X$ is coherent, then the standard $t$-structure on $\QCoh(X)$ is coherent in the sense of \autoref{definition: coherent t-structure}. See, for example, \cite[Proposition 8.7]{scholze2025sixfunctorformalisms}.
    Hence \autoref{proposition: pseudo coherent and pi_n} allows us to compute explicitly $\text{Coh}(X)\subseteq \text{PCoh(X)}$ for a coherent scheme $X$: they turn out to be the classical $D^b_{\text{coh}}(X)\subseteq D^-_{\text{coh}}(X)$.   
\end{remark}

Let us now consider a map $f:X\to Y$ between quasi-compact quasi-separated schemes. 
Consider the pullback functor $f^*:\QCoh(Y)\to\QCoh(X)$. 
This functor is rigid and  right $t$-exact with respect to the standard $t$-structure. 
Since the pushforward $f_*:\QCoh(X)\to\QCoh(Y)$ is of finite cohomological dimension, it follows that $f^*$ may be identified as  a  functor of tor-finite categories.
\begin{remark}
    In general the pushforward $f_*$ does \emph{not} preserve compact objects, nor does it send pseudo-coherent objects to pseudo-coherent objects. 
    It does when it is \emph{quasi-perfect} or \emph{quasi-proper}, respectively, in the schematic sense.
\end{remark}
We need the following result (see \cite[Theorem 1.10]{alonso2023relative}).
\begin{lemma}\label{lemma: yoneda for schemes}
    Let $f:X\to Y$  be a morphism of quasi-compact quasi-separated schemes, and let $G$ be a compact generator for $\QCoh(X)$. Then $\QCoh(X)(G,-):\QCoh(X)\to\QCoh(Y)$ detects and preserves eventually being connective and coconnective objects. 
\end{lemma}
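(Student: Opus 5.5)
The plan is to exploit the fact that $\QCoh(X)(G,-)\simeq f_*\IHom_X(G,-)\simeq f_*(G^\vee\otimes -)$, since $G$ is compact and hence dualizable. This reduces the statement to two separate facts. First, tensoring with the perfect object $G^\vee$ behaves well with respect to the standard $t$-structure on $X$. Second, $f_*$ has finite cohomological dimension. I will treat preservation and detection separately.

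\emph{Preservation.} Every perfect complex on a quasi-compact quasi-separated scheme has finite tor-amplitude; this is the earlier observation that $(\QCoh(X),\Perf(X)_{\geq0})$ is tor-finite. Hence $G^\vee\otimes-$ is right $t$-exact up to a shift and left $t$-exact up to a shift, so it carries $\QCoh(X)^-$ into $\QCoh(X)^-$ and $\QCoh(X)^+$ into $\QCoh(X)^+$. Next, $f_*$ is left $t$-exact. Moreover, because $X$ is quasi-compact and quasi-separated, $f_*$ has finite cohomological dimension: this follows from a Čech/Mayer–Vietoris argument on a finite affine cover of $X$ over affines of $Y$. It follows that $f_*$ is right $t$-exact up to a shift, so $f_*$ also preserves $(-)^-$ and $(-)^+$. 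Composing the two steps gives preservation.

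\emph{Detection.} This is the substantive part. Suppose $f_*(G^\vee\otimes x)$ is eventually connective; I want to show that $x$ is eventually connective, and similarly for eventually coconnective. The approach I would take first is to reduce to the case where $Y$ is affine. Bounds on $Y$ can be checked on a finite affine cover $\{V_j\}$. Base change for the open immersions $V_j\subseteq Y$ commutes with $f_*$, and $G|_{f^{-1}V_j}$ is still a compact generator. So it suffices to treat $Y=\Spec R$. In that case $f_*$ composed with global sections is $\Gamma(X,-)$, and $\QCoh(X)(G,x)$ corresponds to the $R$-module spectrum $\Hom(G,x)$.

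The coconnective half then follows from \cite[Theorem 1.10]{alonso2023relative}-type arguments. Fix $x$, and suppose $\Hom(\Sigma^n G,x)$ vanishes for $n\gg0$. The compact objects $\Sigma^nG$ thickly generate $\Perf(X)$, as in \autoref{remark: thick generation}. Every connective perfect complex lies in a bounded window of $\langle G\rangle^{[-m,m]}$, by Neeman's approximability of $\QCoh(X)$ for qcqs $X$. In particular $\mathcal{O}_{U}$-type generators of $\QCoh(X)_{\geq N}$ are built from $G$ in bounded shifts. From this one deduces that $x\in\QCoh(X)_{\leq M}$. The connective half proceeds dually, using that $G$ is in the preferred equivalence class: $\Hom(G,\QCoh(X)_{\leq -A})\subseteq \Sp_{\leq -A+N}$, and the $t$-structure generated by $G$ is equivalent to the standard one.

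\emph{Main obstacle.} I expect the main obstacle to be exactly this detection step. The key point is that the $t$-structure generated by $G$ lies in the same equivalence class as the standard $t$-structure. I would invoke \cite[Lemma 0.13, Example 0.17]{neeman2025triangulatedcategoriessinglecompact}, which says the preferred equivalence class contains the standard $t$-structure. With that in hand, eventual (co)connectivity of $\Hom(G,x)$ is equivalent to that of $x$, and detection follows.
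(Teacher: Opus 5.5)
\paragraph{Where your proposal agrees with the paper, and where it departs.}
Your preservation argument is the paper's: $\QCoh(X)(G,-)\simeq f_*(G^\vee\otimes-)$, with $G^\vee$ of finite tor-amplitude and $f_*$ of finite cohomological dimension. Your reduction of detection to $Y=\Spec R$ is also sound. It uses flat base change to a finite affine cover of $Y$ and the fact that $G|_{f^{-1}V_j}$ is still a compact generator. What remains is whether eventual (co)connectivity of the mapping spectrum $\Hom_X(G,x)$ forces that of $x$.

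For the coconnective half your route works. It differs from the paper, which passes by thickness from $G$ to all perfect complexes and otherwise relies on \cite[Theorem 1.10]{alonso2023relative}. Stated cleanly, your argument is as follows. Let $(\mathcal{C}^G_{\geq0},\mathcal{C}^G_{\leq0})$ be the $t$-structure generated by $G$. By construction $\mathcal{C}^G_{\leq m}=\{x:\Hom_X(G,x)\in\Sp_{\leq m}\}$. The standard $t$-structure lies in the preferred class, which gives $\QCoh(X)_{\geq A}\subseteq\mathcal{C}^G_{\geq 0}$, equivalently $\mathcal{C}^G_{\leq0}\subseteq\QCoh(X)_{\leq A}$.

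Your sentence about connective perfect complexes lying in a window $\langle G\rangle^{[-m,m]}$ does not supply this inclusion. With $m$ uniform it is false, since the $\susp^n\mathcal{O}_X$, $n\geq0$, cannot all lie in one window. With $m$ depending on the complex it is too weak. The uniform one-sided inclusion above is what you need.

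This half is also all the paper uses later. The theorem on functors out of $(\mathcal{B}^\omega)^\op$ only asks the evaluations to detect coconnective objects.

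\paragraph{The gap: connective detection.}
The connective half does not ``proceed dually''. The aisle $\mathcal{C}^G_{\geq0}$ is the closure of $G$ under colimits and extensions, i.e.\ the left orthogonal of $\mathcal{C}^G_{\leq -1}$. That is a condition on maps \emph{out of} $x$, and it is not cut out by connectivity of $\Hom_X(G,x)$. The inclusion $\Hom(G,\QCoh(X)_{\leq -A})\subseteq\Sp_{\leq -A+N}$ you quote is preservation, not detection.

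In fact the preferred equivalence class alone cannot give connective detection. Take $E=C^*(S^2;k)$ and $\Mod_E$ with the $t$-structure generated by $G=E$; here $\pi_0\Hom(G,\susp^iG)=0$ for $i>2$. The augmentation module $k$ has $\Hom_E(E,k)=k$, which is connective. But $\susp^{-n}k$ is $(-n)$-coconnective, and $\pi_0\Hom_E(k,\susp^{-n}k)=\pi_n\Hom_E(k,k)\neq0$ for all $n\geq0$, since $\Hom_E(k,k)\simeq C_*(\Omega S^2;k)$. So $k$ is not eventually connective.

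For schemes you need geometric input. One option is induction on the number $r$ of affines $U_1,\dots,U_r$ covering $X$.
\begin{enumerate}
\item Put $V_i=\bigcup_{j\neq i}U_j$. Then $Z_i=X\setminus V_i$ is closed in $X$, contained in $U_i$, and equal to $V(f_1,\dots,f_s)$ there, because $U_i\cap V_i$ is quasi-compact.
\item One has $\Gamma_{Z_i}x\simeq\colim_n\susp^{-s}K_n\otimes x$, where $K_n$ is the Koszul complex on $f_1^n,\dots,f_s^n$, extended by zero from $U_i$.
\item Each $K_n$ is an iterated extension of copies of $K_1$ without shifts. Hence $\Hom_X(G,K_n\otimes x)\simeq\Hom_X(G\otimes K_n^\vee,x)\in\Sp_{\geq c}$ with $c$ independent of $n$.
\item It follows that $\Hom_{V_i}(G|_{V_i},x|_{V_i})\simeq\Hom_X(G,j_{i*}j_i^*x)$ is eventually connective.
\item Induction gives $x|_{V_i}$ eventually connective. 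The $V_i$ cover $X$ when $r\geq2$, and the affine case is immediate since $\mathcal{O}_X\in\text{thick}(G)$.
\end{enumerate}
The same argument also handles the coconnective half.
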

\begin{proof}
    Let $M\in\QCoh(X)$ and assume that it is connective (respectively, coconnective). 
    Since  $\QCoh(X)(G,-)\simeq f_*\IHom_{\QCoh(X)}(G,-)\simeq f_*(G^\vee\otimes)$ is $t$-exact up to a shift (since $f_*$ is and since $G^\vee$ is of finite tor-dimension), it follows that $\QCoh(X)(G,M)$ is connective (respectively, coconnective) up to a shift.
    Assume conversely that $\QCoh(X)(G,M)$ is connective (respectively, coconnective) up to a shift. 
    An argument by thickness implies the same claim with  $G$ replaced by any perfect complex, which  implies the claim.
\end{proof}

We can now apply \autoref{theorem: functors out of Cc giovanni proof} to deduce the following result. 
\begin{corollary}
    Let $f:X\to Y$ be a quasi-proper map of quasi-compact quasi-separated schemes. Assume that $Y$ is coherent. 
    Then there are equivalences of categories
    \[
    \text{PCoh}(X)\to \Fun^\ex_{\text{Perf}(Y)}(\text{Perf}(X)^\op,\text{PCoh}(Y)), 
    \qquad 
    \text{Coh}(X)\to \Fun^\ex_{\text{Perf}(Y)}(\text{Perf}(X)^\op,\text{Coh}(Y)).
    \]
    induced by the $\text{Perf}(Y)$-Yoneda embedding.  
\end{corollary}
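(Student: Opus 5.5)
The plan is to deduce the corollary directly from \autoref{theorem: functors out of Cc giovanni proof}, applied to the functor of tor-finite categories $f^*:\QCoh(Y)\to\QCoh(X)$. Both sides carry the tor-finite structures $(\QCoh(Y),\Perf(Y)_{\geq0})$ and $(\QCoh(X),\Perf(X)_{\geq0})$ from the preceding corollary. The argument then reduces to checking the three hypotheses of the theorem: quasi-properness, the existence of a finite generating set, and detection of coconnective objects.

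\textbf{Quasi-properness.} By assumption $f$ is quasi-proper in the schematic sense, so $f_*$ preserves pseudo-coherent complexes. Our $\text{PCoh}(-)$ agrees with the classical pseudo-coherent complexes, for instance via \autoref{proposition: PCoh = C-c} together with the remark identifying pseudo-coherent objects with pseudo-compact ones. Hence $f^*$ is quasi-proper in the sense of our definition. Equivalently, by \autoref{lemma: equivalent condition for quasi-proper} it suffices that $f_*$ sends perfect complexes to pseudo-coherent ones.

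\textbf{Finite generating set.} By Bondal--van den Bergh and the subsequent remark, $\QCoh(X)$ has a single compact generator $G$, which can be chosen connective. After closing under tensor products we may take $S_\mathcal{B}=\Perf(X)_{\geq0}$ and $S_\text{fin}=\{G\}\subseteq S_\mathcal{B}$. Its thick closure is $\Perf(X)$ by \autoref{remark: thick generation}, so \autoref{remark: crucial} applies, as anticipated in \autoref{remark: improvement of theorem Cc}.

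\textbf{Detection of coconnective objects.} This step carries the content specific to this corollary and is where I expect the work to lie. We need that an exact $\Perf(Y)$-linear functor $F=\QCoh(X)(-,M)$ with pseudo-coherent values is eventually coconnective as soon as each $\text{ev}_s F$ is eventually coconnective. By \autoref{lemma: yoneda for schemes}, $\QCoh(X)(G,-)$ detects eventual coconnectivity. Since $G$ lies in $S_\mathcal{B}$ and already generates, the family $(\text{ev}_s)_{s\in S_\mathcal{B}}$ detects coconnective objects, up to the uniform shift that \autoref{lemma: yoneda for schemes} provides.

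Coherence of $Y$ enters in two places. It makes the standard $t$-structure on $\QCoh(Y)$ coherent, so that $\text{Coh}(Y)$ and $\text{PCoh}(Y)$ are described by homotopy sheaves via \autoref{proposition: pseudo coherent and pi_n}. This is needed to identify the value categories with the classical ones and to check that $f_*$ of a pseudo-coherent complex is pseudo-coherent in our sense.

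With the three hypotheses in place, \autoref{theorem: functors out of Cc giovanni proof} gives both equivalences, induced by the $\Perf(Y)$-linear Yoneda embedding $M\mapsto f_*\IHom(-,M)$.
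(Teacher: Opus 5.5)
Your proposal is correct and follows the paper's proof: you apply \autoref{theorem: functors out of Cc giovanni proof} to $f^*$, take the single connective compact generator as the finite generating set as in \autoref{remark: improvement of theorem Cc}, and use \autoref{lemma: yoneda for schemes} to detect eventual coconnectivity. One minor point: coherence of $Y$ is not used anywhere in this deduction, since the identification of $\text{PCoh}$ with classical pseudo-coherent complexes via pseudo-compactness does not need it; it only matters later, when $\text{PCoh}$ and $\text{Coh}$ are identified with $D^-_{\text{coh}}$ and $D^b_{\text{coh}}$.
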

\begin{proof}
   Since $f^*:\QCoh(Y)\to\QCoh(X)$ is a functor of tor-finite categories and since $\QCoh(X)$ may be generated by a singe connective object, an application of \autoref{remark: improvement of theorem Cc} and \autoref{lemma: yoneda for schemes} (after having identified the evaluation at the compact generator with the linear Yoneda) concludes. 
\end{proof}
We  obtain a more classical result by means of  Kiehl’s Finiteness Theorem \cite[Theorem 2.9']{kiehl1972descente}. 
It shows that every proper pseudo-coherent map is quasi-proper, and, in particular, it implies that every finite-type separated map $f : X \to Y$ over a noetherian base is quasi-proper if and only if it is proper. This observation, coupled with the previous result, proves the following  generalization of \cite[Corollary 0.5]{neeman2025triangulatedcategoriessinglecompact}.
\begin{corollary}\label{corollary: ND1 for schemes}
   Let $f:X\to Y$ be a proper map and assume that $Y$ is noetherian. 
   Then there are equivalences of categories
    \[
    D^-_{\text{coh}}(X)\to \Fun^\ex_{\text{Perf}(Y)}(\text{Perf}(X)^\op,D^-_{\text{coh}}(Y)), \qquad D^b_{\text{coh}}(X)\to \Fun^\ex_{\text{Perf}(Y)}(\text{Perf}(X)^\op,D^b_{\text{coh}}(Y)).
    \]
    induced by  the $\text{Perf}(Y)$-Yoneda embedding.  
\end{corollary}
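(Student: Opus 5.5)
The plan is to deduce \autoref{corollary: ND1 for schemes} from the preceding corollary for quasi-proper maps with coherent target. There are three inputs to check: that $f$ is quasi-proper, that $Y$ is coherent, and that $\text{PCoh}$ and $\text{Coh}$ coincide with $D^-_{\text{coh}}$ and $D^b_{\text{coh}}$ on both $X$ and $Y$.

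First, $Y$ is noetherian, hence coherent. By the remark on the standard $t$-structure (via \cite[Proposition 8.7]{scholze2025sixfunctorformalisms}), $\QCoh(Y)$ carries a coherent $t$-structure. Then \autoref{proposition: pseudo coherent and pi_n} identifies $\text{PCoh}(Y)=D^-_{\text{coh}}(Y)$ and $\text{Coh}(Y)=D^b_{\text{coh}}(Y)$. This uses left separatedness, $\Perf(Y)\subseteq\QCoh(Y)^-$, and assumption (S), which holds by connective perfect generation and \autoref{remark: assumption S}.

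Second, a proper map is of finite type, so $X$ is noetherian as well. The same argument then gives $\text{PCoh}(X)=D^-_{\text{coh}}(X)$ and $\text{Coh}(X)=D^b_{\text{coh}}(X)$.

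Third, $f$ is quasi-proper. This step carries the real geometric input, and I would simply cite it rather than reprove it. Kiehl's finiteness theorem \cite[Theorem 2.9']{kiehl1972descente} says that $f_*$ of a proper pseudo-coherent map preserves pseudo-coherent complexes. Over a noetherian base, a proper map of finite type is pseudo-coherent, since finite type over noetherian means locally of finite presentation over a coherent base. Hence $f_*$ sends $D^-_{\text{coh}}(X)$ to $D^-_{\text{coh}}(Y)$. This is exactly quasi-properness of $f^*$ as a functor of tor-finite categories, given the identification from the previous steps. Alternatively, \autoref{lemma: equivalent condition for quasi-proper} reduces the check to $f_*$ of perfect complexes.

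Finally, I would apply the previous corollary. Its hypotheses are the finite generating family $\{G\}\subseteq\Perf(X)_{\geq0}$ given by a connective compact generator from Bondal–van den Bergh, and detection of coconnectivity by evaluation, which is \autoref{lemma: yoneda for schemes}. Substituting the identifications above yields both equivalences. The only delicate point is matching the abstract quasi-properness with Kiehl's statement; the rest is bookkeeping.
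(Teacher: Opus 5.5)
Your proposal is correct and follows the paper's route. You identify $\text{PCoh}$ and $\text{Coh}$ with $D^-_{\text{coh}}$ and $D^b_{\text{coh}}$ via the coherent standard $t$-structure on noetherian schemes. You then use Kiehl's finiteness theorem to see that the proper map $f$ is quasi-proper, and apply the preceding corollary for quasi-proper maps with coherent target. One small fix: a finite-type morphism to a noetherian base is pseudo-coherent because polynomial algebras over a noetherian ring are noetherian. Local finite presentation over a coherent base would not suffice in general.
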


We now turn our attention to the second duality result. 
\begin{remark}
    Let $X$ be a noetherian scheme. 
    Recall that a \emph{regular alteration} of $X$ is a proper surjective morphism $h : R\to X$ such that:
    \begin{enumerate*}
        \item $R$ is regular and finite dimensional.
        \item There is a dense open set $U\subseteq X$ over which $h$ is finite.
    \end{enumerate*}
    A great number  of examples of regular alterations were provided by de Jong in \cite{de1996smoothness} and \cite{jong1997families}:  every separated of finite type scheme over an excellent scheme of dimension $\leq 2$ admits a regular alteration. 
\end{remark}
\begin{lemma}
    Let $X$ be a noetherian scheme and let $h : R\to X$ be a quasi-perfect regular alteration.
    Then $h^*:\QCoh(X)\to\QCoh(R)$ is of universal descent in the sense of \autoref{definition: t-geometric of universal descent}.
\end{lemma}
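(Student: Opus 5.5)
We have to check the two conditions of \autoref{definition: t-geometric of universal descent}: first, that $h^*$ is a functor of tor-finite categories, left $t$-exact up to a shift; second, that it is of universal descent in the sense of \autoref{definition: morphism of universal descent}.

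\textbf{The $t$-exactness part.} Both $\QCoh(X)$ and $\QCoh(R)$ are tor-finite by the corollary for quasi-compact quasi-separated schemes. The functor $h^*$ is rigid and right $t$-exact. Its right adjoint $h_*$ has finite cohomological dimension, because $h$ is a proper map of noetherian schemes. Left $t$-exactness of $h^*$ up to a shift says that $h^*$ has finite tor-dimension. This is automatic from quasi-perfectness via Lipman–Neeman \cite[Theorem 1.2]{lipman2007quasi}: a quasi-perfect map is quasi-proper and of finite tor-dimension.

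\textbf{Beck–Chevalley.} Write $R^{n}=R\times_X\dots\times_X R$. Since $\QCoh$ of qcqs schemes satisfies $\QCoh(R)\otimes_{\QCoh(X)}\QCoh(R)\simeq\QCoh(R\times_XR)$ (Ben-Zvi–Francis–Nadler, or \cite{Lurie-SAG}), the cosimplicial object $(\mathcal{B}^\omega)^\bullet$ is $[n]\mapsto\Perf(R^{n+1})$, with pullback maps.
\begin{itemize}
\item The relevant squares come from cartesian squares of schemes in which the maps $d^0$ are base changes of $h$.
\item The canonical map $\alpha^* d^0_*\to d^0_*\alpha^*$ is an equivalence by flat-free base change for qcqs morphisms (Lipman's tor-independent base change holds unconditionally in the derived/spectral setting, since fibre products are derived).
\item The right adjoints $d^0_*$ preserve perfect complexes, because quasi-perfectness is stable under (derived) base change by \cite{lipman2007quasi}.
\end{itemize}
One caveat is that, strictly, $R\times_XR$ should be the derived fibre product. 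I would either work with spectral schemes throughout or note that the equivalence of $\QCoh$-tensor products forces this.

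\textbf{The exponent: the main obstacle.} We must show that $\id_{\QCoh(X)}$ is a retract of $\phi_e$ for some $e$. I would use the theory of descendability in the sense of Mathew. Set $A=h_*\mathcal{O}_R\in\CAlg(\QCoh(X))$. By the projection formula, $H=h_*h^*\simeq A\otimes-$, so the $\phi_n$ are the partial totalizations of the Amitsur complex of $A$.
\begin{itemize}
\item The identity is a retract of $\phi_e$ exactly when $A$ is descendable. Equivalently, the map $I^{\otimes e+1}\to\mathcal{O}_X$ from the fibre of the unit is null (Mathew, \emph{The Galois group of a stable homotopy theory}, Prop.~3.20).
\item Descendability of proper surjective maps of noetherian schemes of finite type, in particular alterations, is Bhatt–Scholze, \emph{Projectivity of the Witt vector affine Grassmannian}, Theorem~11.2 ($v$-descent for proper surjections is descendable). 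Their proof goes by noetherian induction, using finiteness over a dense open $U$ and excision; the finite case is handled by Mathew's result that finite faithfully flat-ish maps are descendable.
\item Alternatively, one can invoke Bhatt–Scholze's result that proper surjective maps of noetherian schemes are descendable. For a regular alteration, surjectivity plus properness suffice.
\end{itemize}
The main difficulty is translating the paper's retraction condition on $\phi_e$ into Mathew's nilpotence criterion, and checking that the finite exponent is uniform. I would do this via the cofibre description of $\phi_n\to\phi_{n-1}$ as $H^n(I)$ given earlier: a null map $I^{\otimes e+1}\to\mathcal{O}$ produces a section of $\phi_e$.
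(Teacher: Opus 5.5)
Your proposal is correct and follows the paper's route. Beck--Chevalley comes from the quasi-perfection of $h$ together with base change, and the exponent comes from descendability of $\mathcal{O}_X\to h_*\mathcal{O}_R$; the paper gets descendability by noting that a regular alteration is an h-cover \cite[Lemma 0ETW]{stacks-project} and invoking \cite[Proposition 11.25]{bhatt2017projectivity}. Your extra details fill in steps the paper's proof leaves implicit: left $t$-exactness up to a shift via finite tor-dimension \cite[Theorem 1.2]{lipman2007quasi}, the need for derived fibre products, and the translation of descendability into a retraction of $\id\to\phi_e$ (most cleanly done via $\fib(\mathcal{O}_X\to\Tot_e(A^{\otimes\bullet+1}))\simeq I^{\otimes e+1}$ rather than through the graded pieces).
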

\begin{proof}
    Since $X$ is noetherian and every morphism of finite type  over a locally noetherian base is of finite presentation, \cite[\href{https://stacks.math.columbia.edu/tag/0ETW}{Lemma 0ETW}]{stacks-project}  implies that every regular alteration $h : R\to X$ is an \emph{h-cover} in the sense of Voevodsky \cite[\href{https://stacks.math.columbia.edu/tag/0ETS}{Definition 0ETS}]{stacks-project}. 
    Now by \cite[Proposition 11.25]{bhatt2017projectivity} every $h$-cover $h:R\to X$  of noetherian schemes is \emph{descendable} in the sense of \cite[Definition 3.18]{mathew2016galois}. 
    In particular, the derived pullback $h^*:\QCoh(X)\to\QCoh(R)$ is of universal descent in the sense of \autoref{definition: morphism of universal descent}. 
    Indeed, point $(1)$ follows by the quasi-perfection of $h$, whereas point $(2)$ follows from Bhatt and Scholze's result. 
    By regarding $h^*$ as a functor between tor-finite categories, it immediately follows that it is also of universal descent in the sense of \autoref{definition: t-geometric of universal descent}.
\end{proof}

We deduce the following consequence of \autoref{theorem: functors out of coh proof}.
\begin{corollary}\label{corollary: ND2 for schemes}
    Let $f:X\to Y$ be a proper map and assume that $Y$ is noetherian. 
    Assume that $X$ admits a quasi-perfect regular alteration.
    Then there is an equivalence of categories
    \[
    \text{Perf}(X)^\op\to\Fun_{\text{Perf}(Y)}^\ex(D^b_{\text{coh}}(X),D^b_{\text{coh}}(Y))
    \]
    induced by the dual $\text{Perf}(Y)$-Yoneda embedding.
\end{corollary}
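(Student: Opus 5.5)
The plan is to deduce the corollary directly from \autoref{theorem: functors out of coh proof}, applied to $f^\L = f^*:\QCoh(Y)\to\QCoh(X)$ with the descent morphism $h^*:\QCoh(X)\to\QCoh(R)$ coming from the quasi-perfect regular alteration $h:R\to X$. Three inputs must be supplied: that $f^*$ is a quasi-proper functor of tor-finite categories; that $h^*$ is of universal descent onto a regular tor-finite category with a suitable finite generating set; and that the abstract categories $\Coh{-}$ and $\mathcal{B}^\omega$ are identified with $D^b_{\text{coh}}(-)$ and $\text{Perf}(-)$.

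First, $\QCoh(X)$, $\QCoh(Y)$ and $\QCoh(R)$ are tor-finite with $S=\Perf(-)_{\geq0}$, and $f^*$, $h^*$ are functors of tor-finite categories, by the corollary and remarks in the schemes subsection. By Kiehl's finiteness theorem, proper maps over a noetherian base are quasi-proper, as already used in \autoref{corollary: ND1 for schemes}; hence $f^*$ is quasi-proper. The preceding lemma gives that $h^*$ is of universal descent in the sense of \autoref{definition: t-geometric of universal descent}.

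Second, I check that $\QCoh(R)$ is regular in the sense of \autoref{definition: regular category}. Since $R$ is regular and finite-dimensional, hence noetherian, the remark on coherent schemes and \autoref{proposition: pseudo coherent and pi_n} identify $\Coh{\QCoh(R)}$ with $D^b_{\text{coh}}(R)$. The classical fact that on a regular scheme of finite dimension every bounded complex with coherent cohomology is perfect (finite global dimension locally, uniformly bounded by $\dim R$) gives $D^b_{\text{coh}}(R)=\Perf(R)$. Similarly, $X$ and $Y$ are noetherian, so $\Coh{-}=D^b_{\text{coh}}(-)$ for them, and $\mathcal{B}^\omega=\Perf(X)$.

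Third, for the finiteness hypothesis I take $S_\text{fin}=\{G\}$, a single connective compact generator of $\QCoh(R)$ given by Bondal--van den Bergh together with connective perfect generation. \autoref{remark: improvement of theorem Cc} then lets it sit inside $S_{\QCoh(R)}$, up to enlarging $S$ to $\Perf(R)_{\geq0}$. The detection of coconnective objects by $(\text{ev}_r)_{r}$, where $\text{ev}_r = (f h)_*\IHom(r,-)$ lands in $\QCoh(Y)$, follows from \autoref{lemma: yoneda for schemes} applied to the composite $f\circ h:R\to Y$.

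The main obstacle is this last point together with bookkeeping of the base. In \autoref{theorem: functors out of coh proof}, the appeal to \autoref{lemma: regular inf-categories satisfy ND2} implicitly uses the composite $\QCoh(Y)\to\QCoh(R)$ being quasi-proper. That composite is $f\circ h$, which is proper since both maps are, so Kiehl applies again. I would make this explicit rather than rely on the theorem's phrasing, and then conclude by invoking \autoref{theorem: functors out of coh proof}.
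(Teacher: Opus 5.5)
Your proposal is correct and follows the paper's route. You feed $f^*$ and the descent morphism $h^*:\QCoh(X)\to\QCoh(R)$ from the preceding lemma into \autoref{theorem: functors out of coh proof}, with Kiehl giving quasi-properness, regularity of $R$ giving $D^b_{\text{coh}}(R)=\text{Perf}(R)$, a single connective compact generator as $S_\text{fin}$, and \autoref{lemma: yoneda for schemes} for detection of coconnectivity. The one place where you do more work than needed is the quasi-properness of the composite $\QCoh(Y)\to\QCoh(R)$: it follows formally because $h^*$ is quasi-perfect and hence quasi-proper (\autoref{remark: t-geometric of universal descent is quasi-perfect}, \autoref{corollary: quasi-perfect implies quasi-proper}), so $f_*h_*$ preserves pseudo-coherent objects without a second appeal to Kiehl.
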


\subsection{Spectral algebraic spaces}\label{subsection: spectral deligne-mumford stacks}

We can generalize the previous example by considering spectral Deligne-Mumford stacks. Let us denote by $\widehat{\Spc}$ the very large category of large spaces. 
Denote also by $\CAlg^\cn(\Sp)$ the category of connective $\mb{E}_\infty$-rings. 

\begin{definition}[\protect{\cite[Definition 9.1.0.1]{Lurie-SAG}}]
    Let  $X : \CAlg^\cn(\Sp)\to \widehat{\Spc}$ be a functor. We will say that $X$ is \emph{quasi-geometric stack} if:
    \begin{enumerate*}
        \item The functor $X$ satisfies descent with respect to the fpqc topology.
        \item The diagonal map $\delta : X \times X\to X$ is quasi-affine
        \item There exists a connective $\mb{E}_\infty$-ring $A$ and a faithfully flat morphism $\Spec(A)\to X$.
    \end{enumerate*}
\end{definition}
The class of quasi-geometric stacks contains many algebro-geometric objects that arise in practice. 
In \cite[Section 6.2]{Lurie-SAG}, Lurie assigns to every quasi-geometric stack $X$ a category of quasi-coherent sheaves $\QCoh(X)$. 
This category defines an algebra $\QCoh(X)\in\CAlg(\Pr^\L_\st)$ and admits a $t$-structure which is right-complete presentable $t$-structure, left complete and compatible with $\otimes$.
However, in general, $\QCoh(X)$  is not compactly generated for two reasons.
The first obstruction happens since the monoidal unit $\mathcal{O}_X$ is \emph{not} a compact object of $\QCoh(X)$. 
The second obstruction is in the compact generation, since $\QCoh(X)$ does not have enough perfect complexes in general.
Restricting to  \emph{perfect stacks} fixes this issue, since, roughly speaking, a quasi-geometric stack $X$ is perfect if the canonical map $\Ind(\text{Perf}(X))\to\QCoh(X)$ is an equivalence of categories (see \cite[Proposition 9.4.4.5]{Lurie-SAG}). 
\begin{definition}[\protect{\cite[Definition 9.4.4.1]{Lurie-SAG}}]
    Let $X : \CAlg^\cn(\Sp)\to\widehat{\Spc}$ be a functor. 
    We will say that $X$ is a \emph{perfect stack} if it satisfies the following conditions:
    \begin{enumerate*}
    \item The functor $X$ is a quasi-geometric stack. 
    \item The structure sheaf $\mathcal{O}_X$ is a compact object of $\QCoh(X)$.
    \item Every quasi-coherent sheaf $\mathcal{F}\in\QCoh(X)$ can be obtained as the colimit of a filtered diagram $\{\mathcal{F}_i\}_{i\in I}$, where each $\mathcal{F}_i$ is a perfect object of $\QCoh(X)$.
    \end{enumerate*}
\end{definition}
\begin{remark}
    Since for a perfect stack $X$ the compact objects  coincide with the perfect ones, it follows that $\QCoh(X)\in\CAlg^\rig(\Pr^\L_\st)$ is rigidly-compactly generated.
\end{remark}

Compact generation by a single object is more subtle and needs some restriction. 
\begin{remark}
    Recall that a \emph{spectral algebraic space} is a spectral Deligne-Mumford stack $X$ such that the mapping space $\Hom(\text{Spét}(R), X)$ is discrete for every commutative ring $R$. See \cite[Definition 1.6.8.1]{Lurie-SAG}. 
    Now \cite[Proposition 9.6.1.1]{Lurie-SAG} shows that if $X$ is a quasi-compact, quasi-separated spectral algebraic space then its functor of points defines a perfect stack, allowing us to deduce that $\QCoh(X)$ is rigidly-compactly generated. 
    Its compact generation by a single object follows then by \cite[Corollary 9.6.3.2]{Lurie-SAG}. 
\end{remark}
To sum up, we have the following.
\begin{corollary}
    Let $X$ be a quasi-compact and quasi-separated spectral algebraic space.
    Then the category of quasi-coherent sheaves $\QCoh(X)$ on $X$, equipped with the standard $t$-structure, is tor-finite. 
    Moreover, $X$ comes equipped with a compact connective generator $G$ such that $\{G\}\subseteq\Perf(X)_{\geq0}$ determines a finite generating family.  
    Finally, if $X$ is noetherian,  then $\QCoh(X)$ is coherent.
\end{corollary}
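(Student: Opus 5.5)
The corollary has four claims: $\QCoh(X)$ with the standard $t$-structure is tor-finite, there is a compact connective generator, this generator gives a finite generating family, and $\QCoh(X)$ is coherent when $X$ is noetherian. I would prove them in that order, following the scheme case above, and use the remarks just preceding the corollary for rigidity and single compact generation.

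\emph{Tor-finiteness.} Take $S_X=\Perf(X)_{\geq0}=\Perf(X)\cap\QCoh(X)_{\geq0}$. This is symmetric monoidal because the standard $t$-structure is compatible with $\otimes$ and $\mathcal{O}_X$ is connective.
By connective perfect generation for quasi-compact quasi-separated spectral algebraic spaces \cite[Proposition 9.6.1.2]{Lurie-SAG}, $\QCoh(X)_{\geq0}$ is compactly generated by $S_X$. Hence \autoref{lemma: conditions of t-structure} and \autoref{remark: lurie construction of t} show that the $t$-structure generated by $S_X$ is the standard one, and that it is accessible, right complete and compatible with filtered colimits. Left completeness is part of Lurie's general statement for quasi-geometric stacks quoted above; alternatively it follows from \autoref{lemma: excellent t-structure are preserved by equivalence class} once we have a generator $G$ with $\pi_0\Hom(G,\susp^iG)=0$ for $i\gg0$.

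It remains to show $\Perf(X)\subseteq\Tor{X}$. By \autoref{lemma: stupid properties of Tor} the class $\Tor{X}$ is thick, so it suffices to check that $\mathcal{O}_X$ has finite tor-dimension. Here $\mathcal{O}_X$ is connective and $\mathcal{O}_X\otimes y\simeq y$, so it has tor-amplitude in $[0,0]$. Every perfect complex then lies in $\Tor{X}$, since perfects are the dualizable objects and are generated by $\mathcal{O}_X$ locally; concretely one checks this étale-locally on affines, where it is the statement for $\Mod_R$.

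\emph{The generator.} \cite[Corollary 9.6.3.2]{Lurie-SAG} gives a single compact generator $G$. Since $\Perf(X)\subseteq\Tor{X}\subseteq\QCoh(X)^-$, some shift $\susp^nG$ is connective, and we rename it $G$. Then $\{G\}\subseteq S_X$ is finite and $\operatorname{thick}(G)=\Perf(X)$ by \autoref{remark: thick generation}, which is exactly the hypothesis needed in \autoref{remark: crucial} and \autoref{remark: improvement of theorem Cc}.

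\emph{Coherence.} This is the main obstacle. I would use the criterion $(\text{HC})_n$, or equivalently condition (2) of \autoref{lemma: restriction of t-structure on pcoh}: the inclusion $\QCoh(X)_{\leq0}\into\QCoh(X)_{\leq1}$ preserves compact objects. The first plan is to reduce étale-locally to affines $\operatorname{Sp\acute{e}t}(R)$ with $R$ noetherian, where $\Mod_R$ is coherent because $\pi_0R$ is noetherian and each $\pi_nR$ is finitely generated, so $R$ is coherent in the sense of \cite[Definition 7.2.4.16]{Lurie-HA}.
Descent is the hard part: compactness in $\QCoh(X)_{\leq n}$ must be checked locally. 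I would do this by identifying compact objects of $\QCoh(X)_{\leq n}$ with truncations of almost perfect objects, which are local by \cite[Section 6.2]{Lurie-SAG}. The noetherian description of almost perfect objects as those with coherent $\pi_n$, bounded below \cite[Section 2.7]{Lurie-SAG}, then makes the truncations stable under $\susp^{-1}$, which is condition (3) of \autoref{lemma: restriction of t-structure on pcoh}.
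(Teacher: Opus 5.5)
Your proposal is correct. On the one point the paper's proof actually argues, you follow the same route: the paper only says that its tor-amplitude agrees with the geometric (\'etale-local) one, so perfect complexes lie in $\text{Tor}(\QCoh(X))$. You check this on an \'etale affine cover, where perfect $R$-modules have finite tor-amplitude, with uniform bounds coming from quasi-compactness.

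Drop the sentence before that local check, though. Thickness of $\text{Tor}(\QCoh(X))$ does not reduce the claim to $\mathcal{O}_X$, because $\operatorname{thick}(\mathcal{O}_X)$ is in general much smaller than $\Perf(X)$; on $\mathbb{P}^1$ it does not contain $\mathcal{O}(-1)$. It is the local check that does the work.

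Beyond this, the paper's proof is silent on the $t$-structure axioms, the connective generator and coherence, leaving them to the preceding remarks and the analogy with schemes. Your arguments for these genuinely supplement it and are sound:
\begin{itemize}
\item connective perfect generation together with \autoref{lemma: conditions of t-structure} gives the $t$-structure axioms;
\item shifting Lurie's single generator, using $\text{Tor}(\QCoh(X))\subseteq\QCoh(X)^-$, gives a connective generator;
\item for noetherian $X$, compact objects of $\QCoh(X)_{\leq0}$ are retracts of $\tau_{\leq0}$ of perfect complexes by \autoref{lemma: NP 6.1.1}, hence bounded below with coherent homotopy sheaves, hence almost perfect by the noetherian criterion applied locally; so $\susp^{-1}$ preserves them.
\end{itemize}
The coherence step can be shortened. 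Once $\text{PCoh}(X)$ is identified with the eventually connective objects whose homotopy sheaves are coherent, truncations visibly preserve it, which is exactly the definition of a coherent $t$-structure.
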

\begin{proof}
    It suffices to understand the objects of finite tor dimension, which, by construction, are designed to  be the objects of finite tor dimension in the geometric sense.  
    Since every perfect complex is such, the claim follows. 
\end{proof}
We now study morphisms of quasi-compact quasi-separated spectral algebraic spaces. 
\begin{remark}
    Let $f:X\to Y$ be a morphism of quasi-compact quasi-separated spectral algebraic spaces. 
    In this case $f$ determines a symmetric monoidal and colimit preserving functor $f^*:\QCoh(Y)\to\QCoh(X)$ which is also right $t$-exact. 
    Furthermore:
    \begin{enumerate*}
        \item The pushforward $f_*$ is right $t$-exact up  to  a shift precisely when $f$ is of finite cohomological dimension. 
        In this case, $f^*$ defines a functor of tor-finite categories. 
        \item Thanks to the direct image theorem, \cite[Theorem 5.6.02]{Lurie-SAG},  any proper and locally almost of finite presentation of spectral Deligne-Mumford stacks $f : X \to Y$  is such that $f_* : \QCoh(X)\to\QCoh(Y)$ carries almost perfect objects to almost perfect objects.
    \end{enumerate*}
    The definitions of proper  and locally almost of finite presentation morphisms are in \cite[Definition 5.1.2.1 and Definition 4.2.0.1]{Lurie-SAG}. 
\end{remark}
Since pseudo-coherent objects are defined to coincide with Lurie's almost perfect objects on a quasi-compact spectral Deligne-Mumford stack, we deduce the following.
\begin{corollary}
    Let $f: X\to Y$ be a morphism of finite cohomological dimension of quasi-compact quasi-separated spectral algebraic spaces which is proper  and locally almost of finite presentation. 
    Then $f^*:\QCoh(Y)\to\QCoh(X)$ is quasi-proper. 
\end{corollary}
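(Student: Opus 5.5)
Since $f^\L:\mathcal{A}\to\mathcal{B}$ is already a functor of tor-finite categories, quasi-properness only asks that $f^\R=f_*$ carry pseudo-coherent objects of $\QCoh(X)$ to pseudo-coherent objects of $\QCoh(Y)$. The plan is to reduce this to the direct image theorem via the identification of $\PCoh{-}$ with Lurie's almost perfect objects.

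The first step is to verify that $f^*$ is a functor of tor-finite categories. By the preceding corollary, $\QCoh(X)$ and $\QCoh(Y)$ are tor-finite. The functor $f^*$ is symmetric monoidal, colimit-preserving and right $t$-exact, and it sends $\Perf(Y)_{\geq0}$ into $\Perf(X)_{\geq0}$, so it is a morphism in $\Pr^{S,\otimes}$. The right adjoint $f_*$ is left $t$-exact, and by the finite cohomological dimension hypothesis it is right $t$-exact up to a shift, as recorded in point $(1)$ of the preceding remark.

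The second step identifies $\PCoh{-}$ with almost perfect objects. For a quasi-compact spectral algebraic space, the standard $t$-structure is compatible with filtered colimits and right complete. By \autoref{definition: pseudo-coherent}, an object is pseudo-coherent exactly when it lies in some $\QCoh(X)_{\geq n}$ and each truncation $\tau_{\leq m}$ is compact in $\tau_{\leq m}\QCoh(X)_{\geq n}$. This is verbatim Lurie's definition of almost perfect, in the form "almost compact in $\QCoh(X)_{\geq n}$" (see \cite[Section 9.1]{Lurie-SAG}, or \cite[Proposition 2.7.3.2]{Lurie-SAG} in the affine case). The only care needed is that Lurie sometimes defines almost perfectness étale-locally. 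Quasi-compactness lets us reduce to a finite étale cover, and there the local and global notions agree.

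The third step applies the direct image theorem, point $(2)$ of the preceding remark. Since $f$ is proper and locally almost of finite presentation, $f_*$ carries almost perfect objects to almost perfect objects, so $f_*(\PCoh{X})\subseteq\PCoh{Y}$ and $f^*$ is quasi-proper. As an alternative route, \autoref{lemma: equivalent condition for quasi-proper} shows it would suffice to check that $f_*$ sends perfect objects to almost perfect ones, which again follows from the direct image theorem since perfect objects are almost perfect.

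The main obstacle is the dictionary in the second step: checking that the abstract notion of pseudo-coherence, defined through the $t$-structure generated by $\Perf(X)_{\geq0}$, matches Lurie's almost perfect objects for the standard $t$-structure. Since that $t$-structure is the standard one (connective perfect generation, \cite[Proposition 9.6.1.2]{Lurie-SAG}), this should be a direct comparison of definitions.
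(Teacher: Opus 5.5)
Your proposal is correct and takes the same route as the paper. You check that $f^*$ is a functor of tor-finite categories using the finite cohomological dimension hypothesis, you identify pseudo-coherent objects with Lurie's almost perfect objects, and you apply the direct image theorem to get $f_*(\text{PCoh}(X))\subseteq\text{PCoh}(Y)$. The only loose wording is \enquote{finite \'etale cover}, which should mean a cover by finitely many \'etale affines (equivalently, a single \'etale surjection from an affine); like you, the paper takes this local--global comparison of almost perfectness as known from Lurie rather than proving it.
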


We now deduce the following consequence of  \autoref{theorem: functors out of Cc giovanni proof}.
\begin{corollary}\label{corollary: ND1 for spectral schemes}
    Let $f: X\to Y$ be a morphism of finite cohomological dimension of quasi-compact quasi-separated spectral algebraic spaces which is proper  and locally almost of finite presentation. 
    Assume also that $Y$ is noetherian. 
    Then there are equivalences of categories
    \[
    \text{PCoh}(X)\to \Fun^\ex_{\text{Perf}(Y)}(\text{Perf}(X)^\op,\text{PCoh}(Y)), \qquad \text{Coh}(X)\to \Fun^\ex_{\text{Perf}(Y)}(\text{Perf}(X)^\op,\text{Coh}(Y)).
    \]
    induced by the $\text{Perf}(Y)$-Yoneda embedding. 
\end{corollary}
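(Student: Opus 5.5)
The plan is to deduce this corollary directly from \autoref{theorem: functors out of Cc giovanni proof}, applied to $f^*:\QCoh(Y)\to\QCoh(X)$. We check its hypotheses one at a time, using the preceding corollaries on spectral algebraic spaces.

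\textbf{Tor-finiteness.} By the corollary on quasi-compact quasi-separated spectral algebraic spaces, both $\QCoh(X)$ and $\QCoh(Y)$ are tor-finite categories. We take $S_{\QCoh(X)}=\Perf(X)_{\geq0}$, and $S_{\QCoh(Y)}$ analogously.

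\textbf{$f^*$ is a quasi-proper functor of tor-finite categories.} Since $f$ has finite cohomological dimension, $f_*$ is right $t$-exact up to a shift. Hence $f^*$ is a functor of tor-finite categories. It also sends $\Perf(Y)_{\geq0}$ into $\Perf(X)_{\geq0}$, being symmetric monoidal and right $t$-exact. Because $f$ is proper and locally almost of finite presentation, the preceding corollary (Lurie's direct image theorem) shows that $f^*$ is quasi-proper.

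\textbf{Finite generating set.} The same corollary provides a compact connective generator $G$ of $\QCoh(X)$, and $S_\text{fin}=\{G\}\subseteq\Perf(X)_{\geq0}$ is a finite generating family.

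\textbf{Detection of coconnective objects.} This is the step needing an argument. The family $(\text{ev}_s)_{s\in S}$ corresponds, under the linear Yoneda equivalence, to the functors $\QCoh(X)(s,-)\simeq f_*(s^\vee\otimes-)$. I would imitate \autoref{lemma: yoneda for schemes}. Since $G^\vee$ has finite tor-dimension and $f_*$ has finite cohomological dimension, $\QCoh(X)(G,-)$ is $t$-exact up to a shift. Conversely, suppose $f_*(G^\vee\otimes M)$ is eventually coconnective. By thickness the same holds with $G$ replaced by any perfect complex. Testing against a suitable generating family of perfect complexes would then force $M$ to be eventually coconnective. In this detection, "coconnective" must be read "up to a uniform shift", which is exactly what the last step of the proof of the theorem uses.

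The detection step is where I expect the main obstacle. The converse direction needs that $\pi_0$ of $\QCoh(X)(P,M)$ detects the homotopy sheaves of $M$ as $P$ ranges over perfect complexes. To prove it, I would first reduce to $Y$ affine by quasi-compactness, then use Lurie's compact generation results for spectral algebraic spaces, arguing via Zariski/étale locality. I expect this to go through, but it is the place where the spectral setting differs from the classical scheme case.

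Noetherianity of $Y$ enters only to make $\PCoh{Y}$ and $\Coh{Y}$ the expected subcategories. With the hypotheses verified, \autoref{theorem: functors out of Cc giovanni proof} gives both equivalences, with $\text{Perf}(-)=\QCoh(-)^\omega$.
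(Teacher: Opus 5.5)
Your reduction to \autoref{theorem: functors out of Cc giovanni proof} matches the paper, as do your checks of tor-finiteness, of quasi-properness via Lurie's direct image theorem, and of $S_\text{fin}=\{G\}$. The paper likewise reduces everything to showing that $\QCoh(X)(G,-)\simeq f_*(G^\vee\otimes-)$ detects eventually coconnective objects. The gap is at exactly the step you flagged, and the argument you sketch there does not close it.

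Thickness only gives bounds that depend on the test object. If $f_*(G^\vee\otimes M)\in\QCoh(Y)_{\leq N}$ and $P$ is built from finitely many $\susp^iG$ with $|i|\leq m_P$, you get $f_*(P^\vee\otimes M)\in\QCoh(Y)_{\leq N+m_P}$, and nothing better. Detection as such is not the problem: for $Y$ affine, the family $\Perf(X)_{\geq0}$ already detects $\QCoh(X)_{\leq0}$ exactly, by connective perfect generation. The problem is uniformity. You would need one $m$ that works for all connective perfect complexes. Retreating to a finite test family does not help, since its direct sum is a single compact object and you are back to the original question.

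What is missing is the following statement. There is an integer $A$ such that $\QCoh(X)_{\geq A}$ lies in the smallest subcategory closed under colimits and extensions containing the $\susp^nG$ for $n\geq0$. Equivalently, the standard $t$-structure lies in the preferred equivalence class determined by $G$, so that (for $Y$ affine) $\Hom(G,M)\in\Sp_{\leq N}$ forces $M\in\QCoh(X)_{\leq N+A}$. This is a geometric theorem about $X$, not a formal consequence of thickness.

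The paper does not prove it either. After reducing to $Y$ affine, it imports it from \cite[Proposition 7.0.2 and Remark 7.0.3]{ben2017integral}. The thickness sentence in the proof of \autoref{lemma: yoneda for schemes}, which you propose to imitate, is likewise backed by the citation \cite[Theorem 1.10]{alonso2023relative} rather than being a complete argument. Your appeal to Lurie's compact generation results and \'etale locality points in a reasonable direction but supplies none of this content.

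A smaller point: for spectral algebraic spaces the reduction to $Y$ affine is \'etale-local, not just a matter of quasi-compactness. It therefore also uses flat base change for $f_*$ and the fact that $G$ pulls back to a compact generator of $X\times_Y V$.
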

\begin{proof}
    Let $G$ be a compact generator of $\QCoh(X)$.
    It suffices  to show that, for a perfect complex $G$, generating $\QCoh(X)$ then the  Yoneda embedding $\QCoh(X)(G,-):\QCoh(X) \to\QCoh(Y)$ detects eventually coconnective objects. 
    By reducing to $Y$ affine, the proof follows by \cite[Proposition 7.0.2 and Remark 7.0.3]{ben2017integral}. 
    Their argument, which is for algebraic spaces, can be  carried without any modification also for spectral algebraic spaces and works for morphisms of  quasi-compact quasi-separated spectral algebraic spaces.
\end{proof}

Unfortunately, we do not know any application  of \autoref{theorem: functors out of coh proof} in the realm of spectral algebraic geometry. 
Actually, \autoref{example: regular not good for spectra} shows that  we are far from proving an honest converse to \autoref{theorem: functors out of Cc giovanni proof}.

\bibliographystyle{alpha}
\bibliography{Bibliography}

\end{document}

%% file: Comands.tex
\theoremstyle{definition}
\newtheorem{theorem}{Theorem}[subsection]
\newtheorem{definition}[theorem]{Definition}
\newtheorem{proposition}[theorem]{Proposition}
\newtheorem{corollary}[theorem]{Corollary}
\newtheorem{lemma}[theorem]{Lemma}
\newtheorem{remark}[theorem]{Remark}
\newtheorem{notation}[theorem]{Notation}
\newtheorem{example}[theorem]{Example}
\newtheorem{warning}[theorem]{Warning}
\newtheorem{construction}[theorem]{Construction}

\newtheorem*{theorem*}{Theorem}
\newtheorem*{corollary*}{Corollary}
\newtheorem*{definition*}{Definition}
\newtheorem*{remark*}{Remark}
\newtheorem*{question*}{Question}

\newcommand{\rom}[1]{\uppercase\expandafter{\romannumeral #1\relax}}

\makeatletter
\@ifpackageloaded{hyperref}%
  {\newcommand{\mylabel}[2]
    {\protected@write\@auxout{}{\string\newlabel{#1}{{#2}{\thepage}%
      {\@currentlabelname}{\@currentHref}{}}}}}%
  {\newcommand{\mylabel}[2]
    {\protected@write\@auxout{}{\string\newlabel{#1}{{#2}{\thepage}}}}}
\makeatother

\usepackage{enumitem}
\newenvironment{enumerate*}
  {\begin{enumerate}[label=(\arabic*),topsep=-\parskip+\jot,itemsep=-\parskip+\jot]}
  {\end{enumerate}}
  
\newenvironment{enumerate**}
  {\begin{enumerate}[label=(\alph*),topsep=-\parskip+\jot,itemsep=-\parskip+\jot]}
  {\end{enumerate}}
  
\newenvironment{enumerate***}
  {\begin{enumerate}[label=(*),topsep=-\parskip+\jot,itemsep=-\parskip+\jot]}
  {\end{enumerate}}

 \usepackage{bbm}
\let\mb\mathbbm 

\let\mc\mathcal

\def\N{\mathbb{N}}
\def\Z{\mathbb{Z}}

\DeclareFontFamily{U}{dmjhira}{}
\DeclareFontShape{U}{dmjhira}{m}{n}{ <-> dmjhira }{}
\DeclareRobustCommand{\yo}{\text{\usefont{U}{dmjhira}{m}{n}\symbol{"48}}}

\def\id{\text{id}}
\DeclareMathOperator\Fun{Fun}
\DeclareMathOperator\Hom{Hom}

\def\IHom{\underline{\text{Hom}}}

\def\Ind{\text{Ind}}

\def\Mod{\text{Mod}}

\def\op{\text{op}}

\def\perf{\text{perf}}
\def\st{\text{st}}

\def\Spc{\text{Spc}}
\def\Sp{\text{Sp}}
\def\Cat{\text{Cat}}
\def\Pr{\text{Pr}}
\def\CAlg{\text{CAlg}}

\def\ex{\text{ex}}

\def\Tot{\text{Tot}}

\def\rig{\text{rig}}

\newcommand{\Coh}[1]{\text{Coh}(\mc{#1})}
\newcommand{\PCoh}[1]{\text{PCoh}(\mc{#1})}

\newcommand{\Tor}[1]{\text{Tor}(\mc{#1})}

\def\into{\hookrightarrow}

\def\iso{\cong}

\def\susp{\Sigma}

\def\iff{\Leftrightarrow}
\def\L{\text{L}}
\def\R{\text{R}}

\def\fib{\text{fib}}
\def\cofib{\text{cofib}}

\DeclareMathOperator\colim{colim}

\def\phi{\varphi}
\def\epsilon{\varepsilon}

\newcommand{\ts}[1]{(\mc{#1}_{\geq0}, \mc{#1}_{\leq0})}
\newcommand{\Acc}[1]{\text{Acc}^\otimes(\mathcal{#1})}

\def\QCoh{\text{QCoh}}
\def\Perf{\text{Perf}}
\def\Spec{\text{Spec}}
\def\cn{\text{cn}}